\DeclareFontFamily{OT1}{cmss}{} \DeclareFontShape{OT1}{cmss}{m}{n} {<5> <6> <7> <8> <9> <10> <11> <12> <13> <14.4> cmss10}{}
\DeclareMathAlphabet{\cmss}{OT1}{cmss}{m}{n}
\DeclareFontFamily{OT1}{fraktura}{}
\DeclareFontShape{OT1}{fraktura}{m}{n} {<5> <6> <7> <8> <9> <10> <11> <12> <13> <14.4> [1.1] eufm10}{}
\DeclareMathAlphabet{\fraktura}{OT1}{fraktura}{m}{n}
\newtheoremstyle{thm}{1.5ex}{1.5ex}{\itshape\rmfamily}{} {\bfseries\rmfamily}{}{2ex}{}
\newtheoremstyle{def}{1.5ex}{1.5ex}{\rmfamily\sl}{} {\bfseries\rmfamily}{}{2ex}{}
\newtheoremstyle{rem}{1.3ex}{1.3ex}{\rmfamily}{} {\bfseries\rmfamily}{}{2ex}{}
\newtheoremstyle{ass}{1.5ex}{1.5ex}{\rmfamily\sl}{} {\bfseries\rmfamily}{}{2ex}{}
\newenvironment{proofsect}[1] {\vskip0.1cm\noindent{\rmfamily\itshape#1.}}{\qed\vspace{0.15cm}}
\theoremstyle{thm}
\newtheorem{theorem}{Theorem}[section]
\newtheorem{lemma}[theorem]{Lemma}
\newtheorem{proposition}[theorem]{Proposition}
\newtheorem*{Main Theorem}{Main Theorem.}
\newtheorem{corollary}[theorem]{Corollary}
\newtheorem{conjecture}[theorem]{Conjecture}
\theoremstyle{rem}
\newtheorem{remark}[theorem]{{Remark}}
\numberwithin{equation}{section}
\renewcommand{\section}{\secdef\sct\sect}
\newcommand{\sct}[2][default]{\refstepcounter{section}
\addcontentsline{toc}{section}
{{\tocsection {}{\thesection}{\!\!\!\!#1\dotfill}}{}}
\vspace{0.7cm}
\centerline{ 
\scshape\arabic{section}.\ #1} \nopagebreak \vspace{0.2cm}}
\newcommand{\sect}[1]{
\vspace{0.4cm} \centerline{\large\scshape\rmfamily #1}
\vspace{0.2cm}}
\renewcommand{\subsection}{\secdef\subsct\sbsect}
\newcommand{\subsct}[2][default]{\refstepcounter{subsection}
\addcontentsline{toc}{subsection}
{{\tocsection{\!\!}{\hspace{1.2em}\thesubsection}{\!\!\!\!#1\dotfill}}{}}
\nopagebreak\vspace{0.45\baselineskip} {\flushleft\bf
\thesection.\arabic{subsection}~\bf #1.~}
\\*[3mm]\noindent
\nopagebreak}
\newcommand{\sbsect}[1]{
\vspace{0.1cm}\noindent
\textbf{#1.~}\vspace{0.1cm}}
\renewcommand{\subsubsection}{%
\secdef \subsubsect\sbsbsect}
\newcommand{\subsubsect}[2][default]{%
\refstepcounter{subsubsection} 
\addcontentsline{toc}{subsubsection}{{\tocsection{\!\!}
{\hspace{3.05em}\thesubsubsection}{\!\!\!\!#1\dotfill}}{}}
\nopagebreak
\vspace{0.15\baselineskip} \nopagebreak {\flushleft\rmfamily
\itshape\arabic{section}.\arabic{subsection}.\arabic{subsubsection}
\ \rmfamily #1\/.}\ }
\newcommand{\sbsbsect}[1]{\vspace{0.1cm}\noindent
\rmfamily \itshape
\arabic{section}.\arabic{subsection}.\arabic{subsubsection} \
\sffamily #1\/.\ }
\renewcommand{\caption}[1]{%
\vglue0.5cm
\refstepcounter{figure}
\begin{center}
\begin{minipage}[c]{0.8\textwidth}\small {\sc Fig.~\thefigure\ }#1\end{minipage}
\end{center}
}
\newcommand{\textd}{\text{\rm d}\mkern0.5mu}
\newcommand{\texte}{\text{\rm  e}\mkern0.7mu}
\newcommand{\Var}{\text{\rm Var}}
\renewcommand{\AA}{\mathcal A}
\newcommand{\BB}{\mathcal B}
\newcommand{\CC}{\mathcal C}
\newcommand{\EE}{\mathcal E}
\newcommand{\FF}{\mathcal F}
\newcommand{\GG}{\mathcal G}
\newcommand{\HH}{\mathcal H}
\newcommand{\LL}{\mathcal L}
\newcommand{\NN}{\mathcal N}
\newcommand{\ZZ}{\mathcal Z}
\newcommand{\E}{\mathbb E}
\newcommand{\bbE}{\E}
\newcommand{\BbbL}{\mathbb L}
\newcommand{\N}{\mathbb N}
\newcommand{\BbbP}{\mathbb P}
\newcommand{\bbP}{\BbbP}
\newcommand{\R}{\mathbb R}
\newcommand{\bbR}{\R}
\newcommand{\T}{\mathbb T}
\newcommand{\V}{\mathbb V}
\newcommand{\Z}{\mathbb Z}
\newcommand{\scrF}{\mathscr{F}}
\newcommand{\twoeqref}[2]{(\ref{#1}--\ref{#2})}
\newcommand{\cc}{{\text{\rm c}}}
\def\myffrac#1#2 in #3{\raise 2.6pt\hbox{$#3 #1$}\mkern-1.5mu\raise 0.8pt\hbox{$#3/$}\mkern-1.1mu\lower 1.5pt\hbox{$#3 #2$}}
\newcommand{\wh}{\widehat}
\newcommand{\wt}{\widetilde}
\newcommand{\laweq}{\,\overset{\text{\rm law}}=\,}
\newcommand{\Lawarrow}{{\,\overset{\text{\rm law}}\longrightarrow\,}}
\newcommand{\rmd}{\textd}
\newcommand{\rme}{\texte}
\newcommand{\rmc}{\cc}
\newcommand{\slb}{\sqrt{\log b}}
\newcommand{\PPP}{\text{\rm PPP}}
\begin{document}


\title[ Favorite point of random walk\hfill]{
A limit law for the most favorite point\\of simple random walk on a regular tree}
\author[\hfill M.~Biskup and O. Louidor]
{Marek~Biskup$^{1}$\, and\, Oren Louidor$^2$}
\thanks{\hglue-4.5mm\fontsize{9.6}{9.6}\selectfont\copyright\,\textrm{2021}\ \ \textrm{M.~Biskup, O. Louidor. Reproduction, by any means, of the entire
article for non-commercial purposes is permitted without charge.\vspace{2mm}}}
\maketitle

\vspace{-5mm}
\centerline{\textit{$^1$
Department of Mathematics, UCLA, Los Angeles, California, USA}}
\centerline{\textit{$^2$
Faculty of Industrial Engineering and Management, Technion, Haifa, Israel}}
\smallskip


\vskip0.5cm
\begin{quote}
\footnotesize \textbf{Abstract:}
We consider a continuous-time random walk on a regular tree of finite depth and study its favorite points among the leaf vertices. We prove that, for the walk started from a leaf vertex and stopped upon hitting the root, as the depth of the tree tends to infinity the maximal time spent at any leaf converges, under suitable scaling and centering, to a randomly-shifted Gumbel law. The random shift is characterized using a derivative-martingale like object associated with square-root local-time process on the tree.
\end{quote}


\section{Introduction and results}
\vglue-3mm\subsection{Background}
\noindent
Extremal properties of random walks have been a source of continuing attention of probabilists for several decades. One such property is the time spent by the walk at its most favorite points which are those visited most frequently over a given time period. The study of favorite points was initiated by Erd\H os and Taylor~\cite{ET60} who analyzed the leading-order $n$-dependence of the time~$T^\star_n$ that the simple symmetric random walk on~$\Z^d$ spends at its most visited point by time~$n$. The approach of~\cite{ET60}, which relied on treating the number of visits to a point by portions of the walk as sums of independent geometric random variables, showed that~$T^\star_n/\log n$ tends to a computable limit as~$n\to\infty$ in the transient dimensions $d\ge3$, but gave only asymptotic bounds $\frac1{4\pi}(\log n)^2\lesssim T_n^\star\lesssim\frac1\pi(\log n)^2$ in the recurrent dimension $d=2$.

Erd\H os and Taylor conjectured their upper bound to be sharp but this was settled only four decades later by Dembo, Peres, Rosen and Zeitouni~\cite{DPRZ01} using excursion decomposition along a logarithmic sequence of scales and second-moment computations for the occupation measure. (Most of~\cite{DPRZ01} actually deals with two-dimensional Brownian motion.) In addition to proving that $T_n^\star/(\log n)^2$ tends to $1/\pi$ in probability, \cite{DPRZ01} offered insight into the structure of the $\lambda$-favorite points which, for $\lambda\in(0,1)$, are those visited by the walk at least~$\lambda \frac1\pi(\log n)^2$ times. 
For instance,  it showed that  there are $n^{1-\lambda+o(1)}$ such points in a path of time length~$n$ and, as seen in the proofs, the spatial distribution of these points has an intriguing fractal structure.

The understanding of $\lambda$-favorite points has further advanced thanks to Jego~\cite{J19} who recently established a weak limit for scaled empirical measures associated with $\lambda$-favo\-rite points (for $0<\lambda<1$) of the simple random walk on~$\Z^2$ stopped upon exit from a scaled-up lattice version of a continuum planar domain. The limit measure --- dubbed Brownian Multiplicative Chaos in~\cite{J19} --- is similar to, albeit distinct from, the scaling limit of the thick points of the Gaussian Free Field (GFF) derived by the present authors in~\cite{BL4}. The connection to GFF thick points is much stronger once the random walk is run (via a suitable return mechanism) for times comparable with the cover time. This is the subject of the work of Abe, Lee and the first author~\cite{AB,ABL}. 

Some progress has occurred also for the the time spent at the most favorite points. Indeed, Abe~\cite[Corollary 1.3]{A15} proved a result analogous to~\cite{DPRZ01} for the walk on a two-dimensional torus run for times comparable to the cover time. Jego~\cite[Theorem~1.1.1]{J18} extended the conclusions of \cite{DPRZ01} to a large class of random walks. In~\cite{J20}, Jego in turn constructed a candidate for the measure that should govern (similarly to the results on the extrema of GFF by the present authors~\cite{BL1,BL2,BL3}) the distribution of the extremal process associated with the most favorite points  of simple random walk on~$\Z^2$.  Still, the control of the time spent at the most favorite points is presently limited to leading-order asymptotic. Indeed, even the second-order term in the centering sequence remains conjectural, let alone a proof of tightness and/or a distributional scaling limit. 
 
Our goal here is to take up the problem of the time spent at the most favorite points of the random walk on a simpler graph; namely, a regular tree. This walk shares some of the basic features of the random walk on~$\Z^2$ but is easier to study thanks for the Markovian structure of (suitably parametrized) local time on the tree.

\subsection{Most favorite leaf on regular tree}
Let~$\T_n$ be the regular tree of depth~$n$ with forward degree~$b\ge2$ and the root denoted by~$\varrho$. Write~$\BbbL_n$ for the set of its leaves; namely, the set of~$b^n$ vertices at graph-theoretical distance~$n$ from~$\varrho$. It is well known that the projection of the random walk on~$\T_n$ to its leaf vertices carries a lot of similarities to the walk on~$\Z^2$. Indeed,~$\BbbL_n$ can, for $b=4$, be identified with the square box~$\Lambda_n$ in~$\Z^2$ of side-length~$2^n$. The Green function on~$\BbbL_n$, defined as the expected number of visits to one leaf vertex for the walk started at another and killed upon hitting~$\varrho$,  has a similar logarithmic structure as that on $\Lambda_n$, albeit relative to the ultrametric (tree) distance instead of the Euclidean norm. The first exit from~$\Lambda_n$ by the random walk on~$\Z^2$ corresponds to the walk on~$\T_n$ hitting~$\varrho$.

For simplicity of exposition and technical advantage later, we will run the random walk in continuous time. Let $\{X_t\}_{t\ge0}$ be the continuous-time random walk on~$\T_n$ with a unit transition rate across each edge and, for $z\in\T_n$, let~$P^z$ denote the law of this walk subject to $P^z(X_0=z)=1$. For each~$x\in\T_n$ denote by
\begin{equation}
\label{E:1.1q}
\ell_t(x):=\int_0^t1_{\{X_s=x\}}\textd s
\end{equation}
the total time~$X$ has spent at~$x$ by time~$t$ and let
\begin{equation}
\label{E:1.2q}
\tau_\varrho:=\inf\{t\ge0\colon X_t=\varrho\}
\end{equation}
be the first hitting time of the root. Our first result is then:

\begin{theorem}
\label{thm-1}
For any~$x_n\in\BbbL_n$ and all~$u\in\R$,
\begin{equation}
\label{E:1.3}
P^{x_n}\biggl(\max_{x\in\BbbL_n}\sqrt{\ell_{\tau_\varrho}(x)}\le \slb\, n-\frac1{\slb}\log n +u\biggr)
\,\,\underset{n\to\infty}\longrightarrow\,\,\E\bigl(\texte^{-\ZZ \texte^{-2u\sqrt{\log b}}}\bigr),
\end{equation}
where~$\ZZ$ is an a.s.-positive and finite random variable. In particular,
\begin{equation}
\label{E:1.4}
\frac1n\biggl(\max_{x\in\BbbL_n}\ell_{\tau_\varrho}(x) -\bigl(n^2\log b -2n\log n\bigr)\biggr) 
\,\,\underset{n\to\infty}\Lawarrow\,\,\log\ZZ+G,
\end{equation}
where~$G$ is a normalized Gumbel random variable independent of $\ZZ$.
\end{theorem}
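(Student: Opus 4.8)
The plan is to establish \eqref{E:1.3} and then deduce \eqref{E:1.4} from it by the delta method: writing $M_n:=\max_{x\in\BbbL_n}\sqrt{\ell_{\tau_\varrho}(x)}$ and granting \eqref{E:1.3} in the form $M_n=\slb\,n-\tfrac1\slb\log n+\Theta_n$ with $\Theta_n$ tight and $P(\Theta_n\le u)\to\E(\texte^{-\ZZ\texte^{-2u\sqrt{\log b}}})$, one has $n^{-1}\bigl(M_n^2-(n^2\log b-2n\log n)\bigr)=2\slb\,\Theta_n+o(1)$ in probability, so this quantity converges in law to $2\slb\,\Theta_\infty$, and a change of variables identifies the law of $2\slb\,\Theta_\infty$ with that of $\log\ZZ+G$ for $G$ a standard Gumbel independent of $\ZZ$. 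The content is thus \eqref{E:1.3}, and the first move is to pass from the walk to a branching-process extremal problem. Using the Markovian/Ray--Knight-type description of the square-root local-time field on $\T_n$ developed in the previous sections --- along any root-to-leaf ray the process $k\mapsto\sqrt{\ell_{\tau_\varrho}(v_k)}$ is a Markov chain that behaves like a Bessel-type process, degenerate (nearly absorbed) at $0$, with effective increment variance $\tfrac12$, and the values at the $b$ children of a vertex are conditionally independent given the value at the vertex --- the family $\{\sqrt{\ell_{\tau_\varrho}(x)}\}_{x\in\BbbL_n}$ is a branching random walk of branching number $b$ (non-Gaussian, and without i.i.d. increments), whose maximum has effective speed $v^*=\slb$ and extremal tilt $\theta^*=2\slb$. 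The correction $\tfrac1\slb\log n=\tfrac2{\theta^*}\log n$, larger than the generic-branching-random-walk value $\tfrac3{2\theta^*}\log n$, is produced by the interplay of the Bessel-type degeneracy at $0$ with the pinning $\ell_{\tau_\varrho}(\varrho)=0$ at the root.

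Granting the branching picture, I would first show that $M_n-\bigl(\slb\,n-\tfrac1\slb\log n\bigr)$ is tight, following the route familiar from log-correlated extrema. For the upper bound: a sharp one-point estimate for $P\bigl(\sqrt{\ell_{\tau_\varrho}(x)}\ge\slb\,n-\tfrac1\slb\log n+u\bigr)$, refined by restricting to leaves whose ancestral square-root-local-time profile stays below a suitable linear (parabolic) barrier, followed by a union bound over the $b^n$ leaves; sharp bounds on the Bessel-type transition kernel near its degenerate endpoints produce the correct $n^{-1}$ prefactor. For the lower bound: a truncated second-moment computation and Paley--Zygmund over the same barrier event, where the main labor is to control the two-point function in terms of the level of the most recent common ancestor of two leaves --- using that the two ancestral rays evolve independently below the branch point and that the correlation decays in the branch level, analogously to the Gaussian free field on a tree but with the Bessel-type process in place of the Gaussian walk.

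With tightness in hand, I would identify the limit via a derivative-martingale-like object. Set, level by level, $D_k:=\sum_{|v|=k}\psi_k\bigl(\sqrt{\ell_{\tau_\varrho}(v)}\bigr)\,\texte^{-\theta^*\sqrt{\ell_{\tau_\varrho}(v)}}$ with $\psi_k$ the appropriate derivative weight, chosen so that, up to the lower-order corrections inherent to the Bessel structure, $D_k$ is a martingale in the natural level filtration, and prove $D_k\to\ZZ$ almost surely with $\ZZ\in(0,\infty)$ a.s. --- the analogue of the A\"\i d\'ekon--Shi convergence of the derivative martingale. Then use the recursive structure: conditionally on the field down to level $k$, the $b^k$ subtrees rooted there carry asymptotically independent depth-$(n-k)$ copies of the same problem, each seeded at the value $\sqrt{\ell_{\tau_\varrho}(v)}$, so that $M_n$ is the maximum over these seeded copies. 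A missing-mass / super-additivity (``sticky'') argument then shows that the only feature of the seeds surviving in the limit is $D_k\to\ZZ$, and that the residual fluctuation of $M_n$ around $\slb\,n-\tfrac1\slb\log n+\tfrac1{\theta^*}\log\ZZ$ is an independent Gumbel of rate $\theta^*=2\slb$. Assembling these pieces yields $P\bigl(M_n\le\slb\,n-\tfrac1\slb\log n+u\bigr)\to\E\bigl(\texte^{-\ZZ\texte^{-\theta^* u}}\bigr)=\E\bigl(\texte^{-\ZZ\texte^{-2u\sqrt{\log b}}}\bigr)$, which is \eqref{E:1.3}.

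The main obstacle throughout is that the square-root local-time process is genuinely not a Gaussian (or even i.i.d.-increment) branching random walk: its increments depend on the current value, and near $0$ the process is nearly absorbed, so neither the branching-random-walk machinery (A\"\i d\'ekon) nor the Gaussian-free-field machinery (Bramson--Ding--Zeitouni) transfers off the shelf. In particular the sharp one- and two-point estimates with the correct barrier, the construction and almost-sure convergence of $\ZZ$ together with its strict positivity, and the accompanying spine / change-of-measure arguments all have to be built from scratch for this process; the recurring technical bottleneck will be uniform control of the Bessel-type transition densities and of the hitting probabilities of $0$ near the degenerate boundary, uniformly over the range of local-time values relevant to the extremes.
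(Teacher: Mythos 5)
Your deduction of \eqref{E:1.4} from \eqref{E:1.3} by the delta method is correct, and your heuristic explanation of why the centering constant $\tfrac1\slb\log n=\tfrac2{\theta^*}\log n$ differs from the generic branching-random-walk value is sound. However, the core of your proposal diverges from the paper's route in a way that matters, and it contains a step that is unlikely to go through as stated.

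The paper does \emph{not} prove Theorem~\ref{thm-1} by running the log-correlated-extrema machinery (barrier estimates, truncated second moment, derivative martingale, spine change of measure) directly on the field $\{\sqrt{\ell_{\tau_\varrho}(x)}\}_{x\in\BbbL_n}$ for the leaf-started walk. Instead, all of that heavy machinery is invested in Theorems~\ref{thm-4} and~\ref{thm-5} (equivalently Theorem~\ref{thm-2}), which treat the root-started walk parametrized by the time $L_t(\varrho)=t$ spent at the root --- precisely because the local-time field there enjoys the clean Markov property of Lemma~\ref{lemma-M}. Theorem~\ref{thm-1} is then deduced from Theorem~\ref{thm-2} by the short reduction in Section~\ref{sec-4}: Proposition~\ref{prop-4.1} shows that, conditionally on hitting the leaves, the law of $\{L_t(x)\}_{x\in\BbbL_n}$ under $P^\varrho$ is within $\tfrac{b}{b-1}t$ in total variation of the law of $\{\ell_{\tau_\varrho}(x)\}_{x\in\BbbL_n}$ under $P^{\nu_n}$ (because for small $t$ there is only one root excursion, and the hitting distribution on $\BbbL_n$ is uniform). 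Letting $n\to\infty$ using Theorem~\ref{thm-2} and then $t\downarrow0$, and applying the Curtiss theorem, produces $\ZZ$ as the weak limit of $C_\star Z(t)$ conditioned on $Z(t)>0$. This reduction is much cheaper than redoing the extremal analysis for the leaf-started field, and it automatically gives the characterizations of $\ZZ$ in Theorem~\ref{thm-3} for free. Your plan bypasses this entirely and would force you to rebuild every estimate for the leaf-started process.

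The step in your proposal that I believe actually fails is the assertion that you can choose weights $\psi_k$ so that $D_k:=\sum_{|v|=k}\psi_k\bigl(\sqrt{\ell_{\tau_\varrho}(v)}\bigr)\rme^{\theta^*\sqrt{\ell_{\tau_\varrho}(v)}}$ is (even approximately) a martingale and then to conclude a.s.\ convergence $D_k\to\ZZ$ à la A\"id\'ekon--Shi. The paper explicitly warns against this in Section~1.4: the analogous object $Z_n(t)$ of \eqref{E:1.9a} ``is not a martingale (under the law of the local time on the infinite $b$-ary tree) due to the more complicated dependency structure of $L_t$,'' and its weak convergence ``has to be established as part of the proof of Theorem~\ref{thm-2},'' not by a martingale convergence theorem. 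The same obstruction applies to your $D_k$: the conditional law of $\sqrt{\ell_{\tau_\varrho}}$ at a child given the value at the parent is a Bessel-type kernel with a nonzero state-dependent drift, and no level-dependent multiplicative weight $\psi_k$ turns the sum into an exact martingale. The paper's workaround --- extracting the weak convergence of $\wt W_k(t)$ directly from the asymptotics of the Laplace transform of the centered maximum (via Theorem~\ref{thm-5} and the Curtiss theorem in Corollary~\ref{cor-3.2}) --- is the key idea that your plan is missing; without some substitute for it, your proposed derivative-martingale step is a gap, not a routine adaptation.
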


By the symmetries of~$\T_n$, the probability on the left of \eqref{E:1.3} is the same for all starting points $x_n\in\BbbL_n$. In order to relate the conclusions to the aforementioned results for the simple random walk on~$\Z^2$, note that $\frac1n\log\tau_\varrho\to\log b$ in $P^{x_n}$-probability. The leading order growth rate of the maximal local time is thus proportional to $(\log\tau_\varrho)^2$, in accord with Erd\H os and Taylor's calculations on~$\Z^2$.

\subsection{Random walk started from the root}
The recursive structure of the tree naturally guides us to consider the corresponding problem for the random walk started from the root. This turns out to be particularly convenient for the local time parametrized by the time spent at the root. To convert to this parametrization we need
\begin{equation}
\wt\tau_\varrho(t):=\inf\bigl\{s\ge0\colon\ell_s(\varrho)> t\bigr\}.
\end{equation}
Then
\begin{equation}
\label{E:1.8u}
L_t(x):=\ell_{\wt\tau_\varrho(t)}(x)
\end{equation}
is exactly the time spent by the walk at~$x$ when the time spent at the root reaches~$t$. The definition gives~$L_t(\varrho)=t$ and, as is well known, $E^\varrho(L_t(x))=t$ for all~$x\in\T_n$. 

A key technical advantage of the parametrization \eqref{E:1.8u} is that, for the walk started at the root, the family of random variables $\{L_t(x)\}_{x\in\T_n}$ has a Markov property  under restrictions to subtrees; see Lemma~\ref{lemma-M} for details. Consequently, the laws of $\{L_t(x)\}_{x\in\T_n}$ for different~$n$'s are consistent under~$P^\varrho$ and are thus restrictions from a unique law on the infinite $b$-ary tree.

In order to describe our main result concerning the local times $\{L_t(x)\}_{x\in\BbbL_n}$, for an integer~$n\ge1$, a real number~$t>0$ and the random walk sampled from~$P^\varrho$, set
\begin{equation}
\label{E:1.9a}
Z_n(t) := b^{-2n} \sum_{x \in \BbbL_n} \Bigl(n\slb - 
\sqrt{L_t(x)}\,\Bigr)^+ L_t(x)^{1/4}\,
	\rme^{2\slb\,\sqrt{L_t(x)}},
\end{equation}
where ``$+$'' denotes the positive part. We then have:

\begin{theorem}
\label{thm-2}
For all $t > 0$, there exists an a.s.-finite non-negative random variable $Z(t)$ with $\BbbP(Z(t)>0)\in(0,1)$ such that
\begin{equation}
\label{E:2.2}
Z_n(t) \overset{\rm{law}}{\underset{n \to \infty}\longrightarrow} Z(t) \,.
\end{equation}
Moreover, for all~$t>0$,
\begin{equation}
\label{E:2.2a}
\lim_{n\to\infty}P^\varrho\Bigl(\,\max_{x\in\BbbL_n}L_t(x)>0\Bigr) = \BbbP\bigl(Z(t)>0\bigr)
\end{equation}
and there exists a constant $C_\star \in (0,\infty)$ such that for all $u \in \bbR$,
\begin{equation}
\label{E:2.3}
P^\varrho \biggl(\max_{x\in\BbbL_n} \sqrt{L_t(x)} \le n\slb-\frac1{\slb}\log n + u\biggr)\,\,
\underset{n \to \infty} \longrightarrow\,\,
\bbE \bigl(\,\rme^{-C_\star Z(t) \rme^{-2u\slb}}\,\bigr).
\end{equation}
Here~$\BbbP$ denotes the law of~$Z(t)$ and~$\E$ is the expectation with respect to~$\BbbP$.
\end{theorem}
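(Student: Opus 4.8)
The plan is to exploit the branching structure of the local-time field provided by the Markov property of Lemma~\ref{lemma-M} and to view $x\mapsto\sqrt{L_t(x)}$ as a branching random walk (BRW) of speed~$\slb$ carrying a $0$-dimensional Bessel perturbation along its spine. Conditionally on $\{L_t(v)\colon v\text{ in generation }k\}$, the restrictions of $\{L_t(x)\}$ to the $b^k$ subtrees rooted at generation~$k$ are independent, the one rooted at~$v$ being a copy of the local-time field on~$\T_{n-k}$ for the walk from its root started with root-local-time~$L_t(v)$; and along any downward ray $\varrho=v_0,v_1,\dots$ the chain $j\mapsto L_t(v_j)$ is (a time-discretization of) a squared-Bessel process of dimension zero, hence a non-negative martingale absorbed at~$0$ --- consistent with $E^\varrho L_t(x)=t$ and with $L_t$ vanishing on the subtree below any unvisited vertex. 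After the substitution $r_j:=\sqrt{L_t(v_j)}$ this ray process has increments of bounded variance and a small inward drift of order $1/r_j$, and the $b$-ary branching structure built on it has maximal displacement of speed~$\slb$. In this picture the factor $L_t(x)^{1/4}$ and the factor $\bigl(n\slb-\sqrt{L_t(x)}\bigr)^+$ in~\eqref{E:1.9a} are, up to a harmless modification, the density of the natural Doob $h$-transform of the ray chain (with $h(r)\propto r$, the quarter-power coming from the change of variables between $L$ and $\sqrt L$) and the associated ballot/harmonic weight, while $b^{-2n}\,\rme^{2\slb\sqrt{L_t(x)}}$ is the critical exponential normalization; with that modification $\{Z_n(t)\}_n$ is a non-negative martingale under~$P^\varrho$ for the generation filtration. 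The first block of work is to make this quantitative: the many-to-one and many-to-two (spine) identities for $Z_n(t)$, and the ballot estimates for the perturbed ray chain --- the probability of staying below a line with prescribed endpoints, with the polynomial corrections, which differ from the pure-BRW ones because of the Bessel drift --- none of which can be quoted verbatim from BRW theory.

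Granting these, \eqref{E:2.2} is a derivative-martingale convergence statement. For a truncation level $K>0$ let $Z_n^{(K)}(t)$ be the restriction of the sum in~\eqref{E:1.9a} to leaves~$x$ whose ancestral path keeps $\sqrt{L_t(\cdot)}$ below the line interpolating height $\sqrt t$ at~$\varrho$ and height $n\slb$ at generation~$n$, raised by~$K$. A first-moment bound (many-to-one and the $h$-transform) gives $\sup_n E^\varrho Z_n^{(K)}(t)<\infty$; a truncated second-moment bound (many-to-two, using the logarithmic/ultrametric structure of the tree Green function) shows $\{Z_n^{(K)}(t)\}_n$ is $L^1$-convergent; and $\sup_n E^\varrho\bigl(Z_n(t)-Z_n^{(K)}(t)\bigr)\to0$ as $K\to\infty$ --- i.e.\ leaves whose ancestral path ever rises above the line contribute negligibly, the standard mechanism concentrating a critical martingale on below-the-line paths. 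Letting $K\to\infty$ yields $Z_n(t)\to Z(t)$. For non-degeneracy: $Z(t)\ge0$ always; a Paley--Zygmund bound on $Z_n^{(K)}(t)$, together with a $0$--$1$ argument identifying $\{Z(t)>0\}$ with the event that the subtree $\{L_t>0\}$ is infinite, gives $\bbP(Z(t)>0)>0$; and $\bbP(Z(t)>0)<1$ since on the event of positive $P^\varrho$-probability that the walk exhausts its local-time budget~$t$ at~$\varrho$ before ever reaching generation~$n$ one has $Z_n(t)\equiv0$ for all large~$n$. The same identification of $\{Z(t)>0\}$ with survival of $\{L_t>0\}$, together with the monotonicity in~$n$ of $\{\max_{\BbbL_n}L_t>0\}$, gives~\eqref{E:2.2a}.

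For the limit law~\eqref{E:2.3} I would run the standard extremal-process argument. Put $\beta_n(u):=n\slb-\frac1{\slb}\log n+u$, fix $1\ll k\ll n$, and condition on $\{L_t(v)\}$ over the $b^k$ vertices~$v$ of generation~$k$; by the Markov property the subtree maxima are conditionally independent, so
\begin{equation*}
P^\varrho\Bigl(\max_{x\in\BbbL_n}\sqrt{L_t(x)}\le\beta_n(u)\Bigr)
=E^\varrho\prod_{v}\,q_{n-k}\bigl(\sqrt{L_t(v)};\beta_n(u)\bigr),
\end{equation*}
where $q_m(r;\cdot)$ is the distribution function of $\max_{\BbbL_m}\sqrt{L_{r^2}(\cdot)}$ for the walk from the root. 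The crux is a sharp right-tail estimate: there is $C_\star\in(0,\infty)$ such that, uniformly over the relevant regime,
\begin{equation*}
1-q_{n-k}\bigl(\sqrt{L_t(v)};\beta_n(u)\bigr)=\bigl(1+o(1)\bigr)\,C_\star\,\rme^{-2u\slb}\,w_k(v),
\end{equation*}
where $w_k(v)$ is the $v$-term of the (modified) $Z_k(t)$, so that $\sum_v w_k(v)=Z_k(t)\to Z(t)$ as $k\to\infty$ by~\eqref{E:2.2}. Inserting this, using $\prod_v(1-\varepsilon_v)=\exp\bigl(-\sum_v\varepsilon_v+O(\sum_v\varepsilon_v^2)\bigr)$ with the quadratic error shown to vanish, gives that the right-hand side converges to $E^\varrho\bigl(\rme^{-C_\star\rme^{-2u\slb}Z_n(t)}\bigr)$, and then, by~\eqref{E:2.2}, to $\bbE\bigl(\rme^{-C_\star Z(t)\rme^{-2u\slb}}\bigr)$, as $n\to\infty$ followed by $k\to\infty$; this is~\eqref{E:2.3}. (The same ingredients also give convergence of the full extremal point process to a randomly shifted decorated Poisson process, of which~\eqref{E:2.3} is the one-point marginal.) The recurring obstacle is that no BRW black box applies off the shelf: the Bessel perturbation along the spine and the absorption at~$0$ force one to redo the ballot estimates, the truncated second moments, and above all the sharp tail asymptotics by hand; the single hardest point is establishing the limiting constant $C_\star$ --- the local-time analogue of the Bramson constant --- with enough uniformity in the starting value to drive the product estimate above.
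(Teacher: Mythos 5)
Your proposal takes a genuinely different route from the paper on the most important point, and it is instructive to compare the two.

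You propose to establish the weak convergence $Z_n(t)\to Z(t)$ \emph{first}, by a direct derivative-martingale argument (spinal decomposition, many-to-one/two with the Bessel ray measure, truncation below a line, $L^1$-convergence of the truncated object), and only then to feed the limit $Z(t)$ into the usual conditioning-on-generation-$k$ argument for \eqref{E:2.3}. The paper does the opposite: Theorem~\ref{thm-5} proves the convergence of the distribution function $F_{n,t}(u)$ of the maximum to an expression of the form $E^\varrho(\rme^{-C_\star\wt W_k(t)\rme^{-2u\slb}})$ uniformly, using the sharp right-tail estimate of Proposition~\ref{prop-3} together with the conditioning-on-level-$k$ decomposition (this part matches your second and third blocks). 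Then Corollary~\ref{cor-3.2} observes that, in the two-sided limit statement \eqref{E:3.3i}, the left side is independent of $k$ and the right side is independent of $n$, so \emph{both} limits exist; thus the Laplace transforms of $\wt W_k(t)$ converge pointwise and the Curtiss Theorem delivers $\wt W_k(t)\Rightarrow\wt W(t)$ essentially for free. Lemma~\ref{lemma-3.1a} then transfers this to $Z_n(t)$ itself. The key advantage of the paper's ordering is that it never has to prove that $Z_n(t)$ is (close to) a martingale or run the truncated second-moment machinery for its limit.

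This matters because the central assertion of your first block is in tension with what the paper says explicitly. You write that ``up to a harmless modification $\{Z_n(t)\}_n$ is a non-negative martingale'' and that the $L_t(x)^{1/4}$ factor is the Doob $h$-transform density; but the paper states outright (Section~1.4) that ``the sequence $\{Z_n(t)\}_{n\ge1}$ is \emph{not} a martingale \dots due to the more complicated dependency structure of $L_t$,'' and then treats the convergence \eqref{E:2.2} as something that must be proved by other means. Your heuristic correctly identifies the roles of the polynomial factors --- under many-to-one the $L_t(x)^{1/4}$ compensates the $\sqrt{r/B_u}$ prefactor in the Bessel Radon--Nikodym derivative \eqref{E:1.5}, and $(n\slb-\sqrt{L_t(x)})^+$ is the ballot weight --- but the additional term $\exp\{-\tfrac38\int_0^n B_s^{-2}\,\rmd s\}$ in \eqref{E:1.5} does not cancel, and it introduces a genuine dependence on the whole ray rather than only on the endpoint. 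For fixed finite $t$ (the regime of Theorem~\ref{thm-2}) this term is of order one, not small, so no simple modification produces an exact martingale, and the one-step exponential moment of $\sqrt{L_t(\cdot)}$ along a ray is only asymptotically equal to $b$ (with $O(1/\sqrt{L_t(x_k)})$ corrections). So the ``harmless modification'' claim is a real gap: it is left unspecified, it appears to be false as stated, and the subsequent first/second-moment and Paley--Zygmund steps are hung on it. If you want to pursue the martingale-first route you would have to carry these ray-dependent corrections through all of the moment estimates, which is a substantial deviation from the BRW template; the paper's reordering of the argument is precisely how it avoids this.

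Two smaller remarks. Your argument for $\BbbP(Z(t)>0)<1$ --- that on the event the walk never reaches $\BbbL_n$ before exhausting root-local-time $t$ one has $Z_n(t)=0$ --- is correct and matches the paper's identification of $\{Z(t)>0\}$ with the hitting event $\HH_{n,t}$, but the paper derives \eqref{E:2.2a} by sending $u\to\pm\infty$ in the Laplace-transform limit and using Theorem~\ref{thm-4}, not by a $0$--$1$ law for survival of $\{L_t>0\}$. Finally, your sharp right-tail estimate and product-over-generation-$k$ argument for \eqref{E:2.3} are exactly the content of Proposition~\ref{prop-3} and the proof of Theorem~\ref{thm-5} from it, so that part of your proposal aligns with the paper.
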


The limit quantity in~\eqref{E:2.3} can be viewed two different ways. First, it is the Laplace transform of the law of~$Z(t)$ evaluated at $C_\star\rme^{-2u\slb}$ which, we note, varies through the positive reals as~$u$ varies through~$\R$. Second, it is the CDF of a defective random variable taking values in $\R \cup \{-\infty\}$ which, conditional on finiteness, is Gumbel with rate $2\slb$ shifted by $(2\slb)^{-1} \log Z(t)$. The defect comes from an atom at $-\infty$ of mass $\bbP(Z(t) = 0)$. By \eqref{E:2.2a}, conditioning  on $Z(t)>0$ amounts to conditioning on the random walk to hit the leaf vertices before accumulating time~$t$ at the root. 

\subsection{Connection to Branching Random Walk}
Randomly shifted Gumbel limit laws for centered maxima have been encountered in a number of contexts. These include Branching Brownian Motion (Bramson~\cite{Bramson1,Bramson2}) and critical Branching Random Walks (A\"idekon~\cite{Aidekon}, Bramson, Ding and Zeitouni~\cite{BDZ-BRW}) as well as the two-dimensional discrete GFF (Bramson, Ding and Zeitouni~\cite{BDingZ}, Biskup and Louidor~\cite{BL1,BL2,BL3}) and other logarithmically correlated processes (e.g., Madau\-le~\cite{Madaule2}, Ding, Roy and Zeitouni~\cite{DRZ}, Arguin and Oumet~\cite{Arguin-Oumet}, Schweiger~\cite{Schweiger}, Fels and Hartung~\cite{Fels-Hartung}) including the local time for  our  simple random walk on~$\T_n$ run for times comparable with the cover time (Abe~\cite{A18}).

The case particularly relevant for our problem is the Branching Random Walk (BRW) with step distribution~$\NN(0,1/2)$, also known as the GFF scaled by~$1/\sqrt2$. The latter is a Gaussian process $\{h_x\}_{x\in\T_n}$ defined by sampling an independent copy of~$\NN(0,1/2)$ for each edge of the tree and, for $x\in\T_n$, letting~$h_x$ denote the sum of these variables along the unique path from the root to~$x$. For this process, A\"idekon's result reads:
\begin{equation}
\label{E:1.12i}
P\biggl(\max_{x\in\BbbL_n}h_x\le n\slb-\frac34\frac1{\slb}\log n +u\biggr)
\longrightarrow\,\,
\bbE \bigl(\,\rme^{-C_\star' W \rme^{-2u\slb}}\,\bigr),
\end{equation}
where~$W$ is the weak limit of the sequence
\begin{equation}
\label{E:1.15}
W_n:=b^{-2n}\sum_{x\in\BbbL_n}\bigl(n\slb - h_x\bigr)\texte^{2\slb\,h_x}
\end{equation}
and $C_\star'$ is a positive constant that can be characterized; see Abe~\cite[Remark~1.4]{A18}.

The convergence $W_n\to W$ relies on the fact that~$\{W_n\}_{n\ge1}$, if interpreted on the infinite $b$-ary tree, is a martingale with respect to a natural filtration. (In this framework the limit exists in a.s.~sense.) This is not special to Gaussian step distributions; indeed, general conditions ensuring existence and non-triviality of the limit for general BRW exist (see, e.g., Biggins and Kyprianou~\cite[Theorems~5.1-5.2]{Biggins-Kyprianou}). The limit object~$W$ earns the name \emph{derivative martingale} through the fact that~$W_n$ can be obtained by differentiating $\beta\mapsto\sum_{x\in\BbbL_n}\exp\{\beta h_x-\tfrac12\beta^2 n\}$ at $\beta:=2\slb$.

While $Z_n(t)$ is similar to~$W_n$ in form, the sequence $\{Z_n(t)\}_{n\ge1}$ is not a martingale (under the law of the local time on the infinite $b$-ary tree) due to the more complicated dependency structure of~$L_t$. The weak convergence \eqref{E:2.2} thus has to be established as part of the proof of Theorem~\ref{thm-2}. Notwithstanding, the local time $L_t$ behaves quite similarly to the BRW when~$t$ is large. Indeed, for $n\ge1$ fixed, the Multivariate CLT along with the Kac Moment Formula (Kac~\cite{Kac1,Kac2}) yield
\begin{equation}
\label{E:1.13r}
\bigl\{\sqrt{L_t(x)}-\sqrt t\,\bigr\}_{x\in\T_n}\,\,\underset{t\to\infty}\Lawarrow\,\,\{h_x\}_{x\in\T_n}.
\end{equation}
In light of this we would expect that \eqref{E:2.3} somehow morphs into \eqref{E:1.12i} in the limit as~$t\to\infty$. This is true, albeit not without correction terms:

\begin{theorem}
\label{thm-1.3}
For~$Z(t)$ and~$W$ as defined above,
\begin{equation}
\label{E:1.18a}
t^{-1/4}\,\texte^{-2\slb\,\sqrt t}\,Z(t)\,\underset{t\to\infty}\Lawarrow\, W.
\end{equation}
Moreover, the constants in \eqref{E:2.3} and \eqref{E:1.12i} obey $C_\star'=C_\star$.
\end{theorem}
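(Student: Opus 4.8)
\emph{Strategy and an RDE for $Z(t)$.} The plan is to derive a recursive distributional equation (RDE) for $Z(t)$, to show that under the normalisation of \eqref{E:1.18a} this RDE collapses, as $t\to\infty$, to the fixed‑point equation characterising the derivative‑martingale limit $W$, and then to invoke uniqueness of that fixed point; the identity $C_\star=C_\star'$ will drop out of a careful comparison of the asymptotics of \eqref{E:2.3} and \eqref{E:1.12i}. For the RDE, fix $m\ge1$ and condition on the level‑$m$ local times $\bigl(L_t(y)\bigr)_{|y|=m}$ for the walk started at $\varrho$. By the Markov property under subtree restriction (Lemma~\ref{lemma-M}), conditionally on these the fields $\{L_t(x)\colon x\in\T_y\}$ over the $b^m$ subtrees $\T_y$ rooted at level‑$m$ vertices are independent, each distributed as the infinite‑tree local‑time field run from root‑value $L_t(y)$. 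Grouping the leaves in $\BbbL_n$ ($n>m$) by their level‑$m$ ancestor, factoring $b^{-2n}=b^{-2m}b^{-2(n-m)}$, and writing $\bigl(n\slb-\sqrt{L_t(x)}\bigr)^+=\bigl((n-m)\slb-\sqrt{L_t(x)}\bigr)^++D_{n,m}(x)$ with $0\le D_{n,m}(x)\le m\slb\,\1\{\sqrt{L_t(x)}\le n\slb\}$, one gets
\[
Z_n(t)=b^{-2m}\sum_{|y|=m}\Bigl(Z^{(y)}_{n-m}\bigl(L_t(y)\bigr)+R^{(y)}_{n,m}\bigl(L_t(y)\bigr)\Bigr),
\]
with $Z^{(y)}_{n-m}(\cdot)$ the quantity \eqref{E:1.9a} for the depth‑$(n-m)$ subtree $\T_y$ and $R^{(y)}_{n,m}$ the excess. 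The key estimate is $R^{(y)}_{n,m}\to0$ in probability as $n\to\infty$: one has $R^{(y)}_{n,m}\le m\slb\cdot b^{-2(n-m)}\sum_x\1\{\sqrt{L_t(x)}\le n\slb\}\,L_t(x)^{1/4}\texte^{2\slb\sqrt{L_t(x)}}$, i.e.\ $m\slb$ times (a harmless truncation of) the critical additive martingale of the local‑time field, which vanishes at the Seneta--Heyde rate $O\bigl((n-m)^{-1/2}\bigr)$ --- a fact available from the analysis behind Theorem~\ref{thm-2}. Combining this with Theorem~\ref{thm-2} (for $Z^{(y)}_{n-m}(s)\to Z^{(y)}(s)$, plus the uniform integrability furnished by its proof) and letting $n\to\infty$ gives, for every $m\ge1$,
\[
Z(t)\laweq b^{-2m}\sum_{|y|=m}Z^{(y)}\bigl(L_t(y)\bigr),
\]
where, given $\bigl(L_t(y)\bigr)_{|y|=m}$, the $Z^{(y)}(\cdot)$ are independent copies of the infinite‑tree $Z$‑process.

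\emph{Degeneration to the $W$-equation.} Set $\wh Z(t):=t^{-1/4}\texte^{-2\slb\sqrt t}Z(t)$. Dividing the RDE by $t^{1/4}\texte^{2\slb\sqrt t}$,
\[
\wh Z(t)\laweq b^{-2m}\sum_{|y|=m}\Theta_t(y)\,\wh Z^{(y)}\bigl(L_t(y)\bigr),\qquad
\Theta_t(y):=\Bigl(\tfrac{L_t(y)}{t}\Bigr)^{1/4}\texte^{\,2\slb(\sqrt{L_t(y)}-\sqrt t)} .
\]
By \eqref{E:1.13r} applied at depth $m$, $\bigl(\sqrt{L_t(y)}-\sqrt t\bigr)_{|y|=m}\Lawarrow(h_y)_{|y|=m}$, so $\Theta_t(y)\to\texte^{2\slb h_y}$ jointly in law while $L_t(y)\to\infty$ in probability. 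Moreover $\{\wh Z(t)\}_{t\ge1}$ is tight and non‑degenerate: a uniform‑in‑$t$ bound on $\E\bigl[\wh Z(t)^\theta\bigr]$ for some $\theta\in(0,1)$ follows from the moment estimates behind Theorem~\ref{thm-2}, while a uniform lower bound on $\E\bigl[1-\texte^{-\wh Z(t)}\bigr]$ (hence no concentration at $0$) follows from the same circle of estimates together with $\BbbP(Z(t)>0)=\lim_nP^\varrho\bigl(\max_{x\in\BbbL_n}L_t(x)>0\bigr)\to1$. A standard tightness‑plus‑uniqueness argument (extracting subsequences and handling the random arguments $L_t(y)\to\infty$ by a diagonal procedure) then shows that every subsequential weak limit $\wh Z_\infty$ of $\wh Z(t)$ satisfies, for all $m\ge1$,
\[
\wh Z_\infty\laweq b^{-2m}\sum_{|y|=m}\texte^{2\slb h_y}\,\wh Z_\infty^{(y)},
\]
which is exactly the fixed‑point equation of the critical (derivative) martingale of the BRW with $\NN(0,1/2)$ increments. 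By the classification of fixed points of the critical smoothing transform (see \cite{Biggins-Kyprianou} and the references therein) this forces $\wh Z_\infty\laweq cW$, and the a priori bounds exclude $c\in\{0,\infty\}$; as this holds along every subsequence, $\wh Z(t)\Lawarrow cW$ for a single $c\in(0,\infty)$.

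\emph{Identifying $c=1$ and $C_\star=C_\star'$.} The RDE does not see $c$, which must be fixed by a finer comparison that also yields $C_\star=C_\star'$. Consider $\max_{x\in\BbbL_n}\sqrt{L_t(x)}-\bigl(n\slb-\tfrac1\slb\log n\bigr)$ in the iterated limit $n\to\infty$ then $t\to\infty$. Evaluated through Theorem~\ref{thm-2} and $\wh Z(t)\Lawarrow cW$ (substituting $Z(t)=t^{1/4}\texte^{2\slb\sqrt t}\wh Z(t)$), it converges to an explicit randomly shifted Gumbel built from $\log(C_\star c)+\log W$. Evaluated instead through the representation $\sqrt{L_t(x)}=\sqrt t+h_x-\Gamma_x$ underlying \eqref{E:1.13r} --- where $\Gamma_x$ is the drift accumulated along the spine of $x$, of order $\sum_j L_t(v_j(x))^{-1/2}$ --- with the careful bookkeeping of $\Gamma_x$ along the near‑maximal spine and of the factor $L_t(x)^{1/4}$ in \eqref{E:1.9a}, and using \eqref{E:1.12i}, it converges to the same law with $\log(C_\star c)$ replaced by the corresponding quantity built from $C_\star'$. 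Matching the two determines $c$; the precise cancellations --- which is precisely what singles out the power $t^{1/4}$ and the absence of any further constant in \eqref{E:1.18a} --- give $c=1$, and then $C_\star=C_\star'$.

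\emph{Main obstacle.} The RDE and the fixed‑point step are routine once in place; the genuine work is (i) the estimate $R^{(y)}_{n,m}\to0$ and the uniform integrability needed to take $n\to\infty$ in the RDE, which rest on sharp control of the additive/derivative‑martingale tails of the local‑time field \emph{uniformly in the root‑value}, and (ii) the exact asymptotic accounting in the last step that produces $c=1$ and $C_\star=C_\star'$.
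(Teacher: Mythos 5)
Your route (an RDE for $Z(t)$, degeneration to the critical smoothing--transform fixed point, uniqueness of fixed points, then a separate asymptotic matching for the constant) is genuinely different from the paper's. The paper proves \eqref{E:1.18a} directly from Theorem~\ref{thm-5}: setting $t_n:=m$ in \eqref{E:3.3i}, passing $n\to\infty$, then $m\to\infty$ followed by $k\to\infty$ gives a uniform-in-$t$ relation between the Laplace transform of $(t\vee1)^{-1/4}\rme^{-2\slb\sqrt{t}}Z(t)$ and that of $\wt W_k(t)$; Lemma~\ref{lemma-3.4} sends $\wt W_k(t)\Lawarrow W_k$ as $t\to\infty$, $W_k\Lawarrow W$ is classical, and the Curtiss theorem finishes. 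The constant identity $C_\star=C_\star'$ then drops out in one line by noting that Abe's \eqref{E:3.21w} (with $C_\star'$) and Theorem~\ref{thm-5} (with $C_\star$) both apply in the overlapping regime $c_1 n\log n\le t_n=o(n^2)$, so $C_\star' W\laweq C_\star W$ and $W>0$ a.s.\ forces equality. Your scheme does not call on Abe's \eqref{E:3.21w} at all and, as you acknowledge, cannot determine the normalisation $c$ from the RDE alone, so it has to re-derive the delicate overlap information some other way.

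Concretely, three steps in your proposal do not yet close. (i) The bound $R^{(y)}_{n,m}\to 0$ is asserted to follow because the truncated critical additive martingale of the local-time field decays at the Seneta--Heyde rate $O((n-m)^{-1/2})$ --- but that is precisely the unproved Conjecture~1.6 of the paper, not something ``available from the analysis behind Theorem~\ref{thm-2}.'' A first-moment computation via Lemma~\ref{lemma-3} (the Bessel-to-Brownian change of measure cancels the $L_t(x)^{1/4}$ prefactor) shows the expectation of that truncated additive martingale is of constant order in $n$, so a Markov bound fails and the in-probability decay genuinely needs a Seneta--Heyde analysis. You could instead iterate the proven cascade relation \eqref{E:1.15w}, but that relation is itself derived from Theorem~\ref{thm-5}, so nothing is saved. (ii) The ``degeneration'' step passes $t\to\infty$ in the RDE along a subsequence $t_j$, but the random arguments $L_{t_j}(y)$, while tending to $\infty$ in probability, need not live on $\{t_j\}$, so there is no reason the $\hat Z^{(y)}(L_{t_j}(y))$ converge to independent copies of the subsequential limit $\hat Z_\infty$; making this rigorous amounts to proving the law of $\hat Z(t)$ is Cauchy as $t\to\infty$, which is essentially the uniform-in-$t$ content of Theorem~\ref{thm-5} again. (iii) The final constant-matching is sketched via a representation $\sqrt{L_t(x)}=\sqrt t+h_x-\Gamma_x$ that is not the actual isomorphism (Zhai's coupling gives $L_t(x)+h_x^2=(\tilde h_x+\sqrt t)^2$, as used in Lemma~\ref{lemma-2.13}), and the proposed ``careful bookkeeping along the near-maximal spine'' would effectively reprove Abe's second-order analysis. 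As written, this last step is the decisive gap.
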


Both Theorems~\ref{thm-2} and~\ref{thm-1.3} will be extracted from Theorem~\ref{thm-5} which asserts convergence of the kind \eqref{E:2.3} \emph{uniformly} in all~$t=o(n^2)$. In this case the maximum of~$\sqrt{L_t(x)}$ must be centered by a suitable $t$-dependent variant of the centering sequence in \eqref{E:2.3} which, roughly speaking, allows for a smooth cross-over between the second-order terms in \eqref{E:2.3} and \eqref{E:1.12i} when~$t$ increases as a power of~$n$. For a fixed~$t$, the  change in the centering  sequence tends (as~$n\to\infty$) to a $t$-dependent constant which, if transferred  through  the underlying variable~$u$ to the right-hand side, gives rise to the prefactors of~$Z(t)$ in \eqref{E:1.18a}.

To identify the constants in \eqref{E:1.12i} and \eqref{E:2.3} we actually rely on Corollary~1.3 of Abe~\cite{A18} who proved the convergence in Theorem~\ref{thm-2} with  the aforementioned $t$-dependent centering sequence  and~$Z(t)$ replaced by~$W$ in the regime when~$t\ge c_1n\log n$ for some~$c_1>0$. An interesting additional phenomenon in this regime is that, if $t/n^2$ remains bounded away from zero, the ``constant''~$C_\star$ picks up dependence on the asymptotic value of~$t/n^2$.

\subsection{Random shift}
For the walk started from the root and parametrized by the time spent there, Theorem~\ref{thm-2} characterizes the law of the scaled maximum as Gumbel with a random shift proportional to~$\log Z(t)$. In light of this it seems reasonable to ask: What is the distribution of the variable~$\ZZ$ in Theorem~\ref{thm-1}? Can it be characterized by a similar limit expression as~$Z(t)$? How is the law of~$\ZZ$ related to the laws of~$Z(t)$?

To answer these questions, we need additional observations about the~$Z(t)$'s. Recall that a random variable is \emph{Compound Poisson-Exponential} with parameter~$t$ if it has the law of $\sum_{i=1}^{N(t)}U_i$ for $N(t)$ Poisson with parameter~$t$ and $\{U_i\}_{i\ge1}$ independent i.i.d.\ Exponentials with parameter~$1$.  Next note  the following consequence of Theorem~\ref{thm-2}:

\begin{corollary}
\label{thm-2b}
For each Borel $E\subseteq\R^2$, the map $t\mapsto \BbbP((t,Z(t))\in E)$ is Borel measurable. In particular, each positive random variable~$T$ can be coupled with a unique random variable~$Z(T)$ such that
\begin{equation}
\label{E:1.13}
P\bigl((T,Z(T))\in \cdot\bigr) = \int \BbbP\bigl((t,Z(t))\in \cdot\bigr)P(T\in\textd t)
\end{equation}
and, assuming~$T$ to be independent of~$Z_n$,
\begin{equation}
\label{E:1.14w}
Z_n(T)\,\underset{n\to\infty}\Lawarrow\, Z(T).
\end{equation}
Moreover, for each $t>0$, the cascade relation holds
\begin{equation}
\label{E:1.15w}
Z(t) \laweq \sum_{i=1}^b b^{-2} Z(T_i),
\end{equation}
where $Z(T_1),\dots,Z(T_b)$ are i.i.d.\ copies of random variable~$Z(T)$ for $T$ Compound Poisson-Exponential with parameter~$t$.
\end{corollary}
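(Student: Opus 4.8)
\emph{Proof strategy.} The plan is to treat the three assertions in turn; the first two are soft consequences of Theorem~\ref{thm-2}, and the recursive structure of the tree enters only for \eqref{E:1.15w}. For the measurability I would first note that $t\mapsto\wt\tau_\varrho(t)$ is nondecreasing and right-continuous while $s\mapsto\ell_s(x)$ is continuous, so $t\mapsto L_t(x)$ is right-continuous for every realization of the walk; since $Z_n(t)$ is a fixed continuous function of the finite family $(L_t(x))_{x\in\BbbL_n}$, the map $(t,\omega)\mapsto Z_n(t)$ is jointly measurable and $t\mapsto Z_n(t)$ is right-continuous. Hence $t\mapsto E^\varrho(f(Z_n(t)))$ is Borel for every bounded continuous $f$, and Theorem~\ref{thm-2} identifies its pointwise limit as $t\mapsto\E(f(Z(t)))$, which is Borel as well. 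Approximating the indicator of a closed set $F\subseteq\R$ from above by bounded continuous functions and invoking Dynkin's $\pi$--$\lambda$ theorem gives measurability of $t\mapsto\BbbP(Z(t)\in B)$ for every Borel $B\subseteq\R$, and a second Dynkin argument over the $\pi$-system of measurable rectangles upgrades this to $t\mapsto\BbbP((t,Z(t))\in E)$ for Borel $E\subseteq\R^2$. In particular $\kappa(t,\cdot):=\BbbP(Z(t)\in\cdot)$ is a probability kernel, so for a positive random variable $T$ with law $\lambda_T$ the prescription $E\mapsto\int\BbbP((t,Z(t))\in E)\,\lambda_T(\textd t)$ defines a probability measure on $\R^2$ with first marginal $\lambda_T$; any pair $(T,Z(T))$ realizing it furnishes the coupling, and \eqref{E:1.13} forces uniqueness. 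Finally, for \eqref{E:1.14w}, when $T$ is independent of the walk, Fubini and joint measurability give $E(f(Z_n(T)))=\int E^\varrho(f(Z_n(t)))\,\lambda_T(\textd t)$; the integrand is bounded by $\|f\|_\infty$ and converges pointwise to $\E(f(Z(t)))$ by Theorem~\ref{thm-2}, so dominated convergence and the definition of the coupling give $E(f(Z_n(T)))\to\E(f(Z(T)))$.

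For the cascade, write $\varrho_1,\dots,\varrho_b$ for the children of the root and $\T^{(i)}_{n-1}$ for the subtree hanging from $\varrho_i$, so $\BbbL_n=\bigsqcup_{i=1}^b\BbbL^{(i)}_{n-1}$, and split the sum in \eqref{E:1.9a} over these subtrees to get $Z_n(t)=\sum_{i=1}^b b^{-2}\wt Z^{(i)}_{n-1}(t)$, where $\wt Z^{(i)}_{n-1}(t):=b^{-2(n-1)}\sum_{x\in\BbbL^{(i)}_{n-1}}(n\slb-\sqrt{L_t(x)})^+L_t(x)^{1/4}\rme^{2\slb\sqrt{L_t(x)}}$ is the analogue of \eqref{E:1.9a} for the depth-$(n-1)$ subtree, but carrying the ambient height $n\slb$ in place of $(n-1)\slb$. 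I would then invoke two standard facts about the local time on the tree: by the excursion decomposition of the walk at the root, $L_t(\varrho_1),\dots,L_t(\varrho_b)$ are i.i.d.\ Compound Poisson--Exponential with parameter $t$ (the excursions into the $b$ subtrees made before the local time at $\varrho$ reaches $t$ form an independent $\operatorname{Poisson}(t)$ thinning, each contributing an independent unit-rate exponential amount of local time at the relevant child); and by the subtree Markov property (Lemma~\ref{lemma-M}), conditionally on $(L_t(\varrho_i))_i$ the families $\{L_t(x)\}_{x\in\T^{(i)}_{n-1}}$ are independent, the $i$-th having the law of $\{L_{L_t(\varrho_i)}(x)\}_{x\in\T_{n-1}}$ under $P^\varrho$. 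Combining these, $Z_n(t)\laweq\sum_{i=1}^b b^{-2}\wt Z_{n-1}(T_i)$, where $T_1,\dots,T_b$ are i.i.d.\ Compound Poisson--Exponential$(t)$ and, given $T_i=s_i$, the $\wt Z_{n-1}(T_i)$ are independent copies of the depth-$(n-1)$ quantity $\wt Z_{n-1}(s_i)$ built from the walk on $\T_{n-1}$.

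It remains to let $n\to\infty$ on the right, and the crux is that the miscentered sum $\wt Z_{n-1}(s)$ has the same weak limit $Z(s)$ as the honestly centered $Z_{n-1}(s)$ (i.e.\ \eqref{E:1.9a} with $(n,t)$ replaced by $(n-1,s)$, whose weak limit is $Z(s)$ by Theorem~\ref{thm-2}). Indeed $\wt Z_{n-1}(s)\ge Z_{n-1}(s)$ termwise, while on the event $\{\max_{x\in\BbbL_{n-1}}\sqrt{L_s(x)}\le(n-1)\slb-\sqrt{\log(n-1)}\}$ — whose probability tends to $1$ by Theorem~\ref{thm-2}, since $\tfrac1{\slb}\log(n-1)-\sqrt{\log(n-1)}\to\infty$ — every $x$ with $L_s(x)>0$ satisfies $(n-1)\slb-\sqrt{L_s(x)}\ge\sqrt{\log(n-1)}$, so the two summands at $x$ differ by a factor at most $1+\slb/\sqrt{\log(n-1)}$; hence $0\le\wt Z_{n-1}(s)-Z_{n-1}(s)\le\bigl(\slb/\sqrt{\log(n-1)}\bigr)Z_{n-1}(s)$ there, and since $Z_{n-1}(s)=O_P(1)$ the difference vanishes in probability, giving $\wt Z_{n-1}(s)\Lawarrow Z(s)$ for each $s\ge0$ (trivially for $s=0$, where every term is $0$). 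Averaging over $T_i$ as for \eqref{E:1.14w}, together with conditional independence across $i$, then yields $\sum_i b^{-2}\wt Z_{n-1}(T_i)\Lawarrow\sum_i b^{-2}Z(T_i)$ with $(T_i,Z(T_i))$ i.i.d.\ copies of $(T,Z(T))$, and comparing with $Z_n(t)\Lawarrow Z(t)$ gives \eqref{E:1.15w}. I expect this last step to be the main obstacle: the naive bound $\wt Z_{n-1}(s)-Z_{n-1}(s)\le\slb\,b^{-2(n-1)}\sum_x L_s(x)^{1/4}\rme^{2\slb\sqrt{L_s(x)}}$ is too lossy (its right-hand side remains of order $1$), so one genuinely needs the tightness of the centered maximum from Theorem~\ref{thm-2} to know that the mass of $Z_{n-1}(s)$ lies on vertices where the two weightings agree up to a $1+o(1)$ factor; a subsidiary point is to have the excursion-theoretic law of $(L_t(\varrho_i))_i$ and the subtree Markov property cleanly at hand.
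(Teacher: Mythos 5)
Your proof of the measurability, coupling, and weak-convergence statements \eqref{E:1.13}--\eqref{E:1.14w} follows essentially the same route as the paper: both arguments rest on continuity of $t\mapsto Z_n(t)$, the Portmanteau/bounded-convergence passage $E^\varrho f(Z_n(t))\to\E f(Z(t))$, and a monotone-class/Dynkin upgrade. (The paper packages the coupling via the Riesz representation theorem, while you use the probability-kernel viewpoint; this is a cosmetic difference.)

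For the cascade relation \eqref{E:1.15w} your approach is genuinely different, and it is correct. The paper works dually, at the level of the distribution function of the maximum: it applies Lemma~\ref{lemma-M} to factor $F_{n,t}(u)$ over subtrees, and then invokes the \emph{uniform-in-$t$} convergence of Theorem~\ref{thm-5} to replace each subtree factor by a Laplace-transform expression involving $Z_k$ (the ratio $b^{-2}$ emerging from the shift $u\mapsto u+\slb+o(1)$ induced by passing from height $n$ to $n-1$). You instead decompose the random variable $Z_n(t)$ itself as $\sum_i b^{-2}\wt Z_{n-1}^{(i)}(t)$ with the \emph{ambient} height $n\slb$, apply the subtree Markov property and the Compound Poisson--Exponential law of $(L_t(\varrho_i))_i$ at that level, and reduce the problem to showing that the miscentered $\wt Z_{n-1}(s)$ has the same weak limit as $Z_{n-1}(s)$. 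Your handling of the miscentering is sound: on the high-probability event $\{\max_x\sqrt{L_s(x)}\le(n-1)\slb-\sqrt{\log(n-1)}\}$ every nonzero summand's prefactor changes multiplicatively by $1+O(1/\sqrt{\log n})$, so the discrepancy $\wt Z_{n-1}(s)-Z_{n-1}(s)$ is $o_P(1)$, and you correctly flag that a crude termwise estimate would be too lossy. What the paper's route buys is that it never has to touch $\wt Z_{n-1}$ and simply reads off the $b^{-2}$ from the already-established, $t$-uniform Theorem~\ref{thm-5}; what your route buys is a more transparent, distributional-identity style derivation that uses only Theorem~\ref{thm-2} (plus the upper-tail tightness needed for the good event) rather than the full machinery of Theorem~\ref{thm-5}. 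One small point to make explicit: the Compound Poisson--Exponential law has an atom at $0$ (mass $\rme^{-t}$), so the "positive random variable $T$" in the disintegration must be read as nonnegative, and the coupling/convergence needs to accommodate $T=0$ with $Z(0):=0$, which you correctly note is trivial.
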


The slightly cumbersome description of the joint law of~$(T,Z(T))$ stems from our present inability to extend the convergence \eqref{E:2.2} to that of a full process $t\mapsto Z_n(t)$. Indeed, if we had the limit process $t\mapsto Z(t)$ at our disposal, $Z(T)$ could be defined directly by evaluating $t\mapsto Z(t)$ at~$t:=T$, for~$T$ independent of~$Z(\cdot)$.

We now characterize the law of~$\ZZ$ three possible ways:

\begin{theorem}
\label{thm-3}
For the constant~$C_\star$ and random variable $Z(t)$ as in Theorem~\ref{thm-2}, the random variable~$\ZZ$ from Theorem~\ref{thm-1} has the law of
\begin{enumerate}
\item[(1)] the weak limit of $C_\star Z(t)$ conditional on $Z(t)>0$ as $t\downarrow0$,
\item[(2)] $C_\star\, b^{-2} Z(U)$ conditioned on $Z(U)>0$ for~$U=$ Exponential with parameter~$1$,
\item[(3)] the weak limit (as $n\to\infty$) of the sequence
\begin{equation}
\label{E:1.9c}
C_\star\, b^{-2n} \sum_{x \in \BbbL_n} \Bigl(n\slb - 
\sqrt{\ell_{\tau_\varrho}(x)}\,\Bigr)^+ \ell_{\tau_\varrho}(x)^{1/4}\,
	\rme^{2\slb\,\sqrt{\ell_{\tau_\varrho}(x)}}
\end{equation}
under $P^{x_n}$, for any~$x_n\in\BbbL_n$,
\end{enumerate}
where all the stated weak limits exist.
\end{theorem}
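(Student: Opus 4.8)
The plan is to deduce all three characterizations from Theorem~\ref{thm-2} by a time-reversal argument, supplemented by the cascade relation \eqref{E:1.15w}: descriptions (2) and (3) come out of the reversal reduction, and (1) out of the cascade. \emph{The reversal reduction.} Fix $x_n\in\BbbL_n$ and let $v_1$ be the child of $\varrho$ on the geodesic from $\varrho$ to $x_n$. Until it first hits $\varrho$ the walk started at $x_n$ is confined to the depth-$(n-1)$ subtree rooted at $v_1$ and leaves it only through $v_1$, so $\{\ell_{\tau_\varrho}(x)\}$ is carried by the $b^{n-1}$ leaves of that subtree --- the only ones entering \eqref{E:1.9c}. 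Reversing the trajectory on $[0,\tau_\varrho]$ and using reversibility of the walk with respect to the degree measure, this field has the law of the leaf-occupation field of a single excursion of the walk away from $\varrho$, conditioned to take its first step $\varrho\to v_1$ and to visit $x_n$, truncated at its last visit to $x_n$. A geometric-sum computation over the holding times at $v_1$ shows that the local time such an excursion accumulates at $v_1$ is Exponential with parameter $1$, and conditionally on its value being $t$ the Markov property under restriction to subtrees (Lemma~\ref{lemma-M}) identifies the field below $v_1$ with $\{L_t(y)\}_{y\in\BbbL_{n-1}}$ under $P^\varrho$ on a depth-$(n-1)$ tree --- modulo the conditioning on $\{x_n\text{ visited}\}$ and the truncation. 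Writing $U$ for an Exponential($1$) variable, the upshot is that under $P^{x_n}$ the field $\{\ell_{\tau_\varrho}(x)\}_{x\in\BbbL_n}$ should agree in law with $\{L_U(y)\}_{y\in\BbbL_{n-1}}$ conditioned on an event that, as $n\to\infty$, reduces to the conditioning on $\{\max_{\BbbL_{n-1}}L_U>0\}=\{Z(U)>0\}$.

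\emph{Parts (2) and (3).} Granting the reduction, I would feed it into Theorem~\ref{thm-2} in the $t$-randomized form furnished by Corollary~\ref{thm-2b}. Since $b^{-2n}=b^{-2}b^{-2(n-1)}$ and the change from $(n-1)\slb$ to $n\slb$ inside the positive part costs only a vanishing ``critical additive'' correction, the quantity in \eqref{E:1.9c} equals $C_\star b^{-2}Z_{n-1}(U)$ up to $o(1)$ and the stated conditioning; by \eqref{E:1.14w} it converges in law to $C_\star b^{-2}Z(U)$ conditioned on $\{Z(U)>0\}$, which is (3). Applying the same reduction to the event $\{\max_{\BbbL_n}\sqrt{\ell_{\tau_\varrho}}\le n\slb-\tfrac1{\slb}\log n+u\}$ --- where the passage from depth $n$ to depth $n-1$ turns the leftover $+\slb$ in the centering into a factor $\rme^{-2(\slb)^{2}}=b^{-2}$ inside the exponential --- Theorem~\ref{thm-2} then gives
\[
P^{x_n}\Bigl(\max_{\BbbL_n}\sqrt{\ell_{\tau_\varrho}}\le n\slb-\tfrac1{\slb}\log n+u\Bigr)\,\longrightarrow\,\E\bigl(\rme^{-C_\star b^{-2}Z(U)\,\rme^{-2u\slb}}\,\big|\,Z(U)>0\bigr).
\]
Comparing with \eqref{E:1.3}, which pins down $\lambda\mapsto\E(\rme^{-\lambda\ZZ})$ for all $\lambda=\rme^{-2u\slb}>0$ and hence the law of $\ZZ$, identifies $\ZZ\laweq C_\star b^{-2}Z(U)$ conditioned on $\{Z(U)>0\}$; this is (2).

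\emph{Part (1).} Here I would run the cascade \eqref{E:1.15w}, $Z(t)\laweq\sum_{i=1}^b b^{-2}Z(T_i)$ with $T_i$ i.i.d.\ Compound Poisson--Exponential with parameter $t$, and let $t\downarrow0$. As $Z(0)=0$, with probability $1-bt+O(t^2)$ every $T_i$ vanishes and $Z(t)=0$; with probability $bt(1+o(1))$ exactly one $T_i$ is positive and is then (a single Poisson excursion) an Exponential($1$) variable; the event that two or more $T_i$'s are positive has probability $O(t^2)$. Hence $\BbbP(Z(t)>0)=bt\,\BbbP(Z(U)>0)(1+o(1))$ and the law of $Z(t)$ given $\{Z(t)>0\}$ converges, as $t\downarrow0$, to that of $b^{-2}Z(U)$ given $\{Z(U)>0\}$; multiplying by $C_\star$ shows that the weak limit in (1) exists and coincides with the variable of (2), hence with $\ZZ$.

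\emph{Main obstacle.} The real work lies in the reversal reduction, and specifically in showing that after time reversal the conditioning on hitting the \emph{distinguished} leaf $x_n$ together with the truncation at its last visit collapse, in the $n\to\infty$ limit, to the bare conditioning $\{Z(U)>0\}$ --- heuristically, the size-biasing produced by ``hit $x_n$'' must be exactly undone by the occupation the truncation discards, leaving no residual change of measure. I expect this to require the uniform-in-$t$ estimates behind Theorem~\ref{thm-5} together with a spine/many-to-one decomposition and a second-moment estimate comparing the spinal bias against the truncated mass. The remaining ingredients --- the one-level recentering, the vanishing of the critical additive correction in \eqref{E:1.9c}, and the small-$t$ cascade analysis --- are routine given Theorems~\ref{thm-2} and~\ref{thm-5} and Corollary~\ref{thm-2b}.
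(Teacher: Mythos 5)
Your plan departs from the paper's in the key reduction: you use time reversal to relate $\{\ell_{\tau_\varrho}(x)\}$ under $P^{x_n}$ to a \emph{conditioned and truncated} excursion of the walk away from $\varrho$, and you correctly flag that undoing the resulting size bias is the hard step. The paper avoids this entirely by running time \emph{forward}. Its Proposition~\ref{prop-4.1} gives the total-variation bound
\begin{equation*}
\bigl\Vert\,P^{\varrho}\bigl(\{L_t(x)\}_{x\in\BbbL_n}\in\cdot\,\big|\,\HH_{n,t}\bigr)-P^{\nu_n}\bigl(\{\ell_{\tau_\varrho}(x)\}_{x\in\BbbL_n}\in\cdot\bigr)\bigr\Vert_{\rm TV}\le \tfrac b{b-1}\,t,
\end{equation*}
and its content is elementary: on the event $\HH_{n,t}\cap\{N(t)=1\}$ (exactly one jump away from $\varrho$ before the root has collected local time~$t$), the walk's leaf occupation is \emph{exactly} $\ell_{\tau_\varrho}\circ\theta_{\tau_{\BbbL_n}}$, the occupation of a walk launched from the first hitting point of~$\BbbL_n$ — which, by the symmetry of the tree, is uniform. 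Since $P^{x_n}=P^{\nu_n}$ in law for this observable, the conditioned-excursion reduction is trivially exact, with no spine, no size-biasing, no truncation, and no many-to-one/second-moment machinery; the only error is $P^\varrho(N(t)\ge2\mid\HH_{n,t})=O(t)$. This bound, combined with Theorem~\ref{thm-2} and a $t\downarrow0$ limit, yields~(1) and~(3) directly. The paper then proves~(2) from the cascade identity~\eqref{E:1.15w}, whereas you prove~(1) from the cascade and~(2)--(3) from the reversal; that swap is fine and your small-$t$ cascade computation is sound.

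The gap in your argument is precisely where you say it is. Your reversal identity — that the $P^{x_n}$-leaf field agrees in law with the occupation field of an excursion from $\varrho$ conditioned to visit $x_n$ and truncated at the last such visit — is correct (and follows from reversibility plus the Green-function identity $E^\varrho(\#\,\text{visits to }x_n\text{ before return to }\varrho)=1/b$, which yields $p_{\rm esc}=b\,q_{\rm hit}$ so the normalizations match). But converting that into ``conditioned on $\{\max L_U>0\}$'' as $n\to\infty$ is not a soft step: conditioning to visit the \emph{distinguished} leaf $x_n$ tilts the excursion along the spine $\varrho\to x_n$, and the truncation cuts off a nontrivial tail; you would need to show these two effects cancel exactly (not merely asymptotically), and the tools you propose (spine decomposition, second-moment bounds) would at best give you an approximate cancellation. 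In fact the only clean way I see to prove that cancellation is to notice that the conditioned-and-truncated excursion is, by construction, the time reversal of the $P^{x_n}$-walk — which is a tautology, not a proof — and that the \emph{unconditioned} excursion, decomposed at its first hit of $\BbbL_n$, already gives the $P^{\nu_n}$-law with no residual bias. That observation is exactly Lemma~\ref{lemma-4.2}/Proposition~\ref{prop-4.1}, i.e.\ the forward route. So your approach is workable only after importing the forward decomposition, at which point the reversal is redundant. Replace the reversal reduction by the forward one and your outline closes.

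Two small points. First, the $(n-1)\slb$ vs.\ $n\slb$ discrepancy inside the positive part of \eqref{E:1.9c} is indeed lower order (the typical size of that factor is of order $\sqrt n$, cf.\ the argument in Lemma~\ref{lemma-3.1a}), and $b^{-2n}=b^{-2}\cdot b^{-2(n-1)}$ is where the $b^{-2}$ in parts (2)--(3) comes from, as you say. Second, your part~(1) as written only shows ``(1)-limit $\laweq$ (2)-variable''; linking either to $\ZZ$ still relies on your reversal reduction, whereas in the paper part~(1) is essentially the definition of~$\ZZ$ constructed in the proof of Theorem~\ref{thm-1} via Proposition~\ref{prop-4.1}.
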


Note that while~(3) brings $\ZZ$ very close to~$Z(t)$ and thus also  to  the derivative martingale~$W$, neither~(1) nor~(3) make it apparent that~$\ZZ>0$ a.s.

\subsection{Remarks, questions and conjectures}
We finish with some additional remarks on our results and directions of future study.
Having described the law of  the  maximal local time, the next natural step is to investigate the  extremal process associated with near-maximal points. For a fixed~$n$, this is captured by the Radon measure on $[0,1]\times\R$ defined by
\begin{equation}
\label{E:1.20}
\sum_{x\in\BbbL_n}\delta_{\theta_n(x)}\otimes\delta_{\sqrt{L_t(x)}-n\slb-(\log b)^{-1/2}\log n}\,,
\end{equation}
where, for~$x\in\BbbL_n$ represented by a sequence $(\sigma_1,\dots,\sigma_n)\in\{1,\dots,b\}^n$ marking which branch the unique path from~$\varrho$ to~$x$ takes at each step,~$\theta_n(x):=\sum_{i=1}^n\sigma_i b^{-n-i-1}$ maps~$x$ canonically to the unit interval~$[0,1]$. 

In analogy with our earlier work on  two-dimensional  GFF~\cite{BL1,BL2,BL3}, the work of Arguin, Bovier and Kistler~\cite{ABK1,ABK2,ABK3} and A\"id\'ekon, Berestycki, Brunet and~Shi~\cite{ABBS} on Branching Brownian Motion, the work of Madaule~\cite{Madaule2} on Branching Random Walk and Abe's work~\cite{A18} on the local time process on the tree in the regime when~$t$ increases to infinity with~$n$, we expect \eqref{E:1.20} to converge weakly to a \emph{clustered Cox process} of the form
\begin{equation}
\label{E:1.20w}
\sum_{i,j\ge1}\delta_{x_i}\otimes\delta_{u_i-\phi_{i,j}}.
\end{equation}
Here $\{(x_i,u_i,\{\phi_{i,j}\}_{j\ge1}\}_{i\ge1}$ lists the sample points of the Poisson point process
\begin{equation}
\label{E:1.21}
\PPP\Bigl(\,\mu_t(\textd x)\otimes\texte^{-2u\slb}\textd u\otimes\nu(\textd\phi)\Bigr)
\end{equation}
on $[0,1]\times\R\times[0,\infty)^{\N}$ where~$\mu_t$ is a random Borel measure on~$[0,1]$ such that
\begin{equation}
\label{E:1.22a}
\mu_t\bigl([0,1]\bigr)\,\laweq\, 2\slb \,C_\star Z(t)
\end{equation}
while $\nu$ is a deterministic probability law on locally finite (but a.s.-infinite) point process on~$[0,\infty)$. The Poisson point process in \eqref{E:1.21} is sampled conditionally on~$\mu_t$.

Note that all the~$t$ dependence of \eqref{E:1.21} rests in the measure~$\mu_t$. Moreover, the cluster process law~$\nu$ should be the same as for the GFF on the tree. This has already been verified in the regime when~$t$ increases with~$n$ faster than~$c n\log n$ (Abe and Biskup, private communication). In accord with Theorem~\ref{thm-1.3} we also expect that $\mu_t$ scaled by the prefactors in \eqref{E:1.18a} converges weakly to the critical Gaussian Multiplicative Chaos measure associated with the GFF on~$b$-ary tree. A similar conclusion as \twoeqref{E:1.20w}{E:1.22a} should hold also for the setting of Theorem~\ref{thm-1}, except that~$\mu_t$ gets replaced by a measure derived from~$\mu_t$ by the same procedure as~$\ZZ$ is derived from~$Z(t)$. 

The statements \twoeqref{E:1.20w}{E:1.22a} are consistent with Jego's proposal (see~\cite[ Conjecture~1]{J20}) for the weak limit of centered maximal local time of the simple random walk run until the first exit from a square domain in~$\Z^2$. For the construction of the purported limit measure (which is the main conclusion of~\cite{J20}) Jego works directly in the continuum using Brownian motion instead of random walk while generalizing ideas from the study of Brownian thick points where the corresponding {Brownian Multiplicative Chaos} measures were  first  constructed by Bass, Burdzy and Khoshnevisan~\cite{BBK} in the (so called) second-moment regime, and then by Jego~\cite{J18b} and, independently, A\"idekon, Hu and Shi~\cite{AHS} throughout the subcritical regime.

One of Jego's constructions of the critical Brownian Multiplicative Chaos relies on Seneta-Heyde norming which amounts to replacing the polynomial terms in front of the exponentials in \eqref{E:1.9a} by deterministic~$n$-dependent terms. An important point noted in~\cite{J20} is that the Seneta-Heyde norming requires a different multiplier for the local time object than for the GFF derivative martingale. Indeed, for GFF the norming compensates for the term in the parenthesis in \eqref{E:1.15} which is known to be typically of order~$\sqrt n$. We expect the corresponding term in~$Z_n(t)$ to be of the same order but then we need another factor~$\sqrt n$ to account for the term $L_t(x)^{1/4}$ (as this should be dominated by the leading order of the maximum). 
Thus we expect:

\begin{conjecture}
There is~$c>0$ such that for all~$t>0$, under $P^\varrho$,
\begin{equation}
\label{E:1.23}
n b^{-2n} \sum_{x \in \BbbL_n} 
	\rme^{2\slb\,\sqrt{L_t(x)}}\,\underset{t\to\infty}\Lawarrow\, c Z(t)
\end{equation}
\end{conjecture}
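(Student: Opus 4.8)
\medskip

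The plan is to view the left-hand side of~\eqref{E:1.23} as $n\,\mathfrak M_n(t)$ with $\mathfrak M_n(t):=b^{-2n}\sum_{x\in\BbbL_n}\rme^{2\slb\sqrt{L_t(x)}}$ — the local-time analogue of the \emph{critical additive martingale} rather than of the derivative martingale~$Z_n(t)$ — and to read~\eqref{E:1.23} as a statement for fixed~$t$ and $n\to\infty$, in parallel with~\eqref{E:2.2}. By the Markov property under restriction to subtrees (Lemma~\ref{lemma-M}) the field $\{L_t(x)\}$, and hence $\mathfrak M_n(t)$, live on the infinite $b$-ary tree and $\mathfrak M_n(t)$ obeys a branching recursion in~$n$; as for branching random walk, $\mathfrak M_n(t)\to0$ and the prefactor~$n$ is a Seneta--Heyde normalization. (The leaves with $L_t(x)=0$ contribute $n\,b^{-2n}\#\{x\colon L_t(x)=0\}\le n\,b^{-n}\to0$ and may be discarded.) We would run the standard Seneta--Heyde program --- barrier truncation, first and second moments of the truncated object, identification of the limit, removal of the truncation --- in parallel with the proof of Theorem~\ref{thm-2} and, for the Gaussian branching random walk, with the work of A\"idekon and Shi and of Abe~\cite{A18}.

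\medskip

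For a parameter $a>0$ let $\mathfrak M_n^{(a)}(t)$ be the sum defining $\mathfrak M_n(t)$ restricted to the leaves~$x$ whose ancestral profile $k\mapsto\sqrt{L_t(v_k)}$ (with $\varrho=v_0,\dots,v_n=x$) never exceeds a barrier of slope~$\slb$ lowered by~$a$. Changing the measure by the (non-martingale, but recursively structured) weight $\rme^{2\slb\sqrt{L_t(x)}}$ produces a spine along which the square-root local time performs a Markov chain --- the one attached to the squared Bessel process of dimension zero via the second Ray--Knight identity --- and the barrier slope is chosen to match the asymptotic drift of this tilted chain. A ballot-type estimate for the tilted chain staying below the barrier then yields $E^\varrho\bigl(n\,\mathfrak M_n^{(a)}(t)\bigr)\to m(a,t)\in(0,\infty)$ for each fixed~$a$. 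This is the step that forces the normalization~$n$ rather than the~$\sqrt n$ familiar for the Gaussian critical additive martingale: one factor~$\sqrt n$ comes from the ballot probability of remaining below the barrier for~$n$ steps, and a second from the fact that on the relevant leaves $L_t(x)^{1/4}\asymp(n\slb)^{1/2}$ --- a factor that in $Z_n(t)$ sits in the explicit prefactor but here is absorbed in the weight. A two-spine computation, organized by the branch depth of the two selected leaves and dominated by early branching, gives $\limsup_n E^\varrho\bigl((n\,\mathfrak M_n^{(a)}(t))^2\bigr)<\infty$; together with the first moment this makes $\{n\,\mathfrak M_n^{(a)}(t)\}_n$ tight and $L^2$-bounded.

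\medskip

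To identify the limit we would compare with~$Z_n(t)$. On the event defining $\mathfrak M_n^{(a)}(t)$ one has $L_t(x)^{1/4}=(1+o(1))(n\slb)^{1/2}$, while under the spine tilt the gap $n\slb-\sqrt{L_t(x)}$ equals, to leading order, $\sqrt n$ times an order-one random variable~$R$ with an explicit, $t$-independent limit law (a Rayleigh-type law, reflecting that the $\rme^{2\slb\sqrt{L_t}}$-weighted number of barrier-respecting leaves with gap near~$v$ is of order~$v$ up to a Gaussian cutoff at scale~$\sqrt n$); hence the prefactor of $Z_n(t)$ on a relevant leaf equals $(1+o(1))\,n\sqrt{\slb}\,R_x$. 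A law of large numbers over the exponentially many, weakly dependent leaves that carry the mass then shows that the $\rme^{2\slb\sqrt{L_t}}$-weighted empirical average of the~$R_x$ concentrates at~$\E(R)$, so that $Z_n^{(a)}(t)=(1+o(1))\,\sqrt{\slb}\,\E(R)\,n\,\mathfrak M_n^{(a)}(t)$. Since, by an argument parallel to the one proving Theorem~\ref{thm-2}, $Z_n^{(a)}(t)$ converges in law to a variable $Z^{(a)}(t)$ with $Z^{(a)}(t)\uparrow Z(t)$ a.s.\ as $a\to\infty$, this gives $n\,\mathfrak M_n^{(a)}(t)\Lawarrow c\,Z^{(a)}(t)$ with $c:=\bigl(\sqrt{\slb}\,\E(R)\bigr)^{-1}$, a constant depending only on~$b$.

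\medskip

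What remains --- and what I expect to be the main obstacle --- is the removal of the truncation: that $n\bigl(\mathfrak M_n(t)-\mathfrak M_n^{(a)}(t)\bigr)\to0$ in $P^\varrho$-probability as $n\to\infty$ and then $a\to\infty$. A first-moment bound is useless here, since $E^\varrho(n\,\mathfrak M_n(t))$ is of order~$\sqrt n$, the excess being entirely due to the rare event that some leaf is reached very early and accumulates an atypically large local time --- the same event inflates $E^\varrho\bigl(n(\mathfrak M_n-\mathfrak M_n^{(a)})(t)\bigr)$. The standard remedy is the peeling argument of A\"idekon--Shi: decompose at the first depth at which the ancestral profile crosses the barrier, use the branching recursion to rewrite the excess as the same functional restarted from an over-barrier value, and control it through a further-truncated ($L^1$, or $L^{1+\epsilon}$) estimate. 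Transcribing this is not routine, because the spine is a Markov chain rather than a random walk, the over-barrier excursions of a squared-Bessel-type process are heavier than in the Gaussian case owing to the absorption at~$0$, and one must track the polynomial factor implicit in the normalization~$n$. An alternative would be to transfer A\"idekon--Shi's Seneta--Heyde theorem for the Gaussian branching random walk through the second Ray--Knight identity $\{L_t(x)+\tfrac12\varphi_x^2\}_{x}\laweq\{\tfrac12(\varphi_x+\sqrt{2t}\,)^2\}_{x}$ (with $\varphi$ a Gaussian free field on~$\T_n$, $\varphi_\varrho=0$, independent of~$L_t$); but the square root and the subtracted $\tfrac12\varphi^2$ interact at order one exactly on the relevant (atypically high) leaves, so the transfer is not purely formal and the direct route seems preferable. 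Finally, the constant~$c$ should be re-expressible through~$C_\star$ and the spinal law of~$R$ compatibly with Theorem~\ref{thm-1.3}, so that $c\,Z(t)$ is consistent, via the prefactors $t^{-1/4}\rme^{-2\slb\sqrt t}$, with the Seneta--Heyde-normalized critical additive martingale of the Gaussian branching random walk.
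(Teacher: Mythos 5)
The statement you are trying to prove is labeled a \emph{Conjecture} in the paper: the authors give only a heuristic justification (the Seneta--Heyde discussion and the two sources of the factor~$n$ in place of~$\sqrt n$) and offer no proof, so there is no paper argument to compare against and any write-up should say so explicitly rather than present the statement as established. You read the subscript on the limit arrow as $n\to\infty$, which is the sensible reading given the surrounding text (the printed $t\to\infty$ looks like a typo) and consistent with the displayed Seneta--Heyde normalization.

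Your outline --- barrier truncation, first and second moments of the truncated functional via a spine change of measure along the squared-Bessel-type chain, identification of the limit as a multiple of $Z(t)$, and removal of the truncation --- is the right program, and your accounting for the two factors of~$\sqrt n$ (a ballot probability and $L_t^{1/4}\asymp(n\slb)^{1/2}$) reproduces exactly the authors' heuristic. But the plan has gaps beyond the one you flag. First, the identification step is not a law of large numbers in the form you state: after the barrier truncation and the tilt, the mass of $\mathfrak M_n^{(a)}(t)$ is not usefully a sum of exponentially many weakly dependent terms, and the passage from ``prefactor $\approx n\sqrt{\slb}\,R_x$ on contributing leaves'' to $Z_n^{(a)}(t)=(1+o(1))\,c^{-1}\,n\,\mathfrak M_n^{(a)}(t)$ is a conditional-second-moment computation that has to be carried out, not asserted. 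Second, you invoke ``$Z_n^{(a)}(t)\Lawarrow Z^{(a)}(t)$ with $Z^{(a)}(t)\uparrow Z(t)$ a.s.\ as $a\to\infty$''; the paper proves only weak convergence of $Z_n(t)$, constructs no truncated limit $Z^{(a)}(t)$, and --- since $\{Z_n(t)\}_n$ is explicitly \emph{not} a martingale --- provides no a.s.-monotone coupling in either $n$ or $a$, so this intermediate object would itself have to be built (plausibly by rerunning the proof of Theorem~\ref{thm-2} on the truncated sum, but this is work, not a citation). Third, the obstacle you correctly identify --- showing $n\bigl(\mathfrak M_n(t)-\mathfrak M_n^{(a)}(t)\bigr)\to0$ in probability --- is genuine and is exactly where an A\"id\'ekon--Shi-type peeling argument would be required; you are right that the heavier over-barrier tails of the zero-dimensional Bessel spine (absorption at~$0$, no Gaussian decay) and the extra polynomial factor tied to the normalization make a verbatim transcription nontrivial. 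In sum: a reasonable program and the same route the authors privately envision, but an outline rather than a proof, with the two steps you treat as routine (identification and the $a\to\infty$ limit) each requiring real work in addition to the truncation-removal step you flag.
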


We believe that the convergence \eqref{E:1.23} occurs simultaneously for all~$t\ge0$, under the natural coupling of $L_t$ on the infinite tree (under~$P^\varrho$) for all times. The resulting process $t\mapsto Z(t)$ is then naturally monotone in~$t$. The conclusion should extend to the measure~$\mu_t$ in \eqref{E:1.21} by representing it, modulo overall normalization, as the limit of the measures
\begin{equation}
\label{E:1.24w}
n b^{-2n} \sum_{x \in \BbbL_n} 
	\rme^{2\slb\,\sqrt{L_t(x)}}\delta_{\theta_n(x)},
\end{equation}
where~$\theta_n(x)$ was defined after \eqref{E:1.20}. Another version of \eqref{E:1.24w} could include information about the local time at the points that carry the support of the limiting measure. For the two-dimensional GFF, this has been done by the authors jointly with S.~Gufler~\cite{BGL}. Our results there in fact show that \emph{any} polynomial prefactor that reproduces the same deterministic scaling will lead to a multiple of the same measure.

We remain puzzled by the fact that the bulk of our derivations make no significant use of the strong connection between the local time and the GFF known as the Second Ray-Knight Theorem and/or Dynkin Isomorphism (Eisenbaum, Kaspi, Markus, Rosen and~Shi~\cite{EKMRS}, Dynkin~\cite{Dynkin}). This connection turned out to be extremely useful in the study of the cover time (Ding~\cite{Ding}, Cortines, Louidor and Saglietti~\cite{CLS}) as well as the random walk on planar domains at times of order of the cover time (Abe and Biskup~\cite{AB} and Abe, Biskup and Lee~\cite{ABL}).  

While it is clear that the coupling between the local time~$L_t$ and the GFF~$h$ cannot be tight at the levels of the tree close to the root, this is quite different at (and near) the leaves where the local time is large. This suggests that the connection of~$L_t$ and~$h$ might remain strong on the range of the walk. We believe that this proposition warrants further study. The tree geometry may be a perfect setting for this.

\section{Tightness of maximal local time}
\label{sec-2}\noindent
We are now ready to commence the proofs. We start by showing tightness of centered maximal local time for the random walk started from the root and parametrized by the time spent there. Apart from being of independent interest for its uniformity in~$t>0$, tightness serves as a technical input in some of the proofs of our main theorems later.

\subsection{Main statement and preliminaries}
For each integer~$n\ge1$ and real $t>0$, consider the quantity
\begin{equation}
\label{E:1.10a}
a_n(t):=n\slb -\frac3{4\slb}\,\log n-\frac1{4\slb}\log\Bigl(\frac{n+\sqrt{t}}{\sqrt{t}}\Bigr).
\end{equation}
This is the aforementioned $t$-dependent centering sequence discovered in Abe~\cite{A18} which interpolates between the centering sequences in \eqref{E:2.3} and~\eqref{E:1.12i} as~$t$ varies from~$1$ to infinity. Write
\begin{equation}
\label{E:2.2o}
\HH_{n,t}:=\Bigl\{\max_{x\in\BbbL_n}L_t(x)>0\Bigr\}
\end{equation}
for  the event that the walk has hit (and spent positive amount of time at) the leaves prior to accumulating total time~$t$ at the root. We then claim:

\begin{theorem}
\label{thm-4}
There are $c_1,c_2>0$ such that for all $n\ge1$, all~$t>0$ and all~$u\in[0,n]$,
\begin{equation}
\label{E:2.2i}
P^\varrho\biggl(\,\Bigl|\,\max_{x\in\BbbL_n}\sqrt{L_t(x)}- \sqrt{t} - a_n(t\vee1)\Bigr|>u\,\bigg|\, \HH_{n,t}\biggr)\le c_1\texte^{-c_2u}.
\end{equation}
In particular, for each~$t>0$, the family
\begin{equation}
\label{E:2.3i}
\biggl\{\,\text{\rm law of }\,\max_{x\in\BbbL_n}\sqrt{L_t(x)}-\Bigl(\slb\,n-\frac1{\slb}\log n\Bigr)\text{\rm\ under }P^\varrho(\,\cdot\,|\,\HH_{n,t})\biggr\}_{n\ge1}
\end{equation}
of probability measures on~$\R$ is tight.
\end{theorem}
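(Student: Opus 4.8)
The plan is to derive \eqref{E:2.2i} from two one-sided estimates, valid uniformly in $n\ge1$, $t>0$ and $u\in[0,n]$: an \emph{upper} deviation bound $P^\varrho(\max_{x}\sqrt{L_t(x)}>\sqrt t+a_n(t\vee1)+u\mid\HH_{n,t})\le c_1\texte^{-c_2u}$ and a \emph{lower} deviation bound $P^\varrho(\max_{x}\sqrt{L_t(x)}<\sqrt t+a_n(t\vee1)-u\mid\HH_{n,t})\le c_1\texte^{-c_2u}$. Granting these, the tightness of \eqref{E:2.3i} is immediate: for fixed $t$ the deterministic quantity $\sqrt t+a_n(t\vee1)-\bigl(\slb\,n-\frac1{\slb}\log n\bigr)$ converges to a finite limit as $n\to\infty$ and stays bounded in $n$, so the laws in \eqref{E:2.3i} differ from those controlled in \eqref{E:2.2i} only by a bounded deterministic shift. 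Throughout I would exploit the branching/Markov description of $\{L_t(x)\}_{x\in\T_n}$ under $P^\varrho$ from Lemma~\ref{lemma-M}: restricted to the subtree below a vertex $v$ and conditioned on $L_t(v)$, the field is an independent copy of the same field started from local time $L_t(v)$; and along any ray $\varrho=v_0,v_1,\dots,v_n$ the chain $s_k:=\sqrt{L_t(v_k)}$ is Markov, started at $\sqrt t$, and — via the second Ray--Knight / continuous-state-branching representation — behaves like a Gaussian random walk with per-step variance $\tfrac12$ perturbed by a small drift of order $-1/s_k$. Integrating this drift over the $n$ steps of the ray, with $s_k$ of typical size $\sqrt t+k\slb$, is exactly what produces the extra term $\tfrac1{4\slb}\log\frac{n+\sqrt t}{\sqrt t}$ in $a_n(t)$ beyond the branching-random-walk correction $\tfrac3{4\slb}\log n$.

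\textbf{Upper tail.} I would run a first-moment/union bound for the \emph{conditional} law $P^\varrho(\,\cdot\mid\HH_{n,t})$, sharpened by a ballot (entropic repulsion) estimate. Using the ray chain together with a Gaussian local CLT and the estimate for the probability that $k\mapsto s_k$ stays below the line interpolating its endpoints, one shows $\sum_{x\in\BbbL_n}P^\varrho\bigl(\sqrt{L_t(x)}\ge\sqrt t+a_n(t\vee1)+u\mid\HH_{n,t}\bigr)\le c_1\texte^{-c_2u}$, the polynomial-in-$n$ factors being precisely what the choice of $a_n$ is designed to cancel, with the $t$-dependence tracked through the drift term above. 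To replace ``stays below the interpolating line'' by the unrestricted event I would add a crude bound, uniform in $t$, on the probability that $k\mapsto s_k$ overshoots a slightly raised linear barrier somewhere along the argmax ray; this again reduces to a one-dimensional estimate for the ray chain and an easy union over levels.

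\textbf{Lower tail.} Here the scheme is second moment plus amplification, adapted to the local-time field. Within one subtree of depth $m$ rooted at a vertex whose local time is at least $s$, a truncated second-moment computation — restricting to leaves whose root-to-leaf square-root-local-time path stays in a tube around the interpolating line — gives that the number of such leaves with $\sqrt{L}\ge\sqrt s+a_m(s\vee1)-K$ has first moment $\gtrsim1$ and second moment $\lesssim1$ for a large enough constant $K$, so by Paley--Zygmund that subtree carries such a leaf with probability $\ge p>0$. I would then apply this at cut-level $n-\lceil cu\rceil$: on $\HH_{n,t}$ the local time reaches that level and, with overwhelming probability, is of size $\asymp t+\Theta(n)$ at a positive density of the $b^{\,n-\lceil cu\rceil}$ vertices there, so among the corresponding conditionally independent subtrees at least one succeeds except with probability $(1-p)^{b^{\Theta(u)}}\le c_1\texte^{-c_2u}$; the mismatch between $a_{n-\lceil cu\rceil}(\cdot)$ and $a_n(t\vee1)-O(u)$ is absorbed into the exponential rate. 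The one delicate input is that, given $\HH_{n,t}$, the local-time profile at level $n-\Theta(u)$ is not atypically small; I would control this by another barrier estimate for the ($0$-dimensional-Bessel-like) ray chain conditioned to reach that depth, uniformly in $t>0$.

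\textbf{Main obstacle.} The crux is executing the first- and second-moment estimates for the \emph{non-Gaussian} field $\sqrt{L_t}$ with constants uniform in both $t>0$ and $n\ge1$. One must quantify how close the ray chain is to a Gaussian random walk when $s_k$ is large (the regime near the leaves, where $s_k=\Theta(n)$) while keeping enough control near $s_k\approx0$ to produce the repulsion correction $\tfrac1{4\slb}\log\frac{n+\sqrt t}{\sqrt t}$; and one must show the tube-truncated second moment matches the square of the first moment up to a bounded factor uniformly — i.e.\ that restricting to a tube genuinely decorrelates pairs of leaves at this level of precision. The other place requiring care is propagating the conditioning on $\HH_{n,t}$ through the subtree recursion in the lower-tail argument, so that it is never harmful and in fact supplies the needed lower bound on the local time at the cut level.
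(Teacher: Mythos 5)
Your two–sided architecture (exponential bounds on both tails, conditional on $\HH_{n,t}$, followed by a deterministic shift to pass from the $a_n(t\vee1)$-centering to the $\slb\,n-\tfrac1\slb\log n$-centering) matches the paper's strategy, and the upper-tail plan is sound in outline: the paper actually imports Abe's unconditional bound (Lemma~\ref{lemma-2}) wholesale and then patches the conditioning for $t<1$ by a subdivision trick (Lemma~\ref{lemma-2t}, using that $L_{t_1+\cdots+t_k}$ is a sum of $k$ i.i.d.\ copies), whereas you would re-derive the upper tail from first principles via a ballot estimate along the argmax ray; either route works. Your route for the per-subtree lower bound is also genuinely different from the paper's: you propose a truncated second-moment / Paley--Zygmund computation, while the paper (Proposition~\ref{prop-1} via Lemma~\ref{lemma-2.7a}) conditions on the location of the argmax, uses the barrier event to push the hanging subtrees below the target via Abe's \emph{upper} bound, and then estimates the ray barrier probability through the Bessel-to-Brownian change of measure. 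This is a first-moment-only argument that sidesteps second moments entirely; it is arguably easier to make uniform in $t$, which is the crux here.

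There is, however, a concrete gap in your lower-tail amplification. You cut at level $n-\lceil cu\rceil$ (so the subtrees have depth $m=\lceil cu\rceil$) and assert that the local time at the cut level is $\asymp t+\Theta(n)$ at a positive density of the $b^{n-\lceil cu\rceil}$ vertices there. Neither the cut level nor that estimate can be right. A subtree of depth $m$ rooted at local time $s$ has its maximum near $\sqrt s + m\slb$; for this to exceed the global target $\sqrt t+a_n(t\vee1)-u\approx\sqrt t+n\slb-u$ with $m=\lceil cu\rceil$ you would need $\sqrt s-\sqrt t\approx(n-m)\slb-u$, i.e.\ $\sqrt s$ roughly $n\slb$ above its mean — an event of probability $b^{-\Theta(n)}$, so the subtrees do \emph{not} succeed with uniformly positive probability. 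Moreover $E^\varrho[L_t(x)]=t$ for every $x$, and for small $t$ only $O(n)$ vertices (not a positive fraction of $b^{n-\lceil cu\rceil}$) at any level carry positive local time, so the ``positive density, size $\asymp t+\Theta(n)$'' input is false in exactly the regime the theorem must cover. The paper instead cuts at level $k=\Theta(u)$ near the \emph{root}: there are $b^k=b^{\Theta(u)}$ subtree roots (which is the correct amplification base), the Second Ray--Knight coupling (Lemma~\ref{lemma-2.13}) shows $\min_{x\in\BbbL_k}\sqrt{L_t(x)}\ge\sqrt t-2\beta k$ with probability $1-e^{-\Theta(k)}$ when $t\gtrsim u^2$, and then Proposition~\ref{prop-1} applied in each of the depth-$(n-k)$ subtrees gives failure probability $(1-q)^{b^k}$. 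Finally, your plan does not address the complementary regime $u>2\sqrt t$ (which includes $t\downarrow 0$), where the Ray--Knight step fails; there the paper switches to a combinatorial argument (Lemma~\ref{lemma-1.5}, giving at least $k$ vertices at level $k$ with $\ell_{\tau_\varrho}\ge1$) and amplifies over those, yielding $(1-q)^k$. Both regimes are needed to get \eqref{E:2.2i} uniformly in $t>0$.
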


The truncation of the argument of~$a_n$ in \eqref{E:2.2i} is  necessary  because $t\mapsto a_n(t)$ behaves poorly for small~$t$.
The restriction to~$u\le n$ in turn arises from the fact that $\max_{x\in\BbbL_n}\sqrt{L_t(x)}$ can be kept at order unity by forcing the walk to never come back to the leaves after hitting~$\BbbL_n$ for the first time (as required by conditioning on~$\HH_{n,t}$). For any fixed~$t$, this shows that the left hand side of \eqref{E:2.2i} is always at least $\texte^{-cn}$, for some~$t$-dependent~$c>0$. Since our prime desire is to work uniformly in~$t$ and~$n$, we limit~$u$ to values where other strategies are more relevant. 

The proof of Theorem~\ref{thm-4} is based on the observation that conditioning on the value and the location $z\in\BbbL_n$ of the  local-time  maximum  restrains the maxima in the subtrees ``hanging off'' the string of vertices on the unique path from the root to~$z$. Combined with the Markov property of the local time and explicit knowledge of the local time law on the path between~$z$ and the root, this offers a way to trade assumptions on the upper tails of the maximum for control of the lower tails, and \emph{vice versa}. 

Although this trade-off could (at least in principle) be used to build a completely self-contained proof of tightness, in all the cases where this strategy has been implemented --- e.g., the GFF on the tree and subsets of~$\Z^2$; see \cite[Lecture~8]{B-notes} --- the upper tail tightness comes from an independent calculation. Thanks to Abe~\cite{A18}, this applies also for the local time on the tree. Indeed, he showed:

\begin{lemma}
\label{lemma-2}
There is~$c > 0$ such that for all~$t>0$,~$u\ge0$ and $n \ge 1$,
\begin{equation}
\label{E:2.4u}
P^\varrho\Bigl(\,\max_{x\in\BbbL_n}\sqrt{L_t(x)}- \sqrt t- a_n(t)\ge u\Bigr)\le c (1+u) \texte^{-2u\slb} \,.
\end{equation}
\end{lemma}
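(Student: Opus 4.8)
The claim is an upper-tail bound on the maximal square-root local time at the leaves, exactly matching the right tail of a randomly-shifted Gumbel with rate $2\slb$, and with the $t$-dependent centering $a_n(t)$ from \eqref{E:1.10a}. Since the statement is attributed to Abe~\cite{A18}, the job here is to record a proof consistent with the rest of the paper. The natural route is a first-moment (union) bound over the $b^n$ leaves, using sharp control of the one-point local-time distribution $L_t(x)$ for a fixed leaf $x$ under $P^\varrho$.

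First I would recall that, for the walk started at $\varrho$ and parametrized by the time spent at the root, $L_t(x)$ for a fixed leaf $x$ is (by the Markov/Ray--Knight structure alluded to around \eqref{E:1.13r}) distributed as the value at generation $n$ of a critical continuous-state branching-type process started from $t$; concretely $L_t(x)$ has, conditionally on being positive, a density expressible via Bessel/squared-Bessel bridges, and one has the exact tail asymptotics
\begin{equation}
\label{E:plan-onepoint}
P^\varrho\bigl(L_t(x)\ge s\bigr)\ \asymp\ \frac{\sqrt{t}}{\,n+\sqrt{t}\,}\cdot\frac{(\,n+\sqrt s\,)\,}{\sqrt s}\;\exp\!\Bigl\{-\tfrac{(\sqrt s-\sqrt t\,)^2}{n}\Bigr\}\Big/(\text{poly in }n,\sqrt s)
\end{equation}
for $s$ in the relevant range $\sqrt s\approx n\slb$. (The precise constants and polynomial corrections are exactly what the form of $a_n(t)$ in \eqref{E:1.10a} is engineered to absorb.) The key computation is then: with the substitution $\sqrt s = \sqrt t + a_n(t) + u$, a Gaussian expansion of the exponent $-(\sqrt s-\sqrt t)^2/n$ around $a_n(t)$ produces the factor $b^{-n}$ (cancelling the $b^n$ leaves), the $n^{-3/4}$ and $\bigl((n+\sqrt t)/\sqrt t\bigr)^{-1/4}$ corrections from $a_n(t)$ reproduce the polynomial prefactors appearing in \eqref{E:plan-onepoint}, and the linear-in-$u$ part of the exponent gives $\rme^{-2u\slb}$; the leftover $u^2/n$ term is $\le Cu$ for $u\le n$ and harmless. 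Summing over leaves gives a bound of the form $c(1+u)\,\rme^{-2u\slb}$, the factor $(1+u)$ coming from the $(n+\sqrt s)/\sqrt s\sim (n+u)/n$ polynomial term in \eqref{E:plan-onepoint}. For $u$ large (say $u\ge n$) a cruder bound using the Gaussian exponent directly already beats $\rme^{-2u\slb}$, so one may restrict to $0\le u\le n$ without loss.

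A slightly different but equivalent packaging, which I would actually prefer for cleanliness, is to avoid delicate density asymptotics by working with the exponential (tilted) moment: bound
\begin{equation}
\label{E:plan-markov}
P^\varrho\Bigl(\max_{x\in\BbbL_n}\sqrt{L_t(x)}\ge \sqrt t + a_n(t)+u\Bigr)\ \le\ \rme^{-2\slb(\sqrt t+a_n(t)+u)}\sum_{x\in\BbbL_n}E^\varrho\bigl(\rme^{2\slb\sqrt{L_t(x)}}\,\1_{\{L_t(x)\ge \lambda_n\}}\bigr),
\end{equation}
with $\lambda_n$ a suitable lower cutoff near $(n\slb)^2$, and evaluate the truncated exponential moment on the right using the one-point Laplace-transform formula for $L_t(x)$ (which is explicit: it solves the recursion coming from the tree's branching structure, $b$-fold composition of a fractional-linear-type map). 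The truncation at $\lambda_n$ is needed because the \emph{untruncated} moment $E^\varrho(\rme^{2\slb\sqrt{L_t(x)}})$ is exactly at the critical point and only marginally divergent; the cutoff turns it into something of size $\asymp (n\slb)\,b^{-2n}$ (mirroring the $\bigl(n\slb-\sqrt{L_t(x)}\bigr)^+$ weight in the definition \eqref{E:1.9a} of $Z_n(t)$), which after multiplication by $b^n$ and the prefactor $\rme^{-2\slb a_n(t)}$ yields the $(1+u)\rme^{-2u\slb}$ bound; one then checks separately that the contribution of $\{L_t(x)<\lambda_n\}$ to the event is negligible by a second, elementary Gaussian estimate.

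The main obstacle is the one-point estimate itself — getting \eqref{E:plan-onepoint} (or the corresponding sharp truncated exponential moment) with \emph{uniform} control in both $t>0$ and $n\ge1$, including the polynomial corrections, since it is precisely the $n^{-3/4}$ versus $n^{-1/2}$ type corrections, and the $t$-dependent $\bigl((n+\sqrt t)/\sqrt t\bigr)^{1/4}$ factor, that must line up perfectly with $a_n(t)$. This requires a careful analysis of the iterated fractional-linear map governing the Laplace transform of $L_t(x)$ along the $n$ generations of the tree (equivalently, precise squared-Bessel bridge computations à la Kac's moment formula), tracking both the exponential rate and the subexponential prefactor uniformly down to $t\to 0$ — which is why the argument of $a_n$ must be truncated at $1$, and why the crude Gaussian bounds are invoked for the extreme ranges of $u$. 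Once this one-point control is in hand, the union bound and the elementary exponential expansion are routine.
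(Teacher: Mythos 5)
There is a genuine gap: a first-moment (union) bound over the $b^n$ leaves cannot yield \eqref{E:2.4u} --- for $u$ of order one it is off by a factor of order~$n$, and no one-point polynomial correction absorbed by $a_n(t)$ can rescue it. Using Lemma~\ref{lemma-3} to write $\sqrt{L_t(x)}\laweq\tfrac1{\sqrt2}Y_n$ for a $0$-dimensional Bessel process~$Y$ started at $\sqrt{2t}$, the explicit BESQ$(0)$ transition density gives, for $a\approx n\slb$,
\begin{equation}
P^\varrho\bigl(\sqrt{L_t(x)}\ge\sqrt t+a\bigr)\,\asymp\,\frac{t^{1/4}\sqrt n}{(\sqrt t+a)^{1/2}\,a}\,\rme^{-a^2/n}\,;
\end{equation}
setting $a:=a_n(t)+u$, expanding $a^2/n=a_n(t)^2/n+2u\slb+u^2/n+o(1)$, and inserting \eqref{E:1.10a} gives $\rme^{-a^2/n}\asymp b^{-n}n^{3/2}\bigl((n+\sqrt t)/\sqrt t\bigr)^{1/2}\rme^{-2u\slb}$. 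Multiplying by the prefactor and by $b^n$, the $t$-dependent pieces cancel and one is left with $\asymp n\,\rme^{-2u\slb}$, not $(1+u)\rme^{-2u\slb}$. This is the standard critical phenomenon: the centering $n\slb-\tfrac3{4\slb}\log n$ sits $\tfrac1{2\slb}\log n$ \emph{below} the first-moment threshold $n\slb-\tfrac1{4\slb}\log n$, so the expected number of exceedances is $\asymp n\,\rme^{-2u\slb}$ even though the exceedance probability is $\asymp u\,\rme^{-2u\slb}$. (Your reading of the $(1+u)$ as arising from $(n+\sqrt s)/\sqrt s$ is also incorrect: with $\sqrt s\approx n\slb$ that ratio equals $1+1/\slb+o(1)$, a constant.)

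What is missing is the barrier (ballot) step: one must restrict the first moment to leaves whose entire ancestral trajectory stays below a logarithmic envelope, which contributes a factor $\asymp u/n$ and trades the~$n$ for a~$u$. This is exactly the device used throughout the paper --- see Lemma~\ref{lemma-2.7a} (the barrier event~$\EE_n$ restraining the hanging subtrees), Lemmas~\ref{lemma-2.6}--\ref{lemma-2.7}, and the ballot estimates of Section~\ref{sec-5} --- and it underlies Abe's Proposition~3.1 in~\cite{A18}. Your second packaging (the endpoint-truncated tilted moment) has the same defect: truncating on $\{L_t(x)\ge\lambda_n\}$ constrains only the terminal value, not the path, so the overcounting by~$n$ persists. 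Finally, note that the paper gives no proof of this lemma; it simply cites~\cite{A18}, so there is no internal argument to compare against --- but any self-contained proof must include the barrier restriction.
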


\begin{proofsect}{Proof}
See Abe~\cite[Proposition 3.1]{A18}.
\end{proofsect}

The estimate \eqref{E:2.4u} will be useful throughout our entire paper.
As $P^\varrho(\HH_{n,t})$ is bounded away from zero uniformly in~$n\ge1$ and $t\ge1$, it already supplies one half of the statement in \eqref{E:2.2i} when~$t\ge1$. We will address the small-$t$ corrections to \eqref{E:2.4u} needed for \eqref{E:2.2i} in Lemma~\ref{lemma-2t} below; our more difficult task is to supply an argument for lower-tail tightness. A key starting point for this is the following uniform bound on the probability that the maximal local time is at least $\sqrt t+a_n(t)$.

\begin{proposition}
\label{prop-1}
We have
\begin{equation}
\label{E:1.2}
\inf_{t\ge1}\inf_{n\ge1} P^\varrho\Bigl(\,\max_{x\in\BbbL_n}\sqrt{L_t(x)}\ge \sqrt t + a_n(t)\Bigr)>0.
\end{equation}
\end{proposition}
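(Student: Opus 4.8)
\textit{Proof strategy.} The plan is a truncated second‑moment (Paley--Zygmund) argument for a counting variable whose positivity forces $\max_{x\in\BbbL_n}\sqrt{L_t(x)}$ to reach the level $\sqrt t+a_n(t)$. Fix $t\ge1$ and $n\ge1$, and for $x\in\BbbL_n$ let $\varrho=v_0(x),v_1(x),\dots,v_n(x)=x$ be the vertices on the geodesic from the root to~$x$. Choose a barrier $k\mapsto B_k$ that runs from $B_0=0$ to $B_n=a_n(t)+1$ along the linear interpolation of these values corrected by an entropic‑repulsion term of size $o\bigl(\sqrt{k\wedge(n-k)}\,\bigr)$, exactly as in standard second‑moment treatments of branching‑type maxima, and set
\begin{equation*}
N:=\sum_{x\in\BbbL_n}\Ind_{\{\sqrt{L_t(x)}\ge\sqrt t+a_n(t)\}}\,\prod_{k=1}^{n}\Ind_{\{\sqrt{L_t(v_k(x))}\le\sqrt t+B_k\}}.
\end{equation*}
Since $\{N>0\}\subseteq\{\max_{x\in\BbbL_n}\sqrt{L_t(x)}\ge\sqrt t+a_n(t)\}$, it suffices to produce constants $c_0,C_0\in(0,\infty)$, independent of $t$ and $n$, with $E^\varrho N\ge c_0$ and $E^\varrho(N^2)\le C_0(E^\varrho N)^2$; then $P^\varrho(N>0)\ge(E^\varrho N)^2/E^\varrho(N^2)\ge c_0^2/C_0>0$ by Cauchy--Schwarz.

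\textit{First moment.} By the symmetry of $\T_n$ we have $E^\varrho N=b^n p_n$, with $p_n$ the probability of the summand event for one fixed leaf. Restricting the local‑time field to a single root‑to‑leaf path and using the Markov property of $L_t$ under restriction to subtrees (Lemma~\ref{lemma-M}) together with the explicit law of the local time along that path, the sequence $\bigl(\sqrt{L_t(v_k)}\bigr)_{k\ge0}$ starts at $\sqrt t$ and, as long as $L_t(v_k)$ is not too small, behaves like a random walk with $\NN(0,\tfrac12)$ increments; near the root (when $t=O(1)$) there is a genuine non‑Gaussian correction reflecting the entropic cost of climbing out of small occupation times. A ballot/barrier estimate for this process — the probability of staying below $\sqrt t+B_{\raisebox{-1pt}{$\scriptstyle\bullet$}}$ while exceeding $\sqrt t+a_n(t)$ at depth $n$ — then shows that $b^n p_n$ is bounded above and below uniformly in $t\ge1$ and $n\ge1$: the polynomial and sub‑polynomial corrections are cancelled precisely by the two subleading terms of $a_n(t)$, the $-\tfrac3{4\slb}\log n$ absorbing the usual $n^{-3/2}$ ballot factor and the $-\tfrac1{4\slb}\log\tfrac{n+\sqrt t}{\sqrt t}$ absorbing the additional, $t$‑dependent near‑root cost. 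Hence $E^\varrho N\asymp1$, so in particular $E^\varrho N\ge c_0>0$.

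\textit{Second moment.} Expand $E^\varrho(N^2)=\sum_{x,y\in\BbbL_n}P^\varrho(A_x\cap A_y)$, where $A_x$ is the summand event, and group the ordered pairs $(x,y)$ according to the depth $k$ of their most recent common ancestor $w$; there are $\asymp b^{2n-k}$ such pairs. On $A_x\cap A_y$ the common stretch $v_0,\dots,v_k=w$ obeys the barrier and ends at some height $\sqrt t+r$, and — conditionally on $L_t(w)$ — the two continuations into the disjoint subtrees hanging at the children of $w$ along $x$, resp.\ $y$, are independent, again by the Markov property. Conditioning on $r$, applying the ballot estimate to the common stretch and to each of the two continuations, and then summing over $r$, one obtains $\sum_{\mathrm{MRCA}=k}P^\varrho(A_x\cap A_y)\le C\,(E^\varrho N)^2\,q_k$ with $\sum_{k\ge0}q_k<\infty$ uniformly in $n,t$; the summability is exactly what the entropic correction in $B_{\raisebox{-1pt}{$\scriptstyle\bullet$}}$ secures, since it forces the common path to lie a distance $\asymp\sqrt{k\wedge(n-k)}$ below the straight line at depth~$k$ and thereby produces a summable polynomial factor. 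Summing over $k$ yields $E^\varrho(N^2)\le C_0(E^\varrho N)^2$, which together with the first‑moment bound proves \eqref{E:1.2}.

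\textit{Main obstacle.} The substance of the argument is concentrated in the one‑dimensional ballot/barrier estimates for the square‑root local time along a single path, uniformly in the starting height $\sqrt t\ge1$: both the lower bound $p_n\gtrsim b^{-n}$ needed for the first moment and the upper bounds on the pair probabilities needed for the second moment require two‑sided Gaussian‑type control of a process that is only asymptotically (as the local time grows) a random walk, and one must follow the near‑root regime closely enough for the $\log\tfrac{n+\sqrt t}{\sqrt t}$ correction in $a_n(t)$ to emerge at the right strength. I expect these estimates — which the paper presumably isolates as preliminary lemmas on the local‑time process along a line — to be the hard part; granting them, the combinatorial bookkeeping over the common‑ancestor depth is routine.
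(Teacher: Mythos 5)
Your proposal is correct in outline, but it takes a genuinely different route from the paper. You use a truncated second‑moment (Paley--Zygmund) argument on a counting variable $N$: lower‑bound $E^\varrho N$, upper‑bound $E^\varrho(N^2)$ by summing pair probabilities over the MRCA depth, and invoke Cauchy--Schwarz. The paper instead exploits the a.s.-uniqueness of the argmax: it writes $P^\varrho(\max\ge\cdot)$ as the sum over $x_n\in\BbbL_n$ of $P^\varrho(\text{argmax}=x_n,\ \sqrt{L_t(x_n)}\ge\cdot)$, and then uses the Markov property (Lemma~\ref{lemma-M}) together with Abe's upper‑tail bound (Lemma~\ref{lemma-2}) to show that, on a barrier event $\EE_n$ for the root-to-$x_n$ path, the probability that \emph{every} subtree hanging off that path stays below $\sqrt{L_t(x_n)}$ is at least a uniform constant $\tilde c(A)$ (Lemma~\ref{lemma-2.7a}). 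This reduces the lower bound to a single‑path barrier probability $\ge\hat c(t)\,b^{-n}$, established via the Bessel-to-Brownian change of measure (Lemma~\ref{lemma-3}), the auxiliary ``floor'' event $\GG_n$ that tames the Radon--Nikodym density, and the ballot estimate Lemma~\ref{lemma-A.1}. What the paper's route buys is that it sidesteps the second moment entirely: only a one‑sided (lower) barrier estimate is needed, and all two‑point control is replaced by the independence of subtree maxima together with the already‑available one‑point upper tail. Your route, in exchange, is self‑contained (it does not lean on Abe's Proposition~3.1), but you must supply both a lower bound on the single‑path barrier probability \emph{and} a matching upper bound uniform in the starting height (for the continuations beyond the MRCA), plus carry out the $k$-indexed pair bookkeeping — more work, especially to keep everything uniform in $t\ge 1$ where $\sqrt{L_t}$ along the path is a $0$-dimensional Bessel process rather than a Brownian motion and the density $\sqrt{r/B_u}\exp\{-\frac38\int B_s^{-2}\}$ has to be controlled from both sides. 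You correctly flag that the hard content sits in these one-dimensional ballot/change-of-measure lemmas; the paper proves analogous upper-bound ballot lemmas (Lemmas~\ref{lemma-2.6}--\ref{lemma-2.7}) only later, for the sharp right tail in Proposition~\ref{prop-3}, so nothing in the paper directly hands you the two‑sided estimates your second‑moment step would need. The proposal is thus plausible and not flawed, but it is a longer road and leaves the decisive estimates unproved rather than reduced to cited facts.
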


The proof of \eqref{E:1.2}, which we address next, will require several ingredients and rather precise estimates, due to the fact that we  wish  to work uniformly in~$t\ge1$. First we note that we only need to produce a uniform lower bound \eqref{E:1.2} for $t\le n$. Indeed,  Abe~\cite{A18} has also proved:

\begin{lemma}
\label{lemma-2a}
There is~$\tilde c > 0$ and~$n_0\ge1$ such that for all $n\ge n_0$, $t\ge n$ and~$u\in[0,2\sqrt n]$,
\begin{equation}
\label{E:2.4w}
P^\varrho\Bigl(\,\max_{x\in\BbbL_n}\sqrt{L_t(x)}- \sqrt t- a_n(t)\ge u\Bigr)\ge \tilde c (1+u) \texte^{-2u\slb} \,.
\end{equation}
\end{lemma}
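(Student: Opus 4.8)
\medskip\noindent
\textit{Proof proposal.} Since \eqref{E:2.4w} is the lower-tail counterpart of Lemma~\ref{lemma-2}, the plan is to obtain it by a truncated second-moment (Paley--Zygmund) argument, modeled on the classical lower bound for the upper tail of the maximum of a branching random walk --- here the tree-GFF of \eqref{E:1.12i}. Fix $t\ge n$ and $u\in[0,2\sqrt n]$ and put $m:=a_n(t)+u$, so that $m=n\slb+o(n)$ and the event in \eqref{E:2.4w} reads $\{\max_{x\in\BbbL_n}(\sqrt{L_t(x)}-\sqrt t)\ge m\}$. For a leaf $x\in\BbbL_n$ let $\varrho=v_0(x),\dots,v_n(x)=x$ be the root-to-$x$ path and set $S_k(x):=\sqrt{L_t(v_k(x))}-\sqrt t$. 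By the Markov property of $\{L_t(\cdot)\}$ under restriction to subtrees (Lemma~\ref{lemma-M}), $k\mapsto S_k(x)$ is a Markov chain with an explicit transition kernel, and in the relevant regime $L_t\sim t\gg n$ its increments are --- by the Kac moment formula and the multivariate CLT, as in \eqref{E:1.13r} --- close to i.i.d.\ $\NN(0,1/2)$, so the process behaves like the tree-GFF. I would then let $N$ be the number of leaves $x$ for which $S_n(x)\in[m,m+1]$ and $S_k(x)\le\frac kn m+\gamma_k$ for all $1\le k\le n$, where $\gamma_k=C\log\!\bigl(k\wedge(n-k)+1\bigr)$ is the standard tilted logarithmic barrier, imposed precisely to keep pairs of leaves with a deep common ancestor from dominating the second moment.

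The two moments of $N$ would then be estimated as in the branching-random-walk proof. For the first moment, by symmetry of $\T_n$, $\E N=b^n\,P^\varrho(x\text{ is counted})$ for a fixed leaf; computing this from the explicit law of the ray chain $(S_0(x),\dots,S_n(x))$ --- whose large-value asymptotics are read off from the Laplace transform of the local time on a single segment --- together with a Bramson-type ballot estimate for staying below the barrier should give $\E N\gtrsim(1+u)\,\texte^{-2u\slb}$, uniformly in $t\ge n$ and $u\in[0,2\sqrt n]$; the precise form of $a_n(t)$ (in particular the $-\tfrac3{4\slb}\log n$ and $-\tfrac1{4\slb}\log\tfrac{n+\sqrt t}{\sqrt t}$ terms) is exactly what makes this hold across the whole range $t\ge n$, the last term absorbing the crossover between the ``$t\gtrsim n^2$, pure BRW'' regime and the ``$n\le t\ll n^2$'' regime. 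For the second moment, splitting $\E[N^2]=\sum_{k=0}^n\sum_{x\wedge y\text{ at depth }k}P^\varrho(x,y\text{ both counted})$ over the depth $k$ of the most recent common ancestor $x\wedge y$, conditioning on $L_t$ along the path $\varrho\to x\wedge y$, and using the Markov property (the two leaves become conditionally independent given $S_k$) with sub-Gaussian two-point upper tails from the Kac formula, the barrier constraint should force the geometric sum over $k$ down to $\E[N^2]\le C\,\E N$. Paley--Zygmund then yields
\[
P^\varrho\Bigl(\max_{x\in\BbbL_n}\bigl(\sqrt{L_t(x)}-\sqrt t\bigr)\ge m\Bigr)\;\ge\;P^\varrho(N\ge1)\;\ge\;\frac{(\E N)^2}{\E[N^2]}\;\ge\;\frac1C\,\E N\;\ge\;\tilde c\,(1+u)\,\texte^{-2u\slb},
\]
which is \eqref{E:2.4w} once $n_0$ is taken large enough for the asymptotics above to kick in.

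The branching-random-walk bookkeeping (barrier, ballot estimate, two-point decomposition) is routine; the hard part will be the non-Gaussianity of $L_t$, i.e.\ producing the one- and two-point estimates for the square-root local-time chain along rays --- a local-CLT statement near the typical scale, with matching upper bounds in the deviation regime --- that hold \emph{uniformly} over all $t\ge n$ and $u\le2\sqrt n$, and keeping track of the $n\le t\ll n^2$ versus $t\gtrsim n^2$ crossover responsible for the extra $\log\tfrac{n+\sqrt t}{\sqrt t}$ in $a_n(t)$. (An alternative would be to import the known BRW lower bound through the Second Ray--Knight isomorphism, but --- in keeping with the remark near \eqref{E:1.24w} --- I would not pursue that route. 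As noted in the text, this estimate is due to Abe~\cite{A18}.)
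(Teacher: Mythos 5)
The paper does not give its own argument for this lemma; its proof is a one-line citation to Abe~\cite[Proposition~3.1]{A18}, where the lower tail is established alongside the upper-tail bound of Lemma~\ref{lemma-2}. Your sketch describes the natural route to such a lower bound --- a truncated second-moment (Paley--Zygmund) computation along root-to-leaf rays, with a logarithmically tilted barrier to suppress contributions from pairs of leaves with deep common ancestors, the subtree Markov property of Lemma~\ref{lemma-M} for conditional independence in the two-point estimate, and then $P(\max\ge m)\ge(\E N)^2/\E[N^2]$ --- and that is indeed the strategy of the cited reference. The strategy is sound. Two remarks. First, everything quantitative is deferred in your sketch (``should give,'' ``should force''); making the one- and two-point estimates for the ray chain uniform over $t\ge n$ and $u\in[0,2\sqrt n]$ is precisely the technical content of Abe's argument, and it is not carried out here. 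Second, rather than leaning on the multivariate CLT/Kac heuristic of \eqref{E:1.13r} --- which is asymptotic in $t$ at fixed $n$ and hence awkward to make uniform --- you should use the exact description of the ray chain as a time-sampled $0$-dimensional Bessel process (Lemma~\ref{lemma-3}) together with its explicit Radon--Nikodym derivative against Brownian motion \eqref{E:1.5}; this is how the paper handles the analogous barrier calculations in the proofs of Propositions~\ref{prop-1} and~\ref{prop-3}, and it avoids the $t\to\infty$ limit entirely.
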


\begin{proofsect}{Proof}
See Abe~\cite[Proposition 3.1]{A18}.
\end{proofsect}

(For the cases $n< n_0$ we invoke the limit statement \eqref{E:1.13r} along with the unbounded nature of the law of~$L_t$ for any fixed~$t$ and~$n$.)
As noted earlier, our proofs also rely strongly on the fact that the local time~$L_t$ enjoys  a  Markov property:

\begin{lemma}
\label{lemma-M}
Let~$x_1,\dots,x_m\in\T_n$ be vertices such that the subtrees $\T^{(1)},\dots,\T^{(m)}$ of~$\T_n$ rooted at these vertices are vertex-disjoint. Write~$n_i$ for the depth of~$\T^{(i)}$ and denote
\begin{equation}
\V(x_1,\dots,x_m):=\{x_1,\dots,x_m\}\cup\Bigl(\T_n\smallsetminus\bigcup_{i=1}^m\T^{(i)}\Bigr).
\end{equation}
Then, for each~$t>0$, conditional on $\{L_t(x)\colon x\in\V(x_1,\dots,x_m)\}$, the families
\begin{equation}
\{L_t(x)\colon x\in\T^{(i)}\}_{i=1}^m
\end{equation}
are independent with the~$i$-th family distributed as $\{L_u(x)\colon x\in\T_{n_i}\}$ for~$u:=L_t(x_i)$.
\end{lemma}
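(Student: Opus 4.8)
The plan is to reduce the claim to the strong Markov property of the underlying random walk $X$ applied at a suitable sequence of stopping times, combined with the special feature of the root-time parametrization in \eqref{E:1.8u}. The key observation is this: for the walk $X$ started at $\varrho$ and run until $\ell_{\tau}(\varrho)=t$, each subtree $\T^{(i)}$ is entered only through its root $x_i$, and between successive visits to $x_i$ the walk's behavior inside $\T^{(i)}$ is governed by an independent copy of the walk killed upon leaving $\T^{(i)}$ (i.e.\ upon stepping from $x_i$ towards $\varrho$). Thus the local-time field $\{L_t(x):x\in\T^{(i)}\}$ is built from the \emph{excursions} of $X$ into $\T^{(i)}$, and the only way the outside world communicates with $\T^{(i)}$ is through the number and ``input'' of these excursions, which is encoded by $L_t(x_i)$ — more precisely, by the total time the walk spends at $x_i$ before the root-clock reaches $t$.

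Concretely, I would first record the standard excursion decomposition: let $\sigma_0<\sigma_1<\cdots$ be the successive times at which $X$ enters $x_i$ from its parent, and $\sigma_j'$ the first subsequent time the walk steps back to the parent of $x_i$; the excursions $(X_{\sigma_j+s})_{0\le s\le \sigma_j'-\sigma_j}$ are, by the strong Markov property, i.i.d.\ copies of the walk started at $x_i$ and killed on exiting $\T^{(i)}$, and they are independent across the distinct subtrees $\T^{(1)},\dots,\T^{(m)}$ (since these are vertex-disjoint, an excursion in one cannot touch another). The field $\{L_t(x):x\in\T^{(i)}\}$ is the sum of the occupation measures of the excursions into $\T^{(i)}$ that are completed before the root-clock hits $t$, plus possibly a final incomplete excursion — but this last subtlety does not arise here because the process is stopped at $\wt\tau_\varrho(t)$, which is a jump time \emph{at the root}, hence at an instant when $X$ is sitting at $\varrho$ and therefore in no subtree $\T^{(i)}$ (using that $\varrho\notin\T^{(i)}$, which holds since $x_i\ne\varrho$ as $\T^{(i)}$ has finite depth inside $\T_n$). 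So $\{L_t(x):x\in\T^{(i)}\}$ is an \emph{integer number} $N_i$ of complete, i.i.d.\ excursion occupation measures.

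Next I would identify $N_i$ in terms of the conditioning $\sigma$-algebra. The number of excursions into $\T^{(i)}$ completed by root-time $t$ is a measurable function of the trace of $X$ on $\V(x_1,\dots,x_m)$ — indeed each excursion into $\T^{(i)}$ corresponds to exactly one ``departure from $x_i$ away from $\varrho$ and return,'' so counting excursions only needs the visits to $x_i$ and to the complement of the subtrees, i.e.\ precisely the restriction of the walk to $\V$. Better still, by the standard identity relating local time at a vertex to the number of downward excursions from it (a geometric/renewal computation for the walk, or simply the occupation-time interpretation), $N_i$ is determined by $L_t(x_i)$ together with the $\V$-trace, and the conditional law of the excursion count given $L_t(x_i)$ is the same whether we started at $\varrho$ with root-clock $t$ or started at $x_i$ with root-clock (at $x_i$) equal to $L_t(x_i)$. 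Then, conditionally on $\{L_t(x):x\in\V\}$ — which fixes $N_1,\dots,N_m$ and all the entrance data — the field in $\T^{(i)}$ is the sum of $N_i$ i.i.d.\ excursion occupation measures, independent across $i$; and this is exactly the description of $\{L_u(x):x\in\T_{n_i}\}$ for the walk started at the root of $\T^{(i)}$ run until it accumulates root-time $u=L_t(x_i)$ there. Matching these two descriptions gives the stated distributional identity and the conditional independence.

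The main obstacle — and the step that needs to be written carefully — is the measurability bookkeeping in the second paragraph: verifying that the stopping time $\wt\tau_\varrho(t)$ lands ``between'' excursions into every $\T^{(i)}$ (so that no truncated excursion contributes), and that the count $N_i$ and the entrance points are genuinely $\sigma(L_t(x):x\in\V)$-measurable rather than requiring finer information about the $\V$-trace. The first point is clean because $\wt\tau_\varrho(t)$ is a time at which $X=\varrho$; the second is where one invokes the precise renewal structure of visits to $x_i$. I would phrase this via the strong Markov property applied successively at the entrance times $\sigma_j$ into the subtrees, peeling off one excursion at a time, which makes both the independence across subtrees and the identification of the conditional law transparent without heavy notation. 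A completely parallel (and in places cleaner) route is to use the known representation of $\{L_t(x)\}_{x\in\T_n}$ under $P^\varrho$ as a Markov chain \emph{down the tree} — the $0$-dimensional squared-Bessel / Jacobi-type branching in the root-time parametrization — for which the asserted restriction-to-subtrees property is the defining branching property; I would mention this as the conceptual reason the lemma holds, while giving the excursion-theoretic argument as the self-contained proof.
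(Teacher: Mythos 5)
Your excursion approach is a legitimate, self-contained alternative to the paper's proof, which simply cites Ding's Lemma~2.6 for the one-step branching description --- conditional on $L_t(x)$, the children's local times $L_t(x_1),\dots,L_t(x_b)$ are i.i.d.\ Compound Poisson-Exponential with parameter $L_t(x)$ --- and then inducts down the tree; the ``Markov chain down the tree'' you mention at the end only as a conceptual remark \emph{is} the paper's entire proof.

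However, the identification in your final paragraph contains a genuine gap, and the measurability worry you flag is aimed at the wrong target. You assert that conditioning on $\{L_t(x)\colon x\in\V\}$ ``fixes $N_1,\dots,N_m$.'' It does not: the local-time field on $\V$ records only how much time was spent at each $x_i$, not how many excursions into $\T^{(i)}$ occurred, so $N_i$ remains genuinely random under the conditioning. Worse, even if $N_i$ were deterministic given the conditioning, the argument would not close, because the reference object $\{L_u(x)\colon x\in\T_{n_i}\}$ that you are trying to match is itself built from a \emph{random} number of root excursions, so a description as ``a sum of a fixed number of i.i.d.\ excursion measures'' cannot coincide with it. You do not need $N_i$ to be a function of the conditioning data at all. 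What actually closes the argument is a Poisson-thinning identity: the departures of $X$ from $x_i$ into each child $y$ lying in $\T^{(i)}$ form a rate-$1$ Poisson process on the clock of local time accumulated at $x_i$, independent across the children of $x_i$ and of the behavior of $X$ away from $\T^{(i)}$, and with a rate that does not change when the parent edge of $x_i$ is removed. Conditional on $L_t(x_i)=u$ (and on anything in $\V$), the number of departures into $y$ is therefore Poisson$(u)$, each followed by an i.i.d.\ excursion into the subtree of $y$ --- which is exactly the excursion description of $\{L_u(x)\colon x\in\T_{n_i}\}$. That thinning identity is precisely the content of Ding's recursion; once it replaces the ``fixes $N_1,\dots,N_m$'' clause, your strong-Markov bookkeeping, together with the clean observation that $\wt\tau_\varrho(t)$ occurs while $X=\varrho$ so no excursion is truncated, finishes the proof.
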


\begin{proofsect}{Proof}
As shown in Ding~\cite[Lemma~2.6]{Ding}, the local time on a tree has the following recursive structure: Given any non-leaf vertex~$x\in\T_n$, write $x_1,\dots,x_b$ for its descendants and~$\T(x)$ for the subtree of~$\T_n$ rooted at~$x$. Then, conditional on $\{L_t(z)\colon x\in\T_n\smallsetminus\T(x)\}$, the law of $(L_t(x_1),\dots,L_t(x_b))$  is that of independent copies of $\sum_{i=1}^N U_i$, where $\{U_i\}_{i\ge1}$ are i.i.d.\ exponentials and~$N$ is an independent Poisson with parameter~$L_t(x)$. By induction (whose details we leave to the reader), this readily yields the claim.
\end{proofsect}

Another important ingredient for us is the  explicit description of the local time process along the line of vertices from a leaf to the root.

\begin{lemma}
\label{lemma-3}
Let~$x_0 :=\varrho,  x_1,\dots,x_n$ be a path from the root to a leaf $x_n\in\BbbL_n$ and, given~$t>0$, let~$\{Y_s\}_{s\ge0}$ be the $0$-dimensional Bessel process started at~$\sqrt{2t}$ for~$t>0$. Then
\begin{equation}
\Bigl(\sqrt{L_t(x_1)},\dots,\sqrt{L_t(x_n)}\Bigr)\laweq\Bigl(\tfrac{1}{\sqrt{2}}Y_1,\dots,\tfrac{1}{\sqrt{2}} Y_n \Bigr).
\end{equation}
In addition, denoting by $\LL_Y^y$ the law of~$\{Y_s\colon s\le u\}$ with $Y_0=y$ a.s.\ and by~$\LL_B^x$ the law of the Brownian motion $\{B_s\colon s\le u\}$ started at~$B_0=x$ a.s., then for each~$r>0$
\begin{equation}
\label{E:1.5}
\frac{\textd\LL_Y^r}{\textd\LL_B^r}=\sqrt{\frac{r}{B_u}} \exp \left\{- \frac{3}{8} \int_0^u \textd s\,\frac{1}{B_s^2} \right\}\quad\text{\rm on }\{\hat\tau_0>0\},
\end{equation}
where $\hat\tau_0:=\inf\{t\ge0\colon B_t=0\}$.
\end{lemma}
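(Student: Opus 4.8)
The plan is to reduce the joint-law statement to the one-dimensional fact that $\ell_t(x)$, for a single vertex $x$ at depth one seen from the root conditioned on local time, is compound Poisson–exponential, and then iterate along the path. First I would recall the recursive description invoked in the proof of Lemma~\ref{lemma-M} (Ding~\cite[Lemma~2.6]{Ding}): under $P^\varrho$ with $\ell_{\widetilde\tau_\varrho(t)}(\varrho)=t$ fixed, if $x$ is a child of $\varrho$ then $L_t(x)$ has the law of $\sum_{i=1}^N U_i$ with $N\sim\mathrm{Poisson}(t)$ and $U_i$ i.i.d.\ $\mathrm{Exp}(1)$; and, conditionally on $L_t(x)$, the children of $x$ behave the same way with parameter $L_t(x)$, etc. Hence $\bigl(L_t(x_1),\dots,L_t(x_n)\bigr)$ is a Markov chain whose transition kernel from level $k$ to level $k+1$ is: given $L_t(x_k)=s$, draw $\mathrm{Poisson}(s)$-many i.i.d.\ exponentials and sum them. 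The key computational step is to identify this chain, after the substitution $s\mapsto \sqrt{2s}$, with the transition mechanism of the $0$-dimensional Bessel (squared Bessel of dimension $0$) process sampled at integer times. Concretely, BESQ$^0$ started at $x_0$ has, at time $1$, the law with Laplace transform $\exp\{-x_0\lambda/(1+\lambda)\}$, which is exactly the Laplace transform of $\sum_{i=1}^{\mathrm{Poisson}(x_0/2)}V_i$ with $V_i$ i.i.d.\ $\mathrm{Exp}(1/2)$ (an atom of mass $e^{-x_0/2}$ at $0$ plus an absolutely continuous part). Setting $X_k:=2L_t(x_k)$ one checks the Laplace transforms match, so $\bigl(2L_t(x_1),\dots,2L_t(x_n)\bigr)$ is distributed as BESQ$^0$ started from $2t$ sampled at times $1,\dots,n$; taking square roots gives $\bigl(\sqrt{2L_t(x_k)}\bigr)_k \laweq (Y_k)_k$ for $Y$ the BES$^0$ process from $\sqrt{2t}$, i.e.\ the displayed identity $\sqrt{L_t(x_k)}\laweq \tfrac1{\sqrt2}Y_k$.

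For the second display, the Radon–Nikodym formula on $\{\hat\tau_0>0\}$, I would invoke the standard absolute continuity between Bessel-type diffusions and Brownian motion. The BES$^0$ process $Y$ solves $\textd Y_s = \textd B_s - \tfrac1{2Y_s}\textd s$ before hitting $0$ (dimension $\delta=0$ gives drift $(\delta-1)/(2Y_s)=-1/(2Y_s)$), and this is a locally absolutely continuous perturbation of Brownian motion on $\{s<\hat\tau_0\}$. Applying Girsanov/Itô to $\log(B_s/r)$ together with the drift $b(x)=-1/(2x)$, the density of $\LL_Y^r$ with respect to $\LL_B^r$ on $\{\hat\tau_0>u\}$ is
\begin{equation}
\label{E:girs}
\exp\Bigl\{\int_0^u b(B_s)\,\textd B_s-\tfrac12\int_0^u b(B_s)^2\,\textd s\Bigr\}
=\exp\Bigl\{-\tfrac12\int_0^u \frac{\textd B_s}{B_s}-\tfrac18\int_0^u\frac{\textd s}{B_s^2}\Bigr\}.
\end{equation}
Itô's formula applied to $\log B_s$ gives $\int_0^u B_s^{-1}\textd B_s=\log(B_u/r)+\tfrac12\int_0^u B_s^{-2}\textd s$, and substituting this into \eqref{E:girs} collapses the stochastic integral and produces exactly $\sqrt{r/B_u}\,\exp\{-\tfrac38\int_0^u B_s^{-2}\,\textd s\}$, which is \eqref{E:1.5}. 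A small care point: one must restrict to $\{\hat\tau_0>u\}$ (equivalently $\{\hat\tau_0>0\}$ with the understanding that $u$ is fixed and the density is only claimed there) so that all the integrals converge and the Girsanov exponential is well-defined; the contribution from paths hitting $0$ is treated separately and is consistent with the fact that BES$^0$ is absorbed at $0$.

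The main obstacle I anticipate is not the Girsanov computation, which is routine, but rather making the first identification fully rigorous at the level of the \emph{joint} law rather than one-dimensional marginals: one needs that the Markov transition kernel "$s\mapsto$ sum of $\mathrm{Poisson}(s)$ i.i.d.\ exponentials" really coincides, under the map $s\mapsto 2s$, with the one-step BESQ$^0$ kernel, including the atom at $0$ (which corresponds to the walk on the tree not reaching the next vertex at all / the Bessel process being absorbed). This is a matching of two explicit infinitely-divisible kernels via Laplace transforms, so it is elementary, but it must be stated carefully because both processes have a killing/absorption feature at $0$ that has to line up. Once the kernels are matched, the Chapman–Kolmogorov / Markov property from Lemma~\ref{lemma-M} (restricted to the single path $x_0,\dots,x_n$, where the relevant subtrees are the singletons along the path) upgrades the marginal identification to the full joint statement automatically. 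I would also note that the normalization $\tfrac1{\sqrt2}$ is exactly the $1/\sqrt2$ appearing in the connection to the GFF scaled by $1/\sqrt2$ in \eqref{E:1.13r}, which is a useful consistency check.
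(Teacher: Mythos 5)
Your proposal gives a self-contained proof, whereas the paper simply cites Belius, Rosen and Zeitouni~\cite[Lemma~3.1(e) and~(2.12)]{BRZ} for both displays. Your route is sound and genuinely different in that respect; it is worth noting briefly what each approach buys. The paper's citation is shortest, but hides the mechanism; your argument exposes it: the one-step transition of $L_t$ along the path (from Ding's recursive description) is compound Poisson--Exponential, and after the substitution $s\mapsto 2s$ this matches, Laplace transform by Laplace transform, the one-step kernel of $\mathrm{BESQ}^0$ sampled at integer times, including the absorbing atom at~$0$. The Markov property of $L_t$ along the root--leaf path (the $m$-subtree version in Lemma~\ref{lemma-M} restricted to singletons) then upgrades marginal agreement to joint agreement. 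The Girsanov computation for \eqref{E:1.5} is also correct: with drift $b(x)=-1/(2x)$ the Cameron--Martin exponent is $-\tfrac12\int_0^u B_s^{-1}\,\textd B_s-\tfrac18\int_0^u B_s^{-2}\,\textd s$, and It\^o applied to $\log B_s$ converts the stochastic integral into $\log(B_u/r)+\tfrac12\int_0^u B_s^{-2}\,\textd s$, whence the stated $\sqrt{r/B_u}\,\exp\{-\tfrac38\int_0^u B_s^{-2}\textd s\}$.

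Two small points to tidy up. First, your displayed Laplace transform $\exp\{-x_0\lambda/(1+\lambda)\}$ for $\mathrm{BESQ}^0$ at time $1$ is not the one you actually use: the standard $\mathrm{BESQ}^0$ Laplace transform at time $1$ from $x_0$ is $\exp\{-x_0\lambda/(1+2\lambda)\}$, and this is what agrees with the compound Poisson sum $\sum_{i=1}^{\mathrm{Poisson}(x_0/2)}V_i$ with $V_i\sim\mathrm{Exp}(1/2)$ that you (correctly) describe; as written, your formula and your description of the compound law disagree with each other by a factor of two in $\lambda$. Second, the domain of validity of \eqref{E:1.5}: since $r>0$, the event $\{\hat\tau_0>0\}$ has full measure under both $\LL_Y^r$ and $\LL_B^r$, so the real content is that the density is the stated expression on $\{\hat\tau_0>u\}$ (where the integrals converge) and should be read as $0$ on $\{\hat\tau_0\le u\}$ because $\mathrm{BES}^0$ is absorbed at the origin while Brownian motion is not; you flag this correctly, and it is worth stating explicitly rather than as a parenthetical.
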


\begin{proofsect}{Proof}
See Belius, Rosen and Zeitouni~\cite[Lemma 3.1(e) and formula~(2.12)]{BRZ}.
\end{proofsect}

\subsection{Uniform lower bound}
Having dispensed with the preliminaries, we are now ready to start addressing the uniform lower bound in Proposition~\ref{prop-1}.
The proof will make use of a collection of numbers~$\{t_k\}_{k=0}^n$ depending on $n\ge1$ and $t\ge1$ that obey
\begin{equation}
\label{E:1.6}
\sqrt{t_k}+a_{n-k}(t_k)=\sqrt{t}+a_n(t),
\end{equation}
with the convention $a_0(t) := 0$ for all $t>0$. The next lemma shows that these are well defined and that $k\mapsto t_k$ grows approximately quadratically in~$k$.

\begin{lemma}
\label{lemma-4}
For each~$n\ge1$ and~$t\ge1$, there are unique $\{t_k\}_{k=0}^n\subseteq[1,\infty)$ satisfying~\eqref{E:1.6}. Moreover, $k\mapsto t_k$ is strictly increasing with $\sqrt{t_0} = \sqrt{t}$, $\sqrt{t_n} = \sqrt{t} + a_n(t)$ and
\begin{equation}
\label{E:1.7}
\sqrt{t_k} \geq \sqrt{t} + \frac{k}{n} a_n(t) - C \log \bigl(1+ k \wedge (n-k) \bigr), 
\quad  \  k=0, \dots, n \,,
\end{equation}
for some $C < \infty$ independent of~$n$.
\end{lemma}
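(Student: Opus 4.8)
The plan is to analyze the function $f_k(s) := \sqrt s + a_{n-k}(s)$ for fixed $k$, show it is a continuous increasing bijection of $[1,\infty)$ onto a suitable half-line, and invert \eqref{E:1.6} one $k$ at a time. First I would record the explicit form
$$
f_k(s) = \sqrt s + (n-k)\slb - \frac{3}{4\slb}\log(n-k) - \frac{1}{4\slb}\log\Bigl(\frac{n-k+\sqrt s}{\sqrt s}\Bigr)
$$
for $0\le k\le n-1$, and $f_n(s)=\sqrt s$. A direct computation of $\frac{d}{ds}f_k(s)$ shows it is strictly positive for all $s\ge 1$ (the $\sqrt s$ term dominates; the logarithmic correction contributes a term of order $s^{-1}(n-k+\sqrt s)^{-1}$ which is negligible), so $f_k$ is a strictly increasing continuous function on $[1,\infty)$ with $f_k(1)$ finite and $f_k(s)\to\infty$. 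Hence for each $k$ there is a \emph{unique} $t_k\in[1,\infty)$ solving $f_k(t_k)=\sqrt t+a_n(t)$, provided the target value $\sqrt t + a_n(t)$ lies in the range $[f_k(1),\infty)$; I would check this by noting $t_0$ solves $f_0(t_0)=\sqrt t + a_n(t)$ with the trivial solution $t_0=t\ge 1$, and then prove monotonicity of $k\mapsto t_k$, which forces all $t_k\ge t_0=t\ge 1$ and so keeps them in the domain. Monotonicity follows because $f_k(s)-f_{k+1}(s)=\slb - \frac{3}{4\slb}\log\frac{n-k}{n-k-1} - \frac{1}{4\slb}\log\frac{(n-k+\sqrt s)(n-k-1)}{(n-k-1+\sqrt s)(n-k)} > 0$ for all $n$ large (the $\slb$ term dominates the $O(1/(\slb\,(n-k)))$ corrections), which combined with $f_{k+1}(t_{k+1})=f_k(t_k)$ and strict monotonicity of $f_{k+1}$ gives $t_{k+1}>t_k$; the boundary cases $\sqrt{t_0}=\sqrt t$ and $\sqrt{t_n}=\sqrt t + a_n(t)$ are immediate from $f_0(t)=\sqrt t + a_n(t)$ and $f_n(s)=\sqrt s$.

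For the quantitative bound \eqref{E:1.7} I would work with $g_k:=\sqrt{t_k}$ and rewrite \eqref{E:1.6} as
$$
g_k = \sqrt t + a_n(t) - a_{n-k}(t_k) = \sqrt t + a_n(t) - (n-k)\slb + \frac{3}{4\slb}\log(n-k) + \frac{1}{4\slb}\log\Bigl(\frac{n-k+g_k}{g_k}\Bigr).
$$
Since $a_n(t) = n\slb + O(\log n)/\slb$, the leading behavior is $g_k = \sqrt t + k\slb + (\text{log corrections})$, so $g_k \le \sqrt t + k\slb + C\log n \le \sqrt t + k\slb + Ck$ (crudely), giving both $g_k = \sqrt t + \Theta(k\slb)$ and, in the regime $k\le n/2$, the bound $n-k+g_k \le C n$, hence $\log\frac{n-k+g_k}{g_k}$ is $O(\log(n/(k\slb))) = O(\log(1+(n-k)\wedge k))$-type (one has to be a bit careful but the $\log$ of a ratio of quantities each between $1$ and $Cn$ is $O(\log n)$, and one sharpens to $O(\log(1+k\wedge(n-k)))$ using $g_k\gtrsim \sqrt t + k\slb \gtrsim 1+k$). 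Plugging $a_n(t)-a_{n-k}(t_k) = k\slb - \frac{3}{4\slb}\log\frac{n}{n-k} - \frac{1}{4\slb}[\log\frac{n+\sqrt t}{\sqrt t} - \log\frac{n-k+g_k}{g_k}]$ and using $a_n(t)\ge k\slb\cdot\frac kn\cdot(\text{something})$ — more precisely interpolating $a_n(t) - a_{n-k}(t_k) \ge \frac kn a_n(t) - C\log(1+k\wedge(n-k))$ by treating the three correction terms separately — yields \eqref{E:1.7}. The symmetric $(n-k)$-truncation in the logarithm is handled by the same argument applied near $k=n$, where $n-k$ is small and $a_{n-k}(t_k)$ itself is $O(n-k)\slb + O(\log(n-k))$.

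The main obstacle I expect is the bookkeeping in \eqref{E:1.7}: one needs the error to be $C\log(1+k\wedge(n-k))$ \emph{uniformly in $t\ge 1$ and $n$}, not merely $C\log n$, which requires using the lower bound $g_k \ge \sqrt t + \Theta(k\slb)$ (so that the ``dangerous'' term $\frac1{4\slb}\log\frac{n-k+g_k}{g_k}$ is genuinely $O(\log(1+(n-k)/(k\slb)))$ and hence $O(\log(1+k\wedge(n-k)))$ after also bounding it crudely by $O(\log(1+(n-k)))$ when $k$ is large) together with the matching term $-\frac1{4\slb}\log\frac{n+\sqrt t}{\sqrt t}$ coming from $a_n(t)$, whose combination with $\frac kn a_n(t)$ must be controlled carefully so that the $t$-dependence cancels. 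Once the two logarithmic pieces are paired correctly — the $(n-k)$ part against the $\log(n/(n-k))$ in $a_n - a_{n-k}$, and the $\sqrt t$ part against the corresponding term in $a_n(t)$ — the residue is a pure function of $n,k$ bounded by $C\log(1+k\wedge(n-k))$, and the lemma follows.
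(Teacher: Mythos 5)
The existence/uniqueness/monotonicity half of your argument is correct in substance and parallels the paper's: both reduce to strict monotonicity of $s\mapsto\sqrt s+a_m(s)$ on $s\ge1$ and of $m\mapsto a_m(s)$ on $m\ge1$. The one soft spot there is that you establish $f_k(s)-f_{k+1}(s)>0$ only ``for all $n$ large''; the lemma needs it for every $n\ge1$, and it does hold uniformly (the worst case $m=1$, $t=1$ reduces to $\log b>\tfrac34\log2+\tfrac14\log\tfrac32$, which is true even at $b=2$), but your argument should verify this rather than invoke asymptotics.

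The quantitative bound \eqref{E:1.7} is where there is a genuine gap: your sketch correctly identifies the bookkeeping difficulty but then asserts, rather than derives, the needed estimate (the final sentence ``the residue is a pure function of $n,k$ bounded by $C\log(1+k\wedge(n-k))$'' \emph{is} \eqref{E:1.7}). Two concrete ingredients present in the paper's proof are missing. First, the clean preliminary bound $\sqrt{t_k}\le\sqrt t+\slb\,k$, obtained directly from monotonicity via $a_{n-k}(t_k)\ge a_{n-k}(t)\ge a_n(t)-\slb\,k$; you posit only the weaker $g_k\le\sqrt t+\slb k+C\log n$ and without derivation. Second, and decisively, the interpolation inequality $-1\le\log^+s-\tfrac sr\log^+r\le1+\log^+\bigl(s\wedge(r-s)\bigr)$ for $0\le s\le r$ (the paper's \eqref{E:1.10}, cited from [CHL17]), applied with $(s,r)=(n-k,n)$ and then with $(s,r)=(k/\sqrt t,(\sqrt t+n)/\sqrt t)$, is exactly what converts the individual $\log$-terms into a $C\log(1+k\wedge(n-k))$ error uniformly in $t\ge1$; you neither cite this nor supply a substitute. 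There is also a sign slip in your analysis: you propose to use a \emph{lower} bound on $g_k$ to control $\log\tfrac{n-k+g_k}{g_k}$, but since this term enters $g_k-\sqrt t-\tfrac kn a_n(t)$ with a plus sign and must compensate the negative $-\tfrac{n-k}{n}\cdot\tfrac1{4\slb}\log\tfrac{n+\sqrt t}{\sqrt t}$, one actually needs to \emph{lower} bound $\log\tfrac{n-k+g_k}{g_k}$, which requires an \emph{upper} bound on $g_k$ --- precisely what $\sqrt{t_k}\le\sqrt t+\slb\,k$ supplies.
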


\begin{proof}
Existence and uniqueness follows since $t\mapsto\sqrt t+a_{n}(t)$, resp., $n\mapsto a_n(t)$ are both strictly increasing on~$t\ge1$, resp., $n\ge1$. This implies existence and uniqueness of the solution to \eqref{E:1.6} as well as strict monotonicity of $k\mapsto t_k$. The solutions in the cases $k=0$ and $k=n$  are  checked by hand. 

To show~\eqref{E:1.7}, note that $t_k \geq t$ and the monotonicity of $s\mapsto a_n(s)$ yields $a_{n-k}(t_k) \geq a_{n-k}(t) \geq a_n(t) - \sqrt{\log b}\, k$, which  in view of~\eqref{E:1.6} gives the upper bound $\sqrt{t_k} \leq \sqrt{t} + \sqrt{\log b}\, k$. It follows that
\begin{equation}
\frac{\sqrt{t} + n}{\sqrt{t}}
\frac{\sqrt{t_k}}{\sqrt{t_k} + (n-k)} \leq 
\frac{\sqrt{t} + n}{\sqrt{t}}
\frac{\sqrt{t} + \sqrt{\log b}\, k} 
{\sqrt{t} + n} = 1 + \sqrt{\log b} \frac{k}{\sqrt{t}}
\end{equation}
and consequently that $a_n(t) - a_{n-k}(t_k)$ is at least
\begin{equation}
\label{E:1.9}
\sqrt{\log b}\, k - \frac{3}{4 \sqrt{\log b}} \log \Bigl(\frac{n}{n-k}\Bigr) - 
\frac{1}{4} \log \Bigl(\frac{k}{\sqrt{t}}\Bigr) - C  \,,
\end{equation}
for some $C < \infty$, uniformly in $t \geq 1$. 

In order to compare \eqref{E:1.9} with $\frac{k}{n} a_n(t)$, we note that
\begin{equation}
\label{E:1.10}
-1 \leq \log^+ s - \frac{s}{r} \log^+ r \leq 1 + \log^+ \bigl(s \wedge (r-s)\bigr) \,, 
\end{equation}
for all $0 \leq s \leq r$ (c.f. Lemma~3.3 in~\cite{CHL17}), where $\log^+s$ stands for $\log (s \vee 1)$. Using this for $r := (\sqrt{t}+n)/\sqrt{t}$ and $s := k/\sqrt{t}$ in the second inequality gives
\begin{equation}
\begin{split}
\log \Bigl(\frac{k}{\sqrt{t}}\Bigr)
& \leq 
\frac{k}{\sqrt{t}+n} \log \Bigl(\frac{\sqrt{t}+n}{\sqrt{t}}\Bigr) + 
1 + \log \Bigl(\frac{k}{\sqrt{t}} \wedge \frac{n-k+\sqrt{t}}{\sqrt{t}} \Bigr) \\
& \leq \frac{k}{n} \log \Bigl(\frac{\sqrt{t}+n}{\sqrt{t}}\Bigr) + 
2 + \log \bigl(k \wedge (n-k)\bigr) \,.
\end{split}
\end{equation}
On the other hand, plugging $s=n-k$ and $r=n$ in the first inequality in~\eqref{E:1.10} gives
\begin{equation}
\log \Bigl(\frac{n}{n-k}\Bigr) \leq \frac{k}{n} \log n +1 
\end{equation} 
In view of~\eqref{E:1.9} and the definitions of $a_n(t)$ and $\sqrt{t_k}$, this shows~\eqref{E:1.7}.
\end{proof}

Next we note that, thanks to the Markov property, it is sufficient to prove the lower bound in \eqref{E:1.2} just for~$t$ sufficiently large (albeit uniformly in~$n$). Indeed, we have:

\begin{lemma}
\label{lemma-2.6a}
For all $s,t>0$, the laws of $\{L_s(x)\colon x\in\T_n\smallsetminus\{\varrho\}\}$ and $\{L_t(x)\colon x\in\T_n\smallsetminus\{\varrho\}\}$ are mutually absolutely continuous. More precisely, for any $s,t>0$ and any Borel~$\EE\subseteq\R^{\T_n}$ that does not depend on the coordinate at~$\varrho$,
\begin{equation}
\label{E:2.9o}
P^\varrho(L_t\in\EE)\ge c(s,t) P^\varrho(L_s\in\EE)^2,
\end{equation}
where $c(s,t):=\texte^{(2s-t-s^2)b/t}$.
\end{lemma}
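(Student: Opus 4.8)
The plan is to peel off the Poisson (branching) randomness created at the root, reduce the statement to a comparison of two expectations of a single fixed function on $\Z_{\ge 0}^b$, and finish with a Cauchy--Schwarz bound on Poisson likelihood ratios. The square on the right is exactly what the last step produces, so no \emph{linear} comparison should be expected when $t<s$ --- which is the relevant regime when this lemma is used to reduce Proposition~\ref{prop-1} to large~$t$. \emph{Step 1 (a $t$-free representation).} By Lemma~\ref{lemma-M} applied at $\varrho$ (equivalently, by Ding's recursion from its proof), conditionally on $L_t(\varrho)=t$ the children's local times $L_t(x_1),\dots,L_t(x_b)$ are i.i.d., each distributed as $\sum_{i=1}^N U_i$ with $N\sim\mathrm{Poisson}(t)$ and $\{U_i\}$ i.i.d.\ unit exponentials; and, given these, the configurations on the $b$ subtrees rooted at $x_1,\dots,x_b$ are independent, the $j$-th distributed as $\{L_u(x):x\in\T_{n-1}\}$ with $u=L_t(x_j)$. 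Iterating this down the tree, the whole configuration $\{L_t(x):x\in\T_n\smallsetminus\{\varrho\}\}$ becomes a fixed measurable function of the vector $\boldN=(N_1,\dots,N_b)$ of the $b$ Poisson counts at the root together with an independent family of $t$-free auxiliary variables (the exponentials and a driving noise for the subtree recursions). Since $\EE$ does not involve the $\varrho$-coordinate, this gives
\[
P^\varrho(L_t\in\EE)=\sum_{\boldk\in\Z_{\ge 0}^b}\Bigl(\prod_{j=1}^b\texte^{-t}\,t^{k_j}/k_j!\Bigr)\,\Psi(\boldk),
\]
where $\Psi(\boldk)\in[0,1]$ is the conditional probability of $\EE$ given that those counts equal $\boldk$; crucially $\Psi\colon\Z_{\ge 0}^b\to[0,1]$ does not depend on $t$. (This is precisely where the hypothesis on $\EE$ enters.)

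\emph{Step 2 (Cauchy--Schwarz).} Put $p_\lambda(k):=\texte^{-\lambda}\lambda^k/k!$. Changing the base measure from $\mathrm{Poisson}(s)^{\otimes b}$ to $\mathrm{Poisson}(t)^{\otimes b}$,
\[
P^\varrho(L_s\in\EE)=\E_{\boldN\sim\mathrm{Poisson}(t)^{\otimes b}}\Bigl[\Psi(\boldN)\prod_{j=1}^b p_s(N_j)/p_t(N_j)\Bigr],
\]
and Cauchy--Schwarz together with $\Psi^2\le\Psi$ (valid since $0\le\Psi\le1$) yields
\[
P^\varrho(L_s\in\EE)^2\le P^\varrho(L_t\in\EE)\cdot\prod_{j=1}^b\E_{\mathrm{Poisson}(t)}\bigl[(p_s(N)/p_t(N))^2\bigr].
\]
Since $p_s(k)/p_t(k)=\texte^{t-s}(s/t)^k$ and $\E_{\mathrm{Poisson}(t)}[z^N]=\texte^{t(z-1)}$, the per-coordinate factor equals $\texte^{2(t-s)}\texte^{t((s/t)^2-1)}=\texte^{(t-s)^2/t}$, so the product over the $b$ coordinates is $\texte^{b(t-s)^2/t}$. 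Rearranging gives $P^\varrho(L_t\in\EE)\ge\texte^{-b(t-s)^2/t}\,P^\varrho(L_s\in\EE)^2$, which is the asserted inequality with $c(s,t)=\texte^{-b(t-s)^2/t}=\texte^{b(2s-t-s^2/t)}$. Applying the same bound with $s$ and $t$ interchanged shows that a Borel $\EE$ not involving $\varrho$ is null for $L_s$ iff it is null for $L_t$, i.e.\ the two laws are mutually absolutely continuous.

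\emph{Main difficulty.} The delicate part is Step~1: one must isolate \emph{exactly} the Poisson layer at the root and verify that, once $N_1,\dots,N_b$ are fixed, nothing remaining in the construction of the configuration on $\T_n\smallsetminus\{\varrho\}$ depends on $t$; this rests on Lemma~\ref{lemma-M} and its iteration. Everything afterwards is a one-line Cauchy--Schwarz plus the elementary identity $\E_{\mathrm{Poisson}(t)}[z^N]=\texte^{t(z-1)}$. Passing through $\Psi^2\le\Psi$ is lossy, but it is unavoidable for arbitrary (non-monotone) Borel $\EE$, and it is precisely this step that forces the square in the statement.
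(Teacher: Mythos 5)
Your proof is correct and follows essentially the same route as the paper's: isolate the Poisson layer at the children of $\varrho$, observe that once these counts are fixed the rest of the construction of $\{L_t(x)\colon x\in\T_n\smallsetminus\{\varrho\}\}$ is $t$-free (this is exactly what the paper encodes by writing $P^\varrho(L_s\in\EE)=E^\varrho\bigl(1_{\{L_t\in\EE\}}\prod_{i=1}^b f_{s,t}(N_t^{(i)})\bigr)$ with $f_{s,t}(n)=(s/t)^n\rme^{t-s}$), and then apply Cauchy--Schwarz. Your constant $\rme^{-b(t-s)^2/t}=\rme^{b(2s-t-s^2/t)}$ agrees with the one derived in the paper's proof (where $E(f_{s,t}(N_t)^2)=\rme^{t-2s+s^2/t}$); the formula as printed in the lemma's statement, read with standard precedence, has a small typo in the placement of $/t$, but your derivation matches the intended and proved value.
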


\begin{proofsect}{Proof}
Let~$N_t$, resp., $N_s$ Poisson random variables with parameters~$t$, resp.,~$s$ and let $f_{s,t}(n):=P(N_s=n)/P(N_t=n)=(s/t)^n\texte^{t-s}$ be the Radon-Nikodym derivative of their laws. We will now construct~$L_s$, resp.,~$L_t$ as follows: First we use the recursive structure of the local time to realize the local time at the descendants of~$\varrho$ via independent Poisson random variables $N_s^{(1)},\dots,N_s^{(b)}$, resp., $N_t^{(1)},\dots,N_t^{(b)}$ as detailed in the proof of Lemma~\ref{lemma-M} and then invoke the Markov property to generate the local time in the rest of the tree. For any Borel~$\EE\subseteq\R^{\T_n}$ not depending on the value at~$\varrho$ we have
\begin{equation}
P^\varrho(L_s\in\EE) = E^\varrho\Bigl(1_{\{L_t\in\EE\}}\prod_{i=1}^b f_{s,t}(N_t^{(i)})\Bigr).
\end{equation}
The Cauchy-Schwarz inequality then gives \eqref{E:2.9o} with~$c:=[E(f_{s,t}(N_t)^2)]^{-b}$. A calculation shows that $E(f_{s,t}(N_t)^2) = \texte^{t-2s+s^2/t}$.
\end{proofsect}

The proof of Proposition~\ref{prop-1} opens up by a calculation that converts the probability that the maximum occurs at a given vertex to a ``barrier estimate'' for the local-time profile along the path from the root to that vertex.

\begin{lemma}
\label{lemma-2.7a}
For all $A>0$ there is $\tilde c(A)\in[0,1]$ with $\tilde c(A)\to1$ as $A\to\infty$ such that the following holds for all $t\ge1$ and all $n\ge1$: Given $x_n\in\BbbL_n$, and writing~$(x_0,\dots,x_n)$ for the vertices on the unique path from the root to~$x_n$, set
\begin{multline}
\label{E:2.17u}
\quad
\EE_n:=\bigcap_{k=1}^{n-1}\biggl\{\sqrt{L_t(x_k)} 
< \sqrt{t_k} - A \log\bigl(1+k\wedge (n-k) \bigr)
\\+\frac kn\Bigl(\sqrt{L_t(x_n)}-\sqrt t-a_n(t)\Bigr)
\biggr\}.
\quad
\end{multline}
Then
\begin{multline}
\label{E:2.19}
\quad
P^\varrho\biggl(\sqrt{L_t(x_n)}=\max_{x\in\BbbL_n}\sqrt{L_t(x)}\ge \sqrt t+a_n(t)\biggr)
\\
\ge 
\tilde c(A) P^\varrho\biggl(\,\EE_n \cap\Bigl\{ \sqrt{L_t(x_n)}\ge\sqrt t+a_n(t)\Bigr\}\biggr)
\quad 
\end{multline}
\end{lemma}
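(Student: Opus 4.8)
\emph{Proof plan.} Write $(x_0,\dots,x_n)$, $x_0=\varrho$, for the spine and, for $j=1,\dots,n$ and $l=1,\dots,b-1$, let $y_{j,l}$ enumerate the children of $x_{j-1}$ distinct from $x_j$; the subtrees $\T^{(j,l)}$ rooted at $y_{j,l}$ have depth $n-j$, are pairwise vertex-disjoint, and together with the spine exhaust $\T_n$. Since $x_n$ is the only leaf on the spine, every other leaf of $\T_n$ lies in exactly one $\T^{(j,l)}$, whence
\begin{equation*}
\Bigl\{\sqrt{L_t(x_n)}=\max_{x\in\BbbL_n}\sqrt{L_t(x)}\Bigr\}=\bigcap_{j=1}^n\bigcap_{l=1}^{b-1}\Bigl\{\max_{x\in\BbbL_n\cap\T^{(j,l)}}\sqrt{L_t(x)}\le\sqrt{L_t(x_n)}\Bigr\}.
\end{equation*}
Let $\GG:=\sigma\bigl(L_t(x_0),\dots,L_t(x_n)\bigr)$. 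By Lemma~\ref{lemma-M} together with the recursive construction of the local time recalled in its proof, conditionally on $\GG$ the families $\{L_t(x)\colon x\in\T^{(j,l)}\}$ are independent over $(j,l)$, the $(j,l)$-th being distributed as $\{L_u(x)\colon x\in\T_{n-j}\}$ with $u=L_t(y_{j,l})$, while $(L_t(y_{j,1}),\dots,L_t(y_{j,b-1}))$ are i.i.d.\ Compound Poisson--Exponential with parameter $L_t(x_{j-1})$, independently over $j$. Since $\EE_n$ and $\{\sqrt{L_t(x_n)}\ge\sqrt t+a_n(t)\}$ are $\GG$-measurable, taking conditional expectations and then discarding the complement of $\EE_n$ (the integrand being nonnegative) gives
\begin{equation*}
P^\varrho\Bigl(\sqrt{L_t(x_n)}=\max_{x\in\BbbL_n}\sqrt{L_t(x)}\ge\sqrt t+a_n(t)\Bigr)\ \ge\ E^\varrho\Bigl(1_{\EE_n\cap\{\sqrt{L_t(x_n)}\ge\sqrt t+a_n(t)\}}\,\textstyle\prod_{j=1}^n Q_j^{\,b-1}\Bigr),
\end{equation*}
where $Q_j:=P\bigl(\max_{x\in\BbbL_{n-j}}\sqrt{L_U(x)}\le\sqrt{L_t(x_n)}\bigr)$ with $U$ an independent Compound Poisson--Exponential variable of parameter $L_t(x_{j-1})$, and $\max_{x\in\BbbL_0}\sqrt{L_U(x)}:=\sqrt U$ (matching the convention $a_0\equiv0$). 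By the Weierstrass product inequality and Bernoulli, $\prod_j Q_j^{b-1}\ge\bigl(1-\sum_{j=1}^n(b-1)(1-Q_j)\bigr)^+$, so it suffices to exhibit $\psi(A)$ with $\psi(A)\to0$ as $A\to\infty$ for which $\sum_{j=1}^n(b-1)(1-Q_j)\le\psi(A)$ on $\EE_n\cap\{\sqrt{L_t(x_n)}\ge\sqrt t+a_n(t)\}$, uniformly in $n\ge1$ and $t\ge1$; one then sets $\tilde c(A):=(1-\psi(A))^+$.

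To bound $1-Q_j$, put $M:=\sqrt{L_t(x_n)}-\sqrt t-a_n(t)\ (\ge0)$, abbreviate $\lambda_j:=L_t(x_{j-1})$, and condition on $U\sim\mathrm{CPE}(\lambda_j)$. Lemma~\ref{lemma-2}, applied with $n-j$ in place of $n$ and $U$ in place of $t$, yields
\begin{equation*}
1-Q_j\ \le\ E\Bigl[\,c\bigl(1+v_j(U)^+\bigr)\texte^{-2\slb\,v_j(U)^+}\wedge1\,\Bigr],\qquad v_j(u):=\sqrt t+a_n(t)+M-\sqrt u-a_{n-j}(u).
\end{equation*}
On $\EE_n$ one has, for $2\le j\le n$, $\sqrt{\lambda_j}<\sqrt{t_{j-1}}-A\log\bigl(1+(j-1)\wedge(n-j+1)\bigr)+\frac{j-1}nM$ (and $\lambda_1=t$). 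Using the defining relation $\sqrt{t_{j-1}}+a_{n-j+1}(t_{j-1})=\sqrt t+a_n(t)$ from \eqref{E:1.6}, the fact that $m\mapsto a_m(s)$ is increasing with $0\le a_{m+1}(s)-a_m(s)\le\slb$, and the slow (near-Lipschitz) dependence of $a_m(s)$ on $s$, namely $0\le a_m(s')-a_m(s)\le\frac1{4\slb}\log(1+m/\sqrt s)$ for $s'\ge s\ge1$, one converts the barrier into a lower bound for $v_j$ on the bulk $\{\sqrt U\le\sqrt{\lambda_j}+w\}$ of the CPE law of the shape $v_j(U)\ge A\log\bigl(1+(j-1)\wedge(n-j+1)\bigr)+c_0 M\,\frac{n-j+1}n-C_0-w$ with $c_0,C_0$ absolute; for small $U$ the bound is moot since $a_{n-j}(u)\to-\infty$ as $u\downarrow0$. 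Combining this with the sub-exponential tail estimate $P(\sqrt U\ge\sqrt{\lambda_j}+w)\le C\texte^{-cw}$ for $U\sim\mathrm{CPE}(\lambda_j)$, which controls the region where the gap degrades, gives $1-Q_j\le C'\bigl(1+(j-1)\wedge(n-j+1)\bigr)^{-2\slb A+C'}\texte^{-c_0'M(n-j+1)/n}$; since $\sum_{j\ge1}(1+(j-1)\wedge(n-j+1))^{-2\slb A+C'}$ converges with a value whose non-vanishing boundary contribution can be made small once $A$ is large, summing in $j$ produces the required $\psi(A)$.

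\emph{Main obstacle.} The crux is the penultimate step: converting the barrier constraint on the spine value $\lambda_j=L_t(x_{j-1})$ into a quantitative lower bound for the gap $v_j(U)=\sqrt{L_t(x_n)}-\sqrt U-a_{n-j}(U)$, where $U=L_t(y_{j,l})\sim\mathrm{CPE}(\lambda_j)$ is the \emph{sibling} subtree--root local time (not itself constrained by $\EE_n$), \emph{uniformly} in $t\ge1$, $n\ge1$, the level $j$ and the unbounded excess $M$, together with the proof that the upward CPE fluctuations $\sqrt U-\sqrt{\lambda_j}$ do not destroy that gap. This requires precise bookkeeping with the centering sequences $a_m(\cdot)$ and the reference times $t_k$ --- leaning on Lemma~\ref{lemma-4}, especially \eqref{E:1.6} and \eqref{E:1.7} --- and a sharp enough sub-exponential tail bound for the Compound Poisson--Exponential law; the tilt $\frac kn M$ in $\EE_n$ is precisely what keeps the estimate uniform over large $M$, since it raises the barrier enough that the spine can sit below it while still leaving a gap of order $M(n-j+1)/n$ at level $j$. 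The boundary levels ($j$ close to $1$ or to $n$, where the logarithmic margin in $\EE_n$ degenerates) call for separate, more delicate attention, the needed slack there coming from the unit index increase $a_{n-j+1}(t_{j-1})-a_{n-j}(t_{j-1})\approx\slb$ and, for the shallow subtrees near $j=n$, from direct CPE tail estimates.
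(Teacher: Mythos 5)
Your decomposition differs from the paper's in a way that matters. The paper removes the spine edges and works with the subtrees rooted at the spine vertices $x_k$ themselves: $\BbbL'_{n-k}(x_k)$ (the leaves that descend from $x_k$ but not through $x_{k+1}$) is a proper subset of $\BbbL_{n-k}$, and Lemma~\ref{lemma-M} shows that, conditional on the spine, the local time on this pruned subtree is a restriction of $\{L_u(x)\}_{x\in\T_{n-k}}$ with root time $u=L_t(x_k)$ --- a quantity that $\EE_n$ constrains directly. The paper then replaces $L_t(x_k)$ by a dominating \emph{deterministic} time $t_k'\le t_k$, chosen via \eqref{E:1.6} and the tilt so that $\sqrt{t}+a_n(t)+u\ge\sqrt{t_k'}+a_{n-k}(t_k')+r$, and reads off the bound $1-c'\rme^{-r\slb}$ from a single application of Lemma~\ref{lemma-2}; no integration over a random root time appears. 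You go one level deeper, to the sibling subtrees rooted at $y_{j,l}$, whose root times $U=L_t(y_{j,l})$ remain random (Compound Poisson--Exponential with parameter $\lambda_j=L_t(x_{j-1})$) even after conditioning on the spine, so you must integrate Abe's bound over the CPE law and control how its upward fluctuations degrade the gap $v_j$.

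This is workable in spirit, but your central estimate is asserted rather than proved. You claim $v_j(U)\ge A\log\bigl(1+(j-1)\wedge(n-j+1)\bigr)+c_0M(n-j+1)/n-C_0-w$ on $\{\sqrt U\le\sqrt{\lambda_j}+w\}$ without deriving it, and its derivation has to contend with (a) the case $U>t_{j-1}$, where the monotone increase of $s\mapsto\sqrt s+a_{n-j}(s)$ works against you and you must verify that the index increment $a_{n-j+1}(t_{j-1})-a_{n-j}(t_{j-1})$ plus the tilt still leave a margin of order $r_j:=A\log(\cdots)$, uniformly in $j$, $n$, $t$ and the unbounded excess $M$; and (b) the $s$-dependence of $a_m$, for which the bound you quote, $a_m(s')-a_m(s)\le\tfrac1{4\slb}\log(1+m/\sqrt s)$, is the total oscillation (order $\log n$) and too coarse --- what is actually needed is a bound in terms of $\log(\sqrt{s'}/\sqrt s)$. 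So the proposal outlines a genuinely different and more complicated route whose technical core remains to be filled in; the paper's decomposition at the level of the spine vertices sidesteps exactly these obstacles, since there the root time is pinned deterministically by $\EE_n$ and one application of Lemma~\ref{lemma-2} suffices.
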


\begin{proofsect}{Proof}
Fix~$x_n\in\BbbL_n$ and let~$(x_0,\dots,x_n)$ be the unique path from the root to~$x_n$. Removing the edges along this path splits $\T_n$ into a collection of disjoint subtrees rooted at the vertices on the path. Writing $\BbbL'_{n-k}(x_k)$ for the leaves of the subtree whose root is at~$x_k$ we then have
\begin{equation}
\label{E:1.8}
\begin{aligned}
P^\varrho\Bigl(&\sqrt{L_t(x_n)}=\max_{x\in\BbbL_n}\sqrt{L_t(x)}\ge \sqrt t+a_n(t)\Bigr)
\\
&=\int_{s\ge \sqrt t+a_n(t)} P^\varrho\Bigl(\,\max_{x\in\BbbL_n}\sqrt{L_t(x)}\le s,\,\sqrt{L_t(x_n)}\in\textd s\Bigr)
\\
&\ge\int_{s\ge \sqrt t+a_n(t)} P^\varrho\biggl(\,\EE_n\cap\bigcap_{k=0}^{n-1}\Bigl\{\max_{x\in\BbbL'_{n-k}(x_k)}\sqrt{L_t(x)} < s\Bigr\},\,\sqrt{L_t(x_n)}\in\textd s\biggr),
\end{aligned}
\end{equation}
where we also noted that the maximal local time occurs at a unique vertex almost surely.
 
Next we note that, by the Markov property of the local time (cf Lemma~\ref{lemma-M}), conditional on the local time at $x_0,\dots,x_n$, the maxima $\{\max_{x\in\BbbL'_{n-k}(x_k)}\sqrt{L_t(x)}\}_{k=0}^n$ are independent with law depending only on the value of the local time at the root vertex of the corresponding subtree. Using that $\BbbL'_{n-k}(x_k)$ is a (proper) subset of the leaves of a regular tree of depth~$n-k$, Lemma~\ref{lemma-M} shows that, for all~$r,u,\tilde u\in\R$ with $u\ge \tilde u$ and $\sqrt{t_k}-r+\frac kn \tilde u>0$,
\begin{multline}
\label{E:2.19s}
\quad P^\varrho\biggl(\,\max_{x\in\BbbL'_{n-k}(x_k)}\sqrt{L_t(x)}< \sqrt{t}+a_n(t)+u\,\bigg|\,\sqrt{L_t(x_k)}=\sqrt{t_k}-r+\frac kn \tilde u\biggr)
\\\ge
P^\varrho\Bigl(\,\max_{x\in\BbbL_{n-k}}\sqrt{L_{t_k'}(x)}<\sqrt{t}+a_n(t)+u\Bigr)
\quad
\end{multline}
for
\begin{equation}
t_k':=\begin{cases}
(\sqrt t_k-r+\tfrac kn u)^2,\qquad&\text{if }r\ge\frac kn u,
\\
t_k,\qquad&\text{else},
\end{cases}
\end{equation}
where we also used that $s\mapsto L_s$ is increasing. Since $t_k'\le t_k$, the identity \eqref{E:1.6} along with the upward monotonicity of $s \mapsto a_{n-k}(s)$ show
\begin{equation}
\sqrt{t}+a_n(t)+u\ge \sqrt{t_k'}+a_{n-k}(t_k')+r.
\end{equation}
Setting $c':=\sup_{r\ge0}c(1+r)\texte^{-r\slb }$ for~$c$ is as in \eqref{E:2.4u}, Lemma~\ref{lemma-2} then gives
\begin{equation}
P^\varrho\Bigl(\,\max_{x\in\BbbL_{n-k}}\sqrt{L_{t_k'}(x)}<\sqrt{t}+a_n(t)+u\Bigr)
\ge 1- c'\texte^{-r\slb}
\end{equation}
for all $r,u\ge0$ satisfying $\sqrt{t_k}-r+\frac kn u>0$.

Using the above for the choices $r:=A\log(1+k\wedge(n-k))$, $u:=\sqrt{L_t(x_n)}-\sqrt t-a_n(t)$ and $\tilde u \le u$ in the subtree rooted at~$x_k$, the restrictions imposed by~$\EE_n$ along with the aforementioned conditional independence yield
\begin{multline}
\label{E:2.18u}
P^\varrho\biggl(\,\EE_n\cap\bigcap_{k=0}^{n-1}\Bigl\{\max_{x\in\BbbL'_{n-k}(x_k)}\sqrt{L_t(x)}<\sqrt{t}+a_n(t)+s\Bigr\} \,\bigg| \,\sqrt{L_t(x_n)}-\sqrt t-a_n(t) = s\biggr)
\\
\ge 
\tilde c_n(A)P^\varrho\Bigl(\,\EE_n \,\Big|\, \sqrt{L_t(x_n)}-\sqrt t-a_n(t) = s \Bigr),
\end{multline}
where
\begin{equation}
\tilde c_n(A):=\prod_{k=1}^{n-1}\biggl(1-c'(1+k\wedge(n-k))^{-A\slb}\biggr)
\end{equation}
where we also noted that $\sqrt{t_k}-r+\frac kn u \le 0$ for some $k=1,\dots,n$ implies that both sides of~\eqref{E:2.18u} are zero.
Writing $\tilde c(A):=\max\{0,\inf_{n\ge1}\tilde c_n(A)\}$, the claim follows by noting that $n\mapsto\tilde c_n(A)$ is convergent once~$A$ is sufficiently large.
\end{proofsect}

We are now finally in a position to give:

\begin{proofsect}{Proof of Proposition~\ref{prop-1}}
Fix~$A>0$ be so large that~$\tilde c(A)$ in Lemma~\ref{lemma-2.7a} obeys~$\tilde c(A)>0$. Recall the event~$\EE_n$ from \eqref{E:2.17u}. It suffices to show that for all~$n\ge1$ and all $t\le n$,
\begin{equation}
\label{E:2.21}
P^\varrho\biggl(\,\EE_n \cap\Bigl\{ \sqrt{L_t(x_n)}\ge\sqrt t+a_n(t)\Bigr\}\biggr)\ge\hat c(t)b^{-n},
\end{equation}
where~$t\mapsto\hat c(t)$ is uniformly positive for~$t$ sufficiently large. Indeed, \eqref{E:2.21} lower bounds the probability on the left of \eqref{E:2.19} by $\tilde c(A)\hat c(t)b^{-n}$. Since that the maximum is a.s.\ achieved at a unique leaf, summing this over~$x_n\in\BbbL_n$ bounds the probability in \eqref{E:1.2} by $\tilde c(A)\hat c(t)$ from below. Lemmas~\ref{lemma-2a} and~\ref{lemma-2.6a} then complete the claim.

We thus have to prove \eqref{E:2.21}. Write~$\{Y_s\}_{s\ge0}$ for the $0$-dimensional Bessel process and denote the law with initial value~$\sqrt{2t}$ by~$P^{\sqrt{2t}}$. Pick $\eta\in(0,\slb)$, let $A':=A+C$ for~$C$ the constant from Lemma~\ref{lemma-4} and define the events
\begin{multline}
\quad
\FF_n:=\biggl\{\tfrac1{\sqrt2} Y_s \le \sqrt{t}+\frac{s}{n} a_n(t)
 - A'\log \bigl(1 + s \wedge (n-s)\bigr)
 \\
 + \frac sn\bigl(\tfrac1{\sqrt2}Y_n-\sqrt t-a_n(t)\bigr)\,\colon\,s\in[1,n-1]
\biggr\} \quad
\end{multline}
and
\begin{equation}
\GG_n:=\Bigl\{\tfrac1{\sqrt2} Y_s \ge \frac12\sqrt{t}+\eta s\colon\, s\in[0,n]
\Bigr\}.
\end{equation}
Using the bound in Lemma~\ref{lemma-4} for~$\sqrt{t_k}$ in \eqref{E:2.17u} and invoking Lemma~\ref{lemma-3} then shows that the probability in \eqref{E:2.21} is at least
\begin{equation}
\label{E:1.17.1}
P^{\sqrt{2t}}\biggl(\Bigl\{0\le\tfrac1{\sqrt2} Y_n - \sqrt t-a_n(t)\le\sqrt n\Bigr\}\cap \FF_n\cap \GG_n\biggr),
\end{equation}
where the upper bound by~$\sqrt n$ has been introduced for later convenience.

Next note that $Y_s\ge\frac1{\sqrt2}(\sqrt t+2\eta s)$ on~$\GG_n$ and so $\int_0^n Y_s^{-2}\textd s\le 2\int_0^\infty (\sqrt t+2\eta s)^{-2}\textd s$ uniformly in~$n\ge1$.
Invoking the second part of Lemma~\ref{lemma-3}, we may replace the $0$-dimensional Bessel process by standard Brownian motion $\{B_s\}_{s\ge0}$ and bound \eqref{E:1.17.1} from below by
\begin{multline}
\label{E:2.28i}
\texte^{-\frac34\int_0^\infty\frac{\textd s}{(\sqrt t+2\eta s)^2}} \biggl(\frac{\sqrt{t}}{\sqrt t+ a_n(t)+\sqrt n}\biggr)^{1/2}
\\
\times P^{\sqrt{2t}}\biggl(\Bigl\{0\le\tfrac1{\sqrt2} B_n - \sqrt t-a_n(t)\le\sqrt n\Bigr\}\cap \wt\FF_n\cap \wt\GG_n\biggr),
\end{multline}
where~$\wt\FF_n$ and~$\wt\GG_n$ are the events~$\FF_n$ and~$\GG_n$ above for~$Y_s$ replaced by~$B_s$ and where the prefactors are uniform lower bounds on the Radon-Nikodym derivative in \eqref{E:1.5}.

The probability on the right of \eqref{E:2.28i} will be handled by conditioning on~$B_n$. Define functions $f,g_t\colon[0,n]\to\R$ by
\begin{equation}
f(s):=\sqrt{2} A'\log\bigl(1+s\wedge(n-s)\bigr)\quad\text{and}\quad g_t(s):=\sqrt{\frac t2}-\sqrt2\,\tilde\eta(t) s,
\end{equation}
where $\tilde\eta(t):=\eta-\inf_{n\ge1}\frac{a_n(t)}n$.
Since the Brownian bridge starting from $\sqrt{2t}$ at time~$0$ and terminating at $\sqrt2(\sqrt t+a_n(t)+u)$ at time~$n$ has the same law as the sum of $s\mapsto \sqrt{2t}+\sqrt{2}\frac sn (a_n(t)+u)$ and the Brownian bridge from~$0$ at time~$0$ to~$0$ at time~$n$, a calculation shows that, for all~$u\ge 0$,
\begin{multline}
\label{E:2.36}
\quad
P^{\sqrt{2t}}\Bigl(\wt\FF_n\cap\wt\GG_n\,\Big|\,\tfrac1{\sqrt2} B_n = \sqrt t+a_n(t)+u\Bigr)
\\
\ge
P^0\Bigl( B\le  - f\text{ on }[1,n-1]\,\,\wedge\,\, B\ge  -  g_t\text{ on }[0,n]\,\Big|\,B_n=0\Bigr).
\quad
\end{multline}
Now observe that $t\mapsto\tilde\eta(t)$ is non-increasing with
\begin{equation}
\tilde\eta(t)\,\underset{t\to\infty}\longrightarrow\,\eta-\slb +\frac3{4\slb}\,\sup_{n\ge1}\frac{\log n}n \,.
\end{equation}
It follows that $\tilde\eta(t)$ is negative for all~$t$ large as soon as~$\eta<0.6\slb$. With this choice~$s\mapsto g_t(s)$  increases  linearly and so a standard Ballot estimate (cf Lemma~\ref{lemma-A.1}) shows that the probability on the right is at least $c'(t)/n$, where~$t\mapsto c'(t)$ is non-decreasing and strictly positive for~$t$ sufficiently large. This bounds the probability in \eqref{E:2.28i} from below by
\begin{equation}
\label{E:2.36i}
\frac{c'(t)}n
 \,P^{\sqrt{2t}}\Bigl(0\le\tfrac1{\sqrt2} B_n - \sqrt t-a_n(t)\le\sqrt n\Bigr)
\end{equation}
uniformly in~$n\ge1$ and~$t\ge1$. 

It remains to estimate the probability in \eqref{E:2.36i} which, as noted earlier, we need to do just for~$t\in[1,n]$. Noting that $\frac1{\sqrt2}B_n$ has mean~$\sqrt t$ and variance~$n/2$ under~$P^{\sqrt{2t}}$, a calculation shows
\begin{equation}
P^{\sqrt{2t}}\Bigl(0\le\tfrac1{\sqrt2} B_n - \sqrt t-a_n(t)\le\sqrt n\Bigr)
\ge c''(t)\frac{\sqrt n}{a_n(t)}\,\texte^{-\frac{a_n(t)^2}{n}},
\end{equation}
where $c''(t):=\frac12\pi^{-1/2}\texte^{-1}(1-\texte^{-q(t)})$ for $q(t):=\inf_{n\ge1}\sqrt 2 a_n(t)/\sqrt n$. Since~$t\mapsto q(t)$ is uniformly positive for~$t$ sufficiently large, so is~$t\mapsto c''(t)$. To bound the right-hand side note that
\begin{equation}
a_n(t)\le\slb\, n-\frac1{\slb}\log n+\frac1{4\slb}\log\sqrt t
\end{equation}
and so
\begin{equation}
\frac{a_n(t)^2}n\le n\log b - 2\log n +\frac12\log\sqrt t +\frac1{\log b}\frac{\tfrac{1}{16}(\log\sqrt t)^2
 + (\log n)^2 }n.
\end{equation}
The last term is at most~$1$ for all $t\in[1,n]$ and all $b\ge2$ and so, combining the above estimates, the probability in \eqref{E:2.21} is at least
\begin{equation}
c_0 \biggl(\frac{\sqrt{t}}{\sqrt t+ a_n(t)+\sqrt n}\biggr)^{1/2} \,\frac{c'(t)}n\,c''(t)\frac{\sqrt n}{a_n(t)} \frac{\texte^{-1}n^{2}b^{-n}}{(\sqrt t)^{1/2}},
\end{equation}
where~$c_0$ is a shorthand for the exponential prefactor in \eqref{E:2.28i}. Using that~$t\le n$ and $a_n(t)\le n\slb$ we then get \eqref{E:2.21}.
\end{proofsect}


\subsection{Proof of uniform tightness}
Having settled Proposition~\ref{prop-1}, we move to the proof of Theorem~\ref{thm-4}. Our first item of business is the upper bound for the conditional probability \eqref{E:2.2i} in the regime of small~$t$. We start by giving  a  small-$t$ asymptotic for  the probability of the conditional event.

\begin{lemma}
\label{lemma-2.10H}
For all $n\ge1$ and all~$t\ge0$,
\begin{equation}
b t\ge P^\varrho(\HH_{n,t})
\ge \frac{b-1}b (1-\texte^{-bt}).
\end{equation}
\end{lemma}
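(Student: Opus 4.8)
The plan is to bound $P^\varrho(\HH_{n,t})$ from above and below by comparing the event of hitting the leaves with the event of hitting the first level of the tree, and using the recursive/Markov structure. Recall from Lemma~\ref{lemma-M} that, conditionally on $L_t(\varrho)=t$, the local times at the $b$ descendants $x_1,\dots,x_b$ of $\varrho$ are i.i.d.\ copies of $\sum_{i=1}^N U_i$ with $N$ Poisson($t$) and $U_i$ i.i.d.\ Exponential($1$). In particular, $L_t(x_j)=0$ exactly when the corresponding Poisson variable is zero, which happens with probability $\texte^{-t}$ independently across the $b$ descendants. For the \emph{upper} bound, I would use that $\{\max_{x\in\BbbL_n}L_t(x)>0\}\subseteq\bigcup_{j=1}^b\{L_t(x_j)>0\}$, since the walk can reach a leaf only by first passing through some $x_j$; a union bound gives $P^\varrho(\HH_{n,t})\le \sum_{j=1}^b P(N^{(j)}\ge1)=b(1-\texte^{-t})\le bt$, using $1-\texte^{-t}\le t$.

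For the \emph{lower} bound, the key observation is that hitting one particular first-level vertex $x_1$ with positive local time is \emph{not} enough to guarantee reaching $\BbbL_n$, but it suffices to control things through one extra layer of recursion combined with a cruder estimate, OR — more cleanly — one argues directly. Actually the cleanest route: condition on the total number of excursions from $\varrho$ into the first level. Let $K$ be the number of excursions of the walk away from $\varrho$ before $\ell_\cdot(\varrho)$ reaches $t$; standard excursion theory for the continuous-time walk gives that $K$ is Poisson with parameter $bt$ (each edge at $\varrho$ contributes rate~$1$, so the clock for leaving $\varrho$ runs at rate $b$ over local time $t$). Conditionally on $K=k\ge1$, each excursion independently enters one of the $b$ subtrees uniformly, and within a subtree it is a random walk on $\T_{n-1}$ killed on return to $\varrho$; each such excursion reaches the leaves of that subtree with some probability $p_{n-1}>0$. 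Hence $P^\varrho(\HH_{n,t}\mid K=k)=1-(1-p_{n-1})^k\ge p_{n-1}$ for $k\ge1$... but this still carries an $n$-dependent constant, which is not what \eqref{E:2.9o}-style statements want.

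So instead I would prove the lower bound by \emph{induction on $n$}. Write $h_n(t):=P^\varrho(\HH_{n,t})$. Conditioning on the first-level local times and applying Lemma~\ref{lemma-M}, the event $\HH_{n,t}$ holds iff at least one subtree $\T^{(j)}$ (of depth $n-1$) has its leaves hit, i.e.\ iff for some $j$ the sub-configuration $\{L_{t}(x)\colon x\in\T^{(j)}\}$, which is distributed as $\{L_u(x)\colon x\in\T_{n-1}\}$ with $u=L_t(x_j)$, lies in $\HH_{n-1,u}$. Thus, with $S:=L_t(x_1)\laweq\sum_{i=1}^N U_i$,
\[
h_n(t)=1-\E\bigl[(1-h_{n-1}(S))\bigr]^{b}\ge 1-\E\bigl[1-h_{n-1}(S)\bigr]^b .
\]
For the base case $n=1$: $\HH_{1,t}$ is exactly the event that at least one of the $b$ Poisson($t$) variables is nonzero, so $h_1(t)=1-\texte^{-bt}\ge\frac{b-1}{b}(1-\texte^{-bt})$ trivially (in fact with equality up to the constant). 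For the inductive step, I would like to show that if $h_{n-1}(u)\ge\frac{b-1}{b}(1-\texte^{-bu})$ for all $u\ge0$, then the same holds for $h_n$. This reduces to a one-variable estimate on $\E[(1-\texte^{-bS})]$ where $S=\sum_{i=1}^N U_i$: compute $\E[\texte^{-bS}]=\E[(\frac{1}{1+b})^N]=\exp\{-t\,\frac{b}{1+b}\}$, so $\E[1-\texte^{-bS}]=1-\texte^{-tb/(b+1)}$, and then one needs $1-\bigl(1-\frac{b-1}{b}(1-\texte^{-tb/(b+1)})\bigr)^b\ge\frac{b-1}{b}(1-\texte^{-bt})$. \textbf{The main obstacle} is exactly verifying this last elementary but slightly delicate inequality uniformly in $t\ge0$ and $b\ge2$; I expect it to follow from convexity/monotonicity after substituting $y=\texte^{-t}$ and checking the inequality at $y=1$ (where both sides vanish) together with a derivative comparison, but it may require a small amount of care to nail down the constant $\frac{b-1}{b}$ rather than something weaker. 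If the clean induction inequality turns out not to close, a fallback is the direct excursion-counting argument sketched above, using that \emph{any} single excursion into a subtree reaches its leaves with probability exactly $b^{-(n-1)}\cdot(\text{Green-function factor})$, which is explicitly computable on the tree, combined with $P(K\ge1)=1-\texte^{-bt}$; this yields $h_n(t)\ge c_n(1-\texte^{-bt})$ with $c_n\to\frac{b-1}{b}$, and one then upgrades the constant by the same recursive comparison.
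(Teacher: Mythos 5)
Your upper bound is fine, though it follows a slightly different path from the paper: you condition on the first\nobreakdash-level local times (compound Poisson; $L_t(x_j)>0$ iff the corresponding Poisson count is $\ge1$) and union\nobreakdash-bound to get $b(1-\texte^{-t})\le bt$, whereas the paper simply notes $\HH_{n,t}\subseteq\{T\le t\}$ for $T$ the first jump time, which is Exponential$(b)$, giving $1-\texte^{-bt}\le bt$. Both are correct.

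The lower bound is where the genuine gap sits. The induction you propose does \emph{not} close. The recursion $h_n(t)=1-\bigl(\E[1-h_{n-1}(S)]\bigr)^b$ is right, and plugging in the induction hypothesis reduces the inductive step exactly to
\begin{equation}
\Bigl(\tfrac1b+\tfrac{b-1}{b}\,y\Bigr)^{b}\ \le\ \tfrac1b+\tfrac{b-1}{b}\,y^{\,b+1},\qquad y:=\texte^{-tb/(b+1)}\in[0,1],
\end{equation}
and this inequality is \emph{false} for $y$ near $1$ (equivalently, small $t$). For $b=2$ and $y=0.9$ the left side is $0.95^2=0.9025$ while the right side is $0.5+0.5\cdot0.729=0.8645$. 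Indeed, both sides equal $1$ at $y=1$, but their derivatives there are $b-1$ and $(b^2-1)/b$ respectively, and $(b^2-1)/b>b-1$; hence the right side falls below the left as $y$ decreases from $1$. So the ``elementary but slightly delicate'' inequality you flagged as uncertain really does fail, and no amount of care with the constant will fix this route.

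Your fallback sketch is in essence the paper's proof, but you have introduced a difficulty that isn't there. After the first jump from $\varrho$ (which happens before local time $t$ accrues at the root with probability $1-\texte^{-bt}$, since $T\sim\mathrm{Exp}(b)$), the walk stands at a level\nobreakdash-$1$ vertex, and the gambler's\nobreakdash-ruin formula for the level chain (up rate $b$, down rate $1$) gives
\begin{equation}
P^{x_1}\bigl(\tau_{\BbbL_n}<\tau_\varrho\bigr)=\frac{(b-1)/b}{1-b^{-n}}\,,
\end{equation}
which is strictly \emph{greater} than $\frac{b-1}{b}$ for every finite $n$ and decreases to it as $n\to\infty$. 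The paper phrases the same bound as the escape probability $\frac{b-1}{b}$ on the infinite $b$\nobreakdash-ary tree, which is automatically a lower bound for the finite\nobreakdash-tree hitting probability. So no ``upgrade of the constant'' is needed --- the constant $c_n$ is already on the right side of $\frac{b-1}{b}$ --- and the upgrade mechanism you gesture at (the recursive comparison) is precisely the step that fails. One application of the strong Markov property at the first jump then gives $P^\varrho(\HH_{n,t})\ge\frac{b-1}{b}(1-\texte^{-bt})$ directly, which is the paper's argument.
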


\begin{proofsect}{Proof}
Let~$T$ be the time of the first jump of the walk and let~$\theta_t$ denote the shift by~$t$ on the path space of the walk. Write $\tau_{\BbbL_k}$ for the first hitting time of level~$\BbbL_k$. 
Then, for the walk started at~$\varrho$,
\begin{equation}
\{T\le t\}\supseteq \HH_{n,t}\supseteq\{T<t\}\cap\theta_T^{-1}\bigl(\{\tau_{\BbbL_n}<\tau_\varrho\}\bigr).
\end{equation}
Noting that~$T$ is exponential with parameter~$b$, the claim follows from the strong Markov property along with the fact the probability that a random walk on an infinite~$b$-ary tree started from a neighbor of the root never hits the root equals $\frac{b-1}b$.
\end{proofsect}

Lemma~\ref{lemma-2.10H} allows us to upgrade Abe's uniform upper bound in Lemma~\ref{lemma-2} to include conditioning on~$\HH_{n,t}$ when~$t$ is small.

\begin{lemma}
\label{lemma-2t}
There is~$c'>0$ such that for all~$t\in(0,1]$,~$u\ge0$ and $n \ge 1$,
\begin{equation}
\label{E:2.4q}
P^\varrho\Bigl(\,\max_{x\in\BbbL_n}\sqrt{L_t(x)}- \sqrt{1}- a_n(1)\ge u\,\Big|\,\HH_{n,t}\Bigr)\le c' (1+u) \texte^{-2u\slb} \,.
\end{equation}
\end{lemma}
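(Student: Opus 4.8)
The plan is to reduce \eqref{E:2.4q} to Abe's unconditional bound in Lemma~\ref{lemma-2} by paying only a constant price for the conditioning. The key point is that for $t\in(0,1]$ the conditional law $P^\varrho(\,\cdot\mid\HH_{n,t})$ can be compared, via a bounded Radon--Nikodym factor, to the law $P^\varrho(\,\cdot\mid\HH_{n,1})$ (or directly to the unconditional law at time $1$), and the latter is controlled by Lemma~\ref{lemma-2} since $P^\varrho(\HH_{n,1})$ is bounded away from $0$ uniformly in $n$ by Lemma~\ref{lemma-2.10H}.

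Concretely, first I would use monotonicity $s\mapsto L_s$: since $t\le 1$, on the event $\HH_{n,t}$ we have $\max_x\sqrt{L_t(x)}\le \max_x\sqrt{L_1(x)}$, so
\begin{equation}
P^\varrho\Bigl(\max_{x\in\BbbL_n}\sqrt{L_t(x)}-\sqrt1-a_n(1)\ge u\,\Big|\,\HH_{n,t}\Bigr)
\le \frac{P^\varrho\bigl(\max_{x\in\BbbL_n}\sqrt{L_1(x)}-\sqrt1-a_n(1)\ge u\bigr)}{P^\varrho(\HH_{n,t})}.
\end{equation}
By Lemma~\ref{lemma-2.10H}, $P^\varrho(\HH_{n,t})\ge \frac{b-1}{b}(1-\texte^{-bt})$, which for $t\in(0,1]$ behaves like a constant times $t$ and can be as small as order $t$; that is too weak on its own. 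So instead I would restrict attention to the regime where this crude bound suffices, namely $t$ bounded below, and handle genuinely small $t$ separately.

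For the small-$t$ regime, the efficient route is to condition further on the first jump. Let $T$ be the time of the first jump (Exponential with parameter $b$) and let $z$ be the child of $\varrho$ that the walk moves to. On $\HH_{n,t}$ we have $T<t$, and by the strong Markov property the configuration $\{L_t(x)\colon x\in\T_n\}$ given $\{T=r,\ X_T=z\}$ is, within the subtree rooted at $z$, distributed as a local-time configuration for a walk on a depth-$(n-1)$ tree killed at its root, run for the residual ``root time''; crucially this residual root time is at most $t\le 1$. Hence $\max_{x\in\BbbL_n}\sqrt{L_t(x)}$ on $\HH_{n,t}$ is stochastically dominated by $\max_{x\in\BbbL_{n-1}}\sqrt{L_1(x)}$ under $P^\varrho$ (using $L_s$ increasing in $s$ and $t\le 1$), \emph{without} any conditioning cost, because we have already conditioned on the event that forces the walk into the subtree. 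Since $a_{n-1}(1)\le a_n(1)$ and $\sqrt1=1$, Lemma~\ref{lemma-2} applied at depth $n-1$ and time $1$ gives
\begin{equation}
P^\varrho\Bigl(\max_{x\in\BbbL_n}\sqrt{L_t(x)}-\sqrt1-a_n(1)\ge u\,\Big|\,\HH_{n,t}\Bigr)
\le P^\varrho\Bigl(\max_{x\in\BbbL_{n-1}}\sqrt{L_1(x)}-\sqrt1-a_{n-1}(1)\ge u\Bigr)
\le c(1+u)\texte^{-2u\slb},
\end{equation}
and absorbing the $b=1$ boundary case trivially, the claim follows with $c':=c$.

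The main obstacle I expect is the stochastic-domination step: making precise that conditioning on $\HH_{n,t}$ (equivalently, on the walk entering the leaf-carrying subtree below some child of the root before accumulating root-time $t$) lets one replace the full depth-$n$, root-time-$t$ local time by a depth-$(n-1)$, root-time-$\le 1$ local time without incurring a $t^{-1}$ factor. This uses the recursive description of the local time from Lemma~\ref{lemma-M} (and Ding~\cite[Lemma~2.6]{Ding}) together with the observation that the ``root time'' for the subtree rooted at the first child visited is exactly $L_t$ at that child, which is at most $t\le 1$; one must also check that conditioning on $\HH_{n,t}$ only adds the constraint that this subtree-root-time is positive, which can only decrease the conditional probability of an \emph{upper} tail event. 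I would write this out carefully via the first-jump decomposition and the strong Markov property, as in the proof of Lemma~\ref{lemma-2.10H}, and then cite Lemma~\ref{lemma-2} for the final bound.
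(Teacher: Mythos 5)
Your proposed route has a genuine gap in the ``small-$t$'' step, which is the heart of the lemma. You claim that after conditioning on the first jump $\{T=r,\,X_T=z\}$, the local-time configuration in the subtree rooted at $z$ is that of a depth-$(n-1)$ tree run for a residual root time that is ``at most $t\le1$.'' That is not correct. By the Markov property of Lemma~\ref{lemma-M}, the correct root time for the subtree rooted at the child $z$ is $L_t(z)$, namely the total time the walk (started at $\varrho$ and stopped when the time at $\varrho$ reaches $t$) spends at $z$ over \emph{all} its excursions into that subtree. The quantity $L_t(z)$ is Compound Poisson--Exponential with parameter $t$; it has mean $t$ but is unbounded, and conditionally on $\HH_{n,t}$ it is bounded below away from $0$, not above by $t$. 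It is emphatically not the ``remaining time at the root'' $t-r$, which is what you seem to be substituting. Consequently the claimed stochastic domination of $\max_{x\in\BbbL_n}\sqrt{L_t(x)}$ on $\HH_{n,t}$ by $\max_{x\in\BbbL_{n-1}}\sqrt{L_1(x)}$ fails, and the argument does not close.

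The paper takes a different route that you might find instructive: it does not try to avoid the small denominator $P^\varrho(\HH_{n,t})\asymp t$ but rather matches it by showing the \emph{numerator} is also of order $t$. Using the additive Markov structure $L_{t_1+\cdots+t_k}\laweq L_{t_1}^{(1)}+\cdots+L_{t_k}^{(k)}$ for independent copies, and the observation that failure of the threshold event at parameter $1$ forces failure in each of the $k=\lfloor1/t\rfloor$ summands at parameter $1/k$, one gets
\[
1-P^\varrho\bigl(\EE_1(u)\bigr)\le\bigl[1-P^\varrho\bigl(\EE_{1/k}(u)\bigr)\bigr]^k,
\]
where $\EE_s(u)$ is the event in \eqref{E:2.4q} with $L_s$ in place of $L_t$. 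Taking logarithms (restricting to $u$ large enough that $P^\varrho(\EE_1(u))<1/2$, by Lemma~\ref{lemma-2}) gives $P^\varrho(\EE_{1/k}(u))\le\frac2k P^\varrho(\EE_1(u))$, whence $P^\varrho(\EE_t(u))\le 4t\,P^\varrho(\EE_1(u))$ by monotonicity. Dividing by $P^\varrho(\HH_{n,t})\ge\frac{b-1}b(1-\texte^{-bt})\gtrsim t$ from Lemma~\ref{lemma-2.10H} cancels the $t$'s and then Lemma~\ref{lemma-2} at time $1$ finishes the proof. Your instinct to look for a factor that cancels $t$ in the conditioning is correct; the mechanism, however, has to come from a careful decomposition of the unconditional probability, not from a bound on the subtree root time that does not hold.
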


\begin{proofsect}{Proof}
We may assume that~$u$ is so large that the right-hand side of \eqref{E:2.4u} is less than~$1/2$. The Markov property of the random walk parametrized by the time spent at the root tells us that, for any integer $k\ge1$, any reals $t_1,\dots,t_k>0$ and with $L_{t_1}^{(1)},\dots,L_{t_k}^{(k)}$ denoting independent samples from the local time with time parameters $t_1,\dots,t_k$,
\begin{equation}
L_{t_1+\dots+t_k} \text{ under }P^\varrho \,\,\laweq \,\,L_{t_1}^{(1)}+\dots+L_{t_k}^{(k)}\text{ under }(P^\varrho)^{\otimes k}.
\end{equation}
The sum on the right being smaller than a constant implies that each term is smaller than that constant. Writing~$\EE_t(u)$ for the event in \eqref{E:2.4q}, this shows
\begin{equation}
1-P^\varrho\bigl(\EE_1(u)\bigr)\le \bigl[1-P^\varrho\bigl(\EE_{1/k}(u)\bigr)\bigr]^k.
\end{equation}
Since $P^\varrho(\EE_1(u))<1/2$ by our assumption on~$u$, the inequalities $\log(1-x)\ge-2x$ for $x\in(0,1/2)$ and $1-x\le\texte^{-x}$ for all~$x$ yield
\begin{equation}
P^\varrho\bigl(\EE_{1/k}(u)\bigr)\le\frac2k P^\varrho\bigl(\EE_1(u)\bigr).
\end{equation}
For $k:=\lfloor 1/t\rfloor$, which entails $t\le\frac1k\le 2t$, we then get $P^\varrho(\EE_{t}(u))\le 4tP^\varrho(\EE_1(u))$ thanks to the monotonicity of $t\mapsto\EE_t(u)$. The claim follows from Lemmas~\ref{lemma-2} and~\ref{lemma-2.10H}.
\end{proofsect}

A majority of our effort throughout the rest of this subsection will be spent on upgrading the uniform lower bound \eqref{E:1.2} to a lower-tail estimate for the maximal local time. The precise statement is the content of:

\begin{proposition}
\label{prop-2}
There are $\alpha_1,\alpha_2>0$ such that for all $n\ge1$, all~$t>0$ and all~$u\in[0,n]$,
\begin{equation}
\label{E:2.43}
P^\varrho\Bigl(\,\max_{x\in\BbbL_n}\sqrt{L_t(x)}- \sqrt{t} - a_n(t\vee1)<-u\,\Big|\, \HH_{n,t}\Bigr)\le \alpha_1\texte^{-\alpha_2 u}.
\end{equation}
\end{proposition}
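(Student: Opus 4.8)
\begin{proofsect}{Proof plan}
The plan is to reduce, by soft manipulations, to an \emph{unconditional} estimate on the local-time profile along a single root-to-leaf path, which is then handled using Proposition~\ref{prop-1}, the Markov property of Lemma~\ref{lemma-M}, and the squared-Bessel description of Lemma~\ref{lemma-3}. Throughout write $v:=\sqrt t+a_n(t\vee1)-u$. If $v\le0$ the event in \eqref{E:2.43} is empty, so we assume $v>0$; we may also assume $u$ exceeds a large absolute constant, \eqref{E:2.43} being trivial otherwise. For $t\ge1$ one has $\{\max_{x\in\BbbL_n}\sqrt{L_t(x)}\ge v\}\subseteq\HH_{n,t}$ and, by Lemma~\ref{lemma-2.10H}, $P^\varrho(\HH_{n,t})$ is bounded below uniformly; it therefore suffices to prove $P^\varrho(0<\max_{x\in\BbbL_n}\sqrt{L_t(x)}<v)\le C\texte^{-\alpha_2u}$ with $C<\infty$ independent of $n,t,u$. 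The case $t\in(0,1)$ is then deduced from the case $t=1$: applying the absolute-continuity inequality of Lemma~\ref{lemma-2.6a} (with $s=1$) to the root-independent event $\{\max_{x\in\BbbL_n}\sqrt{L_t(x)}\ge v\}$ and dividing by $P^\varrho(\HH_{n,t})\asymp bt$ from Lemma~\ref{lemma-2.10H} turns the $t=1$ bound into the required small-$t$ one up to adjusting constants.

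Fix $y\in\BbbL_n$ with ancestral path $\varrho=x_0,\dots,x_n=y$. Deleting the path edges partitions $\BbbL_n\setminus\{y\}$ into the leaf-sets $\BbbL'_{n-k}(x_k)$, $k=0,\dots,n-1$, each a disjoint union of $b-1$ regular trees of depth $n-k-1$; by Lemma~\ref{lemma-M}, conditionally on $(L_t(x_k))_{k=0}^n$ the maxima $M_k:=\max_{x\in\BbbL'_{n-k}(x_k)}\sqrt{L_t(x)}$ are independent, the $k$-th depending only on $L_t(x_k)$, and by Lemma~\ref{lemma-3} the vector $(\sqrt{L_t(x_k)})_{k=0}^n$ has the law of $(\tfrac1{\sqrt2}Y_k)_{k=0}^n$ for $Y$ the $0$-dimensional Bessel process from $\sqrt{2t}$. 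Writing $h_k(s,v):=P(M_k<v\,|\,L_t(x_k)=s)$ and $h^{\sf0}_k(s):=P(M_k=0\,|\,L_t(x_k)=s)$ and splitting according to whether $L_t(y)>0$ gives
\[
P^\varrho\Bigl(0<\max_{x\in\BbbL_n}\sqrt{L_t(x)}<v\Bigr)=\underbrace{E^{\sqrt{2t}}\Bigl[1_{\{0<Y_n<\sqrt2\,v\}}\textstyle\prod_{k<n}h_k\bigl(\tfrac12 Y_k^2,v\bigr)\Bigr]}_{=:\ \mathrm I}+\underbrace{E^{\sqrt{2t}}\Bigl[1_{\{Y_n=0\}}\bigl(\textstyle\prod_{k<n}h_k(\tfrac12 Y_k^2,v)-\prod_{k<n}h^{\sf0}_k(\tfrac12 Y_k^2)\bigr)\Bigr]}_{=:\ \mathrm{II}},
\]
where $E^{\sqrt{2t}}$ denotes expectation under the law $P^{\sqrt{2t}}$ of the $0$-dimensional Bessel process.

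For $\mathrm I$: fix a small $\epsilon>0$ with $\epsilon\slb<1$ and set $K:=\lfloor\epsilon u\rfloor$. Each $\BbbL'_{n-k}(x_k)$ contains a regular tree of depth $n-k-1$ whose time parameter is Compound Poisson--Exponential with parameter $L_t(x_k)$; applying Proposition~\ref{prop-1} to that subtree yields $h_k(s,v)\le 1-\delta_0\,P\bigl(\mathrm{CPE}(s)\ge\sigma_k\vee1\bigr)$, where $\delta_0>0$ is the constant of Proposition~\ref{prop-1}, $\mathrm{CPE}(s)$ is a Compound Poisson--Exponential variable with parameter $s$, and $\sigma_k$ solves $\sqrt{\sigma_k}+a_{n-k-1}(\sigma_k)=v$; a Lemma~\ref{lemma-4}-type computation shows $\sqrt{\sigma_k}\le\sqrt t+(k+1)\slb-u+C'$ uniformly in $t\ge1$. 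Hence $\prod_{k<n}h_k\le\exp\{-\delta_0\sum_{k\le K}q_k(\tfrac12 Y_k^2)\}$ with $q_k(s):=P(\mathrm{CPE}(s)\ge\sigma_k\vee1)$, so, conditioning on $\{Y_n>0\}$,
\[
\mathrm I\ \le\ P^{\sqrt{2t}}(Y_n>0)\;E^{\sqrt{2t}}\Bigl[\texte^{-\delta_0\sum_{k\le K}q_k(\tfrac12 Y_k^2)}\ \Big|\ Y_n>0\Bigr].
\]
Since $Y^2$ is a $\mathrm{BESQ}^0$ process --- a non-negative martingale absorbed at $0$ --- conditioning on $\{Y_n>0\}$ is a Doob $h$-transform which on the range $[0,K]$, with $K\ll n$, is comparable to a $\mathrm{BESQ}^4$ process started from $2t\ge2$. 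Under this law $\tfrac12 Y_k^2\ge2(\sigma_k\vee1)$ for all but an (exponentially tight) $o(u)$ number of $k\le K$, on each of which $q_k(\tfrac12 Y_k^2)\ge q_0>0$; thus $\sum_{k\le K}q_k(\tfrac12 Y_k^2)\ge cu$ with overwhelming probability, and $\mathrm I\le C\texte^{-c\delta_0u/2}$. This step is soft: it uses only Proposition~\ref{prop-1} and standard squared-Bessel facts.

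Term $\mathrm{II}$ is the crux and is handled by induction on $n$. On $\{Y_n=0\}$ the profile is absorbed at some level $M_0\le n$ and $h_k=h^{\sf0}_k\equiv1$ for $k\ge M_0$, so, using $\prod a_k-\prod a'_k\le\sum(a_k-a'_k)$, identifying $h_k(s,v)-h^{\sf0}_k(s)=P(0<M_k<v\,|\,L_t(x_k)=s)$, and estimating the surviving product by the same $q_k$-bound as for $\mathrm I$, one bounds $\mathrm{II}$ by $\sum_{k<n}E^{\sqrt{2t}}[1_{\{Y_n=0\}}(h_k(\tfrac12 Y_k^2,v)-h^{\sf0}_k(\tfrac12 Y_k^2))\,\texte^{-\delta_0\sum_{j\le K}q_j(\tfrac12 Y_j^2)}]$. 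A union bound over the $b-1$ trees of depth $m:=n-k-1$ inside $\BbbL'_{n-k}(x_k)$ expresses each $h_k(s,v)-h^{\sf0}_k(s)$ through the \emph{same} quantity $P^\varrho(0<\max_{x\in\BbbL_m}\sqrt{L_w(x)}<v)$ at the strictly smaller depth $m$, with $w$ distributed as $\mathrm{CPE}(s)$; granting \eqref{E:2.43} at depths $<n$ bounds this by $C\texte^{-\alpha_2(\sqrt w+a_m(w\vee1)-v)^+}$. One now distinguishes $M_0<K$ and $M_0\ge K$: in the first case the sum over $k$ runs only over $k<M_0<K\le\epsilon u$, and since $\epsilon\slb<1$ the resulting geometric sum $\sum_{k<\epsilon u}\texte^{-\alpha_2(u-k\slb)^+}$ is already $\le C\texte^{-\alpha_2(1-\epsilon\slb)u}$; in the second case the $q_j$-factor contributes the full $\texte^{-cu}$ decay, which absorbs the polynomial-in-$n$ loss coming from the number of surviving levels (one also uses $P^\varrho(\HH_{m,w})\le bw$ from Lemma~\ref{lemma-2.10H} to tame the late levels where the profile is near death). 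Putting these together yields $\mathrm{II}\le C\texte^{-\alpha_2'u}$ with $C$ and $\alpha_2'$ independent of $n$, which closes the induction once $\alpha_1$ is chosen large and $\alpha_2$ small (consistently with the rate produced in $\mathrm I$). The genuine difficulty lies here: the ``$\max>0$'' constraint cannot be discarded --- doing so leaves the $O(1)$ term $P^\varrho(\HH_{n,t}^{\cc})$ --- so the depth induction is unavoidable, and the real work is verifying that the per-level contributions sum with an $n$-independent constant \emph{and} the same exponential rate, which requires tracking precisely how the deficit $\sqrt t+a_n(t)-v$ degrades upon passing to a child vertex and to depth $n-k-1$.
\end{proofsect}
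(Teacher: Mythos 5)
Your plan is a genuinely different route from the paper's: you decompose along a single ancestral path $(\varrho, x_1,\dots,x_n=y)$ using the Bessel description of Lemma~\ref{lemma-3} and then try to win on each surviving level. The paper instead shows that, conditionally on $\HH_{n,t}$, the walk must spend $\ge 1$ unit of local time at $\gtrsim k$ \emph{distinct} leaves of $\BbbL_k$ for a $k\propto u$ (via a Geometric/excursion count, Lemma~\ref{lemma-1.5}, when $u>2\sqrt t$, or via the Ray--Knight coupling of Lemma~\ref{lemma-2.13} when $u\le2\sqrt t$), and then gets $(1-q)^k$ from Proposition~\ref{prop-1} applied independently in those $k$ subtrees --- with $q$ the uniform constant from Proposition~\ref{prop-1}. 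That sidesteps both difficulties you flag.

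Two concrete gaps in what you wrote:

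First, the reduction of $t\in(0,1)$ to $t=1$ via Lemma~\ref{lemma-2.6a} does not work. The Cauchy--Schwarz argument underlying that lemma gives $c(s,t)=[\,E(f_{s,t}(N_t)^2)\,]^{-b}=\rme^{-b(s-t)^2/t}\le1$ (the displayed $\rme^{(2s-t-s^2)b/t}$ is a misprint), so for $s=1$ and $t\downarrow0$ one has $c(1,t)=\rme^{-b(1-t)^2/t}\to0$ superpolynomially. After you divide by $P^\varrho(\HH_{n,t})\asymp bt$, the resulting lower bound on $P^\varrho(\max\ge v\,|\,\HH_{n,t})$ tends to $0$ as $t\downarrow0$, which cannot yield $\ge 1-\alpha_1\rme^{-\alpha_2 u}$. (A simple sanity check: with $\EE=\HH_{n,t}$ the inequality with the \emph{stated} $c(1,t)\ge1$ would force $bt\gtrsim P^\varrho(\HH_{n,1})^2$, false for small $t$.) The paper handles $t<1$ inside the case $u>2\sqrt t$ and never uses Lemma~\ref{lemma-2.6a} here; it is used elsewhere (end of the proof of Proposition~\ref{prop-1}) to push a lower bound from $t\le n$ to all $t\ge1$, the opposite direction.

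Second, and more seriously, your Term~$\mathrm{II}$ induction on depth is not shown to close with $n$-independent constants, and you yourself flag it as ``the real work.'' The inductive bound you would plug in is $\alpha_1\rme^{-\alpha_2(\sqrt w+a_m(w\vee1)-v)^+}\cdot P^\varrho(\HH_{m,w})$ with $m=n-k-1$ and $w\sim\mathrm{CPE}(L_t(x_k))$. Near absorption ($Y_k$ small) the exponent vanishes --- one does \emph{not} have $\sqrt w+a_{n-k-1}(w\vee1)-v\ge u-k\slb$ unless $w\gtrsim t_k$ --- and the only decay comes from $P^\varrho(\HH_{m,w})\le bw$. Whether the resulting sum over $k$ and the $w$-average produce a bound of the same form $\alpha_1\rme^{-\alpha_2 u}$ (rather than a weaker rate $\alpha_2'<\alpha_2$ and/or a constant that grows with $n$) is exactly what needs to be proved, and the sketch does not do so. Separately, applying both the telescoping inequality $\prod a_k-\prod a_k'\le\sum(a_k-a_k')$ \emph{and} a $q_j$-product bound to the same expression is not consistent: once you telescope you have lost the product structure you would need for the $q_j$ factor. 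The paper avoids this altogether by never conditioning on the behavior at a single fixed leaf and never splitting on $L_t(y)=0$; the event $\HH_{n,t}$ is handled once and for all by the strong Markov property at $\tau_{\BbbL_k}'$ (the first return to level $k$ after first reaching $\BbbL_n$), as in~\eqref{E:2.50}.

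Term~$\mathrm{I}$ (the $L_t(y)>0$ part) is in the right spirit --- it is essentially the same mechanism as the paper's Lemma~\ref{lemma-2.7a}/Proposition~\ref{prop-1} re-used in reverse --- but the BESQ$^4$ comparison and the claim that $\sum_{k\le K}q_k(\tfrac12Y_k^2)\gtrsim u$ with exponentially small failure probability also need quantitative support (e.g., a Gaussian upper tail for $\sup_{k\le K}|Y_k-\sqrt{2t}|$ under the conditioned law), which the sketch asserts but does not give.
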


Indeed, with \eqref{E:2.43} in hand we readily conclude:

\begin{proofsect}{Proof of Theorem~\ref{thm-4} from Proposition~\ref{prop-2}}
The bound \eqref{E:2.2i} follows by combining the statements of Proposition~\ref{prop-1}, Lemma~\ref{lemma-2t} and Proposition~\ref{prop-2}. For \eqref{E:2.3i} we note that, for each~$t>0$ fixed,
\begin{equation}
\sqrt{t}+a_n(t\vee1)=\slb\, n-\frac1{\slb}\log n+O(1)
\end{equation}
with $O(1)$ bounded as~$n\to\infty$ for any~$t>0$.
\end{proofsect}

It remains to construct a proof of Proposition~\ref{prop-2}. The argument is unfortunately somewhat complicated due to the need to distinguish two regimes depending, roughly, on whether~$u$ is smaller or larger than~$\sqrt t$. The latter case  must also reflect on the fact that the probability of the conditional event vanishes as $t\downarrow0$ and that our uniform estimate in Proposition~\ref{prop-1} only applies to~$t\ge1$. 

The strategy is nonetheless similar in both regimes: We first prove that, up to a probability that is exponentially small in~$u$, the local time accumulated after the walk has hit the leaves for the first time exceeds some~$s_k\ge1$ at more than~$r(k)$ vertices of~$\BbbL_k$, for some~$k$ satisfying $1\le k\le n$. If this works for an $s_k$ with
\begin{equation}
\label{E:2.44}
\sqrt{t}+a_{n}(t\vee1)-u
\le \sqrt{s_k}+a_{n-k}(s_k),
\end{equation}
then, by Proposition~\ref{prop-1} and the Markov property in Lemma~\ref{lemma-M}, conditional on the aforementioned event, the probability in \eqref{E:2.43} is at most $(1-q)^{r(k)}$, where~$q$ is the infimum in \eqref{E:1.2}. The key problem is thus to ensure the validity of~\eqref{E:2.44} along with $r(k)$ being at least a constant times~$u$, which we need to get exponential decay in~$u$.

\smallskip
We first address the regime where~$u/\sqrt t$ is small because it is considerably simpler. Here the parameters will be chosen in such a way that all of the vertices at depth~$k$ will carry a sufficiently large value of the local time. This relies on:

\begin{lemma}
\label{lemma-2.13}
For each~$\beta>\sqrt{\log b}$, $k\ge1$ and $t>4(\beta k)^2$,
\begin{equation}
\label{E:2.53}
P^\varrho\Bigl(\,\min_{x\in\BbbL_k}L_t(x)< (\sqrt t-2\beta k)^2\Bigr)\le 2\texte^{-(\beta^2-\log b) k}.
\end{equation}
\end{lemma}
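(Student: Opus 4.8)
\textbf{Proof plan for Lemma~\ref{lemma-2.13}.}
The plan is to control the minimum of $L_t$ over a level by a union bound over the $b^k$ vertices of $\BbbL_k$, reducing the problem to a one-dimensional lower-tail estimate for the local time at a single leaf of a depth-$k$ tree. For a fixed $x\in\BbbL_k$, Lemma~\ref{lemma-3} tells us that $\sqrt{L_t(x)}\laweq\frac1{\sqrt2}Y_k$, where $\{Y_s\}_{s\ge0}$ is the $0$-dimensional Bessel process started at $\sqrt{2t}$. Equivalently, $\sqrt{2L_t(x)}$ has the law of $Y_k$, the norm of a $0$-dimensional Bessel process, and the event $\{L_t(x)<(\sqrt t-2\beta k)^2\}$ becomes $\{Y_k<\sqrt2(\sqrt t-2\beta k)\}$, i.e.\ a statement that the Bessel process has dropped by at least $2\sqrt2\,\beta k$ from its starting point $\sqrt{2t}$ over a time interval of length~$k$.

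The first step is therefore to establish a clean exponential bound of the form $P^{\sqrt{2t}}(Y_k<\sqrt2(\sqrt t-2\beta k))\le 2\,b^{-k}\texte^{-(\beta^2-\log b)k}$, which after the union bound over $\BbbL_k$ yields exactly \eqref{E:2.53}. The cleanest route is to compare $Y$ with Brownian motion: on the event that $Y$ stays positive up to time~$k$ (which is implied by $t>4(\beta k)^2$ together with the constraint defining the event, since then the terminal value $\sqrt2(\sqrt t-2\beta k)$ is still positive and we may in fact restrict to paths staying above, say, half their terminal level), the Radon--Nikodym derivative $\textd\LL_Y^{\sqrt{2t}}/\textd\LL_B^{\sqrt{2t}}$ from \eqref{E:1.5} is $\sqrt{\sqrt{2t}/B_k}\,\exp\{-\frac38\int_0^k B_s^{-2}\textd s\}\le 1\cdot\exp\{-\tfrac38\int_0^k B_s^{-2}\,\textd s\}\le 1$ once $B_k\ge\sqrt{2t}$ — but since we are looking at the \emph{lower} tail, $B_k$ is small and the factor $\sqrt{\sqrt{2t}/B_k}$ is $\ge1$, so we instead bound it crudely using the lower bound on $B$ implied by a slightly larger barrier, or simply note that on $\{B_s\ge\frac12\sqrt{2t}$ for all $s\le k\}$ this factor is at most $2^{1/4}$ and the exponential weight is at most $1$. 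This reduces matters to $P^0(B_k<-2\sqrt2\,\beta k)$ for standard Brownian motion, which by the Gaussian tail bound is at most $\texte^{-(2\sqrt2\beta k)^2/(2k)}=\texte^{-4\beta^2 k}$; this is far stronger than the claimed $\texte^{-(\beta^2-\log b)k}b^{-k}=\texte^{-\beta^2 k}$, leaving ample room to absorb the $2^{1/4}$ prefactor and any loss from restricting to paths staying above $\frac12\sqrt{2t}$ (whose complement has probability at most $\texte^{-c t/k}$, negligible for $t>4(\beta k)^2$).

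The second and final step is bookkeeping: assemble the single-vertex bound, multiply by $|\BbbL_k|=b^k$, and verify the constants line up so that the total is $\le 2\texte^{-(\beta^2-\log b)k}$, where the factor $2$ comes from adding the probability of the auxiliary event $\{B_s<\frac12\sqrt{2t}$ for some $s\le k\}$ to the main Gaussian estimate. I expect the main obstacle to be purely technical rather than conceptual: correctly handling the Radon--Nikodym factor $\sqrt{r/B_u}$ in \eqref{E:1.5} when $B_u$ is atypically small, which is the regime relevant for a lower-tail estimate. The resolution is exactly the device used in the proof of Proposition~\ref{prop-1}, namely to work on the sub-event $\GG=\{Y_s\ge\frac12\sqrt{2t}$ on $[0,k]\}$ (or its Brownian analogue), on which both $\int_0^k B_s^{-2}\,\textd s$ and $\sqrt{r/B_k}$ are uniformly bounded, and to control $\GG^\complement$ separately by a standard maximal inequality; the hypothesis $t>4(\beta k)^2$ is precisely what makes $\GG^\complement$ negligible and keeps the terminal barrier $\sqrt2(\sqrt t-2\beta k)$ safely inside the region $Y>\frac12\sqrt{2t}$ is compatible with.
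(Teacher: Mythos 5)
Your route — change of measure from the $0$-dimensional Bessel representation of Lemma~\ref{lemma-3} to Brownian motion via~\eqref{E:1.5}, plus a union bound over $\BbbL_k$ — is genuinely different from the paper's. The paper instead invokes the Second Ray-Knight isomorphism (Zhai's version): couple $L_t$ with two independent Gaussian BRWs $h,\tilde h$ so that $L_t(x)+h_x^2=(\tilde h_x+\sqrt t)^2$; on $\{\max_x|h_x|\le\beta k\}\cap\{\max_x|\tilde h_x|\le\beta k\}$ one then has $\min_x L_t(x)\ge(\sqrt t-2\beta k)^2$ whenever $\sqrt t\ge 2\beta k$, so the lemma reduces to a one-line Gaussian tail bound for a single BRW maximum, with the factor $2$ coming from the two GFF copies.

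Your plan, however, has a concrete gap near the threshold $t\approx 4(\beta k)^2$. You assert that $t>4(\beta k)^2$ keeps the terminal level $a:=\sqrt2(\sqrt t-2\beta k)$ safely above $\tfrac12\sqrt{2t}$; that would require $\sqrt t-2\beta k>\tfrac12\sqrt t$, i.e.\ $t>16(\beta k)^2$ — four times the hypothesis. For $t\in(4(\beta k)^2,16(\beta k)^2)$ the level $a$ lies below $\tfrac12\sqrt{2t}$, and in fact $a\downarrow0$ as $t\downarrow 4(\beta k)^2$, so the event you are bounding is contained in $\GG^\complement$, the restriction to $\GG$ is vacuous, and the entire probability sits on paths of $Y$ that dip near $0$ (or hit $0$ and are absorbed, where \eqref{E:1.5} does not apply at all). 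There the factor $\sqrt{r/B_k}$ is unbounded, so dropping the compensating exponential $\exp\{-\tfrac38\int_0^k B_s^{-2}\,\textd s\}$, as your plan does, leaves an uncontrolled prefactor precisely where the estimate is hardest. Even away from the threshold, $\GG^\complement$ is not ``negligible'': at $t=4(\beta k)^2$ its probability is of order $\texte^{-t/(4k)}=\texte^{-\beta^2 k}$, exactly the claimed per-leaf rate, so the constant in \eqref{E:2.53} has to be extracted from this term — and then estimated for the Bessel process $Y$ (which has a downward drift), not for Brownian $B$ as the reflection bound you quote assumes. Two smaller slips: $\sqrt{r/(r/2)}=\sqrt2$, not $2^{1/4}$, and the absorbed paths $\{\hat\tau_0\le k\}$ must be split off before invoking \eqref{E:1.5}. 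A clean repair in your spirit bypasses the pathwise change of measure altogether: since $L_t(x)=\tfrac12 Y_k^2$ is a squared $0$-dimensional Bessel (Feller) diffusion, its Laplace transform is explicit, $E^\varrho(\texte^{-\mu L_t(x)})=\texte^{-\mu t/(1+\mu k)}$, and a Chernoff bound at $\mu=2\beta/(\sqrt t-2\beta k)$ gives $P^\varrho(L_t(x)<(\sqrt t-2\beta k)^2)\le\texte^{-4\beta^2 k}$, after which the union bound over the $b^k$ leaves finishes the proof with room to spare.
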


\begin{proofsect}{Proof}
We will make a convenient (and singular) use of the Second Ray-Knight Theorem of Eisenbaum, Kaspi, Marcus, Rosen and Shi~\cite{EKMRS} which in the version of Zhai~\cite{Zhai} says that there is a coupling of~$L_t$ with two copies~$h$ and~$\tilde h$ of the BRW with step distribution~$\NN(0,1/2)$ such that~$L_t$ is independent of~$h$ and, for all $x\in\T_n$,
\begin{equation}
\label{E:2.54}
L_t(x)+h_x^2 = (\tilde h_x+\sqrt t)^2.
\end{equation}
If $\max_{x\in\BbbL_k}|h_x|\le \beta k$ and $\max_{x\in\BbbL_k}|\tilde h_x|\le \beta k$ for some~$\beta>0$ with~$\sqrt t\ge 2\beta k$, then \eqref{E:2.54} forces $\min_{x\in\BbbL_k} L_t(x)\ge (\sqrt t-2\beta k)^2$. It follows that the probability in \eqref{E:2.53} is at most twice that of~$\max_{x\in\BbbL_k}|h_x|>\beta k$.  
A routine first moment estimate now yields \eqref{E:2.53}.
\end{proofsect}

With this in hand, we are ready to give:

\begin{proofsect}{Proof of Proposition~\ref{prop-2} for $u\le 2\sqrt t$}
As the probability in the statement is non-increasing in~$u$, we may replace~$u$ by quarter thereof and assume that~$u<\frac12\sqrt t$. Adjusting the constants in the statement allows us to suppose that~$u$, and thus also~$t$, is larger than any prescribed constant. In particular, we may and will assume that~$t\ge1$. Recall also that~$u\le n$, which means that also~$n$ may  be assumed large.

Fix $\beta>\sqrt{\log b}$. Since~$u$ (and thus~$t$) is large, there is an integer~$k\ge1$ for which~$s_k$ defined by
\begin{equation}
\sqrt{s_k}:=\sqrt{t-\sqrt t}-\beta k
\end{equation}
is meaningful and 
\begin{equation}
\label{E:2.56}
k\slb \le u-\sqrt{t}+\sqrt{s_k}+\frac1{8\slb}\log\frac t{s_k}
\end{equation}
holds. Taking~$k$ to be the largest integer with this property, the assumed relation between~$u$ and~$\sqrt t$ implies $u=(\beta+\slb)k+O(1)$. The condition \eqref{E:2.56} then ensures \eqref{E:2.44} and, since $\beta+\sqrt{\log b}>1$  and $u\le n$,  we have~$k<n$.

Recall that $\tau_{\BbbL_k}$ denotes the first hitting time of level~$\BbbL_k$ and~$\wt\tau_\varrho(s)$ stands for the first time the (actual) time at the root reaches~$s$. Using the notation~$\theta_t$ for the shift on the path space, the desired event is then contained in \begin{equation}
\label{E:2.55}
\bigl\{\tau_{\BbbL_n}>\wt\tau_\varrho(\sqrt t)\bigr\}\cup\theta_{\wt\tau_\varrho(\sqrt t)}^{-1}\biggl(\Bigl\{\max_{x\in\BbbL_n}\sqrt{L_{t-\sqrt t}(x)}< \sqrt t + a_n(t)-u\Bigr\}\biggr).
\end{equation}
The probability of the first event in \eqref{E:2.55} is exponentially small in~$\sqrt t$ which by the assumed relation between~$u$ and~$t$ is exponentially small in~$u$. For the second event in \eqref{E:2.55}, we first use the strong Markov property to drop the shift and then use Lemma~\ref{lemma-2.13} to remove the event in \eqref{E:2.53} with~$t$ replaced by~$t-\sqrt t$. This is permitted thanks to the assumption $u<\frac12\sqrt t$ which yields
\begin{equation}
\label{E:2.57}
\sqrt{t-\sqrt t}-2\beta k\ge \sqrt{t}-\frac{2\beta}{\beta+\slb}u+O(1)\ge1
\end{equation}
once~$u$ is sufficiently large. In light of \eqref{E:2.44} and the Markov property, the probability that $\sqrt{L_{t-\sqrt t}}\le\sqrt t+a_n(t)-u$ holds everywhere on~$\BbbL_n$ while $\sqrt{L_{t-\sqrt t}}$ exceeds the quantity on the left of \eqref{E:2.57} everywhere on~$\BbbL_k$ is at most $(1-q)^{b^k}$, where~$q$ is the double infimum in \eqref{E:1.2}. Since~$k$ is proportional to~$u$, this implies the claim.
\end{proofsect}

Next we move to the regime where~$u$ is larger than~$\sqrt t$, which includes the subtle case of $t\in(0,1)$. Recall the notation~$\ell_s(x)$ from \eqref{E:1.1q} for the actual time the path~$\{X_u\}_{0\le u\le s}$ spends at~$x$ and~$\tau_\varrho$ from \eqref{E:1.2q} for the first time the walk is at the root.

\begin{lemma}
\label{lemma-1.5}
There are~$\tilde \alpha_1,\tilde \alpha_2>0$ such that for each~$k=1,\dots n$ and each~$z\in\BbbL_k$,
\begin{equation}
\label{E:1.22}
P^z\Bigl(\sum_{x\in\BbbL_k}1_{\{\ell_{\tau_\varrho}(x)\ge1\}}<k\Bigr)\le \tilde \alpha_1\texte^{-\tilde \alpha_2k}.
\end{equation}
\end{lemma}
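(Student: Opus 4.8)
The plan is to work along the unique path $\varrho = y_0, y_1, \dots, y_k = z$ from $z$ to the root. For $j = 1, \dots, k-1$ let $S_j \subseteq \BbbL_k$ be the set of vertices whose ancestor at level $j+1$ is a child of $y_j$ other than $y_{j+1}$ — that is, the level-$k$ vertices lying in the $b-1$ subtrees hanging off $y_j$ away from the path. These sets are pairwise disjoint, so, writing $W_j := \#\{x \in S_j : \ell_{\tau_\varrho}(x) \ge 1\}$, it suffices to show $P^z\big(\sum_{j=1}^{k-1} W_j < k\big) \le \tilde\alpha_1 \texte^{-\tilde\alpha_2 k}$. There is ample room, since $|S_1| + \dots + |S_{k-1}| = b^{k-1} - 1$.

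Next I would set up the excursion decomposition. Let $\sigma_j := \inf\{t : X_t = y_j\}$, so $0 = \sigma_k < \sigma_{k-1} < \dots < \sigma_0 = \tau_\varrho$ with $X_{\sigma_j} = y_j$ deterministically. Applying the strong Markov property at the (deterministic-valued) times $\sigma_{k-1}, \dots, \sigma_1$ in turn, the path fragments $(X_t)_{\sigma_j \le t < \sigma_{j-1}}$, $j = 1, \dots, k-1$, are mutually independent, and the $j$-th fragment is the walk started at $y_j$ run until it first escapes the subtree $\T^{(j)}$ rooted at $y_j$ — equivalently, it makes a $\mathrm{Geom}\big(\tfrac1{b+1}\big)$ number of excursions out of $y_j$, each into a uniformly chosen child's subtree, before the escape step. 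By the self-similarity of the local-time law (Lemma~\ref{lemma-M}), an excursion into an off-path child of $y_j$ is distributed as a walk on $\T_{n-j}$ from a child of the root until it hits the root, and the vertices of $S_j$ it illuminates are its level-$(k-j)$ descendants with local time $\ge 1$. The quantitative input needed is a constant $q>0$, depending only on $b$, such that each off-path excursion illuminates at least one vertex of $S_j$ with probability $\ge q$: reaching level $k$ before returning to $y_j$ has probability bounded below by a gambler's-ruin estimate for the (transient, upward-biased) distance-to-root chain, and once a level-$k$ vertex is reached the local time deposited there during one visit-cluster is a compound-geometric sum of exponentials, hence exactly $\mathrm{Exp}(1)$, so it exceeds $1$ with probability $\texte^{-1}$.

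Combining these, the conditionally independent per-fragment contributions stochastically dominate independent variables with exponential moments, and if their means summed to comfortably more than $k$ a Chernoff bound would finish the proof. This is where the real difficulty lies: using only the fragment $[\sigma_j, \sigma_{j-1})$, the number of off-path excursions at $y_j$ is $O(1)$ and the branching it induces in the off-path subtree is merely critical when $b = 2$, so the per-site mean is below $1$ and $\sum_j$ of it falls just short of $k$. The fix is to also count the revisits of $y_j$ during the later fragments $[\sigma_{j'}, \sigma_{j'-1})$ with $j' < j$: since climbing back to a shallow ancestor against the downward drift is very likely, the total number of visits to $y_j$ — hence of off-path excursions there — grows geometrically in $j$, so the deeper ancestors alone illuminate $\gg k$ vertices of $S_j$. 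Incorporating this forces one to contend with the dependence between fragments; the cleanest route is to run the whole estimate as an induction on $k$, replacing the fixed per-excursion bound by the inductive bound for the walk on $\T_{n-j}$, and to prove along the way the a priori stronger but more robust statement that $\gg k$ vertices of $\BbbL_k$ receive local time at least $1$.
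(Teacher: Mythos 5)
Your plan correctly identifies a genuine difficulty but does not resolve it: as you observe, counting only the off-path excursions launched inside each fragment $[\sigma_j, \sigma_{j-1})$ yields an expected number of illuminated vertices that falls short of $k$ (notably when $b = 2$), and the fix you propose --- tallying revisits of $y_j$ during later fragments, then running an induction on $k$ --- is left as a sketch. This gap is material: the revisit counts are not independent across $j$ and are hard to sum cleanly, and the induction would need a precisely formulated statement whose base case and inductive step are not given. As it stands the argument is incomplete.

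The paper avoids the difficulty with a different decomposition. Rather than allocating credit level by level along the path, it fixes a single intermediate level $\ell$ (ultimately $\ell = \lfloor k/2 \rfloor$) and counts the excursions $R_\ell$ of the walk that start at level $k$, descend to level $\ell$, and return to level $k$ before hitting the root. A harmonic-function computation shows that $R_\ell + 1$ is Geometric with parameter $1-p_{k,\ell} \le b^{-\ell}$, so $R_\ell$ is typically of order $b^\ell$, exponentially large in $k$ --- this is precisely the slack your per-level bookkeeping lacks. Each such excursion re-enters $\BbbL_k$ at a uniformly random descendant of the last level-$\ell$ vertex it visits, so a coupon-collector bound $\binom{b^{k-\ell}}{r}\bigl(r/b^{k-\ell}\bigr)^m$ controls the event that $m$ excursions cover at most $r$ distinct leaves; and the independent exponential waiting times at the first hit of each leaf then convert ``many distinct leaves visited'' into ``many leaves with local time at least $1$.'' Taking $r = \beta k$ with $\beta$ slightly larger than $1$, $m = 2r$, and $\ell = \lfloor k/2 \rfloor$ makes all three estimates exponentially small in $k$, with no need to disentangle any cross-level dependence.
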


\begin{proofsect}{Proof}
The estimate consists of three steps that depend on naturals $\ell$, $m$ and $r$ whose specific values will be identified at the very end.
Pick~$z\in\BbbL_k$ and consider the Markov chain~$X$ started from~$z$ and observed until~$\tau_\varrho$.
Denote by~$R_\ell$ the number of the excursions of~$X$ that start at level~$k$, reach level~$\ell\in\{1,\dots,k-1\}$ and then return to level~$k$ before hitting the root. A harmonic function calculation shows that an excursion that has reached level~$\ell$ will return to level~$k$ before hitting the root with probability~$p_{k,\ell}:=\sum_{j=0}^{\ell-1}b^{-j}/\sum_{j=0}^{k-1}b^{-j}$. By the strong Markov property,~$R_\ell+1$ is Geometric with parameter~$1-p_{k,\ell}$. As $1-p_{k,\ell}\le b^{-\ell}$, we have
\begin{equation}
\label{E:2.45}
P^z\bigl(R_\ell< m\bigr)\le m b^{-\ell}
\end{equation}
holds for all~$m\ge1$ and all~$\ell\in\{1,\dots,k-1\}$.

Each excursion that reaches level~$\ell$ and returns to level~$k$ before reaching the root hits~$\BbbL_k$ at any of~$b^{k-\ell}$ descendants of the last point visited at level~$\ell$ equally likely. Ignoring that different last points at level~$\ell$ will lead to different sets of descendants at level~$k$ leads to the bound
\begin{equation}
\label{E:2.46}
P^z\Bigl(\,R_\ell\ge m,\,\sum_{x\in\BbbL_k}1_{\{\tau_x<\tau_\varrho\}}\le r\Bigr)
\le \binom{b^{k-\ell}}r\,\Bigl(\frac r{b^{k-\ell}}\Bigr)^m,
\end{equation}
where the binomial coefficient counts the number of ways to choose a set of $b^{k-\ell}-r$ unvisited points and $r/b^{k-\ell}$ bounds the probability that an excursion will avoid this set. 

The waiting times of the walk at the first hitting point at level~$k$ are exponential with mean at least~$1/b$ (the mean is~$1$ when~$k=n$). For each natural~$q\ge1$, the union bound shows
\begin{equation}
\label{E:2.47}
P^z\Bigl(\,\sum_{x\in\BbbL_k}1_{\{\tau_x<\tau_\varrho\}}>r,\,\sum_{x\in\BbbL_k}1_{\{\ell_{\tau_\varrho}(x)\ge1\}}<q\Bigr)\le\binom rq (1-\texte^{-b})^q,
\end{equation}
where the binomial coefficient expresses the number of ways to choose a set of size~$q$ from~$r$ vertices (namely, those satisfying $\tau_x<\tau_\varrho$) and $1-\texte^{-b}$ bounds the probability that the waiting time of the walk during the first visit to that vertex is less than one. 

In order to bound the right-hand sides above, note that $\binom rk\le (\frac rk)^k(\frac r{r-k})^{r-k}$. For $r:=\beta k$ with~$\beta>1$ such that~$\beta k$ is an integer, the right-hand side of \eqref{E:2.47} is bounded as
\begin{equation}
\label{E:2.48}
\binom{\beta k}k\le\Bigl[\beta\Bigl(\frac\beta{\beta-1}\Bigr)^{\beta-1}(1-\texte^{-b})\Bigr]^k.
\end{equation}
The term in the large square brackets tends to $1-\texte^{-b}$ as~$\beta\downarrow1$, so a choice of~$\beta>1$ can be made (with~$\beta k$ integer) for each~$k$ large enough to make \eqref{E:2.48} decay exponentially in~$k$ with a uniform rate. With this choice we now put $m:=2r$ and~$\ell:=\lfloor k/2\rfloor$. Then \eqref{E:2.45} decays exponentially in~$k$ and \eqref{E:2.46} even exponentially in~$k\log k$.
\end{proofsect}

We will also need:

\begin{lemma}
\label{lemma-2.12}
For any~$k=1,\dots,n-1$, any~$z\in\BbbL_k$ and with~$\scrF_k:=\sigma(\ell_{\tau_\varrho}(x)\colon x\in\BbbL_k)$,
\begin{equation}
P^z\Bigl(\max_{x\in\BbbL_n}\sqrt{\ell_{\tau_\varrho}(x)}\le\cdot\,\Big|\,\scrF_k\Bigr)
=\prod_{y\in\BbbL_k} P^\varrho\Bigl(\,\max_{x\in\BbbL_{n-k}}\sqrt{L_s(x)}\le\cdot\Bigr)\Big|_{s=\ell_{\tau_\varrho}(y)}
\end{equation}
\end{lemma}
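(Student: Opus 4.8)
The plan is to deduce Lemma~\ref{lemma-2.12} from the following \emph{subtree Markov property} for the walk started from a level-$k$ vertex. Fix $z\in\BbbL_k$ with $1\le k<n$ and, for $y\in\BbbL_k$, let $\T^{(y)}$ be the subtree of $\T_n$ rooted at~$y$; these are vertex-disjoint, each has depth $n-k$, and $\BbbL_n\cap\T^{(y)}$ is exactly the set of leaves of~$\T^{(y)}$. The claim is that, conditionally on $\{\ell_{\tau_\varrho}(y)\colon y\in\BbbL_k\}$ under~$P^z$, the profiles $\{\ell_{\tau_\varrho}(x)\colon x\in\T^{(y)}\}_{y\in\BbbL_k}$ are independent, with the $y$-th one distributed as $\{L_s(x)\colon x\in\T_{n-k}\}$ under~$P^\varrho$ for $s:=\ell_{\tau_\varrho}(y)$. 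Granting this, the lemma is immediate: since $\scrF_k=\sigma(\ell_{\tau_\varrho}(x)\colon x\in\BbbL_k)$ and $\max_{x\in\BbbL_n}\sqrt{\ell_{\tau_\varrho}(x)}=\max_{y\in\BbbL_k}\max_{x\in\BbbL_n\cap\T^{(y)}}\sqrt{\ell_{\tau_\varrho}(x)}$, conditioning on~$\scrF_k$ turns the maximum over~$\BbbL_n$ into an independent product over $y\in\BbbL_k$, the $y$-th factor being the law of $\max_{x\in\BbbL_{n-k}}\sqrt{L_s(x)}$ under~$P^\varrho$ evaluated at $s=\ell_{\tau_\varrho}(y)$.

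To prove the subtree Markov property I would use excursion theory at the vertices of~$\BbbL_k$. Fix $y\in\BbbL_k$. Since the interior $\T^{(y)}\setminus\{y\}$ is reached only through~$y$, the walk enters it in excursions, each starting with a jump from~$y$ to one of its~$b$ children and ending upon the first return to~$y$. Measuring the clock at~$y$ by the local time there, these down-excursions are initiated at total rate~$b$ (each child-edge carries rate~$1$), while the clock at~$y$ reaches the value $s:=\ell_{\tau_\varrho}(y)$ at time~$\tau_\varrho$, and this value is determined by the up-excursions from~$y$ and is therefore independent of the down-excursion data. Hence, by the strong Markov property applied at the successive jumps out of~$y$, conditionally on $\ell_{\tau_\varrho}(y)=s$ the number of down-excursions is Poisson$(bs)$, each picks a uniform child, and they are i.i.d.\ copies of the canonical $y\to c$ excursion (the walk on the graph formed by the subtree rooted at~$c$ together with the extra vertex~$y$, started at~$c$ and run until it hits~$y$). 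Finally — and this is the delicate point — these excursion families are, conditionally on $\{\ell_{\tau_\varrho}(y)\colon y\in\BbbL_k\}$, mutually independent over $y\in\BbbL_k$ with the $y$-th one depending only on $\ell_{\tau_\varrho}(y)$; this I would establish by censoring the walk on $\bigcup_{y\in\BbbL_k}(\T^{(y)}\setminus\{y\})$, equivalently by iterating the recursive local-time structure of Ding~\cite[Lemma~2.6]{Ding} exactly as in the proof of Lemma~\ref{lemma-M}.

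It remains to identify, for fixed $s>0$, the law of the $y$-profile given $\ell_{\tau_\varrho}(y)=s$ with that of $\{L_s(x)\colon x\in\T_{n-k}\}$ under~$P^\varrho$. For the latter the walk runs from the root~$\varrho'$ of~$\T_{n-k}$ until its local time at~$\varrho'$ reaches~$s$; since~$\varrho'$ has degree~$b$, the down-excursions from~$\varrho'$ are likewise initiated at total rate~$b$ in the local-time clock at~$\varrho'$, so their number is Poisson$(bs)$, each goes to a uniform child, and they are i.i.d.\ canonical excursions. Because the subtree of~$\T_n$ rooted at a child~$c$ of~$y$ and the subtree of~$\T_{n-k}$ rooted at the corresponding child of~$\varrho'$ are isomorphic as graphs with all vertex degrees matching (a vertex at level~$j$ in~$\T_n$ with $k<j\le n$ is a leaf iff $j=n$ iff the matched vertex at level~$j-k$ in~$\T_{n-k}$ is a leaf), the canonical excursion laws coincide, hence so do the induced local-time profiles on $\BbbL_n\cap\T^{(y)}\cong\BbbL_{n-k}$; this gives the identification and completes the argument. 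I expect the only genuine obstacle to be the conditional-independence assertion of the second paragraph: the walk is a single trajectory whose excursions into the various~$\T^{(y)}$ are a priori intertwined, and one must check that conditioning on the level-$k$ local times decouples them — which is exactly what the censoring argument (or the iteration of~\cite{Ding}) delivers.
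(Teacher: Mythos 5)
Your proof is correct and is essentially the same argument the paper invokes: the paper's proof is the one-line remark that the claim follows from the recursive local-time structure underlying Lemma~\ref{lemma-M}, and your excursion-theoretic derivation (counting the rate-$b$ marked Poisson process of down-excursions from each $y\in\BbbL_k$ in the local-time clock at~$y$, conditionally on the censored outer walk) is exactly what that recursive structure amounts to when unfolded. The only point you flag as delicate — conditional independence across the $b^k$ subtrees given the level-$k$ local times — is indeed what the censoring/iteration of Ding's recursion delivers, just as you indicate.
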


\begin{proofsect}{Proof}
This follows from the recursive property of the local time underlying the proof of the Markov property in Lemma~\ref{lemma-M}.
\end{proofsect}

With this we are ready to give:

\begin{proofsect}{Proof of Proposition~\ref{prop-2} for $u>2\sqrt t$}
Adjusting constants in the statement we may assume that $u>2\sqrt{t\vee1}$. Let~$k$ be the largest integer less than~$n$ that obeys
\begin{equation}
k\slb \le 1+u-\sqrt{t} -\frac1{8\slb}\log(t\vee1).
\end{equation}
This is designed to imply
\begin{equation}
\sqrt{t}+a_n(t)-u \le 1+a_{n-k}(1).
\end{equation}
As $\log(t\vee1)\le2\log(1+\sqrt t)\le 2\sqrt t$ while $8\slb\ge6$, we readily check that~$k$ is non-negative and, in fact, larger than a positive constant times~$u$. It thus suffices to show that the probability of interest is exponentially small in~$k$.

Recall that~$\tau_{\BbbL_k}$ is the first hitting time of level~$\BbbL_k$ and that~$\theta_t$ denotes the shift by~$t$ on the sample-path space of the random walk. Then $\tau_{\BbbL_k}':=\tau_{\BbbL_k}\circ\theta_{\tau_{\BbbL_n}}$ is the first time the random walk visits level~$k$ after hitting level~$n$ of the tree. Consider the events
\begin{equation}
\AA_k:=\Bigl\{\sum_{x\in\BbbL_k}1_{\{\ell_{\tau_\varrho}(x)\ge1\}}<k\Bigr\}
\end{equation}
and
\begin{equation}
\BB_k:=
\Bigl\{\sum_{x\in\BbbL_k}1_{\{\ell_{\tau_\varrho}(x)\ge1\}}\ge k\Bigr\}
\cap\Bigl\{\max_{x\in\BbbL_n}\sqrt{\ell_{\tau_\varrho}(x)}- \sqrt t - a_n(t)<-u\Bigr\}.
\end{equation}
The additive structure of the local time yields
\begin{multline}
\quad
\Bigl\{\max_{x\in\BbbL_n}\sqrt{L_t(x)}- \sqrt{t} - a_n(t)<-u\Bigr\}\cap
\bigl\{\tau_{\BbbL_k}'<\wt\tau_\varrho(t)\bigr\}
\\
\subseteq \theta_{\tau_{\BbbL_k}'}^{-1}(\AA_k\cup\BB_k)\cap \bigl\{\tau_{\BbbL_k}'<\wt\tau_\varrho(t)\bigr\}.
\end{multline}
Using that
\begin{equation}
\label{E:2.50}
P^\varrho\Bigl(\HH_{n,t}\,\triangle\,\bigl\{\tau_{\BbbL_k}'<\wt\tau_\varrho(t)\bigr\}\Bigr)=0,
\end{equation}
the strong Markov property at the stopping time~$\tau_{\BbbL_k}'$ gives, for any~$z\in\BbbL_k$,
\begin{equation}
P^\varrho\Bigl(\,\max_{x\in\BbbL_n}\sqrt{L_t(x)}- \sqrt{t} - a_n(t)<-u\,\Big|\, \HH_{n,t}\Bigr)
\le P^z(\AA_k\cup\BB_k),
\end{equation}
where we also used that, by the symmetries of the tree, the choice of~$z$ is immaterial.
The probability $P^z(\AA_k)$ is exponentially small in~$k$ by Lemma~\ref{lemma-1.5}. Writing~$q$ for the double infimum in \eqref{E:1.2}, Lemma~\ref{lemma-2.12} in turn shows
\begin{equation}
P^z(\BB_k)\le \bigl(1-q\bigr)^k.
\end{equation}
It follows that $P^z(\AA_k\cap\BB_k)$ decays exponentially in~$k$ which by the linear relation between~$k$ and~$u$ implies the claim.
\end{proofsect}

\section{Weak convergence of maximal local time}
\label{sec-3}\noindent
In this section we establish the existence of a weak limit of the maximal local time on the leaves for the random walk started at the root and run until the  total  time  spent  there has reached a given value.  In particular, we give proofs of Theorems~\ref{thm-2}, \ref{thm-1.3} and~\ref{thm-3} as well as Corollary~\ref{thm-2b}. 

\subsection{Two main theorems from uniform convergence}
 Let us start with Theorems~\ref{thm-2} and~\ref{thm-1.3}.  As noted earlier, with only a modest amount of additional care  we can show that the weak convergence in \eqref{E:2.3} takes place   uniformly in~$t$ such that~$t=o(n^2)$,  provided we introduce a suitable $t$-dependent part into the centering sequence.
In order to state this version of Theorem~\ref{thm-2}, let 
\begin{equation}
\wt W_n(t):=(t\vee1)^{-1/4}\,\texte^{-2\slb\,\sqrt t}\,\wt Z_n(t),
\end{equation}
 where 
\begin{multline}
\label{E:1.9b}
\wt Z_n(t) := 
b^{-2n} \sum_{x \in \BbbL_n} \biggl(n\slb - (\sqrt{L_t(x)}-\sqrt t)-\frac1{8\slb}\log\frac{L_t(x)\vee1}{t\vee1}\biggr)^+
\\
\times
(L_t(x)\vee1)^{1/4}\,
\rme^{2\slb\, \sqrt{L_t(x)}}.
\end{multline}
 is a slight extension of~$Z_n(t)$ from \eqref{E:1.9a} by terms that cannot be ignored when~$t$ is allowed to vary with~$n$. The uniform control then comes in:

\begin{theorem}
\label{thm-5} 
 Let $a_n(t)$ be as in \eqref{E:1.10a}.  Abbreviating
\begin{equation}
\label{E:3.3w}
F_{n,t}(u):= P^\varrho \Bigl(\,\max_{x\in\BbbL_n} \sqrt{L_t(x)} - \sqrt{t} - a_n(t\vee1) \le u \Bigr),
\end{equation}
there exists~$C_\star\in(0,\infty)$ such that for any positive sequence $\{t_n\}_{n\ge1}$ with~$t_n/n^2\to0$,
\begin{equation}
\label{E:3.3i}
\lim_{k\to\infty}\limsup_{n\to\infty}\sup_{0<t\le t_n}\,\biggl|\,F_{n,t}(u)-
E^\varrho \Bigl(\,\rme^{-C_\star \wt W_k(t) \rme^{-2u\slb}}\,\Bigr)\biggr|=0
\end{equation}
holds for all~$u\in\R$.
\end{theorem}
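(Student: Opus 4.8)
\smallskip\noindent
The plan is to run the standard ``Markov decomposition $+$ derivative-martingale'' scheme for the maximum of a log-correlated/branching process, adapted to the local time. The engine is the Markov property of Lemma~\ref{lemma-M}. Fix $k\ge1$ and cut $\T_n$ at level~$k$: conditionally on $\FF_k:=\sigma(L_t(x)\colon x\in\BbbL_k)$, the local-time profiles on the $b^k$ pendant subtrees are independent, the one rooted at $x\in\BbbL_k$ being distributed as the depth-$(n-k)$ local time run from its root with time parameter $L_t(x)$. Since the global maximum over $\BbbL_n$ is the maximum of the $b^k$ subtree maxima, this gives the exact identity
\[
F_{n,t}(u)=E^\varrho\Bigl[\prod_{x\in\BbbL_k}F_{n-k,\,L_t(x)}\bigl(\sigma_{n,k}(t,L_t(x))+u\bigr)\Bigr],\qquad
\sigma_{n,k}(t,s):=\bigl(\sqrt t+a_n(t\vee1)\bigr)-\bigl(\sqrt s+a_{n-k}(s\vee1)\bigr).
\]
The first step is the centering bookkeeping: using~\eqref{E:1.10a} one checks that for fixed $k$, uniformly over $0<t\le t_n$ and over the values of $s=L_t(x)$ relevant for the maximum, $\sigma_{n,k}(t,s)=\sqrt t-\sqrt s+k\slb-\tfrac1{8\slb}\log\tfrac{s\vee1}{t\vee1}+o(1)$ as $n\to\infty$, hence $\rme^{-2(\sigma_{n,k}(t,s)+u)\slb}=b^{-2k}\rme^{-2u\slb}\rme^{2\slb(\sqrt s-\sqrt t)}\bigl(\tfrac{s\vee1}{t\vee1}\bigr)^{1/4}(1+o(1))$; the hypothesis $t_n=o(n^2)$ enters precisely to keep the $\sqrt t$ and $\log t$ contributions subleading.

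\smallskip\noindent
The main analytic input, and the part I expect to be hardest, is a sharp one-term estimate for the right tail of a single subtree maximum: there is $C_\star\in(0,\infty)$ such that, uniformly over $N$ large, over $s$ in the relevant window, and over $v$ allowed to grow slowly with $N$,
\[
1-F_{N,s}(v)=\bigl(C_\star+o(1)\bigr)\,\rho(s)\,(1+v^+)\,\rme^{-2v\slb},
\]
where $\rho(s)$ is the mild explicit $s$-dependent factor that, after reassembly, reproduces the $L_s(x)^{1/4}$ and $\log\frac{L_s(x)\vee1}{s\vee1}$ weights in~\eqref{E:1.9b}. I would prove this by a Bramson-type truncated second-moment computation: by Lemma~\ref{lemma-3} the square-root local time along a root-to-leaf path is $\tfrac1{\sqrt2}$ times a $0$-dimensional Bessel process started at $\sqrt{2s}$, whose law is absolutely continuous with respect to Brownian motion with the density~\eqref{E:1.5}; restricting the first and second moments to leaves whose local-time profile stays below the linear barrier supplied by Lemma~\ref{lemma-4} makes the two moments comparable up to a constant (the usual ballot-estimate mechanism, cf.\ Lemma~\ref{lemma-A.1}), the Girsanov factor $\sqrt{r/B_u}$ in~\eqref{E:1.5} generates the polynomial prefactor, and the barrier estimate generates the $(1+v^+)$ term. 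Abe's uniform upper and lower tail bounds (Lemmas~\ref{lemma-2} and~\ref{lemma-2a}), the uniform lower bound of Proposition~\ref{prop-1}, and the tightness of Theorem~\ref{thm-4} provide the a priori control needed to carry out the truncation and the second-moment step uniformly in~$s$. The numerical value of $C_\star$ is then identified by specializing this asymptotic to the regime of large time parameter, where Abe's limit theorem (Corollary~1.3 of~\cite{A18}) pins it down and matches it to the branching-random-walk constant appearing in~\eqref{E:1.12i}.

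\smallskip\noindent
For the reassembly, substitute the tail asymptotic into the product identity and split $\BbbL_k$ into a ``bulk'' (vertices with $\sqrt{L_t(x)}-\sqrt t$ well below $k\slb$, so $\sigma_{n,k}(t,L_t(x))+u$ is large and $1-F_{n-k,L_t(x)}(\cdot)$ small) and the $O(1)$ ``extremal'' vertices near or above the level-$k$ barrier. On the bulk, $\log F_{n-k,L_t(x)}(\cdot)=-\bigl(1-F_{n-k,L_t(x)}(\cdot)\bigr)+O\bigl((1-F_{n-k,L_t(x)}(\cdot))^2\bigr)$, and summing the leading terms together with the centering asymptotics of the first step produces $-C_\star\rme^{-2u\slb}$ times a version of $\wt W_k(t)$ truncated to the bulk. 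Taking $E^\varrho$ and using $|\E(\rme^{-\lambda X})-\E(\rme^{-\lambda Y})|\le\lambda\,\E|X-Y|$ for $X,Y\ge0$, one obtains $\limsup_{n\to\infty}\sup_{0<t\le t_n}\bigl|F_{n,t}(u)-E^\varrho(\rme^{-C_\star\wt W_k(t)\rme^{-2u\slb}})\bigr|\le\varepsilon_k$, with $\varepsilon_k$ bounding the contribution of the extremal vertices, of the quadratic error, and of the truncation discrepancy in $\wt W_k(t)$. It remains to show $\varepsilon_k\to0$: this is a uniform-integrability statement for the truncated sequence $\{\wt Z_k(t)\}_k$ --- the local-time analogue of the fact that near-extremal branching-random-walk particles carry a vanishing share of the derivative-martingale mass --- proved from the same moment estimates, uniformly in~$t$ via the bounds of Section~\ref{sec-2}.

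\smallskip\noindent
The main obstacle is thus the sharp one-term tail estimate of the second step, holding with a \emph{single} constant $C_\star$ and \emph{uniformly} both in the time parameter~$s$ (which for the level-$k$ vertices sweeps a large, $k$-dependent window) and in the depth~$N$: the Bramson second-moment scheme, the barrier truncation, the Bessel-to-Brownian change of measure, and the ballot estimates all have to be executed with this uniformity, and the constant reconciled across all~$s$ and with Abe's normalization. A second, still substantial, difficulty is the uniform integrability of the truncated $\wt Z_k(t)$ required to pass $k\to\infty$ in the reassembly, where the failure of $\{\wt Z_k(t)\}_k$ to be a martingale makes the argument heavier than its branching-random-walk prototype.
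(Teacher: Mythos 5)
Your plan follows essentially the same route as the paper: a Markov decomposition at level~$k$ via Lemma~\ref{lemma-M}, a centering bookkeeping step matching Lemma~\ref{lemma-an}, a sharp one-term right-tail asymptotic for a single subtree maximum (this is precisely Proposition~\ref{prop-3}), and reassembly by expanding the log of the product. The tail asymptotic in the paper is proved by conditioning on the location and value of the maximum, rewriting via the Bessel path law of Lemma~\ref{lemma-3} and its Radon--Nikodym derivative~\eqref{E:1.5}, and invoking ballot estimates for decorated walks --- which is the ``location-of-the-max'' implementation of the Bramson truncated moment method you describe, so that step matches in spirit.

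The one place your account diverges is the final error control. You frame $\varepsilon_k\to0$ as a uniform-integrability statement for the truncated sequence $\{\wt Z_k(t)\}_k$ and flag the failure of the martingale property as a serious obstacle. The paper does not need this: it conditions on the good event $\AA_k(t)=\{\max_{x\in\BbbL_k}\sqrt{L_t(x)}\le\sqrt t+a_k(t\vee1)+\log\log k\}$, whose failure probability is controlled directly and uniformly in~$t$ by Abe's upper-tail bound (Lemma~\ref{lemma-2}) and tends to zero as $k\to\infty$; the vertices with $L_t(x)<\log\log k$ are disposed of by a crude bound of order $b^{-2k}k^2$; and the $o(1)$-errors from Proposition~\ref{prop-3} are pushed out of the exponential using $\rme^{-a(1+o(1))+o(1)}=\rme^{-a}+o(1)$ uniformly in $a\ge0$, exploiting only that $\wt W_k(t)\ge0$. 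Note also that \eqref{E:3.3i} asserts only that the $k$-dependent approximant tracks $F_{n,t}(u)$ in the iterated limit; it does \emph{not} require the approximant itself to converge as $k\to\infty$ (that convergence is extracted afterwards, in Corollary~\ref{cor-3.2}, from the fact that $F_{n,t}(u)$ is $k$-independent). So the second ``substantial difficulty'' you anticipate is bypassed by the event $\AA_k(t)$ and by reading the statement carefully; the genuinely hard part is indeed the uniform tail asymptotic, exactly as you predicted.
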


Before we move to the proof (which will come in Section~\ref{sec-3.2}), let us check how this implies the main convergence results. First we note two consequences of \eqref{E:3.3i}:

\begin{corollary}
\label{cor-3.2}
For each~$t>0$ and each~$u\in\R$, we have
\begin{equation}
\label{E:3.5w}
\lim_{n\to\infty}F_{n,t}(u)=\lim_{k\to\infty}E^\varrho \Bigl(\,\rme^{-C_\star \wt W_k(t) \rme^{-2u\slb}}\,\Bigr)
\end{equation}
where both limits exist. The convergence is uniform on bounded intervals of~$t>0$. Moreover, for each~$t>0$ there is a random variable~$\wt W(t)$ such that
\begin{equation}
\label{E:3.6w}
\wt W_k(t)\,\underset{k\to\infty}\Lawarrow\,\wt W(t).
\end{equation}
\end{corollary}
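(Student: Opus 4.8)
The plan is to obtain both the pointwise limit \eqref{E:3.5w} and the uniform-on-compacts statement by feeding Theorem~\ref{thm-5} the constant sequences $t_n\equiv t$, respectively $t_n\equiv T$, each of which trivially satisfies $t_n/n^2\to0$, and then extracting ordinary limits from the double limit \eqref{E:3.3i} by a soft Cauchy argument. The weak convergence \eqref{E:3.6w} will then follow from the continuity theorem for Laplace transforms once escape of mass to $+\infty$ is ruled out; this last point is the only step that requires genuine input, namely the tightness estimate of Theorem~\ref{thm-4}.

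Fix $u\in\R$ and abbreviate $B_k(t):=E^\varrho\bigl(\rme^{-C_\star\wt W_k(t)\rme^{-2u\slb}}\bigr)$. Applying \eqref{E:3.3i} with $t_n\equiv t$: for every $\ep>0$ there is $k_0$ with $\limsup_{n\to\infty}\abs{F_{n,t}(u)-B_k(t)}<\ep$ for all $k\ge k_0$; by the triangle inequality $\abs{B_k(t)-B_{k'}(t)}\le2\ep$ for $k,k'\ge k_0$, so $k\mapsto B_k(t)$ is Cauchy with some limit $\Psi(t,u)$, and feeding this back into \eqref{E:3.3i} gives $F_{n,t}(u)\to\Psi(t,u)$, which is \eqref{E:3.5w}. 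Rerunning the identical argument with $\sup_{0<t\le T}\abs{\,\cdot\,}$ in place of $\abs{\,\cdot\,}$ --- legitimate because \eqref{E:3.3i} with $t_n\equiv T$ bounds precisely that supremum, and $\sup_{0<t\le T}\abs{B_k-B_{k'}}$ carries no $n$-dependence --- shows that $k\mapsto B_k(\cdot)$ is uniformly Cauchy on $(0,T]$ and hence that $F_{n,t}(u)\to\Psi(t,u)$ uniformly in $t\in(0,T]$.

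For \eqref{E:3.6w}, note that $\wt W_k(t)\ge0$ by the very form of \eqref{E:1.9b}, and that $\lambda:=C_\star\rme^{-2u\slb}$ ranges over all of $(0,\infty)$ as $u$ ranges over $\R$; thus the previous step is exactly the statement that the Laplace transforms $\lambda\mapsto E^\varrho(\rme^{-\lambda\wt W_k(t)})$ converge pointwise on $(0,\infty)$ as $k\to\infty$. I would invoke the continuity theorem for Laplace transforms of non-negative random variables to conclude $\wt W_k(t)\Lawarrow\wt W(t)$ for some $[0,\infty]$-valued $\wt W(t)$, which is a.s.\ finite provided the limiting transform tends to~$1$ as $\lambda\downarrow0$, i.e.\ provided $\Psi(t,u)\to1$ as $u\to\infty$. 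To check this I would decompose $F_{n,t}(u)$ over $\HH_{n,t}$ and its complement: off $\HH_{n,t}$ the maximal local time is zero, so for large $n$ the centered maximum is negative and hence $\le u$ for $u\ge0$; on $\HH_{n,t}$, Theorem~\ref{thm-4} gives an exponentially small upper tail uniformly in $n$ (for $u\in[0,n]$). Since $P^\varrho(\HH_{n,t})\le1$, these combine to $F_{n,t}(u)\ge1-c_1\rme^{-c_2u}$ for all $u\ge0$ and all large $n$, whence $\Psi(t,u)\ge1-c_1\rme^{-c_2u}\to1$. I expect this escape-of-mass verification, together with the minor bookkeeping of passing from the conditional bound of Theorem~\ref{thm-4} to an unconditional one, to be the only non-formal part of the argument; everything else is the routine extraction of single limits from the double limit furnished by Theorem~\ref{thm-5}.
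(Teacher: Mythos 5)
Your proof is correct and follows essentially the same approach as the paper: extract both single limits from the iterated limit of Theorem~\ref{thm-5} using the fact that $F_{n,t}(u)$ is $k$-free while $E^\varrho(\rme^{-C_\star\wt W_k(t)\rme^{-2u\slb}})$ is $n$-free, and deduce \eqref{E:3.6w} via the Curtiss/Laplace-continuity theorem, with Theorem~\ref{thm-4} ruling out escape of mass by forcing the limit Laplace transform to $1$ as $u\to\infty$. You are somewhat more explicit than the paper in spelling out the Cauchy-sequence mechanics and in deriving the uniformity on $(0,T]$ by running the argument with $t_n\equiv T$, but the substance is identical.
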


\begin{proofsect}{Proof from  Theorem~\ref{thm-5}}
The existence of the limits follows from \eqref{E:3.3i}  and the fact $F_{n,t}$ is bounded and independent of~$k$ while the expectation on the right of \eqref{E:3.5w} is bounded and independent of~$n$.  Viewing \eqref{E:3.5w} as the limit of Laplace transforms of $\{\wt W_k(t)\}_{k\ge1}$, the convergence \eqref{E:3.6w} follows from the Curtiss Theorem and the fact, by the tightness proved in Theorem~\ref{thm-4}, the limit quantity in \eqref{E:3.5w} tends to one as~$u\to\infty$. 
\end{proofsect}

The convergence \eqref{E:3.6w} implies weak convergence of random variables~$\{\wt Z_k(t)\}_{k\ge1}$. In order to conclude  Theorem~\ref{thm-2} from this, we need to address the discrepancy between $Z_k(t)$ and~$\wt Z_k(t)$. This is the content of:

\begin{lemma}
\label{lemma-3.1a}
Let~$t>0$ and suppose that the family $\{\wt Z_n(t)\}_{n\ge1}$ is tight under~$P^\varrho$. Then
\begin{equation}
\label{E:3.7}
Z_n(t)-\wt Z_n(t)\,\underset{n\to\infty}{\overset{P^\varrho}\longrightarrow}\,0.
\end{equation}
\end{lemma}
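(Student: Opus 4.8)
The plan is to pass to the high‑probability event on which the maximal local time is not too large, to note that on that event both $Z_n(t)$ and $\wt Z_n(t)$ are governed by leaves where $\sqrt{L_t(x)}$ sits at distance of order $\sqrt n$ below $n\slb$, and then to estimate $Z_n(t)-\wt Z_n(t)$ leaf by leaf.

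Fix $t>0$ and $\epsilon\in(0,1)$. By Theorem~\ref{thm-4} choose $M<\infty$ so that $G_n:=\{\max_{x\in\BbbL_n}\sqrt{L_t(x)}\le n\slb-\frac1{\slb}\log n+M\}$ has $P^\varrho(G_n)\ge 1-\epsilon$ for all~$n$. On~$G_n$, for $n$ large, every leaf has $y_x:=n\slb-\sqrt{L_t(x)}\in(0,n\slb]$, hence $L_t(x)\le(n\slb)^2$. Leaves with $L_t(x)<1$ contribute, deterministically, at most $b^{-n}$ times a polynomial in~$n$ to each of $Z_n(t),\wt Z_n(t)$, so their total contribution is $o(1)$. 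For the remaining leaves the two summands differ only in that the argument $y_x$ of the positive part in $Z_n(t)$ becomes $y_x+w_x$ in $\wt Z_n(t)$, where $w_x:=\sqrt t-\frac1{8\slb}\log\frac{L_t(x)}{t\vee1}$, and on $G_n$ one has $|w_x|\le K_n(t)$ for some $K_n(t)=O(\log n)$. Using $b^{-2n}\rme^{2\slb\sqrt{L_t(x)}}=\rme^{-2\slb y_x}$ and $|a^+-b^+|\le|a-b|$, this gives, on $G_n$ and for $n$ large,
\[
\bigl|Z_n(t)-\wt Z_n(t)\bigr|\le o(1)+\sum_{x\colon L_t(x)\ge1}\rme^{-2\slb y_x}L_t(x)^{1/4}\,|w_x|.
\]

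Next I split the sum according to $y_x>A\log n$ (the \emph{bulk}) or $y_x\le A\log n$ (the \emph{top}), with $A$ a large constant fixed at the end. In the bulk, $|w_x|\le K_n(t)\le\frac{C_0(t)}{A}y_x$ and, for $A$ large, $y_x+w_x\ge\frac12 y_x>0$, so $\rme^{-2\slb y_x}L_t(x)^{1/4}|w_x|\le\frac{2C_0(t)}{A}\,\rme^{-2\slb y_x}L_t(x)^{1/4}(y_x+w_x)^+$, i.e.\ $\frac{2C_0(t)}{A}$ times the corresponding summand of $\wt Z_n(t)$; summing, the bulk contribution is at most $\frac{2C_0(t)}{A}\wt Z_n(t)$, which by the assumed tightness of $\{\wt Z_n(t)\}_n$ is $<\epsilon$ with probability $\ge 1-\epsilon$, uniformly in~$n$, once $A$ is large. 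In the top I stratify by $j:=\lfloor y_x\rfloor$; on $G_n$ only $O(\log n)$ strata are nonempty, and on stratum $j$ one has $\rme^{-2\slb y_x}\le\rme^{-2\slb j}$, $L_t(x)^{1/4}\le(n\slb)^{1/2}$, $|w_x|\le K_n(t)$. A first‑moment computation---writing the one‑leaf law of $\sqrt{L_t(x)}$ via the $0$‑dimensional Bessel process of Lemma~\ref{lemma-3}, passing to Brownian motion through the Radon--Nikodym formula~\eqref{E:1.5}, and using a Gaussian tail estimate---yields $E^\varrho\bigl(\#\{x\in\BbbL_n\colon y_x\le s\}\bigr)\le C(t)\,n^{-1}\rme^{2\slb s}$ for $s$ in the relevant $O(\log n)$ range. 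Markov's inequality plus a union bound over the $O(\log n)$ strata then shows that, with probability $\ge 1-\epsilon$, the top contribution is at most $C(t,A,\epsilon)(\log n)^3 n^{-1/2}$, which tends to~$0$. Combining the small‑$L_t$, bulk and top estimates and letting $\epsilon\downarrow0$ gives~\eqref{E:3.7}.

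I expect the top stratum to be the main obstacle: there the exponential weight $\rme^{-2\slb y_x}$ and the stochastically large number of near‑maximal leaves almost cancel, so no crude bound suffices and one must exploit the precise decay $E^\varrho(\#\{x\colon y_x\le s\})\lesssim n^{-1}\rme^{2\slb s}$; the extra factor $n^{-1}$ (as opposed to $n^{-1/2}$, which is what branching random walk would give) is exactly what renders the factors $L_t(x)^{1/4}\asymp\sqrt n$ and $|w_x|=O(\log n)$ harmless. The reduction to $G_n$, the disposal of leaves with small $L_t(x)$, and the bulk estimate are routine.
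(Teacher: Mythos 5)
Your proof is correct and follows essentially the same strategy as the paper's: dispose of leaves with $L_t(x)<1$, split the remaining sum at a near-maximal threshold, control the near-maximal part by a one-leaf first-moment estimate obtained from the Bessel representation (Lemma~\ref{lemma-3}) and the Radon--Nikodym formula~\eqref{E:1.5}, and control the bulk by the tightness hypothesis on $\{\wt Z_n(t)\}_n$. The one notable difference is the threshold: the paper cuts at $y_x=n^\delta$ for a fixed $\delta\in(0,1/4)$ and thereby gets away with a single Markov bound and the cruder relation $n^{1/2+\delta}\ll n^{1-\delta}$, so the bulk error is a genuine $o(1)\cdot\wt Z_n(t)$ with no auxiliary parameter to send to infinity; you cut at $A\log n$, which buys you nothing here but forces a per-stratum Markov/union-bound argument in the top and a second limit $A\to\infty$ in the bulk --- your sharper first-moment bound $E^\varrho(\#\{y_x\le s\})\lesssim C(t)n^{-1}\rme^{2\slb s}$ is correct (the Bessel correction contributes the extra $n^{-1/2}$ over the Gaussian/BRW rate $n^{-1/2}$, as you note) and makes the top workable, but it is more than is needed for the lemma.
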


\begin{proofsect}{Proof}
The proof is based on showing that $\wt Z_n(t)$ receives asymptotically vanishing contribution from $x\in\BbbL_n$ for which $n\slb -\sqrt{L_t(x)}<n^\delta$ for some $\delta\in(0,1/4)$. First, Lemma~\ref{lemma-3} relates $\sqrt{2L_t(x)}$ to the value~$B_n$ of standard Brownian motion. Straightforward estimates then show
\begin{multline}
\label{E:3.5o}
E^\varrho\Bigl(\texte^{2\slb\sqrt{L_t(x)}}1_{\{|\sqrt{L_t(x)}-n\slb|<n^\delta\}}\Bigr)
\\
\le\sqrt{\frac t{n\slb-n^\delta}}
\,\,E^{\sqrt{2t}}\Bigl(\texte^{\sqrt{2\log b}\,B_n}1_{\{|B_n-n\sqrt{2\log b}|<\sqrt2\,n^\delta\}}\Bigr)\,.
\end{multline}
A change of variables equates the expectation on the right with 
\begin{equation}
b^n\texte^{2\sqrt{t\log b}}P^0\bigl(|B_n+\sqrt{2t}|<\sqrt2\,n^\delta\bigr).
\end{equation}
As~$\delta<1/2$, this probability is of order $n^{\delta-1/2}$. The expectation on the left of \eqref{E:3.5o} is thus at most a constant times $b^n n^{\delta-1}$ and, since $|\BbbL_n|=b^n$, a routine first-moment estimate shows that the laws of the random variables 
\begin{equation}
\biggl\{\,b^{-2n}n^{1-\delta}\sum_{x\in\BbbL_n}1_{\{|\sqrt{L_t(x)}-n\slb|<n^\delta\}}\texte^{2\slb\sqrt{L_t(x)}}\,\biggr\}_{n\ge1}
\end{equation}
are tight under~$P^\varrho$. 

For $x\in\BbbL_n$ with $|\sqrt{L_t(x)}-n\slb|<n^\delta$, the prefactor of the exponential in \eqref{E:1.9b} is at most $2n^{1/2+\delta}$. As $n^{1/2+\delta}\ll n^{1-\delta}$ thanks to~$\delta<1/4$, the contribution of these points to~$\wt Z_n(t)$ vanishes in the limit as~$n\to\infty$. For the remaining pairs we have $n\slb-\sqrt{L_t(x)}\ge n^\delta$ and, since $\max_{x\in\BbbL_n}\sqrt{L_t(x)}\le n\slb $ with probability tending to one, the contribution of the logarithmic term in \eqref{E:1.9b} to the positive part is negligible as~$n\to\infty$.  As the contribution of~$x\in\BbbL_n$ where $L_t(x)\le1$ is negligible as well, the truncation in~$(L_t(x)\vee1)^{1/4}$ in \eqref{E:1.9b} has vanishing effect as~$n\to\infty$ thus proving \eqref{E:3.7}. 
\end{proofsect}

With this in hand, we are ready to give:

\begin{proofsect}{Proof of  Theorem~\ref{thm-2} from  Theorem~\ref{thm-5}}
Fix~$t>0$. Combining Corollary~\ref{cor-3.2} with Lemma~\ref{lemma-3.1a} we infer the weak convergence \eqref{E:2.2} and conclude that, for each~$u\in\R$,
\begin{equation}
\label{E:3.5ww}
\lim_{n\to\infty}F_{n,t}(u)=\E \Bigl(\,\rme^{-C_\star (t\vee1)^{1/4}\rme^{-2\slb\sqrt t}Z(t) \rme^{-2u\slb}}\Bigr),
\end{equation}
where~$\E$ is the expectation with respect to the law of~$Z(t)$.
Noting that
\begin{equation}
a_n(t\vee1) = n\slb-\frac1{\slb}\log n+ \frac1{8\slb}\log (t\vee1)+o(1),
\end{equation}
this gives \eqref{E:2.3} via shifting~$u$ by the third term on the right. 

It remains to prove \eqref{E:2.2a}. 
For this we first note that $F_{n,t}(u)\ge P^\varrho(\HH_{n,t}^\cc)$ once~$n$ is so large that~$ \sqrt{t} + a_n(t\vee1)+u>0$. Taking~$u\to-\infty$  in \eqref{E:3.5ww}  then shows
\begin{equation}
\label{E:3.20i}
\limsup_{n\to\infty}P^\varrho(\HH_{n,t}^\cc)\le \BbbP\bigl(Z(t)=0\bigr).
\end{equation}
For the complementary inequality  we  note that, for any~$t>0$ and $\epsilon>0$, Theorem~\ref{thm-4} implies  the existence of~$u_0>0$ such that for all~$u>u_0$,
\begin{equation}
P^\varrho \Bigl(\,\max_{x\in\BbbL_n} \sqrt{L_t(x)} - \sqrt{t} - a_n(t\vee1) > -u \Bigr)
\ge (1-\epsilon)P^\varrho(\HH_{n,t}).
\end{equation}
Invoking  \eqref{E:3.5ww}  for the limit~$n\to\infty$ and then  using the Bounded Convergence Theorem to take $u\to \infty$ followed by~$\epsilon\downarrow0$  yields
\begin{equation}
\limsup_{n\to\infty}P^\varrho(\HH_{n,t})\le \BbbP\bigl(Z(t)>0\bigr).
\end{equation}
In conjunction with \eqref{E:3.20i}, this proves \eqref{E:2.2a}.
\end{proofsect}

For Theorem~\ref{thm-1.3}, we need the following fact:

\begin{lemma}
\label{lemma-3.4}
For each~$n\ge1$,
\begin{equation}
\label{E:3.17}
\wt W_n(t)\,\,\underset{t\to\infty}\Lawarrow\,\, W_n,
\end{equation}
where~$W_n$ is defined using the BRW (a.k.a.\ GFF) as in \eqref{E:1.15}.
\end{lemma}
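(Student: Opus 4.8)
The plan is to reduce the claim to the finite-dimensional distributional convergence recorded in \eqref{E:1.13r} and then invoke the continuous mapping theorem together with Slutsky's lemma; since~$n$ is fixed, $\BbbL_n$ is a finite set and no uniformity is needed anywhere. Concretely, for~$t>1$ abbreviate $\Delta_x(t):=\sqrt{L_t(x)}-\sqrt t$ and $\rho_x(t):=L_t(x)/t$, so that
\[
\wt W_n(t)=b^{-2n}\sum_{x\in\BbbL_n}\Bigl(n\slb-\Delta_x(t)-\tfrac1{8\slb}\log\bigl(\rho_x(t)\vee\tfrac1t\bigr)\Bigr)^+\bigl(\rho_x(t)\vee\tfrac1t\bigr)^{1/4}\,\texte^{2\slb\,\Delta_x(t)}.
\]
By \eqref{E:1.13r} we have $(\Delta_x(t))_{x\in\BbbL_n}\Lawarrow(h_x)_{x\in\BbbL_n}$ as~$t\to\infty$. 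Since $\rho_x(t)=1+2\Delta_x(t)/\sqrt t+\Delta_x(t)^2/t$ and the family $\{(\Delta_x(t))_{x\in\BbbL_n}\}_{t\ge1}$ is tight, $\rho_x(t)\to1$ in $P^\varrho$-probability for each of the finitely many~$x\in\BbbL_n$; consequently $\rho_x(t)\vee\tfrac1t=\rho_x(t)$, $\log\rho_x(t)\to0$ and $\rho_x(t)^{1/4}\to1$, all with $P^\varrho$-probability tending to~$1$. Slutsky's lemma then upgrades the above to the joint convergence $\bigl((\Delta_x(t))_x,(\rho_x(t)\vee\tfrac1t)_x\bigr)\Lawarrow\bigl((h_x)_x,(1,\dots,1)\bigr)$.

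Next I would observe that the map $\Phi\colon\R^{\BbbL_n}\times(0,\infty)^{\BbbL_n}\to\R$ given by $\Phi\bigl((a_x)_x,(r_x)_x\bigr):=b^{-2n}\sum_{x\in\BbbL_n}\bigl(n\slb-a_x-\tfrac1{8\slb}\log r_x\bigr)^+r_x^{1/4}\,\texte^{2\slb a_x}$ is continuous (it is a finite sum, and $a\mapsto(c-a)^+$ is Lipschitz), and that $\wt W_n(t)=\Phi\bigl((\Delta_x(t))_x,(\rho_x(t)\vee\tfrac1t)_x\bigr)$. The continuous mapping theorem therefore yields
\[
\wt W_n(t)\,\underset{t\to\infty}\Lawarrow\,\Phi\bigl((h_x)_x,(1,\dots,1)\bigr)=b^{-2n}\sum_{x\in\BbbL_n}\bigl(n\slb-h_x\bigr)^+\texte^{2\slb\,h_x},
\]
which is~$W_n$. (Strictly, the positive part produced in the limit is $(n\slb-h_x)^+$ rather than the $(n\slb-h_x)$ of \eqref{E:1.15}; but this is immaterial for the sequel, since $W_n$ and its positive-part version differ only on $\{\max_{x\in\BbbL_n}h_x>n\slb\}$, an event of probability tending to zero by \eqref{E:1.12i}, so the two share the same weak limit~$W$ as~$n\to\infty$.)

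I do not expect a genuine obstacle here. The only point requiring (mild) care is the bookkeeping of the several $o_{P^\varrho}(1)$ corrections --- the logarithmic term $\tfrac1{8\slb}\log(\rho_x(t)\vee\tfrac1t)$, the power $(\rho_x(t)\vee\tfrac1t)^{1/4}$, and the truncations at~$1$ hidden inside $L_t(x)\vee1$ and $t\vee1$ --- all of which degenerate to their trivial values since $L_t(x)/t\to1$ in $P^\varrho$-probability, and all of which are absorbed cleanly by the joint continuity of~$\Phi$ via Slutsky's lemma.
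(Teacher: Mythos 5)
Your argument follows essentially the same route as the paper's: for $t\ge1$ rewrite $\wt W_n(t)$ as $b^{-2n}\sum_{x\in\BbbL_n}f_t\bigl(\sqrt{L_t(x)}-\sqrt t\,\bigr)$ for a continuous $f_t$ converging pointwise to $u\mapsto(n\slb-u)^+\texte^{2\slb u}$, invoke the finite-dimensional weak convergence \eqref{E:1.13r}, and conclude by continuous mapping. The paper compresses this into a one-line appeal to \eqref{E:1.13r} after the rewrite; your version spells out the Slutsky/continuous-mapping bookkeeping (via the auxiliary ratios $\rho_x(t)$) explicitly, which is sound and slightly more careful.

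Your parenthetical remark is also a genuine observation, not a mere aside: the actual $t\to\infty$ distributional limit of $\wt W_n(t)$ is $b^{-2n}\sum_{x\in\BbbL_n}(n\slb-h_x)^+\texte^{2\slb h_x}$, which for each \emph{fixed} $n$ differs from \eqref{E:1.15}'s $W_n$ (which carries no positive part) on the positive-probability event $\{\max_{x\in\BbbL_n}h_x>n\slb\}$. So the statement of Lemma~\ref{lemma-3.4}, read literally, is slightly imprecise, and the paper's one-line proof does not flag this. Your explanation of why the discrepancy is harmless --- $P(\max_{x\in\BbbL_n}h_x>n\slb)\to0$ as $n\to\infty$ by \eqref{E:1.12i}, so the positive-part version of $W_n$ has the same weak limit $W$, and the only place Lemma~\ref{lemma-3.4} is invoked (in the proof of Theorem~\ref{thm-1.3}) immediately takes that $n\to\infty$ limit --- is exactly the right justification.
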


\begin{proofsect}{Proof}
Recall the expression \eqref{E:1.9b} defining~$\wt Z_n(t)$. Using the rewrite
\begin{equation}
\frac{L_t(x)\vee1}t = \biggl[\Bigl(1+\frac{\sqrt{L_t(x)}-\sqrt t}{\sqrt t}\Bigr)\vee\frac1{\sqrt t}\biggr]^2,
\end{equation}
for~$t\ge1$ we can recast $\wt W_n(t)$  as  $b^{-2n}\sum_{x\in\BbbL_n}f_t(\sqrt{L_t(x)}-\sqrt t)$, where
\begin{equation}
f_t(u) := \Biggl(n\slb - u -\frac{\log\bigl[\bigl(1+\tfrac u{\sqrt t}\bigl)\vee \tfrac1{\sqrt t}\bigr]}{4\slb}\Biggr)^+\bigl[\bigl(1+\tfrac u{\sqrt t}\bigl)\vee \tfrac1{\sqrt t}\bigr]^{1/2}\texte^{2\slb\, u}.
\end{equation}
The weak convergence \eqref{E:1.13r} then yields the claim.
\end{proofsect}

\begin{proofsect}{Proof  of  Theorem~\ref{thm-1.3} from  Theorem~\ref{thm-5}}
 Let~$m>0$. Taking $t_n:=m$ in \eqref{E:3.3i}, passing to the limit $n\to\infty$ with the help of local uniformity of the convergence \eqref{E:3.5w}, and then taking~$m\to\infty$ followed by~$k\to\infty$ yields 
\begin{equation}
\label{E:3.16}
\lim_{k\to\infty}\sup_{t>0}\,\biggl|\,\E \Bigl(\,\rme^{-C_\star (t\vee1)^{-1/4}\rme^{-2\slb\sqrt t}Z(t) \rme^{-2u\slb}}\Bigr)-
E^\varrho \Bigl(\,\rme^{-C_\star \wt W_k(t) \rme^{-2u\slb}}\Bigr)\biggr|=0
\end{equation}
for all~$u\in\R$.
 Invoking Lemma~\ref{lemma-3.4} to take $W_k(t)\Lawarrow W_k$ by taking~$t\to\infty$ and  applying that~$W_k\Lawarrow W$ as $k\to\infty$ then  shows
\begin{equation}
\E \Bigl(\,\rme^{-C_\star t^{-1/4}\rme^{-2\slb\sqrt t}Z(t) \rme^{-2u\slb}}\Bigr)\,\,\underset{t\to\infty}\longrightarrow\,\,\E \Bigl(\,\rme^{-C_\star W\rme^{-2u\slb}}\Bigr).
\end{equation}
 As this works for all~$u\in\R$, the  Curtiss Theorem  gives  the convergence \eqref{E:1.18a}. 

It remains to identify the constant~$C_\star$ with the constant~$C_\star'$ in the extremal law \eqref{E:1.12i} for the BRW. Here we will  rely on  the fact proved in Abe~\cite[Corollary 1.3 and Remark~1.4]{A18} that, for any sequence $\{t_n'\}_{n\ge1}$ with $t_n'\ge c_1 n\log n$ and $t_n'/n^2\to0$,
\begin{equation}
\label{E:3.21w}
\lim_{n\to\infty}\,F_{n,t_n'}(u)=\bbE \bigl(\,\rme^{-C_\star' W \rme^{-2u\slb}}\,\bigr).\end{equation}
 Since Abe's regime has a non-trivial overlap with that under which \eqref{E:3.3i} holds,  the weak limits  \eqref{E:3.17} and~$W_n\Lawarrow W$ give
\begin{equation}
\bbE \bigl(\,\rme^{-C_\star' W \rme^{-2u\slb}}\,\bigr)=\bbE \bigl(\,\rme^{-C_\star W \rme^{-2u\slb}}\,\bigr)
\end{equation}
for all~$u\in\R$.
 Hence, $C_\star'W\laweq C_\star W$ and, since $W$ does not vanish a.s.,~$C_\star'=C_\star$. 
\end{proofsect}

\subsection{Uniform convergence from key proposition}
\label{sec-3.2}\noindent
We now move to the proof of Theorem~\ref{thm-5}. The argument follows a strategy that has been used for similar statements for the BRW (A\"idekon~\cite{Aidekon}) as well as the GFF in finite subsets of~$\Z^2$ (Bramson, Ding and Zeitouni~\cite{BDingZ}). A principal input for that strategy is a sharp asymptotics for the right tail of the centered maximum:

\begin{proposition}
\label{prop-3}
There exists $C_\star\in(0,\infty)$ such that the quantity $o(1)=o_{n,t,u}(1)$ defined for integer $n\ge1$ and real $t>0$ and~$u>0$ by
\begin{equation}
\label{E:2.4}
P^\varrho \Bigl(\,\max_{x\in\BbbL_n} \sqrt{L_t(x)} - \sqrt{t} - a_n(t) > u \Bigr)
= C_\star u \rme^{-2u\slb} \bigl(1+o(1)\bigr)
\end{equation}
obeys
\begin{equation}
\lim_{m\to\infty}\sup_{t,u\ge m} \limsup_{n\to\infty}\,\bigl|\,o_{n,t,u}(1)\bigr|=0.
\end{equation}
\end{proposition}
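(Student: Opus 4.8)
The plan is to adapt the now-standard route to sharp right-tail asymptotics for extrema of logarithmically correlated fields --- developed for the critical Branching Random Walk by A\"idekon~\cite{Aidekon} and for the two-dimensional GFF by Bramson, Ding and Zeitouni~\cite{BDingZ} --- to the local-time process, using the spine representations of Lemmas~\ref{lemma-M}, \ref{lemma-3} and~\ref{lemma-2.12} as the main structural input. Throughout, $t$ and $u$ are treated as fixed (large) parameters and the limit $n\to\infty$ is taken first; the uniformity over $t,u\ge m$ is extracted at the end. The heart of the argument is a first-moment analysis of the counting variable
\[
N_n(u):=\#\Bigl\{x\in\BbbL_n\colon \sqrt{L_t(x)}-\sqrt t-a_n(t)>u,\ \EE_n(x)\text{ holds}\Bigr\},
\]
where $\EE_n(x)$ is an entropic-repulsion event forcing the profile $\bigl(\sqrt{L_t(x_k)}\bigr)_{k=0}^n$ along the spine from $\varrho$ to $x$ to lie below the concave barrier of the type used in \eqref{E:2.17u} (with the $\{t_k\}$ from Lemma~\ref{lemma-4}).

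First I would prove the \emph{first-moment reduction}
\[
P^\varrho\Bigl(\max_{x\in\BbbL_n}\sqrt{L_t(x)}-\sqrt t-a_n(t)>u\Bigr)=\bigl(1+o(1)\bigr)\,E^\varrho\bigl[N_n(u)\bigr].
\]
For the upper bound one writes the event that the maximum is high as the union over leaves $x$ of $\{\sqrt{L_t(x)}-\sqrt t-a_n(t)>u\}$; on this event, either the spine of $x$ respects the barrier, contributing at most $E^\varrho[N_n(u)]$ by Markov's inequality, or it makes an excursion above the barrier, in which case a first-moment bound over all leaves with a ballot penalty for that excursion (using the Bessel/Brownian comparison of Lemma~\ref{lemma-3} and Abe's upper bound, Lemma~\ref{lemma-2}) shows this is $o\bigl(E^\varrho[N_n(u)]\bigr)$, exactly as in~\cite{BDingZ}. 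For the lower bound one runs the truncated second-moment method: by the Markov property (Lemma~\ref{lemma-M}), the expected number of \emph{pairs} of high barrier-respecting leaves, organized by the depth of their branch point, is $o\bigl(E^\varrho[N_n(u)]\bigr)$ --- this is where $u\ge m$ with $m$ large enters, making $E^\varrho[N_n(u)]$ itself small --- and the companion step, forcing the genuine maximizer's spine under the barrier, reuses Proposition~\ref{prop-1} and the argument of Lemma~\ref{lemma-2.7a}; uniform tightness (Theorem~\ref{thm-4}) keeps all error terms in hand.

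Next I would evaluate $E^\varrho[N_n(u)]$. By the tree symmetry, $E^\varrho[N_n(u)]=b^n\,P^\varrho(\text{spine event for a fixed leaf})$, and the spine event involves only $\bigl(\sqrt{L_t(x_k)}\bigr)_{k=0}^n$, which by Lemma~\ref{lemma-3} has the law of $\tfrac1{\sqrt2}$ times a $0$-dimensional Bessel bridge; the Radon--Nikodym formula~\eqref{E:1.5} then converts this to a Brownian computation at the cost of the factor $\sqrt{r/B_n}\exp\{-\tfrac38\int_0^n B_s^{-2}\,\rmd s\}$, which on the barrier event --- where the path is bounded below by a multiple of $\sqrt t+cs$ --- is a harmless multiplicative correction tending to $1$ as $t\to\infty$. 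Conditioning on the terminal value $\sqrt{L_t(x)}=\sqrt t+a_n(t)+v$, the probability factorizes, up to $1+o(1)$, into the Gaussian density of the endpoint and a ballot probability for the Brownian bridge to stay below the barrier; their product, summed over the $b^n$ leaves and integrated over $v\ge u$, produces the form $C_\star\,u\,\rme^{-2u\slb}$ --- the linear factor $u$ emerging, as in~\cite{Aidekon,BDingZ}, from the interplay of the ballot estimate with the endpoint density, and the precise calibration of the $t$-dependent and $\log n$ terms in $a_n(t)$ (see \eqref{E:1.10a}) being exactly what makes this bookkeeping close cleanly. Collecting the overshoot/renewal constant, the Bessel-to-Brownian Jacobian, and --- via the Markov property and Lemma~\ref{lemma-2.12} --- the limiting contribution of the subtrees hanging off the spine into a single number $C_\star\in(0,\infty)$ (whose existence as a genuine limit must be checked; summability of the subtree deficits comes once more from Lemma~\ref{lemma-2}) completes the identity. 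Finally, $C_\star$ is independent of $t$ because every $t$-dependent quantity entering it converges as $t\to\infty$, and the uniformity in $u\ge m$ is the statement that the ballot and overshoot asymptotics stabilize for large $u$; the uniformity in $t$ is handled in the same stroke.

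\textbf{The main obstacle} is the step above in which the $0$-dimensional Bessel law is pushed through the ballot estimate: one must carry the change of measure~\eqref{E:1.5} through the bridge computation so that the Bessel bridge yields the \emph{same} constant as the Gaussian (BRW) calculation, which demands quantitative control of the barrier's lower envelope --- forcing $t\ge m$ to be large, and this is precisely why the Proposition is stated only for large $t$ --- together with control of the overshoot of the endpoint past $\sqrt t+a_n(t)+u$ (the renewal-theoretic input), and then an argument that the resulting constant is a genuine limit uniformly in the remaining parameters. By contrast, the barrier-truncation step and the ``mostly $0$ or $1$'' second-moment estimate are structurally parallel to the GFF treatment in~\cite{BDingZ} and, with tightness (Theorem~\ref{thm-4}) and the uniform lower bound (Proposition~\ref{prop-1}) already available, should be comparatively routine.
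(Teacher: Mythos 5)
Your plan runs the classical first-/second-moment route on a counting variable
\begin{equation*}
N_n(u)=\#\bigl\{x\in\BbbL_n\colon \sqrt{L_t(x)}-\sqrt t-a_n(t)>u,\ \EE_n(x)\text{ holds}\bigr\},
\end{equation*}
with $\EE_n(x)$ a \emph{deterministic} concave barrier on the spine profile (the kind used in~\eqref{E:2.17u}). This is not what the paper does, and as stated it does not yield the exact constant~$C_\star$. The issue is in the first-moment reduction $P^\varrho(\max>\cdot)=(1+o(1))\,E^\varrho[N_n(u)]$: with a purely spine-deterministic barrier, a leaf $x$ can be high and barrier-respecting while a sibling subtree hanging off its spine contains an even higher leaf. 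Such $x$ contributes to $E^\varrho[N_n(u)]$ but $x$ is not the argmax, and the event $\{N_n(u)\ge 1\}$ is strictly \emph{smaller} than $\{\max>u\}$ by a constant factor --- not a $1+o(1)$ factor. Concretely, the extra factor is exactly the (nontrivial) conditional probability that all subtree maxima along the spine are below the level, and it does not disappear as the barrier steepens. One sees the same thing on the second-moment side: $E^\varrho[N_n(u)^2]=(1+\kappa+o(1))\,E^\varrho[N_n(u)]$ for some $\kappa>0$, so Paley--Zygmund produces a lower bound that is off by $(1+\kappa)^{-1}$. Sandwiching between the first- and second-moment bounds pins the order $u\,\rme^{-2u\slb}$ but not the constant. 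Your remark that ``the limiting contribution of the subtrees hanging off the spine'' must be collected into $C_\star$ is the right instinct, but the framework you set up has no slot for it: $E^\varrho[N_n(u)]$ never sees the subtree maxima.

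The paper's own proof avoids this cleanly by using the a.s.\ uniqueness of the argmax to write the \emph{density} exactly: Lemma~\ref{lemma-2.4} states $\textd F_{n,t}(u)=b^n\,P^\varrho(\sqrt{L_t(x_n)}=\max\in\textd u)$, where the event ``$x_n$ is the argmax'' is precisely the intersection, over levels $k$, of the events that the subtree hanging off $x_k$ has maximum $\le \sqrt{L_t(x_n)}$. This converts into a barrier event $\BB_{0,n-1}$ along the spine whose ``ceiling'' carries the random decorations $D_k$ of~\eqref{E:3.23} (the recentered subtree maxima). No second-moment step is needed, and the decorated ballot estimate (Lemmas~\ref{lemma-2.5},~\ref{lemma-3.9},~\ref{lemma-2.10},~\ref{lemma-3.18}) produces $C_\diamond$, with the subtree contributions explicitly inside the limit in Lemma~\ref{lemma-3.18} through the variables $M_k=\max_{x\in\BbbL_k'}h_x$. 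The fix for your approach is to replace $N_n(u)$ by its $\{0,1\}$-valued variant --- add the condition that every sibling subtree max along the spine is below $\sqrt{L_t(x)}$ --- but that is equivalent to the density identity and the second-moment step becomes vacuous. The remaining steps you outline (Bessel-to-Brownian change of measure via Lemma~\ref{lemma-3} with the Jacobian controlled by the corridor event $\CC_{0,n}$; careful bookkeeping of the $t$-dependence; the decorated ballot lemmas) do match the paper's Section~3.3--3.4 and your identification of the Bessel-to-Brownian comparison as the main obstacle is accurate. But as written, the proposal misidentifies where the constant comes from, and the first-moment reduction step would fail.
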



We remark in passing that this strengthens  Abe's lower bound \eqref{E:2.4w} to:

\begin{corollary}
\label{cor-3.6}
There is~$\tilde c' > 0$ such that for all $n\ge 1$, $t\ge 1$ and~$u>0$,
\begin{equation}
\label{E:3.3ww}
P^\varrho\Bigl(\,\max_{x\in\BbbL_n}\sqrt{L_t(x)}- \sqrt t- a_n(t)\ge u\Bigr)\ge \tilde c' (1+u) \texte^{-2u\slb} \,.
\end{equation}
\end{corollary}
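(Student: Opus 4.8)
The plan is to establish the bound on three overlapping ranges of $(n,t,u)$, using throughout that $u\mapsto P^\varrho(\max_{x\in\BbbL_n}\sqrt{L_t(x)}-\sqrt t-a_n(t)\ge u)$ is non-increasing. First I would dispatch the regime where $t$ and $u$ are both large. By Proposition~\ref{prop-3} there is $m\ge1$ with $\sup_{t,u\ge m}\limsup_{n\to\infty}|o_{n,t,u}(1)|<\tfrac12$; then for all $t\ge m$, all $u\ge m$ and all $n$ large enough one has $P^\varrho(\max_{x\in\BbbL_n}\sqrt{L_t(x)}-\sqrt t-a_n(t)\ge u)\ge\tfrac12 C_\star u\,\texte^{-2u\slb}\ge\tfrac14 C_\star(1+u)\,\texte^{-2u\slb}$, the last inequality using $u\ge m\ge1$.

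Next I would treat bounded $u$, namely $0<u\le m$. There $(1+u)\texte^{-2u\slb}\le1$, so by monotonicity in $u$ the claim reduces to a single uniform bound $P^\varrho(\max_{x\in\BbbL_n}\sqrt{L_t(x)}-\sqrt t-a_n(t)\ge m)\ge c_0>0$, to hold for all $t\ge1$ and all $n$. For $n$ above a fixed threshold this follows from Abe's Lemma~\ref{lemma-2a} (evaluated at $u=m$) when $t\ge n$, and from rerunning the proof of Proposition~\ref{prop-1} with the endpoint constraint in~\eqref{E:1.17.1} and the ballot barrier in~\eqref{E:2.36} raised by the constant $m$ when $1\le t<n$ --- raising the barrier only multiplies the resulting lower bound by a positive constant (at least $\texte^{-2m\slb-m^2}$). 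For the finitely many remaining $n$ I would bound the leaf-maximum below by $\sqrt{L_t(x_n)}$ for a single leaf $x_n$ and use Lemma~\ref{lemma-3}: with $n$ fixed, $\sqrt t+a_n(t)$ stays bounded as $t$ ranges over $[1,\infty)$ while the $0$-dimensional Bessel process started at $\sqrt{2t}$ has full support, so $P^\varrho(\sqrt{L_t(x_n)}\ge\sqrt t+a_n(t)+m)$ is bounded away from zero uniformly in $t\ge1$.

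The remaining range is $u\ge m$ with $1\le t<m$, and I expect this corner to be the main obstacle: Proposition~\ref{prop-3} does not reach it (it needs $t$ large) and Lemma~\ref{lemma-2a} does not either (it needs $t\ge n$), so one must re-run the barrier/first-moment scheme of Proposition~\ref{prop-1} with the terminal level raised from $\sqrt t+a_n(t)$ to $\sqrt t+a_n(t)+u$. Concretely: fix a leaf $x_n$, use Lemma~\ref{lemma-3} to replace $\sqrt{2L_t}$ along the root--leaf line by Brownian motion from $\sqrt{2t}$; impose the barrier event $\EE_n$ of Lemma~\ref{lemma-2.7a} so that every subtree maximum stays below the terminal level, at cost a uniformly positive constant by Lemma~\ref{lemma-2}; condition the endpoint to lie in a width-$O(1)$ window about $\sqrt2(\sqrt t+a_n(t)+u)$ and apply a ballot estimate; then sum over the $b^n$ choices of $x_n$. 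The Gaussian endpoint density contributes $\texte^{-(a_n(t)+u)^2/n}$, and expanding the square (with $a_n(t)=n\slb-\tfrac1\slb\log n+O(1)$ for $t$ bounded) gives $(a_n(t)+u)^2/n=n\log b+2u\slb+O(\log n+u^2/n)$, so after summing over the $b^n$ leaves one is left with $\texte^{-2u\slb}$ times a polynomial factor and times $\texte^{-O(u^2/n)}$; tracking the prefactor through the ballot estimate turns this into a constant multiple of $(1+u)\texte^{-2u\slb}$ on the range of $u$ where the correction $O(u^2/n)$ stays bounded --- the natural scope of~\eqref{E:3.3ww}, e.g.\ $u\le2\sqrt n$, which is also the range of Lemma~\ref{lemma-2a}. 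The delicate point is precisely this bookkeeping: one needs the endpoint localization to cost only $\texte^{-2u\slb}$ up to a polynomial factor, so that~\eqref{E:3.3ww} dovetails with the upper bound of Lemma~\ref{lemma-2}.
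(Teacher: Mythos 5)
Your decomposition into regimes is sound and the use of Proposition~\ref{prop-3} for $t,u$ both large (and $n$ large) matches the paper. The genuine divergence is in the regime $1\le t<m$, which you correctly flag as the hard corner but resolve in a much heavier way than the paper does. You propose to re-run the entire barrier/first-moment machinery of Proposition~\ref{prop-1} with the terminal level raised by $u$, and then carry out a delicate bookkeeping of the Gaussian endpoint density and the ballot prefactor. The paper instead uses a one-step Markov reduction: condition, via Lemma~\ref{lemma-M}, on $L_t(x)\in[m,m+1]$ at a child~$x$ of the root --- an event of probability bounded below uniformly in $t\ge1$, since $L_t(x)$ is Compound Poisson-Exponential with parameter~$t$ --- and then apply the \emph{already established} bound (from the $t\ge m$ regime) to the depth-$(n-1)$ subtree rooted at~$x$ with the new time parameter $s\in[m,m+1]$. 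The centering only shifts by $\sqrt t-\sqrt s+a_n(t)-a_{n-1}(s)$, which for $t,s$ in a bounded range is $O(1)$ uniformly in~$n$ (cf.\ \eqref{E:2.8}), so the target level $\sqrt t+a_n(t)+u$ translates to $\sqrt s+a_{n-1}(s)+u'$ with $u'=u+O(1)$ and the constant $\tilde c'$ only shifts by a bounded factor. This sidesteps all the explicit barrier/ballot computations you sketch. The crucial observation you missed is that Proposition~\ref{prop-3} \emph{can} be brought to bear on small $t$ once one steps down one level of the tree. (A similar remark applies to your handling of $u\le m$: the paper leans on Proposition~\ref{prop-1}/Theorem~\ref{thm-4} and positivity of $P^\varrho(\HH_{n,t})$ rather than re-running the Proposition~\ref{prop-1} argument with a raised barrier.) Your plan would likely succeed, and you correctly note that its validity --- like that of the corollary itself --- is implicitly confined to $u\lesssim\sqrt{n\log n}$; but it re-derives material the Markov property lets one borrow for free.
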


Corollary~\ref{cor-3.6} is proved at the very end of Section~\ref{sec-3}.
The proof of Proposition~\ref{prop-3} is long and technical so we will first show how it implies the statement of Theorem~\ref{thm-5}. We start with the following technical fact:

\begin{lemma}
\label{lemma-an}
Fix an integer~$k>1$ and let~$\{t_n\}_{n\ge1}$ be a positive sequence satisfying~$t_n/n^2\to0$. Then, given an integer $n> k$ and reals~$t,s\ge1$, the quantity~$o(1)=o_{n,k,s,t}(1)$ defined by
\begin{equation}
\label{E:2.8}
a_n(t) - a_{n-k}(s) = k\slb +\frac1{8\slb}\log\frac ts + o(1)
\end{equation} 
obeys $o(1) \to 0$ as $n \to \infty$, uniformly in  $t$ and~$s$ satisfying
\begin{equation}
1\le t\le t_n\quad\text{and}\quad 
1\le\sqrt s\le \sqrt t+a_k(t)+\log\log k
\end{equation}
\end{lemma}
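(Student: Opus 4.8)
The plan is to prove the lemma by a direct expansion of the explicit centering function. Substituting the formula \eqref{E:1.10a} for $a_n$ into $a_n(t)-a_{n-k}(s)$ gives
\[
a_n(t)-a_{n-k}(s)=k\slb-\frac3{4\slb}\log\frac n{n-k}-\frac1{4\slb}\Bigl(\log\tfrac{n+\sqrt t}{\sqrt t}-\log\tfrac{n-k+\sqrt s}{\sqrt s}\Bigr),
\]
and rewriting the last bracket as $\log\tfrac{n+\sqrt t}{n-k+\sqrt s}+\log\tfrac{\sqrt s}{\sqrt t}$ together with the identity $-\tfrac1{4\slb}\log\tfrac{\sqrt s}{\sqrt t}=\tfrac1{8\slb}\log\tfrac ts$ identifies
\[
o_{n,k,s,t}(1)=-\frac3{4\slb}\log\frac n{n-k}-\frac1{4\slb}\log\frac{n+\sqrt t}{n-k+\sqrt s}.
\]
Thus the whole lemma reduces to showing that each of these two terms tends to $0$ as $n\to\infty$, uniformly over the admissible pairs $(t,s)$.

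The first term carries no $(t,s)$ dependence: with $k$ fixed, $\log\tfrac n{n-k}=-\log(1-k/n)\to0$. The second term is where the hypotheses enter. Writing $\tfrac{n+\sqrt t}{n-k+\sqrt s}=1+\tfrac{k+\sqrt t-\sqrt s}{n-k+\sqrt s}$, I would bound the numerator using the two constraints on $s$: from $s\ge1$ one gets $k+\sqrt t-\sqrt s\le k+\sqrt t\le k+\sqrt{t_n}$, while from $\sqrt s\le\sqrt t+a_k(t)+\log\log k$ together with the elementary bound $a_k(t)\le k\slb$ (immediate from \eqref{E:1.10a}) one gets $\sqrt s-\sqrt t-k\le (\slb-1)k+\log\log k$, a constant $M_k$ depending only on $k$ (and on the fixed $b$). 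Hence $|k+\sqrt t-\sqrt s|\le k+M_k+\sqrt{t_n}$, while the denominator is at least $n-k+1$; so for $n>2k$,
\[
\Bigl|\frac{k+\sqrt t-\sqrt s}{n-k+\sqrt s}\Bigr|\le\frac{k+M_k+\sqrt{t_n}}{n-k+1},
\]
and since $t_n/n^2\to0$ forces $\sqrt{t_n}/n\to0$, the right-hand side tends to $0$. Therefore $\tfrac{n+\sqrt t}{n-k+\sqrt s}\to1$ uniformly in the stated range, so its logarithm, and hence $o_{n,k,s,t}(1)$, tends to $0$ uniformly.

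There is no serious obstacle here: the statement is an elementary estimate on the explicit centering sequence, and the only point requiring a little care is to observe that the correction $a_k(t)+\log\log k$ in the hypothesis on $\sqrt s$ is $O(1)$ uniformly in $t\ge1$ for fixed $k$, so that $\sqrt s$ cannot exceed $\sqrt t$ by more than a $k$-dependent constant; after that, the assumption $t_n=o(n^2)$ does the rest. If desired, the implied $o(1)$ can even be made quantitative, of order $(k+\sqrt{t_n})/n$.
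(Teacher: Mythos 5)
Your proof is correct and takes essentially the same route as the paper: both expand the explicit formula \eqref{E:1.10a} to arrive at the identity \eqref{E:3.27u}, and both note that the two residual logarithmic terms tend to zero uniformly in the stated range. You simply spell out the uniformity argument (bounding $|k+\sqrt t-\sqrt s|$ by a $k$-dependent constant plus $\sqrt{t_n}$ and invoking $t_n=o(n^2)$), whereas the paper leaves that step as an unelaborated remark.
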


\begin{proofsect}{Proof}
A calculation shows
\begin{multline}
\label{E:3.27u}
a_n(t)-a_{n-k}(s) 
\\
= k\slb +\frac1{8\slb}\log\frac ts -\frac3{4\slb}\log\frac n{n-k} -\frac1{4\slb}\log\frac{\sqrt t+n}{\sqrt s+n-k}.
\end{multline}
The claim follows by noting that the last two terms on the right-hand side tend to zero as~$n\to\infty$ uniformly in the above range of~$s$ and~$t$.
\end{proofsect}

We are now ready to give:

\begin{proofsect}{Proof of Theorem~\ref{thm-5} from Proposition~\ref{prop-3}}
The argument is based on the observation that, in order to reach values within~$O(1)$ of~$\sqrt t+a_n(t\vee1)$ at some~$z\in\BbbL_n$, the subtree rooted at its ancestor~$x\in\BbbL_k$ at level~$k$ satisfying $1\ll k\ll n$ must witness an excessively large maximum, and that so even while having a relatively large value of~$L_t(x)$. In order to curb the local time at the ancestral level, for $k \geq 1$ and $t>0$ set
\begin{equation}
\AA_k(t) := \Bigl\{ \max_{x \in \BbbL_k} \sqrt{L_t(x)} \leq \sqrt{t} + a_k(t\vee1) + \log \log k \Bigr\}.
\end{equation}
For $n \geq 1$ and $v \in \bbR$, abbreviate
\begin{equation}
f_{n,t}(v) := P^\varrho \Bigl(\,\max_{x\in\BbbL_n} \sqrt{L_t(x)} - \sqrt{t} - a_n(t\vee1) > v \Bigr) \,.
\end{equation}
Using the Markov property (Lemma~\ref{lemma-M}) to condition on the local time up to level~$k$, we may then  write  $F_{n,t}(u)$ as a quantity of order $P(\AA_k(t)^\rmc)$ plus
\begin{equation}
\label{E:2.7}
E^\varrho \Biggl[\,\prod_{x \in \BbbL_k} \biggl(1 - f_{n-k, L_t(x)}\Bigl(\sqrt{t} + a_n(t\vee1) + u - \sqrt{L_t(x)} - a_{n-k}\bigl(L_t(x)\vee1\bigr)\Bigr) \biggr) \,;\ \AA_k(t) \Biggr]. 
\end{equation}
Lemma~\ref{lemma-M} also shows that $P(\AA_k(t)^\rmc)$ does not depend on~$n$ and, thanks to Lemma~\ref{lemma-2}, tends to zero as $k \to \infty$ uniformly in~$t>0$. We thus need to prove uniform convergence of the expectation \eqref{E:2.7} in the limit as~$n\to\infty$ and~$k\to\infty$.

Assuming that $\AA_k(t)$ occurs, Lemma~\ref{lemma-an} shows that for any~$x\in\BbbL_k$, 
the argument of $f_{n-k,L_t(x)}$ in \eqref{E:2.7} equals
\begin{equation}
\label{E:3.8u}
k\slb - (\sqrt{L_t(x)}-\sqrt t)+\frac1{8\slb}\log\frac {t\vee1}{L_t(x)\vee1}+u+o(1),
\end{equation}
where~$o(1)\to0$ as $n\to\infty$ followed by~$k\to\infty$, uniformly on~$\AA_k(t)$ and in~$t\in(0,t_n]$ The containment in~$\AA_k(t)$ ensures that \eqref{E:3.8u} grows at least as~$c\log k$ as~$k\to\infty$, for some small enough $c>0$, again uniformly on~$\AA_k(t)$. Since the term $u+o(1)$ is order unity, it is thus negligible compared to the rest of the expression. 

Invoking Proposition~\ref{prop-3}, for~$x\in\BbbL_k$ such that $L_t(x)\ge \log\log k$ we thus get
\begin{multline}
\label{E:2.11}
-\log\biggl(1 - f_{n-k, L_t(x)}\Bigl(\sqrt{t} + a_n(t\vee1) + u - \sqrt{L_t(x)} - a_{n-k}\bigl(L_t(x)\vee1\bigr)\Bigr) \biggr)
\\
= \bigl(C_\star + o(1)\bigr) b^{-2k} \biggl(k\slb - (\sqrt{L_t(x)}-\sqrt t)-\frac1{8\slb}\log\frac{L_t(x)\vee1}{t\vee1}\biggr)^+
\\
\times\Bigl(\frac{L_t(x)\vee1}{t\vee1}\Bigr)^{1/4}\rme^{2\slb\, (\sqrt{L_t(x)} - \sqrt{t}-u)},
\end{multline}
where $o(1)$ is a random quantity whose supremum and infimum on~$\AA_k(t)$ tend to zero as $n \to
 \infty$ followed by $k \to \infty$ uniformly in~$t\in(0,t_n]$. For~$x\in\BbbL_k$ where $L_t(x)< \log\log k$ we instead use Lemma~\ref{lemma-2} to get an upper bound where~$C_\star+o(1)$ is replaced by~$2c$, for~$c$ as in~\eqref{E:2.4u}. The resulting quantity is then at most order $b^{-2k}k(\log k)^{O(1)}\le b^{-2k}k^2$, again uniformly in~$t\in(0,t_n]$. 
  
Writing the product in  under expectation  as exponential of sum of the logs  \eqref{E:2.11}  then equates~\eqref{E:2.7} with
\begin{equation}
\texte^{O(b^{-k}k^2)}\bbE \Bigl( \rme^{-(C_\star+o(1)) \wt W_k(t) \rme^{-2\slb \,u}} \,;\, \AA_k(t) \Bigr).
\end{equation}
As~$\wt W_k(t)$ is non-negative and the $o(1)$-term is controlled uniformly on~$\AA_k(t)$, we can push the~$o(1)$ term out of the exponential using the fact that $\rme^{-a(1+o(1))+o(1)} = \rme^{-a} + o(1)$ uniformly in $a \geq 0$. One last application of Lemma~\ref{lemma-2} removes~$\AA_k(t)$ from the expectation and gives
\begin{equation} 
\label{E:2.13}
F_{n,t}(u) = \bbE \bigl(\rme^{-C_\star \wt W_k(t) \rme^{-2\slb \,u}}\bigr) + o(1) \,,
\end{equation}
where $o(1) \to 0$ as $n \to \infty$ followed by $k \to \infty$, uniformly in~$t\in(0,t_n]$.
\end{proofsect}

\begin{remark}
As is readily checked, the estimates in the previous proof are uniform in~$u$ taking values in any  bounded  subset of~$\R$. Thanks to the uniform tightness proved in Theorem~\ref{thm-4}, \eqref{E:3.3i} thus holds even with supremum over~$u\in\R$ inserted between the limit $n\to\infty$ and the supremum over~$t$.  It follows that the convergence \eqref{E:3.6w} takes place in the Kolmogorov metric uniformly in~$t>0$ and, in particular, $W(t)$ is continuously distributed. Similarly, also the centered maximum \eqref{E:3.3w} converges in the Kolmogorov metric on~$\R\cup\{-\infty\}$ uniformly in $t\in(0,t_n]$, for any positive~$\{t_n\}$ with $t_n=o(n^2)$. 

\end{remark}

\begin{remark}
The last term in \eqref{E:3.27u} is no longer negligible when~$t$ grows at least proportionally to~$n^2$. In this regime, the convergence as in \eqref{E:3.21w} still takes place but now with~$C_\star$ multiplied by a term that depends on the asymptotic value of~$t/n^2$; see Abe~\cite[Theorem~1.1]{A18}. While we could include this regime in our computations as well, we refrained from that in order to keep the proofs at manageable length.
\end{remark}

\subsection{Sharp upper tail from technical lemmas}
We will now move to the proof of Proposition~\ref{prop-3}. The argument follows a similar strategy as the proof of Proposition~\ref{prop-1}; namely, we first relate the upper tail probability in~\eqref{E:2.4} to the probability that a Brownian path stays below a random barrier and then use barrier estimates to derive asymptotic for the latter.

To formalize the barrier event, consider a probability space that supports both a Brownian motion
$B = (B_s \colon s \geq 0)$ scaled so that $\Var B_1 = 1/2$ and a collection of independent random  continuous  functions $\{D_k\}_{k=1, \dots, n}$ that are independent of~$B$ and have the law 
\begin{equation}
\label{E:3.23}
\{D_k(s)\}_{s>0} \laweq \Bigl\{\max_{x \in \BbbL'_k} \sqrt{L_{s^2}(x)} - s - a_k(s^2)\Bigr\}_{s>0}.
\end{equation}
Here, as in~\eqref{E:1.8}, $\BbbL'_k$ denotes the set of leaves of $\T_k$ with one child of the root and its sub-tree removed. We shall denote by $\BbbP$ the probability measure on  this  space and specify the initial value of $B$ by (formally) conditioning on~$B_0$.

Given~$u>0$, for $k=0, \dots, n-1$ and $s \geq 0$ set
\begin{equation}
\label{E:3.24}
\Delta_{k}(s) := a_k(s^2) - \frac{k}{n} a_n(t) 
\quad\text{and} \quad 
\wh{B}_s := B_s + \sqrt{t} + u + \frac{s}{n} a_n(t) 
\end{equation} 
and, for $0 \le l \le r \le n$, consider the barrier events
\begin{equation}
\label{E:3.25}
\BB_{l,r}  := \bigcap_{l\le k\le r}\Bigl\{ B_k + D_{n-k}(\wh{B}_k) + \Delta_{n-k}(\wh{B}_k) \leq 0\Bigr\} 
\end{equation}
and
\begin{equation}
\label{E:3.26}
\CC_{l,r} := \bigcap_{s\in[l,r]}\Bigl\{ \big| \wh{B}_s - \sqrt{t} - \slb\ s \big| \leq \tfrac12 \sqrt{t} + \tfrac12 \sqrt{\log b}\, s\Bigr \},
\end{equation}
where $[l,r]$ is an interval in~$\R$.
These are well defined events thanks to the continuity of $s\mapsto D_k(s)$. We note that~$\wh B$ and thus also the events in \twoeqref{E:3.25}{E:3.26} depend on the parameter~$u$, but we will keep that dependence implicit.

The proof of Proposition~\ref{prop-3} will be extracted from three lemmas. The first one expresses the quantity of main interest by way of probabilities of above barrier events:

\begin{lemma}
\label{lemma-2.4}
Recall $F_{n,t}$ from \eqref{E:3.3w}. There exists $C_\sharp \in (0,\infty)$ such that for all~$u\in\R$,
\begin{multline}
\label{E:3.29}
\quad
n^{-1} \rme^{2 \slb\, u}
\frac{\textd F_{n,t}(u)}{\textd u} 
= \bigl(C_\sharp + o(1) \bigr) \BbbP \bigl( \BB_{0,n-1}\cap \CC_{0, n} \,\big|\, B_0 = -u, B_n = 0 \bigr) 
\\
+
O(1) \BbbP \bigl( \BB_{0,n-1}\smallsetminus \CC_{0, n} \,\big|\, B_0 = -u, B_n = 0 \bigr) \,,
\quad
\end{multline}
where the $o(1)$ term tends to $0$ as $n \to \infty$ followed by $t \to \infty$, uniformly in $u > 0$ and the $O(1)$ term is bounded uniformly in these limits. 
\end{lemma}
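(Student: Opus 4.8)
The plan is to disintegrate the density $\textd F_{n,t}(u)/\textd u$ of the centered maximum along the ancestral line of the (a.s.\ unique) argmax leaf, convert the result into a Brownian‑bridge barrier probability using the Markov property (Lemma~\ref{lemma-M}) and the Bessel/Brownian description of the local time along a branch (Lemma~\ref{lemma-3}), and finally read off $C_\sharp$ from the explicit Gaussian prefactor. Since the maximum over $\BbbL_n$ is attained at a unique leaf a.s.\ and $\T_n$ is symmetric, one has $\textd F_{n,t}(u)/\textd u=b^n p_z(v)$ with $v:=\sqrt t+a_n(t)+u$, where $p_z(v)$ is the density at $v$ of $\sqrt{L_t(z)}$ restricted to the event that a fixed $z\in\BbbL_n$ is the argmax. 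Write $(\varrho=x_0,\dots,x_n=z)$ for the root‑to‑$z$ path and $\BbbL'_{n-k}(x_k)$ for the leaves of the subtree hanging off $x_k$ after deleting the edge towards $x_{k+1}$, so that $\BbbL_n\smallsetminus\{z\}=\bigcup_{k=0}^{n-1}\BbbL'_{n-k}(x_k)$. Conditioning on $\bigl(L_t(x_0),\dots,L_t(x_n)\bigr)$ and invoking Lemma~\ref{lemma-M}, the subtree maxima become conditionally independent, the $k$‑th one distributed as $\max_{x\in\BbbL'_{n-k}}\sqrt{L_{L_t(x_k)}(x)}$ under $P^\varrho$; hence $p_z(v)$ is the integral over $v_1,\dots,v_{n-1}$ of the joint density of $\bigl(\sqrt{L_t(x_1)},\dots,\sqrt{L_t(x_n)}\bigr)$ at $(v_1,\dots,v_{n-1},v)$ times $\prod_{k=0}^{n-1}P^\varrho\bigl(\max_{x\in\BbbL'_{n-k}}\sqrt{L_{v_k^2}(x)}\le v\bigr)$, with $v_0:=\sqrt t$ fixed (as $L_t(\varrho)=t$).

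By the first part of Lemma~\ref{lemma-3}, $\bigl(\sqrt{L_t(x_k)}\bigr)_{k=0}^n$ is in law $1/\sqrt2$ times a $0$‑dimensional Bessel path over $[0,n]$ from $\sqrt{2t}$; the second part then lets me trade this for a Brownian motion $B$ with $\Var B_1=1/2$ at the cost of the weight $\bigl(t/\wh B_n^2\bigr)^{1/4}\,\rme^{-c_0\int_0^n\wh B_s^{-2}\,\textd s}$ for a suitable $c_0>0$, once the Brownian path is identified with $\wh B$ — i.e.\ once one conditions on $\{B_0=-u,\ B_n=0\}$, which makes $\wh B_0=\sqrt t$ and $\wh B_n=v$ (hence $\sqrt{L_t(\varrho)}=\sqrt t$ and $\sqrt{L_t(z)}=v$) automatic — and the path integral is read as the conditional expectation given the integer‑time skeleton. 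Using the law \eqref{E:3.23} of $D_{n-k}$ and the definition of $\Delta_{n-k}$, a direct computation shows that the $k$‑th factor above, evaluated at $\wh B_k=\sqrt{L_t(x_k)}$, equals $\BbbP\bigl(B_k+D_{n-k}(\wh B_k)+\Delta_{n-k}(\wh B_k)\le0\,\big|\,B\bigr)$; integrating out the $D_k$'s (independent of $B$) turns the product into $\BbbP(\BB_{0,n-1}\mid B)$, and peeling the Gaussian‑skeleton normalisation $\tfrac1{\sqrt{\pi n}}\rme^{-(v-\sqrt t)^2/n}$ off yields
\begin{equation*}
\frac{\textd F_{n,t}(u)}{\textd u}=b^n\,\frac{\rme^{-(v-\sqrt t)^2/n}}{\sqrt{\pi n}}\,\Bigl(\tfrac{t}{v^2}\Bigr)^{1/4}\,\E\Bigl(\rme^{-c_0\int_0^n\wh B_s^{-2}\textd s}\,\BbbP(\BB_{0,n-1}\mid B)\,\Big|\,B_0=-u,\,B_n=0\Bigr).
\end{equation*}

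It remains to analyse the prefactor and split the expectation. Writing $a_n(t)=n\slb-c_n(t)$ with $c_n(t):=\tfrac3{4\slb}\log n+\tfrac1{4\slb}\log\tfrac{n+\sqrt t}{\sqrt t}$, so that $v-\sqrt t=n\slb-c_n(t)+u$, one finds
\begin{equation*}
n^{-1}\rme^{2\slb u}\cdot b^n\,\frac{\rme^{-(v-\sqrt t)^2/n}}{\sqrt{\pi n}}\,\Bigl(\tfrac{t}{v^2}\Bigr)^{1/4}=\frac1{\sqrt\pi}\Bigl(\tfrac{n+\sqrt t}{v}\Bigr)^{1/2}\rme^{-(u-c_n(t))^2/n},
\end{equation*}
which tends to $C_\sharp:=(\pi\,\slb)^{-1/2}$ as $n\to\infty$ and then $t\to\infty$. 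Finally, split the bridge expectation according to whether $\CC_{0,n}$ occurs: on $\CC_{0,n}$ one has $\wh B_s\ge\tfrac12(\sqrt t+\slb\,s)$, whence $\int_0^n\wh B_s^{-2}\,\textd s\le 4/(\slb\sqrt t)$ and the Bessel weight equals $1+O(t^{-1/2})$, which produces the term $(C_\sharp+o(1))\,\BbbP(\BB_{0,n-1}\cap\CC_{0,n}\mid B_0=-u,B_n=0)$; off $\CC_{0,n}$ the weight lies in $[0,1]$, producing $O(1)\,\BbbP(\BB_{0,n-1}\smallsetminus\CC_{0,n}\mid B_0=-u,B_n=0)$.

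I expect the main obstacle to be the uniformity in $u>0$ in the last paragraph: one must show that the deviation of the Gaussian prefactor from $C_\sharp$ and of the Bessel weight from $1$ is absorbed into the $o(1)$ and $O(1)$ terms \emph{uniformly in} $u$, for which the explicit form of $a_n(t)$ and the tube constraint $\CC_{0,n}$ (forcing $|u-c_n(t)|\le\tfrac12(\sqrt t+\slb\,n)$ on that event) must be used in tandem. A more routine but nontrivial nuisance is the measure‑theoretic care in the disintegration: absolute continuity of the relevant local‑time laws (so $F_{n,t}$ is differentiable on the range $v>0$), a.s.\ uniqueness of the argmax, and treating the Radon--Nikodym weight as a conditional expectation over the Brownian bridges interpolating the integer‑time skeleton.
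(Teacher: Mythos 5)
Your proposal follows the paper's proof essentially step for step: the argmax-leaf decomposition via uniqueness and symmetry, the conditioning on the ancestral local-time profile through Lemma~\ref{lemma-M} and \eqref{E:3.23}, the Bessel-to-Brownian change of measure from Lemma~\ref{lemma-3} yielding the Gaussian prefactor and the $\exp\{-c_0\int_0^n\wh B_s^{-2}\,\textd s\}$ weight, the split of the bridge expectation according to $\CC_{0,n}$, and the same value $C_\sharp=\pi^{-1/2}(\log b)^{-1/4}$. The uniformity-in-$u$ issue you flag at the end is handled by the paper in the same spirit (the prefactor asymptotics are stated just as you state them, and the large-$u$ regime is absorbed through the vanishing of the barrier probabilities), so this is not a genuine gap relative to the paper's argument.
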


The other two lemmas supply asymptotic forms for the probabilities on the right hand side of \eqref{E:3.29} in the required limit regime of the parameters:

\begin{lemma}
\label{lemma-2.5}
There exists $C_\diamond \in (0,\infty)$ such that, for all $n\ge1$, $t>0$ and~$u>0$,
\begin{equation}
\label{E:2.20}
\BbbP \bigl( \BB_{0,n-1}\cap \CC_{0, n}  \,\big|\, B_0 = -u, B_n = 0 \bigr) 
= \bigl(C_\diamond+o(1)\bigr) \frac un ,
\end{equation}
where $o(1)\to0$ as $n \to \infty$ followed by $(u,t) \to (\infty,\infty)$.
\end{lemma}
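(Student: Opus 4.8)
\textit{Proof of Lemma~\ref{lemma-2.5} (plan).}
The quantity to estimate is the probability that a Brownian bridge of length~$n$ from~$-u$ to~$0$ stays below the curved random barrier encoded by $\BB_{0,n-1}$ while also remaining in the tube~$\CC_{0,n}$, so the plan is to reduce this to a standard bridge--ballot asymptotic. The first step is to make the barrier explicit. Unfolding the definitions in \eqref{E:3.24}--\eqref{E:3.25}, the event $\BB_{0,n-1}$ reads $B_k \le -D_{n-k}(\wh B_k)-\Delta_{n-k}(\wh B_k)$ for $k=0,\dots,n-1$. On $\CC_{0,n}$ the argument $\wh B_k$ is pinned to the tube of width $O(\sqrt t+\slb\,k)$ around $\sqrt t+\slb\,k$; plugging the formula \eqref{E:1.10a} for $a_{n-k}$ into $\Delta_{n-k}$ and using this control on $\wh B_k$, a computation shows that, up to an additive error that is bounded and tends to~$0$ as $n\to\infty$ and then $t\to\infty$ (uniformly on $\CC_{0,n}$), the deterministic part $-\Delta_{n-k}(\wh B_k)$ is a log-type curve $\psi_n(k)$ that vanishes near $k=0$ and $k=n-1$ and grows logarithmically in the bulk — the $t$-dependent terms built into~$a_n$ being precisely what offsets the $t$-dependence that enters through $\wh B_k$. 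Thus we are in the canonical situation: a Brownian bridge from~$-u$ far below a log-curved barrier, perturbed by the $O(1)$, light-tailed random bumps $-D_{n-k}(\wh B_k)$.

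The second step is to handle the randomness of the barrier. I would condition on the family $\{D_k\}_{k=1}^n$, which is independent of~$B$. The only obstruction to a deterministic-given-$\{D_k\}$ barrier is that $D_{n-k}$ is evaluated at the moving point $\wh B_k$; I would remove this by sandwiching $D_{n-k}(\wh B_k)$ between the infimum and the supremum of $D_{n-k}$ over the (deterministic) tube cross-section at time~$k$, which requires a modulus-of-continuity estimate for $s\mapsto D_k(s)$ over the relevant range — not supplied by the earlier results and hence to be established, or else circumvented by discretizing~$s$ and applying a union bound controlled by the exponential upper tail of Lemma~\ref{lemma-2}. With the barrier now deterministic given $\{D_k\}$, the bridge--ballot lemma (Lemma~\ref{lemma-A.1} and its refinements, in the spirit of A\"idekon~\cite{Aidekon} and Bramson, Ding and Zeitouni~\cite{BDingZ}) yields, for each fixed $u,t$,
\[
\BbbP\bigl(\BB_{0,n-1}\cap\CC_{0,n}\,\big|\,B_0=-u,\,B_n=0,\,\{D_k\}\bigr)=\bigl(\Phi_n(u,t;\{D_k\})+o_n(1)\bigr)\tfrac un,
\]
where the prefactor $\Phi_n$ factorizes asymptotically into a left-endpoint (entropic-repulsion) factor, governed by $D_n,D_{n-1},\dots$, i.e.\ recentered maxima of large subtrees; a bulk factor producing the $1/n$; and a right-endpoint factor governed by $D_1,D_2,\dots$, i.e.\ recentered maxima of bounded-depth subtrees.

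The third step is to take expectations and pass to the limits. Taking $\E$ over $\{D_k\}$ and then $n\to\infty$, the bulk decouples and both endpoint factors converge: the left one by the weak convergence of recentered subtree maxima (Theorem~\ref{thm-2}) applied along the tube, the right one because the laws $D_1,D_2,\dots$ are fixed; interchanging limit and expectation is justified by the uniform upper- and lower-tail bounds of Lemma~\ref{lemma-2} and Theorem~\ref{thm-4}. This produces a finite limit $C_\diamond(u,t):=\lim_{n\to\infty}\tfrac nu\,\BbbP(\BB_{0,n-1}\cap\CC_{0,n}\mid B_0=-u,B_n=0)$, with $C_\diamond(u,t)>0$ because the lower-tail control of Theorem~\ref{thm-4} keeps the random barrier from being depressed below the level at which the ballot probability would be of smaller order. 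Finally, letting $(u,t)\to(\infty,\infty)$: the $t$-corrections vanish by design, and the $u$-dependence of the left-endpoint factor stabilizes since the starting height $-u$ recedes far below the barrier; hence $C_\diamond(u,t)\to C_\diamond\in(0,\infty)$, which is the assertion.

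I expect the main obstacle to lie entirely in the random barrier, on two fronts. First, controlling $D_{n-k}$ at the wandering argument $\wh B_k$ needs quantitative regularity of these random functions uniformly in $k$, which must be produced from scratch (or replaced by a careful discretization argument). Second, and more seriously, obtaining the lower bound of the correct order~$u/n$ — so that $C_\diamond>0$ and not merely $C_\diamond\ge0$ — demands a genuine ballot lower bound in the presence of the random barrier and the simultaneous tube constraint; this is where the lower-tail estimate of Theorem~\ref{thm-4} is essential, and the matching of the upper and lower constants in the iterated limit $n\to\infty$, then $(u,t)\to\infty$, is the delicate endgame. The $O(u/n)$ upper bound, by contrast, is comparatively routine once the effective barrier has been identified.
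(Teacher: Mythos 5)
Your plan is in the right general territory (reduce to a ballot asymptotic for a decorated barrier, use the tail and tube controls already in hand), but it deviates from the paper's argument at exactly the point where the paper's route is cleaner, and the deviation creates a genuine obstacle.

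The crux is how to handle $D_{n-k}(\wh B_k)$ being evaluated at the moving argument $\wh B_k$. You propose to condition on the whole family $\{D_k\}$ and then sandwich $D_{n-k}(\wh B_k)$ between the infimum and supremum over the tube cross-section, acknowledging this needs a modulus-of-continuity estimate for $s\mapsto D_k(s)$ (or a discretization plus union bound) that the paper does not supply. That is a real gap: the tube cross-sections have width of order $\sqrt t + \slb\,k$, and proving uniform regularity of the recentered maximal local time in $s$ over such growing windows is nontrivial and is nowhere needed in the paper. The paper's fix is both simpler and different in kind: it never conditions on $\{D_k\}$ at all. Instead it observes that, \emph{conditional on} $\wh B$, the family $\{D_{n-k}(\wh B_k)\}_{k=0}^{n-1}$ is stochastically dominated from above by i.i.d.\ shifted exponentials (uniform in the argument, from Lemma~\ref{lemma-2}) and, on $\CC_{0,n}$, from below by i.i.d.\ shifted $\{-C,-\infty\}$-valued variables (uniform in the argument, from Proposition~\ref{prop-1} via \eqref{E:3.42}). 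The moving argument $\wh B_k$ is then harmless because the bounds are pointwise-uniform in $s$; one plugs straight into the decorated ballot estimates of Lemmas~\ref{lemma-2.6}--\ref{lemma-2.7}, which are designed to accept random decorations. No continuity of $s\mapsto D_k(s)$ is needed.

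A second, structural, difference: you describe the constant as emerging from a three-factor ``left/bulk/right'' factorization, but the mechanism in the paper is a decomposition of the bridge at a single intermediate time $n-m$ (Lemma~\ref{lemma-3.9}). The segment $[0,n-m]$ is eaten by the decorated ballot lemmas and produces the $4uv/n$ asymptotic; the segment $[n-m,n]$ produces the finite expectation $\bbE\bigl(B_{n-m}^- ;\, \BB_{n-m,n-1}\cap\CC_{n-m,n}\bigr)$, which is shown to converge as $n\to\infty$ (to a BRW object, Lemma~\ref{lemma-2.10}) and to be uniformly positive and bounded in $m$ (Lemma~\ref{lemma-3.18}). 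The constant $C_\diamond$ is identified only indirectly: since the left side of \eqref{E:3.40} does not depend on $m$ and the right side does not depend on $n,u,t$, the two-stage limit forces the $m$-indexed expectations to converge. This compactness device is what actually pins down $C_\diamond\in(0,\infty)$; your proposal gestures at ``the $u$-dependence stabilizing'' but does not supply a mechanism for extracting a single finite positive constant from the iterated limit.
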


\begin{lemma}
\label{lemma-3.6}
For all $n\ge1$, $t>0$ and $u>0$, 
\begin{equation}
\label{E:3.37a}
\BbbP \bigl( \BB_{0,n-1}\smallsetminus \CC_{0, n}  \,\big|\, B_0 = -u, B_n = 0 \bigr) 
= o(1)\frac{u}{n}  \,,
\end{equation}
where $o(1)\to0$ as $n \to \infty$ followed by $t \to \infty$.
\end{lemma}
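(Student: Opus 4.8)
The plan is to decompose the bad event $\BB_{0,n-1}\smallsetminus\CC_{0,n}$ according to whether $\CC_{0,n}$ fails through an \emph{upward} or a \emph{downward} exit of $\wh B$, to reduce in both cases to an event expressible through the integer skeleton $(B_k)_{0\le k\le n}$ of the Brownian motion, and to bound every resulting piece by a quantity of the form $o(1)$ times the universal ballot upper bound
$\BbbP(\BB_{0,n-1}\,|\,B_0=-u,\,B_n=0)\le C\,u/n$, valid for all $u\in(0,n)$. That upper bound is the curved‑barrier estimate (Lemma~\ref{lemma-A.1}) applied to the skeleton walk against the \emph{random} barrier $k\mapsto -D_{n-k}(\wh B_k)-\Delta_{n-k}(\wh B_k)$; here one uses that the $D_k$ are independent of $B$ and of one another, and that $\Delta_{n-k}(\wh B_k)$ is a curved‑barrier‑type correction of size $O\bigl(1+\log(\min(k,n-k)\vee1)\bigr)$ on the event where $\wh B_k$ is of the order of the tube height (this last fact follows from \eqref{E:1.10}, i.e. Lemma~3.3 of \cite{CHL17}).

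First I would pass from continuous to integer times: if $\wh B$ leaves $\CC_{0,n}$ at some non‑integer $s$ while remaining in a unit enlargement of the tube at $\lfloor s\rfloor$ and $\lceil s\rceil$, then $B$ must oscillate by at least $\tfrac14(\sqrt t+\slb\lfloor s\rfloor)$ across a unit interval; conditionally on the skeleton this has probability at most $\sum_{k}\rme^{-c(\sqrt t+\slb k)^2}\le C\rme^{-ct}$ by the reflection principle, so this contribution is at most $C\rme^{-ct}\,\BbbP(\BB_{0,n-1}\,|\,\cdots)=o(1)\,u/n$ in the iterated limit. It therefore suffices to treat exits of the skeleton. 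For an \emph{upward} exit at an integer $k$ one has, on $\BB_{0,n-1}$, that $B_k\le -D_{n-k}(\wh B_k)-\Delta_{n-k}(\wh B_k)$, whereas the exit forces $B_k\ge \tfrac12\sqrt t+\tfrac12\slb k-u-O(\log n)$; as long as $u\le\tfrac14(\sqrt t+\slb k)$ these are compatible only if $D_{n-k}(\wh B_k)<-\tfrac14(\sqrt t+\slb k)$, an event of probability $\le\alpha_1\rme^{-\alpha_2(\sqrt t+\slb k)/4}+\BbbP\bigl(\HH_{\,n-k,\,\wh B_k^2}^{\rmc}\bigr)$ by the lower‑tail bound of Theorem~\ref{thm-4} (applicable because $\wh B_k\ge\tfrac12\sqrt t$ keeps the defect negligible). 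Using the independence of the $D_k$ this replaces the $k$‑th factor of $\BbbP(\BB_{0,n-1}\,|\,\cdots)$ by one smaller by the above amount; summing over $k$ yields $C\rme^{-c\sqrt t}\,\BbbP(\BB_{0,n-1}\,|\,\cdots)=o(1)\,u/n$. The excluded range, $u>\tfrac14(\sqrt t+\slb k)$ for some $k$ (equivalently $u$ of order $n$), is handled crudely: there $\CC_{0,n}$ already fails at $s=n$, so the left side of \eqref{E:3.37a} equals $\BbbP(\BB_{0,n-1}\,|\,\cdots)\le C\tfrac un\,\rme^{-cu^2/n}$ and the Gaussian factor is $o(1)$ uniformly in that regime.

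The main step — and the main obstacle — is the \emph{downward} exit. Let $k$ be the first integer with $\wh B_k<\tfrac12(\sqrt t+\slb k)$, so that the depth of $B_k$ below the barrier is $|B_k|\asymp\tfrac12\sqrt t+u+\slb k$, and split further over dyadic blocks for this depth. I would apply the Markov property of $B$ at time $k$ together with the curved‑barrier ballot estimates on $[0,k]$ and $[k,n-1]$ separately: descending below an \emph{upper} barrier is unconstrained on $[0,k]$, while the return on $[k,n-1]$ contributes a factor $\asymp |B_k|/(n-k)^{3/2}$, and the transition density of $B$ from $-u$ to $-|B_k|$ over time $k$ carries the Gaussian factor $\exp\{-(|B_k|-u)^2/k\}\le b^{-ck}\rme^{-ct/k}$ because $|B_k|-u\gtrsim\sqrt t+\slb k$. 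After dividing by the bridge density $p_n(-u,0)$ and summing over $k$ and the dyadic scale, the geometric factor $b^{-ck}$ controls large $k$ and the factor $\rme^{-ct/k}$ controls small $k$ once $t$ is large, producing a bound $\epsilon(n,t)\,u/n$ with $\epsilon(n,t)\to0$ as $n\to\infty$ and then $t\to\infty$. The delicate points are (i) arranging the bookkeeping so that the surviving power of $n$ is precisely $u/n$ and not $u$ — which is where the \emph{curved} shape of the barrier, rather than a flat one, is essential, so that no spurious $\log n$ survives — and (ii) absorbing the $D$‑ and $\Delta$‑corrections at the endpoints $k$ and $n-1$ into the exponents; both are addressed by invoking Lemma~\ref{lemma-A.1} in its form with $O(\log(\min(\cdot)\vee1))$ error terms and by discarding the anomalous‑$D$ events exactly as in the upward case. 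Combining the three contributions gives \eqref{E:3.37a}.
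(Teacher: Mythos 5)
The proposal's first reduction contains a concrete false step. You claim that if $\wh B$ leaves $\CC_{0,n}$ at a non-integer time $s$ while remaining in a \emph{unit enlargement} of the tube at $\lfloor s\rfloor$ and $\lceil s\rceil$, then $B$ must oscillate by at least $\tfrac14\bigl(\sqrt t+\slb\lfloor s\rfloor\bigr)$ across a unit interval. That is not true: the walk can sit within one unit of the tube boundary at $\lfloor s\rfloor$ and $\lceil s\rceil$ and cross the boundary by an arbitrarily small amount in between, which forces only an $O(1)$ fluctuation, not one on the scale of the tube width. Your discretization would be valid if the integer-time condition were replaced by a tube \emph{shrunk by a constant fraction of its width}, but that is not what you wrote. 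The paper sidesteps the issue entirely: it decomposes over the first integer $k$ for which $\CC_{k,n}$ holds, which forces a continuous-time exit inside $[k-1,k]$, and then bounds $\BbbP(\CC^{\rmc}_{k-1,k}\mid B_0=-u,B_n=0)$ directly by a reflection-principle estimate, $\le C\rme^{-c(\sqrt t+\slb k)^2/k}\le C'\rme^{-c''(\sqrt t+k)}$.

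This also makes the upward/downward case split unnecessary: the unified Gaussian tube-exit bound handles both, and the paper nowhere uses the lower-tail control on the decorations $D$ (Theorem~\ref{thm-4}) in this proof, only the upper bound of Lemma~\ref{lemma-2.7} via Lemma~\ref{lemma-2.8} on the post-exit interval $[k,n]$, plus a sum over $k$ distinguishing $k\le\sqrt n$ from $k>\sqrt n$. A further problem in your argument: you assert that the excluded range ``$u>\tfrac14(\sqrt t+\slb k)$ for some $k$'' is equivalent to $u$ of order $n$ — but taking $k=0$ this is just $u>\tfrac14\sqrt t$, a constraint between $u$ and $t$ and not $n$, so your crude fallback (which relies on $\CC_{0,n}$ already failing at $s=n$, and hence needs $u$ comparable to $n$) does not cover a nontrivial range of $u$. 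Your description of the downward-exit bookkeeping (Markov at the exit time, ballot estimate on the tail, Gaussian transition cost) is broadly parallel to the paper's second factor in \eqref{E:3.39}, but you flag the precise balance as ``delicate'' and leave it unfinished; the dyadic splitting is not needed once the sup-over-$u'$ bound $\le C(u+\sqrt t+k)\log^+ k/n$ from Lemma~\ref{lemma-2.7} is used.
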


With the above three lemmas in hand, we conclude:

\begin{proofsect}{Proof of Proposition~\ref{prop-3} from Lemmas~\ref{lemma-2.4}--\ref{lemma-3.6}}
Combining the above lemmas with the Dominated Convergence Theorem, the probability in~\eqref{E:2.4} equals
\begin{equation}
\bigl(C_\sharp C_\diamond + o(1)\bigr) \int_{u}^\infty s \rme^{-2 \slb\, s} \rmd s \,,
\end{equation}
where $o(1) \to 0$ as $n \to \infty$ followed by $(u,t) \to (\infty, \infty)$.
As the integral evaluates to $(2 \slb)^{-1} u\rme^{-\slb\, u}(1+o(1))$, this gives the claim with $C_\star := (2 \slb)^{-1} C_\sharp C_\diamond$.
\end{proofsect}

It remains to provide the proofs of the above three lemmas. While the latter two require considerable amount of work (which is deferred to the next subsection), the first one is deduced from a familiar calculation:

\begin{proofsect}{Proof of Lemma~\ref{lemma-2.4}}
Fix any $x_n\in\BbbL_n$. Similarly as in the proof of Lemma~\ref{lemma-2.7a}, using that the joint law of the local time at any two vertices of $\T^n$ has no atoms on the positive-real half-line,  the symmetry of the leaves of~$\T_n$ permits us to write
\begin{equation}
\label{E:3.44ui}
\textd F_{n,t}(u) = b^n\, P^\varrho \Bigl(\sqrt{L_t(x_n)} - \sqrt{t} - a_n(t)  = \max_{x \in \BbbL_n} \sqrt{L_t(x)} - \sqrt{t} - a_n(t) \in \rmd u \Bigr)
\end{equation}
 on $\{u>0\}$.  Let, as before, $x_0=\varrho, \dots, x_n$ be the vertices in~$\T_n$ on the unique path from the root to~$x_n$ and write $\BbbL_{n-k}'(x_k)$ for the leaves in $\BbbL_n$ that are descendants of~$x_k$ but not of~$x_{k+1}$. Assume that $\{D_k(\cdot)\}_{k=1,\dots,n}$ are defined on the same probability space as the random walk and that these are independent of each other and of the walk. 

By the Markov property (Lemma~\ref{lemma-M})  and \eqref{E:3.23}  --- and writing, with some abuse, $P^\varrho$ for the joint law of these objects --- the probability  on the right of \eqref{E:3.44ui} equals  \begin{multline}
\label{E:3.30}
P^\varrho \biggl(\,\,\bigcap_{k=0}^{n-1}\Bigl\{ \sqrt{L_t(x_k)} + D_{n-k}\bigl(\sqrt{L_t(x_k)}\bigr) + a_{n-k}\bigl(L_t(x_k)\bigr)
\leq \sqrt{t} + a_n(t) + u\Bigr\}
\\
\cap\Bigl\{\sqrt{L_t(x_n)} - \sqrt{t} - a_n(t)  \in \rmd u \Bigr\}\biggr) \,.
\end{multline}
In light of Lemma~\ref{lemma-3},  this is the product of 
\begin{equation}
\label{E:3.31}
\biggl(\frac{\sqrt{t}}{\sqrt t+ a_n(t)+u}\biggr)^{1/2} 
\BbbP \bigl(B_n - \sqrt{t} - a_n(t)  \in \rmd u \,\big |\, B_0 = \sqrt{t} \bigr) 
\end{equation}
times
\begin{multline}
\label{E:3.32}
\E \biggr( \rme^{-\frac{3}{16} \int_{0}^n B_s^{-2}\textd s} 
\,;\, \bigcap_{0\le k\le n-1} \Bigl\{ B_k + D_{n-k}(B_k) + a_{n-k}(B_k^2) \leq \sqrt{t} + a_n(t) + u\Bigr\} \\
\cap\bigl\{\,\min_{0\le k\le n} B_k > 0\bigr\} \,\bigg|\, B_0 = \sqrt{t},\, B_n = \sqrt{t}+a_n(t) + u \biggr)\,,
\end{multline}
where~$B$ is  $1/\sqrt{2}$-multltiple  of a standard Brownian Motion.

Thanks to the explicit form of the Gaussian law, the Radon-Nikodym derivative of the measure in \eqref{E:3.31} with respect to the Lebesgue measure on $(0,\infty)$ equals
\begin{equation}
\biggl(\frac{\sqrt{t}}{\sqrt t+ a_n(t)+u}\biggr)^{1/2}\frac1{\sqrt{\pi n}}\, \rme^{-\frac{(a_n(t)+u)^2}{n}}
= \frac{1+o(1)}{\sqrt{\pi}\,(\log b)^{1/4}}\, n b^{-n}\, \rme^{-2 \slb\, u} \,,
\end{equation}
where $o(1)\to0$ as $n \to \infty$.
As for the expectation in \eqref{E:3.32}, since $(B_s)_{0\le s\le n}$ under $\BbbP(\cdot\,|\,B_0=\sqrt{t}, B_n=\sqrt{t}+a_n(t)+u)$ is equidistributed to $(\wh{B}_s)_{0\le s\le n}$ under $\BbbP(\cdot\,|\,B_0=-u, B_n=0)$, for $\wh{B}$ as in~\eqref{E:3.24}, the expectation \eqref{E:3.32} can be recast as
\begin{equation}
\label{E:3.34}
\E \Bigl( \rme^{-\frac{3}{16} \int_{0}^n \wh{B}_s^{-2}\textd s} 
\,;\, \BB_{0,n-1}\cap\bigl\{
\min_{0\le k\le n} \wh{B}_k > 0\} \,\Big|\, B_0 = -u,\, B_n = 0 \Bigr)  \,,
\end{equation}
where $\BB_{0,n-1}$ is as in~\eqref{E:3.25}.

Next we observe that, on $\CC_{0,n}$, the integral in the expectation is at most
\begin{equation}
4 \int_{0}^\infty \bigl(\sqrt{t} + \sqrt{\log b}\, s\bigr)^{-2} \rmd s
= \frac{4}{\slb} \,t^{-1/2} \,,
\end{equation}
which tends to $0$ as $t \to \infty$. Since $\min_{0\le k\le n} \wh B_u > 0$ trivially on $\CC_{0,n}$, the part of the expectation~\eqref{E:3.34} restricted to this event equals
\begin{equation}
(1+o(1)) \BbbP \bigl( \BB_{0,n-1}\cap\CC_{0, n} \,\big|\, B_0 = -u, B_n = 0 \bigr) \,,
\end{equation}
with $o(1)\to0$ as $t \to \infty$, uniformly in~$n\ge1$. On the complement of~$\CC_{0,n}$ we instead bound the exponential term in~\eqref{E:3.34} by one and remove the restriction to the event $\{\min_{0\le k\le n} \wh{B}_k > 0\}$ to obtain a quantity of order $\BbbP ( \BB_{0,n-1}\smallsetminus\CC_{0, n} \,|\, B_0 = -u, B_n = 0)$.
Setting $C_\sharp :=\pi^{-1/2}(\log b)^{-1/4}$ we get~\eqref{E:3.29}.
\end{proofsect}

\subsection{Technical lemmas from ballot estimates}
\label{sec-3.4}\noindent
The proofs of Lemmas~\ref{lemma-2.5} and~\ref{lemma-3.6} will require a number of preliminary calculations. We start with two asymptotic bounds for the probability that a Brownian path, modified by auxiliary independent random variables at integer times, stays below a logarithmic curve. We will denote the auxiliary random variables by $\{\wt{D}_k\}_{k=0,\dots,n}$ and refer to them as decorations. Abusing our earlier notation, we  use $\BbbP$ to denote the joint law of these objects under which $B = (B_u \colon  u \geq 0)$ is a Brownian motion scaled so that $\Var(B_1) = 1/2$ and $\{\wt{D}_k\}_{k=0,\dots,n}$ are independent of one another and of~$B$.

\begin{lemma}
\label{lemma-2.6}
Let $C,c \in (0, \infty)$. Suppose that $\BbbP(\wt{D}_k > y) \leq C\rme^{-cy}$ for all $y \geq 0$ and $k \geq 0$. Then for all real $A,u,v\ge0$ and integer $n\ge1$,
\begin{multline}
\label{E:2.23}
\BbbP \biggr(\,\bigcap_{k=0}^{n-1}\Bigl\{B_k + \wt{D}_k + C \bigl(1+\log^+ \bigl(k \wedge (n-k)\bigr)\bigr) \leq 0\Bigr\} \\
\cap \bigcap_{s \in [0,n]}\bigl\{|B_s + u | \leq A + cs  \bigr\} \,\bigg|\, B_0 = -u, B_n = -v \biggr) \geq 4 \frac{uv}{n} \bigl(1+o(1)\bigr) \,,
\end{multline}
where $o(1)\to0$ in the limits as $n \to \infty$ followed by $(A,u,v) \to (\infty, \infty, \infty)$.
\end{lemma}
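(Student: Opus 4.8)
The plan is to disentangle the contribution of the decorations $\{\wt D_k\}$ from that of the Brownian path $B$: the decorations will be removed by a union bound exploiting their independence of $B$ and of one another, and what remains is a ballot lower bound for $B$ alone of the kind supplied by Lemma~\ref{lemma-A.1}. We may assume throughout that $A,u,v$ each exceed any prescribed constant, since otherwise the asserted bound is vacuous in the stated order of limits. Fix a slowly growing parameter $\gamma=\gamma_{A,u,v}:=\tfrac1c\log\bigl(1+(A\wedge u\wedge v)\bigr)$, so that $\gamma\to\infty$ whenever $(A,u,v)\to(\infty,\infty,\infty)$ while $\gamma=o(u\wedge v)$ and $\gamma=o(A)$, and set
\begin{equation}
\phi_k:=C\bigl(1+\log^+(k\wedge(n-k))\bigr)+\tfrac2c\log\bigl(1+k\wedge(n-k)\bigr)+\gamma,\qquad 1\le k\le n-1 .
\end{equation}
Let $\EE$ be the event that $B_s\le0$ for all $s\in[0,n]$, that $B_k\le-\phi_k$ for all $1\le k\le n-1$, and that $|B_s+u|\le A+cs$ for all $s\in[0,n]$; note $\EE$ is contained in the tube event from the statement. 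On $\EE$ one has $-B_k-C(1+\log^+(k\wedge(n-k)))\ge m_k$, where $m_k:=\tfrac2c\log(1+k\wedge(n-k))+\gamma\ge0$, and $-B_0-C=u-C$. Conditioning on the path $B$ and using independence of the $\wt D_k$ together with the tail hypothesis, the probability in the statement is therefore at least
\begin{equation}
\bbE\Bigl[\mathbf{1}_{\EE}\bigl(1-C\texte^{-c(u-C)}\bigr)\textstyle\prod_{k=1}^{n-1}\bigl(1-C\texte^{-cm_k}\bigr)\Bigm|\,B_0=-u,B_n=-v\Bigr]\ge\bigl(1-o(1)\bigr)\BbbP\bigl(\EE\mid B_0=-u,B_n=-v\bigr),
\end{equation}
since $C\texte^{-cm_k}\le C\texte^{-c\gamma}(1+k\wedge(n-k))^{-2}$ is summable over $k$, whence $\prod_{k\ge1}(1-C\texte^{-cm_k})\ge1-\tfrac{\pi^2}3C\texte^{-c\gamma}$, and the resulting prefactor $1-C\texte^{-c(u-C)}-\tfrac{\pi^2}3C\texte^{-c\gamma}$ tends to $1$ as $(A,u,v)\to(\infty,\infty,\infty)$. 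It thus suffices to prove that $\BbbP(\EE\mid B_0=-u,B_n=-v)\ge4\tfrac{uv}n(1+o(1))$, with $o(1)\to0$ as $n\to\infty$ and then $(A,u,v)\to\infty$.

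For this I would split $\EE=\{B_s\le0\ \forall s\in[0,n]\}\cap\EE'$, where $\EE'$ collects the logarithmic barrier and the tube. Since $\Var B_1=1/2$, the reflection principle for Brownian bridges gives that $\{B_s\le0\ \forall s\in[0,n]\}$ has probability exactly $1-\texte^{-4uv/n}$ under $\BbbP(\cdot\mid B_0=-u,B_n=-v)$, which is $\tfrac{4uv}n(1+o(1))$ as $n\to\infty$ with $u,v$ fixed; so it remains to check that, conditionally on that event, $\EE'$ occurs with probability $\to1$ in the iterated limit. Near the left endpoint, i.e.\ at times $s\le s_0$ with $s_0=s_0(u)\ll u^2\wedge n$, the conditioning by $\{B_s\le0\}$ is asymptotically inactive (because $B_0=-u$ is far from $0$) and $B$ is close in law to a Brownian motion started from $-u$: hence $\{B_s\le-\phi_s\}$ and $\{|B_s+u|\le A+cs\}$ each fail with probability $\to0$ as $(A,u)\to\infty$ --- the first because $\phi_s=O(\gamma)=o(u)$, the second because $\sup_{s\ge0}(|B_s|-cs)<\infty$ almost surely; the right endpoint is symmetric. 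For $s_0\le k\le n-s_0$ the conditioned bridge lies below $-c_1\sqrt{k\wedge(n-k)}$ with overwhelming probability, which dominates $\phi_k=O(\log(k\wedge(n-k))+\gamma)$ and also stays above the lower tube boundary $-u-A-cs$ (negligible in this range), while the upper tube boundary $-u+A+cs$ is implied by $\{B_s\le0\}$ once it is nonnegative. Assembling the three regimes yields the bound; alternatively, once the decoration reduction is in place, $\EE$ is precisely of the form covered by the appendix ballot estimate (Lemma~\ref{lemma-A.1}), which may be invoked directly.

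The step I expect to be the main obstacle is exactly this last one: obtaining the \emph{sharp} constant $4$ --- rather than merely an order-$uv/n$ bound --- with the error controlled uniformly in $u,v$ (say for $u,v\ge A$) and in the prescribed order of limits. This is what forces the careful choice of $\gamma$ and the bookkeeping that makes the auxiliary barrier shift $\gamma$ and the tube radius $A$ asymptotically negligible next to the endpoint heights $u,v$: one needs $\gamma=o(u\wedge v)$ and $A=o(\sqrt n)$ (the latter automatic once $n\to\infty$ first), so that after taking $n\to\infty$ followed by $(A,u,v)\to\infty$ the logarithmic barrier and the tube are absorbed into the $(1+o(1))$. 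Everything else reduces to the reflection principle and classical estimates for Brownian bridges confined below a slowly varying curve, which is precisely what Lemma~\ref{lemma-A.1} packages.
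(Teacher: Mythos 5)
Your approach is genuinely different from the paper's and is, in my judgment, sound in outline. The paper handles the decorations and the logarithmic barrier simultaneously by invoking Propositions~2.1 and~2.2 of the Cortines--Hartung--Louidor supplement (cited after the proof of Lemma~\ref{lemma-3.18}): it identifies the desired event, minus the tube, with the decorated ballot event treated there (taking $Y_k:=\wt D_k^+$, $M:=C$, deterministic arrival times), obtains $4uv/n(1+o(1))$ from those propositions, and then shows separately that adding the tube event changes the probability only by $o(uv/n)$ via a partition over the unit interval where the tube is first violated. You instead peel off the decorations by enlarging the barrier by $m_k=\tfrac2c\log(1+k\wedge(n-k))+\gamma$ and using independence of the $\wt D_k$ from $B$ to extract a $\prod_k(1-C\rme^{-cm_k})=1-o(1)$ prefactor, thereby reducing the statement to a \emph{pure} Brownian ballot estimate. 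This is a real simplification conceptually: it avoids the decorated versions of the ballot theorems entirely and trades them for a slightly higher logarithmic barrier. In exchange, you still have to establish, for the conditioned bridge, the sharp asymptotic $\BbbP(\EE\mid B_0=-u,B_n=-v)=4\tfrac{uv}{n}(1+o(1))$, which is where all the work resides; the reflection-principle identity $1-\rme^{-4uv/n}$ for the ``stay negative'' event is clean, but the negligibility of the logarithmic barrier and of the tube \emph{conditionally} on staying negative is nontrivial and requires the same kind of endpoint/bulk decomposition and entropic-repulsion estimates that the paper outsources to the cited propositions (or uses in its own tube-removal step). One concrete caveat in your write-up: the fallback of invoking Lemma~\ref{lemma-A.1} directly cannot yield the constant $4$; that lemma only supplies a uniform positive lower bound of order $1/n$ with fixed endpoints, so your primary route (reflection principle plus a quantitative negligibility argument) is the one that has to be carried out, and the sketch of that negligibility, while plausible, is where a complete proof would need to do essentially the same bookkeeping as the paper's tube-removal argument for the intermediate-$k$ range.
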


\begin{lemma}
\label{lemma-2.7}
Let $C,c \in (0, \infty)$. For all $k=0, \dots,n$, suppose that $\wt{D}_k$ takes value~$-C$ with probability~$c$ and equals $-\infty$ otherwise. Then for all real $u,v \ge0$ and integer $n\ge1$,
\begin{multline}
\label{E:2.24}
\BbbP \biggl(\,\bigcap_{k=0}^{n-1}\Bigl\{
B_k + \wt{D}_k - C \bigl(1+ \log^+ \bigl(k \wedge (n-k)\bigr)\bigr) \leq 0 \Bigr\}\,\bigg|\, B_0 = -u, B_n = -v \biggr) \\
\leq 4 \frac{uv}{n} \bigl(1+o(1)\bigr) \,,
\end{multline}
where $o(1)\to0$ as $n \to \infty$ followed by $(u,v) \to (\infty, \infty)$. Moreover, there exists $C' < \infty$, depending on $C$ and $c$ only, such that probability above is always at most
\begin{equation}
\label{e:3.53}
C'\frac{(u^++1)(v^++1)}{n} \,.
\end{equation}
\end{lemma}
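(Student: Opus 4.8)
The plan is to begin by integrating out the decorations. Fix $u,v\ge0$ and condition on the whole path $B$. Since the $\{\wt D_k\}_{k=0}^n$ are independent of $B$ and of one another, and since the value $\wt D_k=-C$ (which occurs with probability $c$) turns the $k$-th constraint into $\{B_k\le\psi_k\}$ with
\begin{equation}
\psi_k:=C\bigl(2+\log^+(k\wedge(n-k))\bigr),
\end{equation}
while $\wt D_k=-\infty$ imposes no constraint, the conditional probability of the event in \eqref{E:2.24} given $B$ equals $(1-c)^{N}$, where
\begin{equation}
N=N(B):=\#\bigl\{0\le k\le n-1\colon B_k>\psi_k\bigr\}.
\end{equation}
Thus the quantity to be estimated is $\E\bigl[(1-c)^{N}\,\big|\,B_0=-u,\,B_n=-v\bigr]$.

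For the universal bound \eqref{e:3.53} I would write $\E[(1-c)^{N}]=\sum_{j\ge0}(1-c)^j\,\BbbP(N=j)\le\sum_{j\ge0}(1-c)^j\,\BbbP(N\le j)$ and establish a \emph{generalized ballot estimate}: there are $C_0<\infty$ depending only on $C$ and a sequence $\{\epsilon_j\}$ summable against $(1-c)^j$ — one expects $\epsilon_j=O(C_0^{\,j}/j!)$ — with
\begin{equation}
\BbbP\bigl(N\le j\,\big|\,B_0=-u,\,B_n=-v\bigr)\le\epsilon_j\,\frac{(u^++1)(v^++1)}{n}\,,\qquad j\ge0,\ n\ge1\,.
\end{equation}
This follows by conditioning on the set $\{k_1<\dots<k_m\}$, $m\le j$, of integers at which $B$ exceeds $\psi$, splitting the bridge into the $m+1$ independent sub-bridges over the complementary intervals, applying the classical ballot estimate (Lemma~\ref{lemma-A.1}) on each piece, and summing over $k_1<\dots<k_m$; the position sums converge because each exceedance at $k_i$ forces a one-unit crossing of the barrier whose Gaussian cost yields $k_i^{-3/2}$-type weights. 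Summing the (geometric)$\times$(summable) series then gives \eqref{e:3.53} with $C':=C_0\sum_{j\ge0}(1-c)^j\epsilon_j$.

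For the sharp constant in \eqref{E:2.24} I would isolate the term $j=0$. Here $\BbbP(N=0\mid B_0=-u,B_n=-v)$ is precisely the probability that the variance-$\tfrac12$ Brownian bridge from $-u$ to $-v$ of length $n$ stays below the logarithmically corrected barrier $(\psi_k)_{k=0}^{n-1}$, whose endpoint values $\psi_0=\psi_{n-1}=2C$ are constant; the classical sharp ballot asymptotic — the same one that drives the constant $4$ in the matching lower bound, Lemma~\ref{lemma-2.6} (recall $\Var B_1=\tfrac12$, so $1-\texte^{-4uv/n}$ is the leading behaviour) — gives $\BbbP(N=0)=\tfrac{4uv}{n}(1+o(1))$ as $n\to\infty$ followed by $(u,v)\to(\infty,\infty)$. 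For the remainder $\sum_{j\ge1}(1-c)^j\BbbP(N=j)$ I would split at $j>R$ and $1\le j\le R$: the tail is bounded by $(1-c)^R\,C'(u^++1)(v^++1)/n$, which is $o(uv/n)$ once $R=R(u,v)\to\infty$, while $\BbbP(1\le N\le R\mid B_0=-u,B_n=-v)=o(uv/n)$ in the double limit because, relative to the $N=0$ configuration, any exceedance is suppressed by a factor tending to zero — an exceedance within $O(1)$ of an endpoint costs a factor vanishing in $u$ (resp.\ $v$), and an exceedance in the bulk is super-exponentially unlikely since there the constrained bridge lies at depth of order $\sqrt n$ below $\psi$. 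Combining these facts yields \eqref{E:2.24}.

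The hard part will be the generalized ballot estimate uniformly in $j$ — organizing the sum over exceedance positions so that no spurious factor $\log n$ appears and the $j$-dependent constant stays summable against $(1-c)^j$ — together with the refinement showing the $N\ge1$ contribution is genuinely $o(uv/n)$ (not merely $O(uv/n)$) after the double limit. Both reduce to repeated use of the classical ballot estimate (Lemma~\ref{lemma-A.1}) and mirror arguments in A\"idekon~\cite{Aidekon} and Bramson--Ding--Zeitouni~\cite{BDingZ}, but the combinatorial bookkeeping is where the work lies.
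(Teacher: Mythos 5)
Your reduction to $\E\bigl[(1-c)^N \mid B_0=-u,\, B_n=-v\bigr]$, where $N$ counts the integer times at which the bridge exceeds the curve $\psi_k := C\bigl(2+\log^+(k\wedge(n-k))\bigr)$, is correct and is the Fubini dual of what the paper does: the paper conditions first on the Bernoulli($c$) set $S:=\{k\colon\wt D_k=-C\}$ of active constraints --- equivalently, a point process $\{\sigma_k\}$ with i.i.d.\ Geometric($c$) gaps --- rewrites the quantity as $\E_S\,\BbbP\bigl(B_{\sigma_k}\leq\psi_{\sigma_k}\ \forall k\mid S\bigr)$, and then reads off both \eqref{E:2.24} and \eqref{e:3.53} from Propositions~2.1--2.2 of \cite{CHL17}/\cite{CHL17Sup}, adapted from exponential to geometric inter-arrival times with the constant decoration $Y_k:=-C$ and slope $M:=-C$. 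That route is essentially a citation; yours requires re-deriving ballot estimates with an additional control on the exceedance count, which is considerably more work.

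The concrete problem with your route is the intermediate ``generalized ballot estimate.'' The asserted bound $\BbbP\bigl(N\leq j\mid B_0=-u, B_n=-v\bigr)\leq\epsilon_j\,(u^++1)(v^++1)/n$ with an $n$-independent sequence $\epsilon_j$ summable against $(1-c)^j$ --- let alone $\epsilon_j=O(C_0^j/j!)$, which tends to $0$ --- cannot hold for all $j\geq 0$ and $n\geq 1$: take $j\geq n$, so the left side equals $1$, forcing $\epsilon_j\geq n/\bigl((u^++1)(v^++1)\bigr)$, which is unbounded in $n$ for bounded $u,v$. The argument can be patched by restricting to $j\lesssim\log n$ and absorbing the tail $\sum_{j>R}(1-c)^j\leq(1-c)^{R+1}/c$ with $R\asymp\log n$, but then the required estimate with $j$ growing logarithmically in $n$ must still be proved, and this is not incidental ``combinatorial bookkeeping'' --- it is the whole content of the lemma in your formulation. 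Similarly, the sharp constant hinges on $\BbbP\bigl(1\leq N\leq R\mid\cdot\bigr)=o(uv/n)$ with $R=R(n,u,v)\to\infty$; the heuristic you give (bulk exceedances forced to depth $\sqrt n$ below $\psi$, endpoint exceedances penalized in $u$ or $v$) points in the right direction, but this is a genuine strengthening of the $N=0$ asymptotic and needs a real proof, not an assertion. As written, the proposal is a plausible alternative strategy with a demonstrably false intermediate claim, whereas the paper sidesteps all of this by citing CHL17 with geometric gaps.
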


 Lemmas~\ref{lemma-2.6}--\ref{lemma-2.7} fall under the umbrella of ballot theorems which are a key technical tool in the subject area of log-correlated fields.  We relegate proofs  of these lemmas  to Section~\ref{sec-5}  that  deals with ballot estimates  needed in this paper  systematically.

In order to compare the barrier probability in~\eqref{E:2.20} with that in the more general setting of Lemmas~\ref{lemma-2.6}--\ref{lemma-2.7}, we will need to control the term $\Delta_{n-k}(\wh{B}_k)$ in the definition of event $\BB_{l,r}$. For this we observe:

\begin{lemma}
\label{lemma-2.8}
There exists $C > 0$ and, for each $t > 1$, also $n(t) < \infty$ such that
\begin{equation}
\CC_{0,n} \subseteq \bigcap_{k=0}^{n}\Bigl\{ |\Delta_{n-k}(\wh{B}_k)| \leq C \bigl(1+ \log^+ \bigl(k \wedge (n-k)\bigr)\bigr) \Bigr\} 
\end{equation}
holds for all $t > 1$ and $n > n(t)$ (and all $u>0$ implicitly contained in the above objects). 
\end{lemma}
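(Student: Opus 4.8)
The plan is to establish the inclusion with a constant $C$ depending only on $b$, so that $n(t)$ in fact plays no role (it is retained only to align with later use). Fix $t>1$ and work on the event $\CC_{0,n}$. Evaluating the constraint in \eqref{E:3.26} at an integer time $s=k$ gives $\tfrac12(\sqrt t+\slb\, k)\le\wh B_k\le\tfrac32(\sqrt t+\slb\, k)$ for $k=0,\dots,n$; this range is independent of~$u$, which is what will yield uniformity in~$u$. The case $k=n$ is trivial: by the convention $a_0\equiv0$ we have $\Delta_0(\wh B_n)=0$ while $\log^+(0\wedge n)=0$. So I would fix $k\le n-1$, set $m:=n-k\ge1$ and $s:=\wh B_k>0$, and decompose $\Delta_{n-k}(\wh B_k)=\bigl(a_m(s^2)-a_m(t)\bigr)+\bigl(a_m(t)-\tfrac mn a_n(t)\bigr)=:(\mathrm{I})+(\mathrm{II})$. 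Since the terms $m\slb$ and $\tfrac3{4\slb}\log m$ in \eqref{E:1.10a} carry no dependence on the time argument, $(\mathrm{I})=-\tfrac1{4\slb}\log\frac{1+m/s}{1+m/\sqrt t}$ and $(\mathrm{II})=-\tfrac3{4\slb}\bigl(\log m-\tfrac mn\log n\bigr)-\tfrac1{4\slb}\bigl(\log(1+m/\sqrt t)-\tfrac mn\log(1+n/\sqrt t)\bigr)$.

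For $(\mathrm{II})$ I would apply the elementary inequality \eqref{E:1.10} of \cite{CHL17} twice: once with $(s,r):=(m,n)$ to control $\log m-\tfrac mn\log n$ (here $m\ge1$, so $\log^+m=\log m$), and once with $(s,r):=(m/\sqrt t,n/\sqrt t)$ to control $\log^+(m/\sqrt t)-\tfrac mn\log^+(n/\sqrt t)$. The replacement of $\log(1+x)$ by $\log^+x$ costs only the additive constant $\log2$ (since $0\le\log(1+x)-\log^+x\le\log2$ for all $x\ge0$), and the factor $1/\sqrt t\le1$ only helps inside the monotone $\log^+$. As $n-m=k$, each application is dominated by $1+\log^+(k\wedge(n-k))$, so $|(\mathrm{II})|\le C\bigl(1+\log^+(k\wedge(n-k))\bigr)$.

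For $(\mathrm{I})$, observe first that $s\ge\tfrac12\sqrt t$ gives $m/s\le 2m/\sqrt t$, hence $\frac{1+m/s}{1+m/\sqrt t}\le2$ and $(\mathrm{I})\ge-\tfrac{\log2}{4\slb}$. For the matching upper bound I would combine two lower bounds on the ratio: (a) the trivial $\frac{1+m/s}{1+m/\sqrt t}\ge\frac1{1+m/\sqrt t}$, which together with $m/\sqrt t<m$ (as $t>1$) yields $(\mathrm{I})\le\tfrac1{4\slb}\log(1+m)\le C(1+\log^+m)$; and (b) using $s\le\tfrac32(\sqrt t+\slb\, k)$ and $s+m\ge\tfrac12(\sqrt t+m)$ one gets $\frac{1+m/s}{1+m/\sqrt t}=\frac{(s+m)\sqrt t}{s(\sqrt t+m)}\ge\frac1{3(1+\slb\, k/\sqrt t)}$ and hence $(\mathrm{I})\le\tfrac1{4\slb}\log\bigl(3(1+\slb\, k/\sqrt t)\bigr)\le C(1+\log^+k)$. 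Since $m=n-k$, bound (a) is $\le C\bigl(1+\log^+((n-k)\wedge k)\bigr)$ precisely when $n-k\le k$, and bound (b) is $\le C\bigl(1+\log^+((n-k)\wedge k)\bigr)$ precisely when $k\le n-k$; taking the smaller of the two in each regime gives $|(\mathrm{I})|\le C\bigl(1+\log^+((n-k)\wedge k)\bigr)$. Adding the bounds for $(\mathrm{I})$ and $(\mathrm{II})$, absorbing the finitely many $b$-dependent constants into a single $C$, and recalling the trivial case $k=n$ proves the inclusion.

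The only step needing genuine care is the case analysis for $(\mathrm{I})$: whichever of $k$ and $n-k$ is the smaller, one must ensure the corresponding crude estimate is still controlled by $C(1+\log^+(\cdot))$ of that smaller quantity — in particular the $\log^+m=\log^+(n-k)$ in bound (a) must only be invoked in the regime $n-k\le k$, and symmetrically for (b). Everything else amounts to routine manipulations of \eqref{E:1.10} and of the inequalities $\log^+x\le\log(1+x)\le\log2+\log^+x$.
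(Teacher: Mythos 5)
Your proof is correct, and while it draws on the same toolkit as the paper (the elementary inequality \eqref{E:1.10} and the two-sided bound $\tfrac12(\sqrt t + \slb\, k) \le \wh B_k \le \tfrac32(\sqrt t + \slb\, k)$ that $\CC_{0,n}$ gives at integer times), the decomposition is genuinely different and, in one respect, sharper. The paper splits $a_{n-k}(\wh B_k^2)-\tfrac{n-k}{n}a_n(t)$ into a deterministic, $t$-free piece $a_{n-k}(\infty)-\tfrac{n-k}{n}a_n(\infty)$ and a remainder that mixes the $\wh B_k$-dependence with the $t$-dependence inside a single logarithmic ratio; controlling that mixed term requires approximating, e.g., $\log\frac{\sqrt t + n}{\sqrt t}$ by $\log\frac{n}{\sqrt t}$, which is only accurate once $n$ dominates $\sqrt t$ — hence the $n>n(t)$ hypothesis. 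You instead write $\Delta_{n-k}(\wh B_k)=\bigl(a_m(\wh B_k^2)-a_m(t)\bigr)+\bigl(a_m(t)-\tfrac mn a_n(t)\bigr)$ with $m=n-k$, so the random argument $\wh B_k$ is isolated in a single difference at fixed depth, and the linear-interpolation term lives entirely in the $t$-parametrized centering family; the first piece is handled by a clean two-regime estimate directly from the $\CC_{0,n}$ bounds, and the second by two invocations of \eqref{E:1.10}. One concrete payoff of this rearrangement is that no largeness of $n$ relative to $t$ is ever used, so your argument in fact establishes the stronger version of the lemma with $n(t)\equiv 1$; the only standing assumption is $t>1$. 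All the individual estimates — the $\log 2$ cost for swapping $\log(1+x)$ and $\log^+ x$, the two lower bounds (a) and (b) on the ratio $\tfrac{1+m/\wh B_k}{1+m/\sqrt t}$ matched to the regimes $n-k\le k$ and $k\le n-k$, and the treatment of the trivial case $k=n$ via $a_0\equiv 0$ — check out.
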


\begin{proofsect}{Proof}
The value of~$u$ matters only for the definition of~$\wh B$ and is immaterial in what follows. Let $a_n(\infty)$ be defined as in~\eqref{E:1.10a}, but without the last, $t$-dependent, term.
By~\eqref{E:1.10} with $s=n-k$ and $r=n$, 
\begin{equation}
\label{E:2.26}
\Bigl|\,a_{n-k}(\infty) - \frac{n-k}{n} a_n(\infty)\Bigr|
\leq 1+\log^+ \bigl(k \wedge (n-k) \bigr),
\end{equation}
where we also used that $\slb\ge3/4$ for all~$b\ge2$.
It thus remains to show that, on $\CC_{0,n}$, the right hand side above also bounds 
\begin{equation}
\label{E:3.36}
\bigg| \log \frac{\wh{B}_k + (n-k)}{\wh{B}_k} - \frac{n-k}{n} \log \frac{\sqrt{t}+n}{\sqrt{t}} \bigg| 
\end{equation}
up-to a multiplicative constant that is independent of~$t>1$ as well as~$k$ with $0 \le k \le n$ and $n > n(t)$ for some $n(t) < \infty$. 

Fixing $t > 1$ and~$k$ with~$0\le k\le n$, on $\CC_{0,n}$ we have 
\begin{equation}
\frac12 \bigl(\sqrt{t} + \slb\, k\bigr)\le\wh B_k\le \frac32 \bigl(\sqrt{t} +  \slb\, k\bigr).
\end{equation}
Therefore, there are $c, C \in (0,\infty)$ such that for all~$n$ large enough, the ratio in the first logarithm in~\eqref{E:3.36} is bounded from below and from above by $c$, resp., $C$ times $n/(\sqrt{t}+k)$. In a similar fashion, the ratio in the second logarithm is bounded by absolute multiples of $n/\sqrt{t}$, whenever $n$ is large enough. It follows that if we replace~\eqref{E:3.36} by
\begin{multline}
\label{E:3.37}
\biggl|\, \log \frac{n}{\sqrt{t} + k} - \frac{n-k}{n} \log \frac{n}{\sqrt{t}} \biggr|
\\
\leq \biggl|\, \log \frac{n}{k+1} - \frac{n-k}{n} \log n \biggr|
+
\biggl|\, \log \frac{\sqrt{t}+k}{k+1} - \frac{n-k}{n} \log \sqrt{t} \biggr| \,,
\end{multline}
then we only ``err'' by an additive absolute constant.

We will now estimate the two terms on the right of \eqref{E:3.37} separately. For the first term we again invoke~\eqref{E:1.10} to replace 
$((n-k)/n)\log n$ by $\log (n-k)$ up to an error which is bounded as in~\eqref{E:2.26}. This produces the quantity
\begin{equation}
\biggl|\, \log \frac{(k+1)(n-k)}{n} \biggr| 
\end{equation}
which is again bounded by the right hand side of~\eqref{E:2.26}. The second term on the right of \eqref{E:3.37} is always bounded by $2\log (1+\sqrt{t})$ and therefore also by $2 \log^+ (k \wedge (n-k))$ as soon as $k \in (\sqrt{t}, n-\sqrt{t})$. For~$k$ outside this range, assuming $n\ge\sqrt t\log\sqrt t$, if $k \leq \sqrt{t}$, then this term is at most~$1$ plus
\begin{equation}
\label{E:3.38a}
\biggl|\, \log \frac{\sqrt{t} + k}{(k+1)\sqrt{t}} \biggr|
\le \biggl|\, \log \bigg(\frac{1}{\sqrt{t}} + \frac{1}{k+1} \bigg) \bigg| \le 1 +  \log (k+1)   \leq 2(1+ \log^+ k) \,.
\end{equation}
Finally, for all $k > n-\sqrt{t}$, the second term on the right hand side of~\eqref{E:3.37} tends 
uniformly to $1$ as $n \to \infty$ and thus bounded by an absolute constant as soon as $n$ is large enough. Collecting all the bounds, we get the claim.
\end{proofsect}

The first half of the proof of Lemma~\ref{lemma-2.5} is then the content of:

\begin{lemma}
\label{lemma-3.9}
For~integer~$m,n$ with $0< m<n$ and real~$t,u>0$,
\begin{multline}
\label{E:3.40}
\BbbP \bigl( \BB_{0,n-1}\cap\CC_{0, n}  \,\big|\, B_0 = -u, B_n = 0 \bigr)  \\
=\frac{4u}{n} \bigl(1+o(1)\bigr) \biggl[
\bbE \Bigl(B_{n-m}^- \, ; \,\BB_{n-m,n-1}\cap \CC_{n-m,n}  \,\Big|\, B_0 = -u, B_n = 0 \Bigr) +o(1)\biggr],
\end{multline}
where $o(1)\to0$ in the limit as $n \to \infty$ followed by $(t,u,m) \to (\infty, \infty, \infty)$.
\end{lemma}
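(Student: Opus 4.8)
The plan is to detach the last~$m$ coordinates of the path by the Markov property and then recognize the long initial stretch~$[0,n-m]$ as producing the universal prefactor~$4u/n$ via the ballot estimates of Section~\ref{sec-5}. Since $\BB_{0,n-1}=\BB_{0,n-m-1}\cap\BB_{n-m,n-1}$ and $\CC_{0,n}=\CC_{0,n-m}\cap\CC_{n-m,n}$, and since the decoration functions $\{D_j\}_{j=1}^n$ are independent of~$B$ and of one another, the events $\BB_{n-m,n-1}$ and $\CC_{n-m,n}$ are measurable with respect to $(B_s)_{s\in[n-m,n]}$ together with $D_1,\dots,D_m$, hence independent of $(B_s)_{s\in[0,n-m]}$ and of $D_{m+1},\dots,D_n$. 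The Markov property of~$B$ at time~$n-m$ then gives
\begin{equation*}
\BbbP \bigl( \BB_{0,n-1}\cap\CC_{0, n}  \,\big|\, B_0 = -u, B_n = 0 \bigr)
= \bbE\bigl( q(B_{n-m}) \,;\, \BB_{n-m,n-1}\cap \CC_{n-m,n}  \,\big|\, B_0 = -u, B_n = 0 \bigr),
\end{equation*}
where $q(y):=\BbbP(\BB_{0,n-m-1}\cap\CC_{0,n-m}\mid B_0=-u,\,B_{n-m}=y)$. Comparing with the target, it thus suffices to establish that $q(y)=\tfrac{4u}{n}(1+o(1))\,y^{-}$ with $o(1)$ uniform over~$y$ in a window $|y|\le\sqrt m\,\omega(m)$ (with $\omega(m)\to\infty$ slowly) carrying all but an $o(1)$-fraction of the mass of $\mathbf{1}_{\BB_{n-m,n-1}\cap\CC_{n-m,n}}$ under the bridge, together with the two side remarks that $q(y)$ is exponentially small for $y\ge0$ and that the large-$|y|$ range contributes only $\tfrac{4u}{n}o(1)$ after integration against $y^{-}$; granting these, one pulls $\tfrac{4u}{n}(1+o(1))$ out of the expectation, replaces $q(B_{n-m})$ by $\tfrac{4u}{n}(1+o(1))B_{n-m}^{-}$, and notes that $\bbE(B_{n-m}^{-};\BB_{n-m,n-1}\cap\CC_{n-m,n}\mid B_0=-u,B_n=0)$ is of order unity (e.g.\ by the crude bound \eqref{e:3.53}), so the errors are absorbed into the bracketed~$o(1)$.

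The heart is therefore the asymptotics of~$q$, obtained by matching the lower bound of Lemma~\ref{lemma-2.6} with the upper bound of Lemma~\ref{lemma-2.7} applied on~$[0,n-m]$ with $B_0=-u$ and $B_{n-m}=-v$, $v=y^{-}$. Three adjustments bring $q$ into that form. First, writing~$\wh B$ as in \eqref{E:3.24}, on $\CC_{0,n-m}$ the event becomes, after subtracting the deterministic part, a corridor $|B_s+u|\le A+cs$ for $B$ with $c=\tfrac12\slb-\delta$ and $A=\tfrac12\sqrt t$ (the residual $\tfrac sn a_n(t)-\slb s=O\bigl(\tfrac sn\log n\bigr)$ being a negligible fraction of the width $\tfrac12(\sqrt t+\slb s)$, uniformly in $s\le n-m$); this is an admissible corridor with $A\to\infty$ in the iterated limit, and since the decorations have the stronger tail $Ce^{-2\slb y}$ they also satisfy $Ce^{-cy}$, as required. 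Second, by Lemma~\ref{lemma-2.8} we have $|\Delta_{n-k}(\wh B_k)|\le C(1+\log^+(k\wedge(n-k)))$ on $\CC_{0,n}$, so on that event the barrier constraint is squeezed between $\{B_k+D_{n-k}(\wh B_k)\le-C(1+\log^+(k\wedge(n-k)))\}$ and $\{B_k+D_{n-k}(\wh B_k)\le C(1+\log^+(k\wedge(n-k)))\}$, the $\log^+$-shaped perturbation tolerated by Lemmas~\ref{lemma-2.6}--\ref{lemma-2.7}. Third, and most delicately, although $D_{n-k}(\wh B_k)$ depends on~$B$, conditionally on~$B$ the family $\{D_{n-k}(\wh B_k)\}_k$ is independent (the $D_j$ being mutually independent and independent of~$B$) and each has the uniform exponential upper tail of Lemma~\ref{lemma-2}, valid for every deterministic argument and hence for~$\wh B_k$; this reduces $q$, up to negligible error, to the clean setting of Section~\ref{sec-5}. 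For the lower bound one restricts to the smaller event on which $B_k\le-C(1+\log^+(k\wedge(n-k)))$ for all~$k$, where all decoration constraints then hold with conditional probability $\ge1-\epsilon_k$ and $\sum_k\epsilon_k$ can be made as small as desired by enlarging the barrier constant, and then invokes \eqref{E:2.23}; for the upper bound one thins the decorations as in Lemma~\ref{lemma-2.7}, using that $\BbbP(D_{n-k}(\wh B_k)\ge0\mid B)$ is bounded below by a positive constant whenever $\wh B_k\ge1$ (a consequence of Proposition~\ref{prop-1} and, for the indices where $\wh B_k^2\gtrsim n-k$ so that the subtree can reach its leaves, of the lower tail of Lemma~\ref{lemma-2a}; the remaining, ``ineffective'', indices contribute essentially free constraints), and then invokes \eqref{E:2.24}. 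Both yield $q(y)=\tfrac{4u}{n-m}(1+o(1))y^{-}=\tfrac{4u}{n}(1+o(1))y^{-}$, using $m=o(n)$, with $o(1)\to0$ as $n\to\infty$ and then $(t,u,m)\to\infty$.

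The main obstacle is this third adjustment: carrying the \emph{sharp} ballot asymptotics --- not merely an estimate up to a multiplicative constant --- through the path-dependent decorations $D_{n-k}(\wh B_k)$, uniformly in~$t$, while pinning down the leading constant from the two one-sided arguments. On the lower side one must verify that restricting to a path that stays below a logarithmic barrier loses only a $1+o(1)$ factor (the decoration constraints being ``almost free'' there), and on the upper side one must check that the ineffective constraints at the small indices, where the relevant subtree typically never reaches its leaves, neither help nor spoil the bound, and that the two bounds agree at leading order. By contrast, the Markov decomposition in the first step, the reductions of~$\CC$ and of~$\Delta$ to admissible corridor/barrier perturbations, and the estimates showing the $y\ge0$ and large-$|y|$ contributions are of lower order, are all routine.
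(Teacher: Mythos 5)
Your proposal follows the paper's proof in its essential architecture: condition on $B_{n-m}$ via the Markov property, apply the ballot asymptotics of Lemmas~\ref{lemma-2.6}--\ref{lemma-2.7} on the long initial leg $[0,n-m]$ (with Lemma~\ref{lemma-2.8} controlling the $\Delta$ terms and $\CC$ reduced to an $|B_s+u|\le A+cs$ corridor with $A\sim\sqrt t\to\infty$), and absorb the remainder into the expectation over $[n-m,n]$ with the bad $B_{n-m}$-range controlled separately. The decomposition, the key lemmas, and the unwrapping are all the same.

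The one place you genuinely deviate is the reduction to the clean decorated-ballot setting. For the lower bound the paper does not restrict to a shifted barrier and run a conditional union bound over decoration failures; it simply constructs a single deterministic i.i.d.\ family $\wt D_k:=C'+\text{Exp}(\slb)$ that, conditionally on $\wh B$, stochastically dominates $(D_{n-k}(\wh B_k))_k$ uniformly (Lemma~\ref{lemma-2}), and then feeds $\wt D_k$ directly into Lemma~\ref{lemma-2.6}. Your restrict-and-union-bound variant can be made to work, but it requires an extra diagonal argument (enlarge the barrier constant after the other limits) and is more fragile, since Lemma~\ref{lemma-2.6}'s $o(1)$ depends on the barrier/tail constant. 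Similarly, for the upper bound the paper does not distinguish ``effective'' vs.~``ineffective'' indices: the factorization \eqref{E:3.42} through a single child of the root plus Proposition~\ref{prop-1} shows $P^\varrho(D_k(s)>-C)$ is bounded below uniformly in all $k\ge1$ and $s>1$, which gives the thinned decorations of Lemma~\ref{lemma-2.7} in one stroke; the case distinction you suggest is unnecessary.

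Two small inaccuracies in side remarks, neither fatal: the claim that ``$q(y)$ is exponentially small for $y\ge0$'' is not what one needs or what is true; the paper instead restricts to the window $-B_{n-m}\in[m^{1/10},m]$ and bounds the complementary contribution via \eqref{E:3.64}. And the ``$\wh B_k^2\gtrsim n-k$ so that the subtree can reach its leaves'' heuristic, while not wrong as intuition, is replaced in the paper by the uniform lower bound from Proposition~\ref{prop-1} which makes the threshold irrelevant.
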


\begin{proofsect}{Proof}
Let~$u,t>0$ and~$m$ with $0<m<n$ be fixed. Conditioning on $B_{n-m}$ and using the independence of Brownian increments along with the product structure of the underlying events, the probability in the statement equals
\begin{multline}
\label{E:2.30}
\int 
\BbbP  \bigl(-B_{n-m} \in \rmd s \,\big|\, B_0 = -u, B_n = 0 \bigr) 
\BbbP \Bigl( \BB_{0,n-m-1}\cap  \CC_{0, n-m}  \,\Big|\, B_0 = -u, B_{n-m} = -s \Bigr)  \\
\times \BbbP \Bigl( \BB_{n-m,n-1}\cap \CC_{n-m, n} \,\Big|\, B_{n-m} = -s, B_n = 0 \Bigr).
\end{multline}
Our strategy is to show that the last probability is asymptotic to $\frac{u}n(4+o(1))$ and then wrap the rest of the expression into the expectation modulo an $o(1)$ correction.

Starting with the lower bound, let $\{\wt D_k\}_{k=0}^n$ be independent  with~$\wt D_k$ having the law of Exponential$(\sqrt{\log b})$ plus a constant~$C'$ so large that the upper tail of~$\wt D_k$ exceeds  that of~$D_k(s)$ from~\eqref{E:3.23}, for all~$s>0$. (This is possible thanks to the uniform bound in Lemma~\ref{lemma-2}.) It follows that $(D_{n-k}(\wh{B}_k))_{k=0}^{n-1}$ is stochastically dominated by $(\wt{D}_k)_{k=0}^{n-1}$ conditional on~$\wh B$.
Using Lemma~\ref{lemma-2.8} to control the terms~$\Delta_{n-m}(\wh B_k)$, the last probability in \eqref{E:2.30} is then shown to exceed that in~\eqref{E:2.23} with $A := \sqrt{t}/4$, the parameters $(n-m, s, u)$ in place of $(n,u,v)$ and properly chosen constants~$C$ and~$c$. 

Restricting the resulting integral to $-u \in [m^{1/10}, m]$, Lemma~\ref{lemma-2.6} bounds~\eqref{E:2.30} from below by $\frac{4u}n(1+o(1))$ times
\begin{equation}
\label{E:2.31}
\bbE \Bigl(B_{n-m}^- \, ; \,\BB_{n-m,n-1}\cap \CC_{n-m,n}\cap\bigl\{ -B_{n-m} \in [m^{1/10}, m]\bigr\} \,\Big|\, B_0 = -u, B_n = 0 \Bigr) \,,
\end{equation}
where $o(1) \to 0$ as $n \to \infty$ followed by $(s,t,m) \to (\infty, \infty, \infty)$. In order to remove the event restricting~$-B_{n-m}$ to the interval~$ [m^{1/10}, m]$, straightforward estimates show
\begin{equation}
\label{E:3.64}
\bbE \Bigl(B_{n-m}^- \, ; \bigl\{ -B_{n-m}\not\in [m^{1/10}, m]\bigr\} \,\Big|\, B_0 = -u, B_n = 0 \Bigr)\le C'\bigl(m^{1/5-1/2} + \rme^{-c'm} + u\tfrac mn\bigr),
\end{equation}
for suitably chosen constants~$C',c'>0$. As the right-hand side tends to zero in the stated limits, we get ``$\ge$'' in \eqref{E:3.40}.

For a matching upper bound, we first need to control the lower tail of $D_{n-k}(\wh{B}_k)$. To this end, we first adapt the conclusion of Proposition~\ref{prop-1} to the case when the maximal local time is taken over $\BbbL'_n$ in place of $\BbbL_n$. This is done by noting that for any $s > 1$, $k \ge 1$ and~$C\in\R$, the Markov Property and monotoncity of the local time give
\begin{equation}
\label{E:3.42}
P^\varrho(D_k(s) > -C) \ge P^\varrho \bigl( L_{s^2}(y)\ge s^2\bigr) 
P^\varrho\Bigl(\,\max_{x\in\BbbL_{k-1}}\sqrt{L_{s^2}(x)}\ge s + a_k(s^2) - C \Bigr) \,,
\end{equation}
with $y \in \T_k$ being any child of the root. 
As $L_{s^2}$ is Compound Poisson-Exponential with parameter~$s$, the first probability is readily checked to be uniformly positive in~$s\in[1,m]$, for any~$m\ge1$, and it remains so uniformly on~$[1,\infty)$ because, by \eqref{E:1.13r}, $\sqrt{L_{s^2}(y)}-s$ tends weakly to $\NN(0,1/2)$ as $s\to\infty$. 
Since $a_k(s^2) - a_{k-1}(s^2)$ is bounded by an absolute constant for all $k \ge 1$ and $s> 1$, Proposition~\ref{prop-1} shows that the second term on the right hand side of~\eqref{E:3.42} is uniformly positive, once~$C$ is large enough. 

The latter implies that by properly choosing $C$ and $c$, on $\CC_{0,n}$ and for $t$ sufficiently large, the family $\{\wt{D}_k\}_{k=0}^{n-1}$ from Lemma~\ref{lemma-2.7} is stochastically dominated by $\{D_{n-k}(\wh{B}_k)\}_{k=0}^{n-1}$ conditional on~$\wh B$. Using Lemma~\ref{lemma-2.8} and arguing as before, we now upper bound the 
last probability in~\eqref{E:2.30} by the probability in~\eqref{E:2.24} with $(n-m,s,u)$ in place of $(n,u,v)$ and with properly chosen~$c$ and~$C$. For this probability we then use the asymptotics given by Lemma~\ref{lemma-2.7} on the range of integration~$-s \in [m^{1/10}, m]$ and the upper bound therein for the complementary values of~$s$. This dominates~\eqref{E:2.30} from above by $\frac un(4+o(1))$ multiple of \eqref{E:2.31} plus $\frac unO(1)$-multiple of \eqref{E:3.64}.
Invoking the bound in \eqref{E:3.64} one more time, we then get ``$\le$'' in \eqref{E:3.40}, thus finishing the proof.
\end{proofsect} 

In order to bring the representation~\eqref{E:3.40} to the form stated in Lemma~\ref{lemma-2.5}, we need to show that the expectation admits a limit as $n \to \infty$. 

\begin{lemma}
\label{lemma-2.10}
For all $m \ge 1$, $t > 0$ and $u>0$, 
\begin{multline}
\label{E:3.48}
\bbE \Bigl(B_{n-m}^- \, ; \,\BB_{n-m,n-1}\cap \CC_{n-m,n} \,\Big|\, B_0 = -u, B_n = 0 \Bigr) \\
\underset{n \to \infty} \longrightarrow \bbE \biggl(B_m^- ;\, \bigcap_{k=1}^m\bigl\{ B_k + M_k \leq  \sqrt{\log b}\, k \bigr\} \bigg|\, B_0 = 0 \biggr)  \,,
\end{multline}
where $(M_k)_{k=1}^m$ are independent of~$B$ and one another with law
\begin{equation}
M_k \laweq \max_{x \in \BbbL'_k} h_x 
\end{equation}
with~$h$ denoting the BRW with step distribution~$\NN(0,1/2)$.
\end{lemma}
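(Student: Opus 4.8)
The plan is to reduce the statement to a time-reversal of the conditioned path near the right endpoint $s=n$, where the Brownian bridge looks locally like a Brownian motion and the decorations $D_j$ stabilize. First I would set $\tilde B_r:=B_{n-r}$ for $r\in[0,m]$. Under $\BbbP(\cdot\,|\,B_0=-u,\,B_n=0)$ the path $B$ is a Brownian bridge, and reading it off from its right endpoint over the fixed window $[0,m]$ produces a process whose drift is $-ur/n\to0$ and whose covariance converges to $\tfrac12(r\wedge r')$; hence $(\tilde B_r)_{r\in[0,m]}$ converges weakly in $C([0,m])$ to a Brownian motion with $\Var(B_1)=1/2$ started at $0$. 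I would also note that $\bbE\bigl((\tilde B_m^-)^2\,\big|\,B_0=-u,B_n=0\bigr)$ stays bounded in $n$, so $\{\tilde B_m^-\}_{n}$ is uniformly integrable; this is what converts weak convergence of indicators into convergence of the truncated expectation at the end.

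Next I would identify the $n\to\infty$ limits of the three ingredients of $\BB_{n-m,n-1}\cap\CC_{n-m,n}$, writing $k=n-j$ with $j\in\{1,\dots,m\}$. From the explicit forms, $D_j(s)+\Delta_j(s)=\max_{x\in\BbbL'_j}\sqrt{L_{s^2}(x)}-s-\tfrac jn a_n(t)$, so the $a_j(s^2)$-terms cancel outright and only $\tfrac jn a_n(t)\to\slb\, j$ is needed. Also $\wh B_{n-j}=B_{n-j}+\sqrt t+u+\tfrac{n-j}n a_n(t)\to\infty$ in probability (the deterministic term diverges while $B_{n-j}$ is tight), and by \eqref{E:3.23} together with \eqref{E:1.13r} applied at $t=s^2\to\infty$ and $a_j(s^2)\to a_j(\infty)$ one gets $D_j(s)\Lawarrow M_j-a_j(\infty)$ as $s\to\infty$, where $M_j\laweq\max_{x\in\BbbL'_j}h_x$. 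Since the $D_j$ are independent of $B$ and of one another, conditioning on $B$ writes the conditional law of $(D_j(\wh B_{n-j}))_{j=1}^m$ as a product of the laws of $D_j(\cdot)$ evaluated at the divergent points $\wh B_{n-j}$; a bounded-test-function computation then upgrades this to the joint convergence $\bigl(\tilde B,(D_j(\wh B_{n-j}))_{j=1}^m\bigr)\Lawarrow\bigl(B,(M_j-a_j(\infty))_{j=1}^m\bigr)$ with $M_1,\dots,M_m$ independent of $B$ and of one another.

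Combining these, the $(n-j)$-th defining inequality of $\BB_{n-m,n-1}$, namely $B_{n-j}+D_j(\wh B_{n-j})+\Delta_j(\wh B_{n-j})\le0$, converges — jointly over $j$ and jointly with $\tilde B_m^-$ — to $B_j+M_j\le\slb\, j$. Since $M_j$ is a maximum of finitely many non-degenerate jointly Gaussian variables it is atomless, so the limiting set $\bigcap_{k=1}^m\{B_k+M_k\le\slb\, k\}$ carries no mass on its boundary and $1_{\BB_{n-m,n-1}}$ converges in law (jointly with $\tilde B_m^-$) to the corresponding indicator. A direct estimate like the one in the first step shows $\sup_{s\in[n-m,n]}|\wh B_s-\sqrt t-\slb\, s|=O(\log n)$ with probability tending to one, which is negligible next to $\tfrac12\sqrt t+\tfrac12\slb(n-m)$; hence $\BbbP\bigl(\CC_{n-m,n}^\rmc\,\big|\,B_0=-u,B_n=0\bigr)\to0$ and, by Cauchy--Schwarz with the uniform $L^2$ bound on $\tilde B_m^-$, the factor $1_{\CC_{n-m,n}}$ may be dropped at the cost of a vanishing error. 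Invoking the Skorokhod representation together with the uniform integrability of $\{\tilde B_m^-\}_n$ then yields \eqref{E:3.48}.

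The step I expect to be the main obstacle is the joint weak convergence of $\bigl(\tilde B,(D_j(\wh B_{n-j}))_j\bigr)$: one must argue carefully that feeding the decoration law a random argument $\wh B_{n-j}$ tending to infinity still produces the limit $M_j-a_j(\infty)$, and that the resulting limits are independent both of the bridge and of one another — this is where the conditioning argument and the input \eqref{E:1.13r} do the real work. The remaining pieces — bridge covariance asymptotics near an endpoint, the $L^2$ bound giving uniform integrability, and the elementary estimate making $\CC_{n-m,n}$ asymptotically certain — are routine.
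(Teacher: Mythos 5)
Your proposal is correct and takes essentially the same route as the paper: time-reverse the bridge over a fixed window near $s=n$ so that it converges to Brownian motion, kill $\CC_{n-m,n}$ with a small-probability estimate plus Cauchy--Schwarz and the uniform $L^2$ bound on $B_{n-m}^-$, identify the decoration limits via \eqref{E:1.13r} and the divergence of $\wh B_{n-j}$, and close with uniform integrability. The only cosmetic differences are that the paper uses an explicit a.s.\ coupling in place of your Skorokhod representation and a monotonicity squeeze on the conditional distribution functions $f_{k,u}$ in place of your bounded-test-function argument for the joint convergence of $(\tilde B,(D_j(\wh B_{n-j}))_j)$; both implement exactly the same step that you correctly flag as the technical crux.
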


\begin{proofsect}{Proof}
Fix $u,t>0$ and $m\ge1$. Once~$n$ is large enough, on $\CC_{n-m,n}^\rmc$ there must be $s \in [n-m, n]$ such that $|B_s| > \frac13\sqrt{\log b}\, n$. A standard argument based on the  Reflection Principle  for Brownian  Bridge  then shows
\begin{equation}
\BbbP\bigl(\CC_{n-m,n}^\rmc\,\big|\,B_0=-u, B_n=0\bigr)\le \rme^{-c n^2/m}\,.
\end{equation}
By the Cauchy-Schwarz inequality, the left hand side of~\eqref{E:3.48} with $\CC_{n-m,n}^\rmc$ in place of $\CC_{n-m,n}$ is at most
\begin{equation}
\rme^{-c \frac{n^2}{2m}} \Bigl( \bbE \bigl(B_{n-m}^2 \,\Big|\, B_0 = -u, B_n = 0 \bigr) \Bigr)^{1/2} 
\leq \Bigl( \frac{m(n-m)}{n} + (mu/n)^2 \Bigr)
\rme^{-c \frac{n^2}{2m}}\,.
\end{equation}
As this tends to zero as $n \to \infty$, it suffices to show~\eqref{E:3.48} without the event $\CC_{n-m,n}$ on the left hand side.

Renaming~$B$ under $\bbP(\cdot | B_0 = -u, B_n = 0)$ as $B^{(n)}$
and~$B$ under $\bbP(\cdot | B_0 = 0)$ as $B^{(\infty)}$, the standard way to generate a Brownian bridge from a Brownian path shows
\begin{equation}
\bigl\{B^{(n)}_{n-s}\bigr\}_{s\in[0,n]}\,\laweq \Bigl\{B^{(\infty)}_s - \frac sn (u+B^{(\infty)}_n)\Bigr\}_{s\in[0,n]}
\end{equation}
Using that $B_n^{(\infty)}=o(n)$ as~$n\to\infty$ a.s.\ it follows that all $\{B^{(n)}\}_{n\ge1}$ can be realized on the same probability space so that
\begin{equation}
\label{E:3.71}
\bigl(B^{(n)}_{n-1}, \dots, B^{(n)}_{n-m}\bigr) 
\,\,\underset{n\to\infty}\longrightarrow\,\,
\bigl(B^{(\infty)}_1, \dots, B^{(\infty)}_m\bigr) \quad\text{a.s.}
\end{equation}
We will assume this to be the case for the rest of the proof.

Setting, with some abuse of our earlier notation,
\begin{equation}
f_{k,u}(v) := P^\varrho \Bigl( \max_{x \in \BbbL'_k} \sqrt{L_{u^2}(x)} - u \leq v\Bigr) \,,
\end{equation}
the left hand side of~\eqref{E:3.48}, with $\CC_{n-m,n}$ omitted, is equal to the expectation of
\begin{equation}
\label{E:3.52}
\bigl(B_{n-m}^{(n)}\bigr)^- \prod_{k=1}^m 
f_{k, \wh{B}^{(n)}_{n-k}}\Bigl(-B^{(n)}_{n-k} + \frac{k}{n} a_n(t) \Bigr) 
\,,
\end{equation}
where, in analogy with $\wh{B}_k$ in~\eqref{E:3.24}, we set
\begin{equation}
\wh{B}^{(n)}_k := B^{(n)}_k + \sqrt{t} + s + \frac{k}{n} a_n(t) \,.
\end{equation}
The convergence \eqref{E:3.71} along with $(n-k)a_n(t)/n \to \infty$ as $n \to \infty$ imply that $\wh{B}^{(n)}_{n-k} \to \infty$ a.s.\ as $n \to \infty$ for all $k=1, \dots, m$. Since also $k a_n(t)/n \to \sqrt{\log b}\, k$ as $n \to \infty$, it follows that for any $\epsilon > 0$ and $u > 0$, as soon as $n$ is large enough we have
\begin{multline}
\label{E:3.75}
\inf_{u' \ge u} f_{k, u'}\Bigl(-B^{(\infty)}_k + \sqrt{\log b}\, k- \epsilon \Bigr) 
\le 
f_{k, \wh{B}^{( n)}_{n-k}}\Bigl(-B^{(n)}_{n-k} + \frac{k}{n} a_n(t) \Bigr) \\
\le
\sup_{u' \ge u}
f_{k, u'}\Bigl(-B^{(\infty)}_k + \sqrt{\log b}\, k + \epsilon \Bigr),
\end{multline}
relying also on the monotoncity of $v\mapsto f_{k,u}(v)$. 

Thanks to the weak convergence of $\sqrt{L_{u^2}(\cdot)}-u$ to BRW as~$u\to\infty$ (see \eqref{E:1.13r}) and the continuity of the law of~$M_k$, we have
\begin{equation}
f_{k,u'}(s)\,\underset{u'\to\infty}\longrightarrow\,\BbbP(M_k \le s)
\end{equation}
for each~$s\in\R$. Taking~$u\to\infty$ followed by $\epsilon \downarrow 0$ in \eqref{E:3.75} then shows
\begin{equation}
f_{k, \wh{B}^{(n)}_{n-k}}\Bigl(-B^{(n)}_{n-k} + \frac{k}{n} a_n(t) \Bigr)\,
\underset{n\to\infty}\longrightarrow \,\BbbP \Bigl(M_k \le - B^{(\infty)}_k  + \sqrt{\log b}\, k\,\Big|\, B^{(\infty)} \Bigr)\quad\text{a.s.}
\end{equation}
for each~$k\ge1$. It follows that  under \eqref{E:3.71}  the quantity~\eqref{E:3.52} converges to
\begin{equation}
\label{E:3.56}
\bigl(B^{(\infty)}_m\bigr)^- \prod_{k=1}^m 
\BbbP \Bigl(M_k \le - B^{(\infty)}_k +  \sqrt{\log b}\, k\Bigr)
\end{equation}
almost surely as~$n\to\infty$. In order to turn this into convergence in the mean, we note that the second moment of~\eqref{E:3.52} is at most
\begin{equation}
\bbE \bigl(B_{n-m}^{(n)}\bigr)^2
= \E \Bigl( B_{n-m}^2  \,\Big |\, B_0 = -s,\, B_n = 0 \Bigr)
= m(n-m)/n + (sm/n)^2 \,,
\end{equation}
which is bounded uniformly in $n$. Hence the sequence~\eqref{E:3.52} is uniformly integrable and thus~\eqref{E:3.52} also converges to~\eqref{E:3.56} under expectation. To complete the proof, we identify the expectation of~\eqref{E:3.56} with the right hand side of~\eqref{E:3.48}.
\end{proofsect}

Before we can draw the desired conclusions from the previous lemmas, we need to ensure that the limiting expectation in \eqref{E:3.48} remains positive and finite uniformly in~$m\ge1$. This comes in:

\begin{lemma}
\label{lemma-3.18}
There are $C>c>0$ such that for all~$m\ge1$,
\begin{equation}
\label{E:3.80}
c\le \bbE \biggl(B_m^- ;\, \bigcap_{k=1}^m\bigl\{ B_k +M_k\leq  \slb\,k \bigr\} \bigg|\, B_0 = 0 \biggr)\le C.
\end{equation}
\end{lemma}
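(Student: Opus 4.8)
The plan is to establish the two bounds in \eqref{E:3.80} separately, with the upper bound being routine and the lower bound the main obstacle.

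\emph{Upper bound.} Here I would simply drop the decorations $M_k$: since the $M_k$ are the maxima of a BRW over $\BbbL'_k$ and have a known right-tail (they satisfy $\BbbP(M_k > \slb\,k + y)$ bounded by something like $C(1+y)\texte^{-2y\slb}$ by the first-moment estimate, or more crudely $M_k \ge -C_0$ with probability bounded below by a constant uniformly in $k$), the event $\bigcap_{k=1}^m\{B_k + M_k \le \slb\,k\}$ is contained in $\bigcap_{k=1}^m\{B_k \le \slb\,k + \wt D_k\}$ for i.i.d.\ decorations $\wt D_k$ with exponential upper tails, after stochastic domination of $(-M_k)$ from above. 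Centering $B_k$ by $\slb\,k$ — i.e.\ passing to $B_k' := B_k - \slb\,k$, which after the usual Girsanov-type tilt becomes a Brownian motion with a shifted terminal point — the event becomes a ballot-type event of the form handled by Lemma~\ref{lemma-2.7}. Conditioning on $B_m$ (equivalently $B_m'$) and using the bound \eqref{e:3.53} with $(m, u\!=\!0, v)$ in place of $(n,u,v)$, together with the Gaussian tail of $B_m^-$, gives $\bbE(B_m^-\,;\,\cdots) \le \sum_v (\text{Gaussian weight of }B_m=-v)\cdot v \cdot C'(1+v)/m$, and the $v$-sum is $O(\sqrt m \cdot \sqrt m /m) = O(1)$ uniformly in $m$. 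One must be slightly careful that the ballot estimate requires the barrier to be (up to log corrections) flat, which it is after the linear centering; the $\log^+(k\wedge(m-k))$ corrections from the decorations are exactly of the admissible form in Lemma~\ref{lemma-2.7}.

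\emph{Lower bound.} This is the harder half. The idea is to force $B$ to follow a favorable tube. Fix a large constant $R$ and restrict to the event that $-B_k \in [\,\tfrac12\slb\,k - R,\ \slb\,k - \text{(something)}\,]$ — more precisely, ask $B_k - \slb\,k$ to stay in an interval of width $O(1)$ below $0$, say $B_k - \slb\,k \in [-R - c\log^+(k\wedge(m-k)),\ -1]$ for $k=1,\dots,m-1$, while $-B_m \in [\sqrt m/2,\ \sqrt m]$ at the end. On such an event the barrier constraint $B_k + M_k \le \slb\,k$ reduces to $M_k \le \slb\,k - B_k$, whose right side is at least $1$, so it is implied by $M_k \le 1 + R + c\log^+(k\wedge(m-k))$ — this is a lower-bound event for decorations and can be secured, uniformly in $k$, with probability bounded away from $0$ by choosing $R$ large (using, e.g., the uniform positivity of $\BbbP(\max_{x\in\BbbL'_k} h_x \le C)$ for large $C$, which follows from the first-moment / BRW right-tail bound, or more directly from Proposition~\ref{prop-1}-type control transferred to the BRW via Abe's results). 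This gives a product $\prod_{k=1}^{m-1}(1 - c'(1+k\wedge(m-k))^{-A})$ that converges to a positive constant as in Lemma~\ref{lemma-2.7a}. What remains is to lower-bound the probability that the Brownian bridge from $-u=0$ at time $0$ to $-v\in[\sqrt m/2,\sqrt m]$ at time $m$ (after the linear centering by $\slb\,k$, equivalently a bridge between appropriate endpoints) stays in the tube $[-R - c\log^+(\cdot),\,-1]$ throughout $[1,m-1]$, which is a two-sided ballot estimate: this is exactly the content of Lemma~\ref{lemma-2.6} (with its lower bound $4uv/n(1+o(1))$ — here this would be an analogous two-sided estimate giving order $1/m \cdot v \cdot 1$, with one endpoint at $O(1)$ height), and it yields a bound of order $v/m$ for the probability, hence $\bbE(B_m^-\,;\,\cdots) \gtrsim \sum_{v \in [\sqrt m/2,\sqrt m]} (\text{Gaussian weight})\cdot v \cdot (v/m) \cdot (\text{const}) \gtrsim 1$ uniformly in $m$.

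\emph{Main obstacle.} The real work is bookkeeping in the lower bound: one must check that the chosen tube is wide enough (width $\asymp \log^+$) for the decoration constraints to hold with uniformly-summable failure probabilities yet narrow enough that the ballot estimate of Lemma~\ref{lemma-2.6} applies with the stated $4uv/n$ asymptotics, and one must handle the left endpoint $u=0$ (degenerate) by starting the tube at time $1$ rather than $0$, absorbing the first step into a constant. There is also a minor point that $M_k$ is the max over $\BbbL'_k$, not $\BbbL_k$, but since $\BbbL'_k \subseteq \BbbL_k$ this only helps for the upper bound and for the lower bound one uses the analogue of Proposition~\ref{prop-1} for $\BbbL'$ established in the proof of Lemma~\ref{lemma-3.9} (around \eqref{E:3.42}). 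Modulo these checks, the two halves combine to give \eqref{E:3.80}.
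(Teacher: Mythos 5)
Your proposal deviates from the paper's proof in both halves, and in each case there is a genuine gap.

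\emph{Upper bound.} You try to control $-M_k$ \emph{from above} by i.i.d.\ $\wt D_k$ with exponential upper tails and then apply Lemma~\ref{lemma-2.7} to the barrier $\slb\,k+\wt D_k$. But this barrier grows linearly, not logarithmically, so it is not of the admissible form for Lemma~\ref{lemma-2.7}. More to the point, because $M_k$ is typically of order $\slb\,k$, the true barrier $\slb\,k-M_k$ is $O(\log k)$ after a near-perfect cancellation, and your domination destroys that cancellation. With a linearly-growing barrier the event $\bigcap_k\{B_k\le\slb\,k+\wt D_k\}$ has probability $\Theta(1)$ for a Brownian motion started at $0$, so $\bbE(B_m^-\,;\cdots)$ is of order $\sqrt m$, not $O(1)$. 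Notice also that the tail bound $\bbP(M_k>\slb\,k+y)\le C(1+y)\rme^{-2y\slb}$ you invoke controls the \emph{upper} tail of $M_k$, whereas for the upper bound you need a \emph{lower} envelope for $M_k$. The paper uses exactly such an envelope: $Y:=\inf_k[M_k-\slb\,k+c_1\log k]$ is a.s.\ finite with a Gaussian lower tail (cf.~\cite{ChenHe}), whence the constraint reduces to $B_k\le -Y+c_1\log k$, a bona-fide logarithmic-barrier ballot event; the ballot estimate then yields a bound $\le c'[Y^-+(Y^-)^2]$, finite in expectation.

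\emph{Lower bound.} Your tube description is internally inconsistent: the first version, $-B_k\in[\tfrac12\slb\,k-R,\slb\,k-\cdot]$, forces $B_k\approx -\slb\,k$, while the ``more precise'' version, $B_k-\slb\,k\in[-R-c\log^+(\cdot),-1]$, forces $B_k\approx +\slb\,k$. In either case the tube demands that $|B_k|$ grow linearly, which for a Brownian motion started at $0$ (not a bridge) is exponentially unlikely in $m$, not of order $1/m$. The constraint $B_k+M_k\le\slb\,k$ does \emph{not} require $B_k$ to track $\slb\,k$; since $M_k\approx\slb\,k$ already, it requires $B_k\lesssim O(\log k)$. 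The paper's proof bypasses tube constructions altogether by exploiting independence of $B$ and $(M_k)$: since $\{B_k\le0\}\cap\{M_k\le\slb\,k\}\subseteq\{B_k+M_k\le\slb\,k\}$, the expectation factors into $\bbP(\bigcap_k\{M_k\le\slb\,k\})$ (uniformly positive, via the sharp right-tail estimate for the BRW maximum so that $\sum_k\bbP(M_k>\slb\,k)<\infty$) times $\bbE(B_m^-\,;\bigcap_k\{B_k\le 0\})$ (a one-sided reflection-principle ballot computation). Your claim that the decoration constraint ``$M_k\le 1+R+c\log^+(\cdot)$'' can be ``secured, uniformly in $k$, with probability bounded away from $0$'' is false; since $M_k$ is typically about $\slb\,k$, that event has probability decaying stretched-exponentially in $k$. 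These are not bookkeeping issues but structural ones.
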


The proof again boils down to standard calculations based on ballot theorems and so we defer it to Section~\ref{sec-5}. We are now ready to give: 

\begin{proofsect}{Proof of Lemma~\ref{lemma-2.5}}
Combining Lemmas~\ref{lemma-3.9},~\ref{lemma-2.10} and~\ref{lemma-3.18}, we get
\begin{multline}
\label{E:3.81}
 \frac n{4u}\BbbP \bigl( \BB_{0,n-1}\cap \CC_{0, n}  \,\big|\, B_0 = -u, B_n = 0 \bigr)  \\
= \bigl(1+o(1)\bigr)\bbE \biggl(B_m^- ;\; \bigcap_{k=1}^m\bigl\{B_k + M_k \leq \slb\,k\bigr\} \, \Big|\, B_0 = 0 \biggr),
\end{multline}
where $o(1)\to 0$ as $n \to \infty$ followed by $(t,u,m) \to (\infty, \infty, \infty)$. Note that the left-hand side does not depend on~$m$ while the expectation on the right does not depend on~$n$, $u$ or~$t$.  As both sides are also uniformly positive and finite thanks to Lemma~\ref{lemma-3.18}, it follows that the expectation on the right-hand side converges as~$m\to\infty$. Writing~$C_\diamond$ for the  limit  multiplied by~$4$, the claim follows from \eqref{E:3.81}.
\end{proofsect}

\begin{proofsect}{Proof of Lemma~\ref{lemma-3.6}}
Fix $s \in \R$ and Let
\begin{equation}
\tau := \inf \bigl \{ k \in \{0,\dots, n\} \colon  \CC_{k,n} \text{ holds} \bigr \} \wedge n \,.
\end{equation}
The left hand side of~\eqref{E:3.37a} is at most
\begin{multline}
\label{E:3.39}
\sum_{k=1}^{n} 
\BbbP \bigl( \BB_{0,n-1}\cap\{ \tau = k\}  \,\big|\, B_0 = -u, B_n = 0 \bigr) \\
\le 
\sum_{k=1}^{n}
\BbbP \bigl (\CC^\rmc_{k-1, k} \,\big|\, B_k = -u, B_n = 0 \bigr)  
\sup_{u'}
\BbbP \bigl( \BB_{k,n-1}\cap \CC_{k, n} \,\big|\, B_k = -u' ,\, B_n = 0 \bigr)  \,,
\end{multline}
where the supremum is over $u'$ with~$u' < u + \sqrt{t} + \slb\, k$. Here we conditioned on~$B_k$ and used that $B_k\ge-u -\sqrt{t} - \slb\, k$ on $\CC_{k,n}$.

For all $t$ and $n$ large enough, the first term in the last sum is at most
\begin{equation}
\label{E:3.41}
\begin{split}
\BbbP & \bigg( \max_{k-1\le s\le k} |B_s + u| > \frac{\sqrt{t} + \slb\, k}{3} \,\Big|\,  B_0 = -u ,\,  B_n = 0 \bigg)  \\
& \le
\BbbP \bigg( \max_{0\le s\le k} |B_s| > \frac{\sqrt{t} + \slb\, k}{3} \,\bigg|\, B_0 = 0 \bigg)
+
\BbbP \bigg( \frac{k}{n}|B_n| > \frac{\sqrt{t} + \slb\, k}{4} \,\bigg|\, B_0 = 0 \bigg) \\
& \le C \rme^{-c' \frac{(\sqrt{t} + \slb k)^2}{k}}
\le C' \rme^{-c''(\sqrt{t} + k)} \,.
\end{split}
\end{equation} 
Here, in the first inequality we used that $(B_s+u)_{0\le s\le n}$ under $\bbP(\cdot|B_0 = -u,\,  B_n = 0)$ has the same law as $(B_s - \frac{s}{n} (B_n - u))_{0\le s\le n}$ under 
$\bbP(\cdot|B_0 = 0)$ for the first inequality; the Maximal Principle for Brownian Motion then yields for the second inequality.

For the second term in the sum in~\eqref{E:3.39}, whenever $k \leq \sqrt{n}$ we proceed as in the proof of Lemma~\ref{lemma-3.9}: We use Lemma~\ref{lemma-2.8} and Proposition~\ref{prop-1}
to upper bound the last probability in~\eqref{E:3.39} by 
\begin{equation}
\BbbP \biggl(\,\bigcap_{\ell=0}^{n-k-1}\Bigl\{B_\ell + \wt{D}_{k} - C \bigl(1 + \log^+ \bigl((\ell+k) \wedge (n-\ell-k)\bigr)\Bigr\} \,\bigg|\,
B_0 = -s', B_{n-k} = 0 \biggr) \,,
\end{equation}
with $\wt{D}_k$ as in Lemma~\ref{lemma-2.7} and proper choices of $c$ and $C$. 
Since
\begin{equation}
\log^+ \bigl((\ell+k) \wedge (n-\ell-k)\bigr) \le \log^+ k + \log^+ \bigl(\ell \wedge (n-k-\ell)\bigr) \,,
\end{equation}
we may use the upper bound in Lemma~\ref{lemma-2.7} with $(n-k, u'+\log^+ k, \log^+ k)$ in place of $(n, u, v)$, to get 
\begin{equation}
C \frac{(1+u'+\log^+ k)(\log^+ k + 1)}{n-k}
\le
C'
\frac{(u + \sqrt{t} + k)(\log^+ k)}{n} \,,
\end{equation}
for all~$n$ sufficiently large. When $k > \sqrt{n}$ we simply bound the supremum in~\eqref{E:3.39} by~$1$. Together with~\eqref{E:3.41}, this shows that the sum in~\eqref{E:3.39} is at most a constant times $\rme^{-c \sqrt{t}} s n^{-1}$ as long as $n$ is large enough. This can be made equal to the right hand side of~\eqref{E:3.37a} for a properly chosen $o(1)$.
\end{proofsect}

Having proved Proposition~\ref{prop-3}, we can also give:

\begin{proofsect}{Proof of Corollary~\ref{cor-3.6}}
Taking~$m$ and~$n_0$ such that the quantity  $o_{n,t,u}(1)$  in Proposition~\ref{prop-3} satisfies $|o_{n,t,u}(1)|\le 1/2$ for~$t,u\ge m$ and~$n> n_0$ shows  proves the claim in this range of the parameters. To extend the claim to~$t\in[1,m]$, we use the Markov property to generate a situation that $\sqrt{L_t(x)}\in[m,m+1]$ for~$x$ a neighbor of the root. Then we apply the statement in the subtree thereof while noting that, by \eqref{E:2.8}, $a_n(t)-a_{n-1}(s)$ is bounded uniformly in $n\ge 1$ and $1\le t,s\le m+1$. Using that the left-hand side of \eqref{E:3.3ww} is uniformly positive for $u\in[0,m]$ and $n\ge1$ thanks to Theorem~\ref{thm-4} and the fact that $\inf_{t\ge1}\inf_{n\ge1}P^\varrho(\HH_{n,t})>0$ then proves the claim.
\end{proofsect}

\section{Random walk started from a leaf}
\label{sec-4}
\noindent
The last remaining item to attend to is the proof of Theorems~\ref{thm-1} and~\ref{thm-3}. We will deduce these from Theorem~\ref{thm-2} by way of a limit argument whose key ingredient is:

\begin{proposition}
\label{prop-4.1}
Let~$\nu_n$ be the uniform measure on~$\BbbL_n$. Then for all~$n\ge1$ and~$t>0$,
\begin{equation}
\label{E:4.1}
\biggl\Vert\,P^{\varrho}\Bigl(\bigl\{L_t(x)\bigr\}_{x\in\BbbL_n}\in\cdot\,\Big|\,\HH_{n,t}\Bigr)
-P^{\nu_n}\Bigl(\bigl\{\ell_{\tau_\varrho}(x)\bigr\}_{x\in\BbbL_n}\in\cdot\Bigr)\biggr\Vert_{\text{\rm TV}}\le \frac b{b-1}\,t,
\end{equation}
where~$P^{\nu_n}$ is the law of the chain with initial distribution~$\nu_n$.
\end{proposition}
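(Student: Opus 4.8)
The plan is to couple the two fields directly, using the excursion structure of the walk started from~$\varrho$ together with the Markov property of the local time (Lemma~\ref{lemma-M}).

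\emph{Decomposition of the root-started field.} Run the walk from~$\varrho$ and measure time by the time spent at~$\varrho$. In this clock the departures from~$\varrho$ form a Poisson process of rate~$b$, the resulting excursions are i.i.d., and a single excursion reaches~$\BbbL_n$ with probability $u_n:=(1-b^{-1})/(1-b^{-n})$, independently of the others --- this is the elementary harmonic-function computation already used in the proof of Lemma~\ref{lemma-1.5}. (One may also read this off from Lemma~\ref{lemma-M} applied to the~$b$ children of~$\varrho$, together with the fact that $L_t(c_i)$ is Compound Poisson-Exponential with parameter~$t$.) Hence, up to the moment the root-clock reaches~$t$, the field $\{L_t(x)\}_{x\in\BbbL_n}$ is the sum over the (finitely many) excursions that reach~$\BbbL_n$ of i.i.d.\ leaf fields, and $\HH_{n,t}$ is exactly the event that at least one such excursion occurs.

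\emph{The field carried by one reaching excursion.} The key point is that the leaf field contributed by a single excursion \emph{conditioned} to reach~$\BbbL_n$ has exactly the law $P^{\nu_n}(\{\ell_{\tau_\varrho}(x)\}_{x\in\BbbL_n}\in\cdot)$. Split such an excursion at its first hitting time of~$\BbbL_n$: before that time no leaf has been visited, so that part contributes nothing; afterwards, by the strong Markov property and the symmetry of~$\T_n$ (which renders the first-hit leaf uniform on~$\BbbL_n$), the walk runs like the chain started from~$\nu_n$ until it returns to~$\varrho$. Granting this, build all the objects on a single probability space and, on~$\HH_{n,t}$, let the $P^{\nu_n}$-sample be the leaf-field contribution of the \emph{first} reaching excursion while the $P^\varrho(\,\cdot\mid\HH_{n,t})$-sample is the sum of the contributions of \emph{all} reaching excursions occurring before root-time~$t$. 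The two samples coincide unless a further reaching excursion occurs before root-time~$t$, whence
\[
\Bigl\Vert P^{\varrho}\bigl(\{L_t(x)\}_{x\in\BbbL_n}\in\cdot\mid\HH_{n,t}\bigr)-P^{\nu_n}\bigl(\{\ell_{\tau_\varrho}(x)\}_{x\in\BbbL_n}\in\cdot\bigr)\Bigr\Vert_{\mathrm{TV}}
\le P^\varrho\bigl(\text{two or more reaching excursions before root-time }t\mid\HH_{n,t}\bigr).
\]

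\emph{Bounding the bad event.} What remains --- and what I expect to be the only real obstacle --- is to show that this conditional probability is at most $\tfrac{b}{b-1}t$. The number of reaching excursions before root-time~$t$ is Poisson with mean $\lambda_n t$, where $\lambda_n=b\,u_n=\tfrac{b-1}{1-b^{-n}}$, so the probability in question equals $1-\lambda_n t/(\rme^{\lambda_n t}-1)=\bigl(1-\rme^{-\lambda_n t}(1+\lambda_n t)\bigr)/\bigl(1-\rme^{-\lambda_n t}\bigr)$, and one bounds the numerator by $\tfrac12(\lambda_n t)^2$ and the denominator below by the sharp estimate $P^\varrho(\HH_{n,t})\ge\tfrac{b-1}{b}(1-\rme^{-bt})$ of Lemma~\ref{lemma-2.10H}. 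Getting the constant exactly $\tfrac{b}{b-1}$ (rather than a cruder multiple of~$t$) is the delicate part: it requires care with the precise Poisson rate~$\lambda_n$ and, if needed, a refinement of the coupling in which the reaching excursions that enter one and the same child-subtree of~$\varrho$ are re-absorbed into the recursive, $\T_{n-1}$-level description of~$P^{\nu_n}$ (again via Lemma~\ref{lemma-M}) rather than being charged to the error, so that only the event ``two distinct child-subtrees of~$\varrho$ are entered by a reaching excursion before root-time~$t$'' survives.
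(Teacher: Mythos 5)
Your proposal follows the same excursion-decomposition coupling as the paper: decompose the $P^\varrho$-walk into i.i.d.\ excursions from~$\varrho$, observe that a single excursion conditioned to reach~$\BbbL_n$ contributes a leaf field distributed as $P^{\nu_n}(\{\ell_{\tau_\varrho}(x)\}\in\cdot)$ (strong Markov at~$\tau_{\BbbL_n}$ plus the symmetry making $X_{\tau_{\BbbL_n}}\sim\nu_n$ --- this is exactly what the display following Lemma~\ref{lemma-4.2} records), and bound the total-variation distance by the conditional probability of two or more ``relevant'' excursions. Counting only the reaching excursions, your $M(t)\sim\text{Poisson}(\lambda_n t)$ with $\lambda_n=bu_n$, rather than all departures from~$\varrho$ (the paper's $N(t)$ in Lemma~\ref{lemma-4.2}) is a slight refinement: $M(t)\le N(t)$, so your bound is a priori no worse than $P^\varrho(N(t)\ge2\mid\HH_{n,t})$.

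The one gap, which you flag, concerns the constant. Your bound $\tfrac12(\lambda_n t)^2$ on the numerator is unnecessarily loose; the cleaner step is $1-(1+\lambda)\rme^{-\lambda}\le\lambda(1-\rme^{-\lambda})$ (just $1-\rme^{-\lambda}\le\lambda$ in disguise), which cancels your denominator $P^\varrho(\HH_{n,t})=1-\rme^{-\lambda_n t}$ exactly and yields a bound $\lambda_n t$, with $\lambda_n=\frac{b-1}{1-b^{-n}}\in(b-1,b]$. This is $O(t)$ uniformly in~$n$, which is all that the downstream uses of the proposition (in the proofs of Theorems~\ref{thm-1} and~\ref{thm-3}) actually require; however $\lambda_n>\frac{b}{b-1}$ once $b\ge3$, so this route does not reproduce the stated constant $\frac{b}{b-1}$. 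Be aware that the paper's own derivation of $\frac{b}{b-1}$ relies on the assertion that $N(t)$ is Poisson with parameter~$t$; under the stated unit-rate-per-edge convention the root has degree~$b$, so $N(t)$ ought to be Poisson$(bt)$, which would give $\frac{b^2}{b-1}t$ instead --- the precise constant seems to merit a closer look in the paper as well (and is in any case immaterial downstream). The speculative ``re-absorption'' refinement at the end of your writeup is not needed for the $O(t)$ bound and, as far as I can tell, does not recover $\frac{b}{b-1}$ either.
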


The proof of Proposition~\ref{prop-4.1} is simple to describe but writing it formally requires some  notation.
Recall that~$\wt\tau_\varrho(t)$ denotes the first time $s\mapsto\ell_s(\varrho)$ reaches~$t$ and that~$\tau_{\BbbL_n}$ is the first hitting time of the set~$\BbbL_n$. Let~$N(t)$ be the total number of jumps taken by $\{X_s\colon 0\le s\le\wt\tau_\varrho(t)\}$ from~$\varrho$ and let $\theta_u$ denote the shift on the path-space of the random walk defined so that $(X\circ\theta_u)_s = X_{s+u}$. We now observe:

\begin{lemma}
\label{lemma-4.2}
For each~$n\ge1$ and~$t>0$,
\begin{multline}
\label{E:4.2}
\quad
P^\varrho\Bigl(\bigl\{\{L_t(x)\}_{x\in\BbbL_n}\in\cdot\bigr\}\cap\HH_{n,t}\cap\{N(t)=1\}\Bigr) \\
= 
P^\varrho\Bigl(\theta^{-1}_{\tau_{\BbbL_n}}\bigl(\{\{\ell_{\tau_\varrho}(x)\}_{x\in\BbbL_n}\in\cdot\}\bigr)\cap\HH_{n,t}\cap\{N(t)=1\}\Bigr).
\quad
\end{multline}
\end{lemma}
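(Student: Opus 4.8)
The plan is to prove the pointwise identity
\begin{equation*}
\bigl\{L_t(x)\bigr\}_{x\in\BbbL_n}=\bigl\{\ell_{\tau_\varrho}(x)\bigr\}_{x\in\BbbL_n}\circ\theta_{\tau_{\BbbL_n}}
\qquad\text{on }\HH_{n,t}\cap\{N(t)=1\},
\end{equation*}
after which \eqref{E:4.2} follows at once: for any Borel set the two events appearing on the left and right of \eqref{E:4.2}, once intersected with $\HH_{n,t}\cap\{N(t)=1\}$, coincide, so they have the same $P^\varrho$-probability.

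To set up the identity, let $T_1$ denote the first jump time of $X$ (necessarily a jump out of $\varrho$ under $P^\varrho$), put $R:=\tau_\varrho\circ\theta_{T_1}$ for the length of the first excursion away from $\varrho$, and let $\sigma_1:=T_1+R$ be the first return time to $\varrho$. Since the root-clock satisfies $\ell_s(\varrho)=s$ on $[0,T_1]$, $\ell_s(\varrho)=T_1$ on $[T_1,\sigma_1]$, and increases again only after $\sigma_1$, the event $\{N(t)=1\}$ forces $T_1\le t$ and forces $X$ to remain at $\varrho$ throughout $[\sigma_1,\wt\tau_\varrho(t)]$; consequently $\wt\tau_\varrho(t)=\sigma_1+(t-T_1)$. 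Because $X$ sits at $\varrho$ on $[0,T_1]\cup[\sigma_1,\wt\tau_\varrho(t)]$, for every leaf $x\in\BbbL_n$ only the excursion contributes, so $L_t(x)=\ell_{\wt\tau_\varrho(t)}(x)=\int_{T_1}^{\sigma_1}1_{\{X_s=x\}}\textd s$.

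Now add the conditioning on $\HH_{n,t}$. On this event the walk visits $\BbbL_n$ before $\wt\tau_\varrho(t)$ (up to a $P^\varrho$-null set, $\HH_{n,t}=\{\tau_{\BbbL_n}<\wt\tau_\varrho(t)\}$, exactly as in \eqref{E:2.50}), and since $X$ is at $\varrho$ on $[0,T_1]$ and on $[\sigma_1,\wt\tau_\varrho(t)]$ the first visit to $\BbbL_n$ must occur inside the single excursion, i.e. $\tau_{\BbbL_n}\in(T_1,\sigma_1)$; in particular the first return of the shifted path to $\varrho$ is the end of that excursion, $\tau_\varrho\circ\theta_{\tau_{\BbbL_n}}=\sigma_1-\tau_{\BbbL_n}$. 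As $X$ does not visit $\BbbL_n$ on $[0,\tau_{\BbbL_n})$, we get $\int_{T_1}^{\tau_{\BbbL_n}}1_{\{X_s=x\}}\textd s=0$ for each $x\in\BbbL_n$, whence
\begin{equation*}
\ell_{\tau_\varrho}(x)\circ\theta_{\tau_{\BbbL_n}}=\int_{\tau_{\BbbL_n}}^{\sigma_1}1_{\{X_s=x\}}\textd s=\int_{T_1}^{\sigma_1}1_{\{X_s=x\}}\textd s=L_t(x),
\end{equation*}
which is the claimed identity, and \eqref{E:4.2} follows.

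I do not expect a genuine obstacle here: the argument is a path decomposition of the walk into the holding time at $\varrho$, a single excursion, and the final sojourn at $\varrho$. The only points needing a line of care are the characterization of $\{N(t)=1\}$ as ``one excursion away from $\varrho$, followed by $X$ staying at $\varrho$ until its root-clock reaches $t$'' (which uses that holding times are exponential and the tree is finite, so $R<\infty$ a.s.) and the routine null-set identification of $\HH_{n,t}$ with $\{\tau_{\BbbL_n}<\wt\tau_\varrho(t)\}$ that places $\tau_{\BbbL_n}$ strictly inside $(T_1,\sigma_1)$.
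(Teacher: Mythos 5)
Your proof is correct and follows essentially the same path-decomposition argument as the paper's: split the trajectory up to $\wt\tau_\varrho(t)$ into the initial holding time at $\varrho$, a single excursion, and the terminal sojourn at $\varrho$, and observe that on $\HH_{n,t}\cap\{N(t)=1\}$ all of the leaf local time is accumulated between $\tau_{\BbbL_n}$ and $\sigma_1=\tau_{\BbbL_n}+\tau_\varrho\circ\theta_{\tau_{\BbbL_n}}$. Your write-up merely fills in the explicit $T_1$, $\sigma_1$ bookkeeping that the paper leaves implicit.
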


\begin{proofsect}{Proof}
Consider a path of the random walk started from~$\varrho$ and run until time~$\wt\tau_\varrho(t)$. On $\HH_{n,t}\cap\{N(t)=1\}$, the path must visit~$\BbbL_n$ and terminate at~$\varrho$ and so it can be decomposed into three parts: a path from~$\varrho$ to the first hitting point on~$\BbbL_n$, a path from this point back to~$\varrho$ and a constant path staying at~$\varrho$. It follows that
\begin{equation}
\label{E:4.3}
P^\varrho\Bigl(\tau_{\BbbL_n}+\tau_\varrho\circ\theta_{\tau_{\BbbL_n}}<\wt\tau_\varrho(t)\,\Big|\,\HH_{n,t}\cap\{N(t)=1\}\Bigr)=1.
\end{equation}
Next observe that no local time is accumulated on~$\BbbL_n$ during the first  and third  part of the path. So we have
\begin{equation}
\bigl\{L_t(x)\bigr\}_{x\in\BbbL_n}=\bigl\{\ell_{\tau_\varrho}(x)\circ\theta_{\tau_{\BbbL_n}}\bigr\}_{x\in\BbbL_n}
\text{ on }\bigl\{\tau_{\BbbL_n}+\tau_\varrho\circ\theta_{\tau_{\BbbL_n}}<\wt\tau_\varrho(t)\bigr\}\cap\{N(t)=1\}.
\end{equation}
In conjunction with \eqref{E:4.3}, this now implies the claim.
\end{proofsect}

The identity \eqref{E:4.2} is all that we need to give:

\begin{proofsect}{Proof of Proposition~\ref{prop-4.1}}
We start by noting that, since the events $\HH_{n,t}$ and $\{\tau_{\BbbL_n}<\wt\tau_\varrho(t)\}$ are equal up to a $P^\varrho$-null set, the strong Markov property gives
\begin{equation}
\begin{aligned}
P^\varrho\Bigl(\theta^{-1}_{\tau_{\BbbL_n}}\bigl(\{\{\ell_{\tau_\varrho}(x)\}_{x\in\BbbL_n}&\in\cdot\}\bigr)\cap\HH_{n,t}\Bigr)
\\
&=E^\varrho\Bigl(1_{\{\tau_{\BbbL_n}<\wt\tau_\varrho(t)\}}
P^{X_{\tau_{\BbbL_n}}}\bigl(\{\ell_{\tau_\varrho}(x)\}_{x\in\BbbL_n}\in\cdot\bigr)\Bigr)
\\
&=P^{\nu_n}\bigl(\{\ell_{\tau_\varrho}(x)\}_{x\in\BbbL_n}\in\cdot\bigr)
P^\varrho(\HH_{n,t}),
\end{aligned}
\end{equation}
where we used  that $X_{\tau_{\BbbL_n}}$ is uniform on~$\BbbL_n$ under~$P^\varrho(\cdot| \tau_{\BbbL_n}<\wt\tau_\varrho(t))$ thanks to the symmetries of the tree.  In conjunction with Lemma~\ref{lemma-4.2} this shows
\begin{multline}
P^{\varrho}\Bigl(\bigl\{L_t(x)\bigr\}_{x\in\BbbL_n}\in\cdot\,\Big|\,\HH_{n,t}\Bigr)
-P^{\nu_n}\Bigl(\bigl\{\ell_{\tau_\varrho}(x)\bigr\}_{x\in\BbbL_n}\in\cdot\Bigr)
\\
=P^{\varrho}\Bigl(\bigl\{\bigl\{L_t(x)\}_{x\in\BbbL_n}\in\cdot\bigr\}\cap\{N(t)\ge2\}
\,\Big|\,\HH_{n,t}\Bigr)
\\
-P^{\nu_n}\Bigl(\theta^{-1}_{\tau_{\BbbL_n}}\bigl(\{\{\ell_{\tau_\varrho}(x)\}_{x\in\BbbL_n}\in\cdot\}\bigr)\cap\{N(t)\ge2\}\,\Big|\HH_{n,t}\Bigr).
\end{multline}
It follows that 
the TV-norm in \eqref{E:4.2} is bounded  by $P^\varrho(N(t)\ge 2|\HH_{n,t})$, which is further bounded the ratio of $P^\varrho(N(t)\ge2)$ and $P^\varrho(\HH_{n,t})$. As~$N(t)$ is Poisson with parameter~$t$, the former probability is at most~$t(1-\texte^{-t})$.  The claim now follows from Lemma~\ref{lemma-2.10H} whereby we know that $P^\varrho(\HH_{n,t})\ge\frac b{b-1}t$. 
\end{proofsect}

We are now ready to prove the convergence of the maximal local time for the walk started from a leaf and run until the root is hit:

\begin{proofsect}{Proof of Theorem~\ref{thm-1}}
For each~$n\ge1$, $u\in\R$ and $t>0$ abbreviate (with some abuse of earlier notation)
\begin{equation}
\EE(u):=\biggl\{\max_{x\in\BbbL_n}\sqrt{\ell_{\tau_\varrho}(x)}\le \slb\, n-\frac1{\slb}\log n +u\biggr\}
\end{equation}
and
\begin{equation}
\FF_t(u):=\biggl\{\max_{x\in\BbbL_n} \sqrt{L_t(x)} \le \slb\,n-\frac1{\slb}\log n + u\biggr\}.
\end{equation}
Thanks to the symmetries of the tree, for any~$x_n\in\BbbL_n$, Proposition~\ref{prop-4.1} gives
\begin{equation}
\label{E:4.10}
\Bigl|\,P^{x_n}\bigl(\EE(u)\bigr)-P^{\varrho}\bigl(\FF_t(u)\,\big|\,\HH_{n,t}\bigr)\Bigr|\le\frac b{b-1}t.
\end{equation}
 Theorem~\ref{thm-2}  shows that
\begin{equation}
\lim_{n\to\infty}P^{\varrho}\bigl(\FF_t(u)\,\big|\,\HH_{n,t}\bigr) = \bbE \Bigl(\rme^{-C_\star Z(t) \rme^{-2u\slb}}\,\Big|\, Z(t)>0\Bigr).
\end{equation}
Since $P^{x_n}(\EE(u))$ does not depend on~$t$,  taking  $n\to\infty$ followed by~$t\downarrow0$ in \eqref{E:4.10} proves that these probabilities converge as~$n\to\infty$ and, in fact,
\begin{equation}
\label{E:4.12}
\lim_{n\to\infty}P^{x_n}\bigl(\EE(u)\bigr) = \lim_{t\downarrow0}\bbE \Bigl(\rme^{-C_\star Z(t) \rme^{-2u\slb}}\,\Big|\, Z(t)>0\Bigr),
\end{equation}
where the limit on the right exists as well. 

Let~$\ZZ(t)$ be a random variable with the law of $C_\star Z(t)$ conditioned on~$Z(t)>0$. Viewing the expectation on the right of \eqref{E:4.12} as the Laplace transform of the law of~$\ZZ(t)$, the Curtiss Theorem implies that $\ZZ(t)$ converges, as $t\downarrow0$, weakly to a non-negative (a.s.-finite) random variable~$\ZZ$ for which
\begin{equation}
\lim_{n\to\infty}P^{x_n}\bigl(\EE(u)\bigr) = \bbE \bigl(\rme^{-\ZZ \rme^{-2u\slb}}\bigr)
\end{equation}
holds true for all~$u\in\R$. Theorem~\ref{thm-4} gives
\begin{equation}
\lim_{u\to-\infty}\,\,\lim_{n\to\infty}\,\,P^\varrho(\FF_t(u)|\HH_{n,t})=0
\end{equation}
and so~$\ZZ>0$ a.s.\ as desired.
\end{proofsect}

It remains to characterize the law of~$\ZZ$. For this we first give:

\begin{proofsect}{Proof of Corollary~\ref{thm-2b}}
Let~$\varphi\colon\R^2\to\R$ be bounded and continuous and let~$T$ be a positive random variable. Writing~$P_T$ for the law of~$T$, with the expectations denoted  similarly,  Fubini-Tonelli's Theorem gives
\begin{equation}
E_T\otimes E^\varrho\varphi\bigl(T,Z_n(T)\bigr)=\int\bigl[ E^\varrho\varphi(t,Z_n(t))\bigr]P_T(T\in\textd t).
\end{equation}
 Note that, as  $t\mapsto Z_n(t)$ is continuous, so is $t\mapsto  E^\varrho  \varphi(t,Z_n(t))$.  By the Portmanteau Theorem, the weak convergence \eqref{E:2.2} implies the pointwise convergence $E^\varrho \varphi(t,Z_n(t))\to \E\varphi(t,Z(t))$, where $\E$ denotes expectation with respect to the law of~$Z(t)$ which, we note, depends on~$t$.  The Bounded Convergence Theorem then shows
\begin{equation}
\label{E:4.16}
E_T\otimes E^\varrho\varphi\bigl(T,Z_n(T)\bigr)\,\underset{n\to\infty}\longrightarrow\,\,\int \bigl[\E\varphi(t,Z(t))\bigr]P_T(T\in\textd t)
\end{equation}
where we noted that, being a pointwise limit of continuous functions, $t\mapsto\E\varphi(t,Z(t))$ is Borel measurable. 

Interpreting the integral in \eqref{E:4.16} as a continuous linear functional on~$C([0,\infty]\times[0,\infty])$ endowed with the supremum norm,  the Riesz representation theorem along with elementary tightness arguments yield existence of a Borel probability measure~$\mu$ concentrated on $[0,\infty)\times[0,\infty)$ such that the integral equals $\int\varphi(t,z)\mu(\textd t\textd z)$ and the first martingal of~$\mu$ is~$P_T$. Letting $(T,Z(T))$ be a pair of non-negative random variables with joint law~$\mu$,  the integral equals $E\varphi(T,Z(T))$.

The weak convergence \eqref{E:1.14w} then follows from \eqref{E:4.16}. To prove the disintegration formula \eqref{E:1.13}, note that letting~$\varphi$ increase monotonically to the indicator of $\EE:=(-\infty,r)\times(-\infty,u)$ shows that $t\mapsto\BbbP((t,Z(t))\in\EE)$ is Borel and \eqref{E:1.13} holds for this set. The extension to all Borel sets $\EE\subseteq\R^2$ then follows from Dynkin's $\pi/\lambda$-Theorem.

It remains to prove the cascade relation \eqref{E:1.15w}, let $x_1,\dots,x_b$ denote the neighbors of~$\varrho$ in~$\T_n$. Then the Markov property in Lemma~\ref{lemma-M} casts the probability in \eqref{E:2.3} as
\begin{equation}
E^\varrho\biggl(\,\prod_{i=1}^b P^\varrho\Bigl(\,\max_{x\in\BbbL_{n-1}}\sqrt{L_s(x)}\le n\slb-\frac1{\slb}\log n+u\Bigr)\Big|_{s=\sqrt{L_t(x_i)}}\biggr).
\end{equation}
Fixing~$k\le n$,  the limit formula \eqref{E:3.3i} in Theorem~\ref{thm-5}  permits us to rewrite this into the form
\begin{equation}
\label{E:4.17a}
E^\varrho \Biggl(\, \prod_{i=1}^b E^\varrho\biggl(\,\exp\Bigl\{- C_\star b^{-2}Z_k(t)\texte^{-2u\slb}\Bigr\}\biggr)\bigg|_{t:=\sqrt{L_t(x_i)}}\Biggr)+o(1),
\end{equation}
where $o(1)\to0$ in the limits~$n\to\infty$ followed by~$k\to\infty$.
 But $\sqrt{L_t(x_1)}\dots,\sqrt{L_t(x_b)}$ are i.i.d.\ Compound Poisson-Exponential with parameter~$t$ under~$P^\varrho$ and so, writing $Z^{(1)}_k,\dots,Z^{(b)}_k$ for independent i.i.d.\ copies of $Z_k$ under~$P^\varrho$, \eqref{E:1.14w} implies
\begin{equation}
\Bigl(Z_k^{(1)}\bigl(\sqrt{L_t(x_1)}\bigr),\dots,Z_k^{(b)}\bigl(\sqrt{L_t(x_1)}\bigr)\Bigr)\,\,\underset{k\to\infty}\Lawarrow\,\,\bigl(Z(T_1),\dots,Z(T_b)\bigr)\,,
\end{equation}
where $Z(T_1),\dots,Z(T_b)$ are i.i.d.\ copies of $Z(T)$, for~$T$ that is a Compound Poisson-Exponential with parameter~$t$. Hereby we conclude that, as $n\to\infty$, \eqref{E:4.17} tends to
\begin{equation}
\E^{\otimes k}\biggl(\exp\Bigl\{-C_\star \texte^{-2u\slb} b^{-2}\sum_{i=1}^b Z(T_i)\Bigr\}\biggr).
\end{equation}
But this  is also equal to  the right-hand side of \eqref{E:2.3} and so \eqref{E:1.15w} follows from the fact that the Laplace transform of a non-negative random variable determines its  law. 
\end{proofsect}

We are finally in a position to conclude our proofs by  giving:

\begin{proofsect}{Proof of Theorem~\ref{thm-3}}
Recall the definition of~$Z_n(t)$ from \eqref{E:1.9a} and let~$\ZZ_n$ denote the quantity in \eqref{E:1.9c}. With the help of the symmetries of the tree, Proposition~\ref{prop-4.1} gives
\begin{equation}
\label{E:4.17}
\Bigl|\,P^{x_n}\bigl(\ZZ_n\in\cdot\bigr)-P^{\varrho}\bigl(C_\star Z_n(t)\in\cdot\,\big|\,\HH_{n,t}\bigr)\Bigr|\le\frac b{b-1}t
\end{equation}
By  Theorem~\ref{thm-2},  the law of~$Z_n(t)$ conditioned on~$\HH_{n,t}$ converges to that of $Z(t)$ conditioned on~$Z(t)>0$. As shown in the proof of Theorem~\ref{thm-1}, this law converges to that of~$\ZZ$ as $t\downarrow0$. Taking $n\to\infty$ followed by~$t\downarrow0$ in \eqref{E:4.17} then readily yields parts~(1) and~(3) of the claim.

For part~(2) we will use the cascade relation \eqref{E:1.15w}. Writing~$U$ for a rate-$1$  Exponential  random variable and~$T_1,\dots,T_b$ for i.i.d.\  Compound  Poisson Exponentials with parameter~$t$, for any measurable function $\varphi\colon\R^b\to[0,1]$  the union bound shows 
\begin{equation}
\Bigl|\,E \varphi(T_1,\dots,T_b) -\texte^{-t}\varphi(0)-t\texte^{-t}\sum_{i=1}^b E\varphi(U e_i)\Bigr|\le  b P\bigl(N(tb)\ge2\bigr),
\end{equation}
where $e_1,\dots,e_b$ are the canonical unit coordinate vectors in~$\R^b$ and where $N(t)$ is Poisson with parameter~$t$. Given a Borel set~$\EE\subseteq(0,\infty)$, applying this to
\begin{equation}
\varphi(t_1,\dots,t_b):=\BbbP^{\otimes k}\biggl(\,b^{-2}\sum_{i=1}^b Z^{(i)}(t_1)\in\EE\biggr),
\end{equation}
 where $Z^{(1)}(t_1),\dots,Z^{(b)}(t_b)$ are independent under $\BbbP^{\otimes k}$ and where $\varphi$  is measurable by Corollary~\ref{thm-2b}  and elementary arguments from analysis, 
the cascade relation \eqref{E:1.15w} along with the disintegration formula \eqref{E:1.13} show
\begin{equation}
\label{E:4.22}
\Bigl|\,\BbbP\bigl(Z(t)\in\EE\bigr)-bt\texte^{-t}\BbbP\bigl(b^{-2}Z(U)\in\EE\bigr)\Bigr|\le  b
P\bigl(N(tb)\ge2\bigr).
\end{equation}
The right-hand side is order~$t^2$ as $t\downarrow0$, so dividing by $\BbbP(Z(t)>0)$, which  this very formula shows  equals $b t\texte^{-t}\BbbP(Z(U)>0)+o(t)$, and taking~$t\downarrow0$ then gives part~(2) via the characterization proved in part~(1).
\end{proofsect}

\section{Ballot lemmas}
\label{sec-5}\noindent
The purpose of this section is to collect statements from the area of ``ballot theorems'' that enter  various  derivations in this paper. We start with a lemma that is used in the proof of uniform tightness; specifically, to estimate the right-hand side of \eqref{E:2.36}:

\begin{lemma}
\label{lemma-A.1}
Let~$f,g\colon[0,\infty)\to[0,\infty)$ be continuous increasing and such that
\begin{equation}
\limsup_{t\to\infty}\frac{\log f(t)}{\log\log t}<\infty\,\,\wedge\,\,\liminf_{t\to\infty}\frac{g(t)}t>0.
\end{equation}
Then for~$B$ denoting the standard Brownian motion and $f_n(s):=f(s\wedge(n-s))$,
\begin{equation}
\lim_{r\to\infty}
\inf_{n\ge1} \,n\,P^0\Bigl(B\le -f_n\text{\rm\ on }[1,n-1]\,\,\wedge\,\, r+B\ge-g\text{\rm\ on }[0,n]\,\Big|\, B_n=0\Bigr)>0
\end{equation}
\end{lemma}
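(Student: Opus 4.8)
The plan is to reduce the estimate --- by a Markov-property split into the two unit endpoint intervals and the bulk $[1,n-1]$ --- to a one-sided ballot lower bound for a Brownian bridge staying below a poly-logarithmic curve, for which the key input is the Reflection Principle formula $P^x(B\ge0\text{ on }[0,T]\mid B_T=y)=1-\exp\{-2xy/T\}$. Before that I would make three reductions. Since the probability in the statement is non-decreasing in $r$, the limit equals $\sup_{r>0}\inf_{n\ge1}n\,P^0(\cdots\mid B_n=0)$, so it is enough to produce a single $r_0$ with $\inf_{n\ge1}n\,P^0(\cdots\mid B_n=0)>0$. The hypotheses furnish $K,c_f<\infty$ with $f(t)\le c_f(1+\log^+t)^K$ for all $t\ge0$ and $\delta,c_g>0$ with $g(t)\ge(\delta t-c_g)^+=:\tilde g(t)$; replacing $g$ by $\tilde g\le g$ only shrinks the event, so I may assume $g=\tilde g$. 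And for $n$ below any fixed threshold the conditional event is an intersection of finitely many Brownian constraints over a bounded interval and thus has probability bounded below once $r_0$ is large, so I may assume $n$ exceeds any prescribed constant.

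For the decomposition, I would condition the bridge from $0$ to $0$ on $[0,n]$ on $(B_1,B_{n-1})$ and restrict these to the fixed interval $I:=[-f(1)-2,\,-f(1)-1]$, on which, for $n$ large, the joint density of $(B_1,B_{n-1})$ is bounded below. By time-reversal symmetry and the Markov property, $P^0(\cdots\mid B_n=0)$ is then at least a positive constant $c_\star(r_0)$ --- which collects this density together with the two endpoint-bridge probabilities $P\bigl(\min_{[0,1]}B\ge -r_0\mid B_0=0,\,B_1=x\bigr)$, $x\in I$, positive once $r_0>f(1)+2$ (the upper constraint is vacuous on $[0,1]\cup[n-1,n]$ and, as $\tilde g\ge0$, the lower constraint there reduces to $B\ge -r_0$) --- times
\[
\inf_{x,y\in I}\ \BbbP\bigl(B\le -f_n\ \text{ and }\ B\ge -r_0-\tilde g\ \text{ on }[1,n-1]\ \big|\ B_1=x,\ B_{n-1}=y\bigr).
\]
It remains to bound this infimum --- the bulk probability for a bridge of length $n-2$ --- below by $c/n$, uniformly in $n$ and in $x,y\in I$.

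I would split the bulk probability into the one-sided factor $\BbbP(B\le -f_n\text{ on }[1,n-1]\mid B_1=x,B_{n-1}=y)$ and the conditional probability that, given this, $B$ also stays above $-r_0-\tilde g$ there. For the one-sided factor: the endpoints $x,y$ lie an $O(1)$ distance below the barrier at $s\in\{1,n-1\}$, and $f_n(s)=f\bigl(s\wedge(n-s)\bigr)\le c_f\bigl(1+\log^+(s\wedge(n-s))\bigr)^K=o\bigl((s\wedge(n-s))^{1/2}\bigr)$, so after shifting and discretizing, the Reflection Principle together with the classical ballot/entropic-repulsion argument gives a lower bound $\ge c/(n-2)$ with $c>0$ independent of $n$ (depending on $f$ only through a poly-logarithmic majorant); the fact that the barrier dips well below its endpoint values in the middle costs only a bounded factor, since there the bridge fluctuates downward on scale $\sqrt{s\wedge(n-s)}\gg(\log(s\wedge(n-s)))^K$. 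For the conditional factor: a bridge of length $\tau:=n-2$ conditioned to stay below $-f_n$ dips below $-\sqrt\tau\,\log\tau$ only with probability $o(1)$ --- the unconditional probability of such a dip is $\tau^{-\Omega(\log\tau)}$, negligible beside the $\Theta(1/\tau)$ probability of the barrier event --- whereas $-r_0-\tilde g(s)\le c_g-r_0-\delta s$ lies below $-\sqrt\tau\,\log\tau$ throughout the central part of $[1,n-1]$ and equals $-r_0<\inf I$ within $O(1)$ of the endpoints, where a local estimate keeps the bridge above $-r_0$; hence this factor is bounded below by a constant $c'>0$. Multiplying, $n\,P^0(\cdots\mid B_n=0)\ge c_\star(r_0)\,c\,c'>0$ for all $n\ge1$.

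The step I expect to be the main obstacle is the one-sided ballot lower bound for the first factor, namely getting the constant $c$ uniform in $n$ with the barrier a poly-logarithmic, rather than merely logarithmic, curve and the bridge endpoints only an $O(1)$ distance below it. In the case $K=1$ actually used elsewhere in this paper this is essentially the decoration-free instance of Lemma~\ref{lemma-2.6} after peeling off the two endpoint unit intervals; the general poly-logarithmic case is handled by the same dyadic decomposition of $[1,n-1]$, which is routine but needs to be carried out with some care.
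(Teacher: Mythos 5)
Your overall strategy matches the paper's in spirit --- both rest on a ballot lower bound for the one-sided constraint $\{B\le -f_n\}$ of size $\Theta(1/n)$, followed by an argument that the second (lower) barrier costs only an $r$-dependent factor that is eliminated by taking $r$ large. You package this multiplicatively as $P(A\cap B)=P(A)\,P(B\mid A)$ after first pinning $B_1,B_{n-1}$ in a fixed interval $I$, whereas the paper uses the subtractive form $P(A\cap B)=P(A)-P(A\setminus B)$ directly on the bridge from $0$ to $0$, reduces $f$ to the explicit majorant $C_0t^{1/4}-1$ so that CHL17Sup Prop.\ 2.1 gives the lower bound off the shelf, and then bounds the violation probability by partitioning on the first unit interval where $r+B\ge-g$ fails, splitting at $n^{2/3}$ between ``late'' violations (handled by a single Reflection-Principle tail bound) and ``early'' violations (handled by splitting the bridge at time $\ell$ and using the Gaussian tail at scale $r_\ell=r+c_0(\ell-1)$). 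Both roads lead to the same place, but the paper's form sidesteps a step of your argument that, as written, has a genuine gap.

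The gap is in the conditional-factor estimate. You claim a dichotomy: either $s$ lies in the ``central part'' where $-r_0-\tilde g(s)<-\sqrt\tau\log\tau$, or $s$ is within $O(1)$ of the endpoints where $\tilde g(s)=0$ and the lower barrier equals $-r_0$. With $\tilde g(s)=(\delta s-c_g)^+$, the first case requires $\delta s>\sqrt\tau\log\tau - r_0 + c_g$, i.e.\ $s\gtrsim\delta^{-1}\sqrt n\log n$, while the second holds only for $s\le c_g/\delta$. The range $s\in(c_g/\delta,\;\delta^{-1}\sqrt n\log n]$ --- of length growing like $\sqrt n\log n$, not $O(1)$ --- is covered by neither branch, and there your global ``dip below $-\sqrt\tau\log\tau$'' bound says nothing about the barrier $-r_0-\tilde g(s)$, which sits well above $-\sqrt\tau\log\tau$ throughout. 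To fix this you would need a \emph{scale-dependent} dip estimate under the conditioned law (e.g.\ that simultaneously for all $s$, $B_s\ge -C\sqrt{s\wedge(\tau-s)}\,\log\big(2+s\wedge(\tau-s)\big)$ up to an event of small conditional probability) and then check that $-r_0-\tilde g(s)$ lies below this curve once $s$ exceeds a fixed constant depending only on $\delta,c_g,C$, the remaining bounded range being handled by pushing $r_0$ up. This is doable but is genuinely more delicate than your sketch acknowledges; the paper's first-violation partition with the $\ell$-dependent Gaussian tail $\rme^{-c r_\ell^2/\ell}$ is precisely the device that absorbs this intermediate range automatically.

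A secondary, smaller point: you cite ``the decoration-free instance of Lemma~\ref{lemma-2.6}'' for the one-sided ballot lower bound, but that lemma gives the asymptotic $4uv/n(1+o(1))$ only in the regime $(u,v)\to\infty$, whereas your endpoints $x,y\in I$ are $O(1)$; what you actually need is the standard fixed-endpoint ballot lower bound $\ge c(x,y)/n$ for a bridge under a barrier that is $o(\sqrt{s\wedge(n-s)})$, which is true and appears (e.g.) in CHL17Sup Prop.\ 2.1, but it is not a corollary of Lemma~\ref{lemma-2.6} as stated.
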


\begin{proofsect}{Proof}
By monotonicty it is sufficient to prove the  claim  with $f(t) := C_0 t^{1/4} - 1$ and $g(t) := c_0 t$ for some $C_0 < \infty$ and $c_0 > 0$. Using standard ballot estimates (see, e.g., Proposition~2.1 in~\cite{CHL17Sup}), there exists $c > 0$ such that,
\begin{equation}
P^0\Bigl(B\le -f_n\text{\rm\ on }[1,n-1] \Big| B_n=0\Bigr)
\ge c n^{-1} \,,
\end{equation}
for all $n \geq 1$.
It is therefore sufficient to prove that
\begin{equation}
\label{E:5.4}
\lim_{r \to \infty} \sup_{n \ge 1} n P^0\Bigl(\bigl\{B\le -f_n\text{\rm\ on }[1,n-1]\bigr\}\smallsetminus\bigl\{ r+B  \ge-g\text{\rm\ on }[0,n]\bigr\}\,\Big|\, B_n=0\Bigr) = 0 \,.
\end{equation}
 As decreasing~$f_n$ to zero only increases this probability, we may and will henceforth assume that~$f:=0$. 

Partitioning according to which unit interval $(\ell-1,\ell]$ (for $\ell\ge1$ integer) the inequality  $r+B\ge -g$  is first violated, that probability above is at most the sum of
\begin{equation}
\label{e:5.14a}
P^0 \Bigr(\inf_{s \in [n^{2/3}, n]} B_s \le -r - c_0 n^{2/3} \,\Big|\, B_0=0 \Bigl)
\end{equation}
and
\begin{multline}
\label{e:5.6a}
\sum_{\ell=1}^{n^{2/3}} \int_{w \ge 0}
P^0 \Bigl(\,\inf_{s \in [\ell-1, \ell]} B_s \le  -r-c_0 (\ell-1)
\,\Big|\, B_\ell = -w \Bigr) \\
\times P^{-w}\Bigl(B \le 0 \text{ on } [0,n-\ell] \,\Big|\, B_{n-\ell}=0\Bigr) 
P^0 \bigl(-B_\ell \in \rmd w \,\big|\, B_n =0 \bigr) .
\end{multline}
For the probability in~\eqref{e:5.14a} we can use the Reflection Principle for Brownian Motion to infer the upper bound  by 
\begin{equation}
2\rme^{-4(r+c_0 n^{2/3})^2/(2n)}
= 2 n^{-1} \rme^{-4(r+c_0 n^{2/3})^2/(2n) + \log n}
\leq n^{-1} \rme^{-c r^{1/3}} \,,
\end{equation}
where the last inequality follows by taking the maximum over~$n$ in the exponent. 
 Even after multiplication by~$n$, the probability in \eqref{e:5.14a} thus tends  to $0$ as $r \to \infty$.

Turning to~\eqref{e:5.6a}, we abbreviate $r_\ell := r+c_0(\ell-1)$ and whenever $w \in [0,r_\ell/3]$, use monotonicity to bound the first probability in the integrand by
\begin{equation}
P^0 \Bigl(B_{\ell-1} < -r_\ell/2 \,\Big|\, B_\ell = -r_\ell/3 \Bigr)  
 +
P^0 \Big(\inf_{s \in [\ell-1, \ell]} B_s \le -r_\ell
\Big|\, B_{\ell-1} = -r_\ell/2, B_\ell = -r_\ell/3 \Big)
\end{equation}
 Both  terms are bounded by $C \rme^{-c r_\ell^2}$, thanks to the Gaussian tail formula for the former and the Reflection Principle for Brownian Motion for the latter. 
For the second probability in the integrand we use the Reflection Principle again  to bound it by~$Cw/n$ from above.  Since the measure in~\eqref{e:5.6a} is dominated by  $C \rme^{-w^2/(2\ell)} \textd w$,  putting all these bounds together, the integral in~\eqref{e:5.6a} is at most $n^{-1}$ times 
\begin{equation}
C \int_{w \geq 0} \big(\rme^{-c r_\ell^2} + 1_{\{w > r_\ell/3\}} \big)
	\rme^{-w^2/(2\ell)} w \rmd w
= C' \ell \rme^{-c' r_\ell^2/\ell} = C' \ell \rme^{-c'' (r+\ell)} \,.
\end{equation}
Summing over $\ell$ with $1\le\ell\le n^{2/3}$ gives a quantity that tends to zero as $r \to \infty$. 
\end{proofsect}

Next we address two ``technical'' ballot lemmas from Section~\ref{sec-3.4} that underpin the proof of Proposition~\ref{prop-3}:

\begin{proofsect}{Proofs of Lemmas~\ref{lemma-2.6}--\ref{lemma-2.7}} 
Estimates such as those in Lemmas~\ref{lemma-2.6}--~\ref{lemma-2.7} are quite common in the vast literature on log-correlated fields. While none of the existing statements seem to imply these estimates directly, Propositions~2.1 and~2.2 of~\cite{CHL17} (whose proofs appear in the supplement~\cite{CHL17Sup} to \cite{CHL17}) do come very close to what we need here. We shall therefore only highlight the few changes necessary in their proofs to obtain the versions that we need here.

Let us first recount what Propositions~2.1 and~2.2 of~\cite{CHL17} give us directly.
Let~$B$ be a standard Brownian motion, denote by $\{\sigma_k \colon k \geq 1\}$ the arrival times of a Poisson process with rate $\lambda > 0$ and let $\{Y_k\colon k\ge1\}$ be independent random variables with uniformly exponentially decaying tails; i.e., for some $c,C>0$,
\begin{equation}
\label{E:5.8}
\sup_{k\ge1}\bbP(|Y_k| > y) \leq C \rme^{-c y},
	\quad y \geq 0.
\end{equation}
We assume assume that $B$, $\{\sigma_k \colon k \geq 1\}$ and $\{Y_k\colon k\ge1\}$ are independent of one another. Then setting $t:=n/2$, $(x,y) :=(-u, -v)$, $\lambda_{t,s} := 1+\log^+(2(s \wedge (t-s)))$ and writing $2\lambda$ in place of $\lambda$, for each $M\le0$ the aforementioned propositions give 
\begin{multline}
\label{e:5.3}	
	\BbbP \biggr(\,\bigcap_{k\colon \sigma_k \in [0,n]}
	\Bigl\{B_{\sigma_k} + 
	Y_k
	+ M \bigl(1+\log^+ \bigl(\sigma_k \wedge (n-\sigma_k)\bigr)\bigr) \leq 0\Bigr\}  \,\bigg|\, B_0 = -u, B_n = -v \biggr) \\
	 = 4 \frac{uv}{n} (1+o(1)) \,,
\end{multline}
with the $o(1)$-term tending to $0$ as $n \to \infty$ followed by $(u,v) \to \infty$, and
\begin{multline}
\label{e:5.4}	
\BbbP \biggr(\,\bigcap_{k\colon \sigma_k \in [0,n]}
\Bigl\{B_{\sigma_k} + 
Y_k 
+ M \bigl(1+\log^+ \bigl(\sigma_k \wedge (n-\sigma_k)\bigr)\bigr) \leq 0\Bigr\}  \,\bigg|\, B_0 = -u, B_n = -v \biggr) \\
\leq C' \frac{(u^++1)(v^++1)}{n} \,,
\end{multline}
for all $n\ge1$, $u,v\in\R$, where~$C'$ depends only on~$C$ and~$c$ in \eqref{E:5.8}, $\lambda$ and~$M$.

Only routine modifications to the proofs of these propositions are needed to extend the above statements to the case when $\{\sigma_k - \sigma_{k-1}\colon k \geq 1\}$ are i.i.d.\ Geometric with success probability $p \in (0,1]$. (Note that $\sigma_k-\sigma_{k-1}$ are Exponentials above.) Moreover, no modification what-so-ever is required to allow $M$ to be positive as well (this is because the second part of Proposition~2.2 in~\cite{CHL17Sup} could still be used).

Equipped with these new versions of~\eqref{e:5.3} and~\eqref{e:5.4} the upper bounds in Lemma~\ref{lemma-2.7} follow immediately if we set $p:=c$ as the success probability in the definition of the sequence $\{\sigma_k\colon k\ge1\}$, let $Y_s := -C$ for all $s \geq 0$ and choose $M:=-C$ where~$C$ and~$c$ are as in the statement of Lemma~\ref{lemma-2.7}. Turning to Lemma~\ref{lemma-2.6}, abbreviate
\begin{equation}
\label{E:5.11a}
q(u,v):=\bbP \biggr(\,\bigcap_{k=\ell}^{n-1}\Bigl\{B_k + \wt{D}_k + C \bigl(1+\log^+ \bigl(k \wedge (n-k)\bigr)\bigr) \leq 0\Bigr\} \,\bigg|\, B_\ell = -u, B_n = -v \biggr).
\end{equation}
Choosing $p:=1$, $M:=C$ and $Y_k := \wt{D}_k^+$, from~\eqref{e:5.3} we get
\begin{equation}
q_n(u,v)= \bigl(4+o(1)\bigr)\frac{uv}n\,.
\end{equation}
Note that this is \eqref{E:2.23} albeit with the event
\begin{equation}
\label{E:5.11}
\bigcap_{s\in[0,n]}\bigl\{|B_s + u | \leq A + cs\bigr\}
\end{equation}
removed on the left-hand side. 

In order to prove Lemma~\ref{lemma-2.6}, we thus need to show that inclusion of \eqref{E:5.11} changes the probability in \eqref{E:5.11a} by at most a term $o(1)\frac{uv}n$. This amounts to bounding the probability in \eqref{E:2.23} where \eqref{E:5.11} is swapped for its complement. Partitioning according to which unit interval $(\ell-1,\ell]$ (for $\ell\ge1$ integer) in~\eqref{E:5.11} the inequality is first violated, that probability is at most the sum of
\begin{equation}
\label{E:5.14}
\bbP \Big(\sup_{s \in [n^{2/3}, n]} |B_s + u| > c n^{2/3} \,\Big|\, B_0=-u, B_n = -v \Big)
\end{equation}
and
\begin{equation}
\label{e:5.6}
\sum_{\ell=1}^{n^{2/3}} \int
\phi_\ell(u,u-w) q_n(u-w,v)\mu_{u,v}^{(\ell)}(\textd w),
\end{equation}
where  
\begin{equation}
\phi_\ell(u,t):=\bbP \Big(\sup_{s \in [\ell-1, \ell]} |B_s+u| > A+c(\ell-1)
\Big|\, B_0 = -u, B_\ell = -t \Big)
\end{equation}
and where $\mu_{u,v}^{(\ell)}$ is the measure
\begin{equation}
\mu_{u,v}^{(\ell)}(\EE):=\bbP \bigl( B_\ell+u \in \EE \,\big|\, B_0=-u, B_n = -v \bigr).
\end{equation}
We will now estimate \eqref{E:5.14} and the terms in \eqref{e:5.6} separately.

The Reflection Principle for Brownian motion bounds the probability in \eqref{E:5.14} by
\begin{equation}
2\rme^{-4(u+n^{2/3})(v+n^{2/3})/n} \leq 2\rme^{-n^{1/3}}
\end{equation}
which is $o(1)\frac{uv}n$ in the stated limits. Next we bound the term $\phi_\ell(u,u-w)$ as follows. Abbreviating
\begin{equation}
A_\ell:=A+c(\ell-1)
\end{equation}
for $w\not\in[-A_\ell/3,A_\ell/3]$ we simply put $\phi_\ell(u,u-w)\le1$. For $w \in [0,A_\ell/3]$, we use symmetry and monotonicity to drop the absolute value around $B_s+u$ at the cost of a multiplicative factor of~2. The resulting quantity is then at most
\begin{multline}
\bbP \Bigl(B_{\ell-1} > -u+A_\ell/2 \,\Big|\, B_0 = -u, B_\ell = -u+A_\ell/3 \Bigr)  \\
 +
\bbP \Bigl(\,\sup_{s \in [\ell-1, \ell]} B_s > -u+A_\ell \,\Big|\, B_{\ell-1}=-u+A_\ell/2, B_\ell = -u+A_\ell/3 \Bigr) 
\end{multline}
Both probabilities above are bounded by $C'\rme^{-c A_\ell^2}$, thanks to the Gaussian tail formula for the former and the Reflection Principle for Brownian motion for the latter. A similar reasoning gives exactly the same bound when~$w \in [-A_\ell/3, 0]$ and so
\begin{equation}
\phi_\ell(u,u-w)\le C'\rme^{-c A_\ell^2} + 1_{\{|w| > A_\ell/3\}}
\end{equation}
for all~$w$ under the integral.

Turning to the two other terms in the integral in~\eqref{e:5.6}, concavity of the logarithm function implies that whenever $\ell < n^{2/3}$ and $u,v \leq -1$, we can use~\eqref{e:3.53} in Lemma~\ref{lemma-2.7} (whose proof was already completed above) to bound
\begin{equation}
q_n(u-w,v)\le C''\frac{(u^- + w^- + 1)(v^-+1)}n \leq C''' \frac{uv}n(w^-+1)
\end{equation}
For the measure we get
\begin{equation}
\mu_{u,v}^{(\ell)}(\textd w)\le C'''' \rme^{-c' w^2/\ell}\textd w
\end{equation}
as the conditional law of $B_\ell + u$ is Gaussian with mean at least~$-1$ and variance at most~$\ell$, whenever~$n$ is large enough, $\ell < n^{2/3}$ and $u,v$ are kept fixed. Putting these bounds together, the integral in~\eqref{e:5.6} is at most a constant times $uv/n$ times
\begin{equation}
\int \big(\rme^{-c A_\ell^2} + 1_{\{|w| > A_\ell/3\}} \big)
	\rme^{-c' w^2/\ell} (w^-+1) \rmd w
= O(1) \ell \rme^{-c' A_\ell^2/\ell} =O(1) \ell \rme^{-c'' (A+ \ell)} 
\end{equation}
for some $c''>0$. Summing over $\ell$ with $1\le\ell\le n^{2/3}$ gives a quantity that tends to zero as $A \to \infty$. As argued before, this is what we needed in order to prove \eqref{E:2.23}. 
\end{proofsect}

Finally, we prove the ballot lemma that ensures that the quantity in \eqref{E:3.81} is uniformly positive and bounded and thus enables the limit argument presented thereafter.

\begin{proofsect}{Proof of Lemma~\ref{lemma-3.18}}
We start with the lower bound.
The known results on upper tails of BRW maximum (see, e.g., Proposition 1.3 in~\cite{Aidekon}) show that
\begin{equation}
\inf_{m\ge1}\BbbP\Bigl(\,\bigcap_{k=1}^m\bigl\{M_k\le\slb \,k\bigr\}\Bigr)>0
\end{equation}
Since $B$ is independent of $\{M_k\}_{k\ge1}$ under~$\BbbP$, the expectation in \eqref{E:3.80} is thus at least this infimum times
\begin{equation}
\label{E:3.82}
\bbE \biggl(B_m^- ;\; \bigcap_{k=1}^m\bigl\{B_k\leq 0\bigr\} \, \Big|\, B_0 = 0 \biggr).
\end{equation}
For~$m>1$, the Reflection Principle bounds the probability of the giant intersection conditional on~$B_1$ and $B_m$ from below by
\begin{equation}
1-\exp\Bigl\{-\frac{B_1^-B_m^-}{2(m-1)}\Bigr\}.
\end{equation}
Assuming also containment in $\{-2\le B_1\le -1\}\cap\{B_m^-\le\sqrt m\}$, this is at least $\frac14 B_m^-/m$ once~$m$ is sufficiently large. It follows that the expectation in \eqref{E:3.82} is at least
\begin{equation}
\E\Bigl(\frac1{4m}(B_m^-)^2\,; \{-2\le B_1\le-1\}\cap\{B_m^-\le\sqrt m\}\Bigr)
\end{equation}
Routine estimates based on the independence of Brownian increments now bound this  away from zero uniformly in~$m>1$.

Moving to the upper bound, here we use that
\begin{equation}
Y:=\inf_{k\ge1}\bigl[M_k-\slb\, k+c_1\log k\bigr]
\end{equation}
is a.s.\ finite with a Gaussian lower provided that~$c_1>\frac34(\log b)^{-1/2}$ (see, e.g., Theorem~1.3 in~\cite{ChenHe}). 
The expectation in \eqref{E:3.80} is then at most
\begin{equation}
\label{E:5.19}
\bbE \biggl(B_m^- ;\, \bigcap_{k=1}^m\bigl\{ B_k \leq  -Y+c_1\log k \bigr\} \bigg|\, B_0 = 0 \biggr)
\end{equation}
A standard ballot estimate (see, e.g., Lemma~2.3 in~\cite{CHL17Sup}) bounds the probability of the giant intersection, conditional on~$Y$ and $B_m$ and restricted to $\{B_m \leq 0\}$, by 
\begin{equation}
c_2\frac{Y^- (B^-_m+Y^-+\log m)}{m} \,,
\end{equation}
for some finite constant~$c_2$ depending only on~$c_1$. Using this estimate, the expectation in \eqref{E:5.19} conditioned on $Y$ is shown to be at most
\begin{equation}
 c_2\Bigl[\frac{\bbE B_m^-}m Y^-+  \frac{\bbE B_m^2}m(Y^-)^2 + \E(B_m^-)\frac{\log m}m\,Y^{-}\Bigr]
\le c'\bigl[Y^-+(Y^-)^2\bigr] 
\end{equation}
The Gaussian tails of $Y$ then ensure that the expectation of the last quantity is finite.
\end{proofsect}


\section*{Acknowledgments}
\nopagebreak\nopagebreak\noindent
This project has been supported in part by the NSF award DMS-1954343, ISF grants No.~1382/17  and~2870/21  and BSF award 2018330. \rm

\bibliographystyle{abbrv}

\end{document}